\newtheorem{theo}{Theorem}[section]
\newtheorem{thm}[theo]{Theorem}
\newtheorem{lem}[theo]{Lemma}
\newtheorem{cor}[theo]{Corollary}
\newtheorem{prop}[theo]{Proposition}
\newtheorem*{thm*}{Theorem}
\newtheorem*{thmA}{Theorem A}
\newtheorem*{thmB}{Theorem B}
\newtheorem*{thmC}{Theorem C}
\newtheorem*{thmD}{Theorem D}
\theoremstyle{definition}
\newtheorem{dfn}[theo]{Definition}
\newtheorem*{staa}{Standing Assumption}
\theoremstyle{remark}
\numberwithin{equation}{section}
\newcommand{\ql}{quadrilateral{}}
\newcommand{\lami}{\mathrm{LAM}}
\newcommand{\De}{\Delta}
\newcommand{\F}{\mathcal{F}}
\newcommand{\np}{\mathcal{NP}}
\newcommand{\C}{\mathbb{C}}
\newcommand{\disk}{\mathbb{D}}
\newcommand{\disc}{\mathbb{D}}
\newcommand{\cdisk}{\ol{\mathbb{D}}}
\newcommand{\ol}{\overline}
\newcommand{\sm}{\setminus}
\newcommand{\A}{\mathcal{A}}
\newcommand{\ha}{\hat{a}}
\newcommand{\hb}{\hat{b}}
\newcommand{\m}{\ol{m}}
\newcommand{\n}{\ol{n}}
\newcommand{\hell}{\hat{\ell}}
\newcommand{\qml}{\mathrm{QML}}
\newcommand{\bd}{\mathrm{Bd}}
\newcommand{\lam}{\mathcal{L}}
\newcommand{\hlam}{\widehat{\mathcal{L}}}
\newcommand{\cs}{\mathrm{CS}}
\newcommand{\ch}{\mathrm{CH}}
\newcommand{\si}{\sigma}
\newcommand{\0}{\emptyset}
\newcommand{\uc}{\mathbb{S}}
\newcommand{\g}{\mathfrak{g}}
\newcommand{\Ss}{\mathcal{S}}
\newcommand{\Pp}{\mathrm{CML}}
\newcommand{\pr}{\mathbb L\mathbb{P}}
\newcommand{\prnp}{\pr^{np}}
\newcommand{\qcp}{\mathrm{QCP}}
\newcommand{\fqcp}{\mathcal{QCP}}
\newcommand{\qnp}{\fqcp^{np}}
\newcommand{\rc}{\mathcal{R}}
\newcommand{\Lam}{\mathbb{L}}
\newcommand{\Q}{\Lam\!\mathcal Q}
\begin{document}
\date{January 15, 2015}

\title[The parameter space of cubic laminations]{The parameter space of cubic laminations\\ with a fixed critical leaf}

\author[A.~Blokh]{Alexander~Blokh}

\thanks{The first author was partially
supported by NSF grant DMS--1201450}

\author[L.~Oversteegen]{Lex Oversteegen}

%\thanks{The second named author was partially  supported
%by NSF grant DMS-0906316}

\author[R.~Ptacek]{Ross~Ptacek}

\author[V.~Timorin]{Vladlen~Timorin}

\thanks{The fourth named author was partially supported by
RFBR grants 13-01-12449, 13-01-00969, and AG Laboratory NRU-HSE,
MESRF grant ag. 11.G34.31.0023.}

\address[Alexander~Blokh and Lex~Oversteegen]
{Department of Mathematics\\ University of Alabama at Birmingham\\
Birmingham, AL 35294}

\address[Ross~Ptacek and Vladlen~Timorin]
{Faculty of Mathematics\\
Laboratory of Algebraic Geometry and its Applications\\
Higher School of Economics\\
Vavilova St. 7, 112312 Moscow, Russia }

\address[Vladlen~Timorin]
{Independent University of Moscow\\
Bolshoy Vlasye\-vskiy Pereulok 11, 119002 Moscow, Russia}

\email[Alexander~Blokh]{ablokh@math.uab.edu}
\email[Lex~Oversteegen]{overstee@math.uab.edu}
\email[Ross~Ptacek]{rptacek@uab.edu}
\email[Vladlen~Timorin]{vtimorin@hse.ru}

\subjclass[2010]{Primary 37F20; Secondary 37F10, 37F50}

\keywords{Complex dynamics; laminations; Mandelbrot set; Julia set}

\begin{abstract}
Thurston parameterized quadratic invariant laminations with a
non-invariant lamination, the quotient of which yields a
combinatorial model for the Mandelbrot set. As a step toward
generalizing this construction to cubic polynomials, we consider
slices of the family of cubic invariant laminations defined by a
fixed critical leaf with non-periodic endpoints. We parameterize
each slice by a lamination just as in the quadratic case, relying on
the techniques of smart criticality
previously developed by the authors.  %To justify the restriction to
%critical leaves with non-periodic endpoints, we show that a similar
%construction is impossible for critical leaves with periodic endpoints.
\end{abstract}

\maketitle

\section*{Introduction}\label{s:intro}
One of the major goals of complex dynamics is to describe the
structure of the space of complex polynomials of degree $d$ up to
Moebius conjugation. In the quadratic case, this space of
polynomials can be identified with the family $P_c(z)=z^2+c$ with
$c\in \C$ parameterizing the family. Since $c$ is the critical value
of $P_c(z)$ it shows that dynamically motivated points in the plane
(i.e., the critical value of each polynomial) can be used to
parameterize the space of all polynomials.

A major achievement by Thurston \cite{thu85} was his combinatorial
interpretation of the above space. To achieve this, he introduced
several important dynamical and combinatorial notions. One of them
was that of a \emph{lamination}. Laminations are equivalence
relations on the unit circle in the plane with finite (equivalence)
classes whose  convex hulls in the unit disk are pairwise disjoint.
A lamination is \emph{$\si_2$-invariant} if its equivalence classes
are preserved under $\si_2$. Using a Riemann map one can see that
every quadratic polynomial with a locally connected Julia set
corresponds to a unique $\si_2$-invariant lamination.

Below we give a modified (but equivalent) description of Thurston's
results \cite{thu85}. Every $\si_2$-invariant lamination $\sim$ has
a unique \emph{minor} set $m_\sim$ which is the convex hull of the
image of a $\sim$-class of maximal diameter. A striking result by
Thurston is that the minor sets are pairwise disjoint and form a
lamination of the unit circle! The quotient space of the unit circle
under this lamination then parameterizes the set of all
$\si_2$-invariant laminations and serves as a model of the
Mandelbrot set.

In this paper we will consider a slice of the set of all
$\si_3$-laminations. Fix a chord $D=\ol{ab}$ such that
$\si_3(a)=\si_3(b)$ where both $a$ and $b$ are not periodic. We
consider the set $\lami_D$ of all $\si_3$-invariant laminations
$\sim$ such that $a\sim b$ except for laminations of \emph{Siegel
capture type} defined below. For each lamination $\sim$ in $\lami_D$
we define its minor set $m_\sim$ similar to the above. Then we apply
machinery from \cite{bopt14} to show that, similar to the case of
the Mandelbrot set, the collection of all such minor sets is a
lamination itself so that the quotient space of the unit circle by
this lamination parameterizes $\lami_D$.

We now describe how the paper is organized. While the precise definitions
are given later, %Section~\ref{s:prelims},
here we give heuristic versions of some of them. Also, even though
all notions can be introduced for any degree $d$, our main focus
here is cubic, hence some definitions are given only in the cubic
case. % which makes them somewhat simpler than the general ones.
%less known of them to avoid confusion.

In Section~\ref{s:prelims} we provide the motivating background from
complex dynamics, define laminations, geolaminations (which will
always be denoted by $\lam$), leaves, (infinite, all-critical) gaps,
and related notions. In particular (cf. \cite{bmov13}), we define
\emph{proper geo\-la\-mi\-na\-tions} as geo\-la\-mi\-na\-tions such
that no leaf can have one periodic and one non-periodic endpoint. We
also review the relevant combinatorial machinery developed in
\cite{bopt14}. Then we define less standard notions, some of which
were introduced in \cite{bopt14}. We give an overview of these
notions here. In the rest of the Introduction we will mostly
consider the cubic case.

A set is a \emph{generalized (critical) \ql{}} if it is a critical
leaf, an all-critical triangle (which collapses to a point), a
\emph{collapsing \ql{}} (i.e., a \ql{} which maps to a chord), or an
all-critical \ql{} (in the cubic case the last case is impossible).
If $\lam$ is cubic, a \emph{quadratically critical portrait
(qc-portrait) of $\lam$} is an ordered pair of \emph{distinct}
generalized \ql s that are leaves or gaps of $\lam$. In fact, cubic
qc-portraits can be defined without geolaminations: an
\emph{admissible (cubic) qc-portrait} is an ordered pair of
generalized \ql s such that (1) a generalized \ql{} with a periodic
vertex either has a degenerate image or, otherwise, has a periodic
edge, and (2) they and all their images intersect at most over a
common edge or vertex. By a standard pullback construction an
admissible cubic qc-portrait can be included in a cubic
geolamination. Here we study only the family $\qnp_3$ of admissible
qc-portraits $\qcp$ such that the second set in $\qcp$ is a critical
leaf $D$ with non-periodic endpoints.

In general, a geolamination may have no qc-portrait; it has a
qc-portrait if and only if all its critical sets are collapsing \ql
s, critical leaves or all-critical gaps \cite{bopt14}. However,
invariant geolaminations can be \emph{tuned} to acquire a
qc-portrait without compromising the dynamics. To this end,
generalized \ql s (which form a qc-portrait) are inserted into
critical gaps of a given proper cubic geolamination $\lam$. Using a
well-known procedure, one can complete the inserted leaves and their
images with their pullbacks, and in this way define a new tuned
invariant geolamination. Observe that the new tuned geolamination is
not necessarily proper as we may need to insert new leaves which
connect periodic and non-periodic points.

Note, that here we consider proper geolaminations $\lam$ which have
a critical leaf $D$ with non-periodic endpoints. For such
geolaminations there are a few possibilities concerning their
critical sets (see Subsection~\ref{ss:pr_geolam}). First, $\lam$ can
have a finite first critical set $C$. By properties of invariant
geolaminations, $C$ is a gap or a leaf on which $\si_3$ acts
two-to-one (unless $D$ is an edge of $C$ and so the point $\si_3(D)$
has all three of its preimages in $C$). Thus, if $C$ is finite,
there are two cases. First, $C$ can be a $2n+1$-gon with $D$ being
one of its edges such that one can break down all its edges into
pairs of ``sibling edges'', i.e. leaves with the same image except
for $D$ (one can say that the ``sibling edge'' of $D$ is the vertex
of $C$ not belonging to $D$ and with the same image as $D$). Second,
$C$ can be a $2m$-gon such that $D$ is not an edge of $C$ in which
case $\si_3|_C$ is two-to-one.

Also, $C$ could be an infinite gap. Then there are two cases. First,
$C$ may be a periodic Fatou gap of some period $k$ and degree $2$
(observe that in this case $D$ may well be an edge of $C$). Second,
there may exist a periodic Siegel gap $U$ with $D$ being one of its
edges and an infinite gap $V$ such that $\si_3|_{\bd(V)}$ is
two-to-one and $V$ eventually maps to $U$. We call such
(geo)laminations \emph{Siegel (geo)laminations of capture type}.

Now, let $\prnp_3$ be the family of all cubic proper geolaminations
which have a critical leaf with non-periodic endpoints \emph{except
for Siegel geolaminations of capture type}. If $\lam\in \prnp_3$ has
a critical leaf $D$ with non-periodic endpoints, then a qc-portrait
$\qcp=(Q, D)$ is called \emph{privileged for $\lam$} if and only if
$Q\subset C$ where $C\ne D$ is a critical set of $\lam$ and either
$C$ is finite, or $C$ is a periodic Fatou gap of degree two and
period $k$ and $Q$ is a collapsing \ql{} which is the convex hull of
a (possibly degenerate) edge $\ell$ of $C$ of period $k$ and another
edge $\hell$ of $C$ such that $\si_3(\ell)=\si_3(\hell)$.

In Section~\ref{s:results} we state our main results. We show that
for each $\lam\in\prnp_3$ there are only finitely many privileged
qc-portraits and for every admissible qc-portrait $\qcp\in\qnp_3$,
there exists $\lam_\qcp\in\prnp_3$ such that $\qcp$ is privileged
for $\lam_\qcp$ and the lamination induced by $\lam_\qcp$ is unique.
For a critical leaf $D$ with non-periodic endpoints, $\Ss_D$ denotes
the collection of all admissible qc-portraits $(Q, D)$ of $\qnp_3$
with $D$ as the second element. To each such qc-portrait $(Q, D)$ we
associate the set (a chord or a point called \emph{minor})
$\si_3(Q)\subset \cdisk$. For each such chord we identify its
endpoints, extend this identification by transitivity and define the
corresponding equivalence relation $\sim_D$ on $\uc$. Our main
result is that $\sim_D$ is itself a lamination whose quotient is a
parameterization of $\lami_D$.

Section~\ref{s:priv_port} describes how we will tag
$\si_3$-invariant geolaminations with privileged qc-portraits and
shows that the tagging generalizes Thurston's tagging of
$\si_2$-invariant geolaminations by minors. We show that any
qc-portrait which contains a critical leaf without periodic
endpoints is privileged for some geolamination.  This implies that
when we fix a critical leaf with non-periodic endpoints we obtain a
closed set of qc-portraits.

In Section~\ref{s:interminor} we study possible intersections of
minors of our qc-portraits. Finally, in Section~\ref{s:unlink} we
construct the parameter lamination $\sim_D$.

%prove that privileged qc-portraits with non-disjoint minors describe
%the same lamination $\sim_D$ and construct this lamination.

%This allows us to construct a lamination $\sim_D$ which
%parameterizes the slice $\Ss_D$ of $\si_3$-invariant geolaminations
%containing $D$ for a critical leaf $D$ with non-periodic endpoints.
%Classes of $\sim_D$ are generated by taking the convex hull of the
%union of tags which are not disjoint.

%We also
%demonstrate that this construction is impossible for critical leaves
%with a periodic endpoint.

% We will do the following:
% \begin{itemize}
% \item Consider $\qnp_d$, the space of all qc-portraits of $d$-invariant sibling
    % geolams which have at most one set with a periodic vertex.
% \item Each qc-portrait $\qcp\in\qnp_d$ corresponds to a proper geolam
    % $\lam_\qcp$   such that:
% \begin{itemize}
% \item Each quadrilateral of $\qcp$ is contained in a critical gap of
    % $\lam_\qcp$, and,
% \item If a quadrilateral has a periodic endpoint, then it is refixed in the
    % corresponding gap of $\lam_\qcp$.
% \end{itemize}
% Moreover, any two proper geolams with this property give the same equivalence
% relation (q-lam) on $\uc$.
% \item Any linked or essentially equal $\qcp$s in $\qnp$ correspond to the same
% q-lam.

% \end{itemize}

\section{Preliminaries}\label{s:prelims}

We assume basic knowledge of complex dynamics and use standard
notation. Let $P:\C\to\C$ be a polynomial of degree $d\ge 2$,
$A_\infty$ be the basin of attraction of infinity, and
$J_P=\bd(A_\infty)$ be the Julia set of $P$.  When $J_P$ is
connected, $A_\infty$ is simply connected and conformally isomorphic
to $\C\sm\disc$ by a unique isomorphism $\phi:\C\sm\ol\disc\to
A_\infty$ asymptotic to the identity at $\infty$.  By a theorem of
B\"ottcher (see, e.g., \cite[Theorem 9.1]{mil00}), $\phi$ conjugates
$P|_{A_\infty}$ with $z^d|_{\C\sm\ol\disc}$. If $J_P$ is locally
connected, then $\phi$ extends continuously to a semiconjugacy
$\ol\phi$ between $\si_d=z\mapsto z^d|_\uc$ and $P|_{J_P}$:

\begin{equation}\label{e:lam_conj}\begin{CD}
\uc @>{\si_d|_{\uc}}>> \uc \\
@V{\ol\phi}VV @V{\ol\phi}VV \\
J_P @>{P|_{J_P}}>> J_P \\
\end{CD}\end{equation}

The \textit{lamination generated by} $P$ is the equivalence relation
$\sim_P$ on $\uc$ whose classes are \emph{$\ol\phi$-fibers}, i.e. point-preimages
under $\ol\phi$.  We call
$\widehat J_P=\uc/\sim_P$ a \textit{topological Julia set} and
$\widehat P$, the corresponding map induced on $\widehat J_P$ by $\si_d$, a
\textit{topological polynomial}.  Evidently $P|_{J_P}$ and
$\widehat P|_{\widehat{J_P}}$ are conjugate.  The collection $\lam_P$
of chords of $\ol\disc$ which are edges of convex hulls of
$\sim_P$ classes is called the \textit{geolamination
generated by }$P$.

\subsection{Laminations, Geolaminations, and their Properties}

Laminations and geolaminations can be defined independently of
polynomials.  This approach is due to Thurston
\cite{thu85} who constructed a combinatorial model of the
Mandelbrot set by parameterizing quadratic geolaminations.
We define laminations to be equivalence relations
without underlying dynamics.

\begin{dfn}[Laminations]\label{d:lam}
An equivalence relation $\sim$ on the unit circle $\uc$ is called a
\emph{lamination} if either $\uc$ is one $\sim$-class (such
laminations are called \emph{degenerate}),
or the following holds:

\noindent (E1) the graph of $\sim$ is a closed subset of $\uc \times
\uc$;

\noindent (E2) the convex hulls of distinct equivalence classes are
disjoint;

\noindent (E3) each equivalence class of $\sim$ is finite.
\end{dfn}

These properties are satisfied by laminations generated by
polynomials.  Property (E1) is necessary for $\uc/\sim$ to be a
metric space, (E2) follows from the semiconjugacy $\ol\phi$ (a proof
can be found in \cite[Proposition II.3.3]{sch09}), and (E3) follows
from \cite[Lemma 18.12]{mil00} for (pre)periodic points and from
Kiwi (\cite{kiw02}) for points with infinite orbit.
Definition~\ref{d:si-inv-lam} enforces dynamics on laminations
consistent with those generated by polynomials.

\begin{dfn}[Laminations and dynamics]\label{d:si-inv-lam}
An equivalence relation $\sim$ is called ($\si_d$-){\em invariant}
if:

\noindent (D1) $\sim$ is {\em forward invariant}: for a $\sim$-class
$\g$, the set $\si_d(\g)$ is a $\sim$-class;

\noindent (D2) for any $\sim$-class $\g$, the map $\si_d: \g\to
\si_d(\g)$ extends to $\uc$ as an orientation preserving covering
map such that $\g$ is the full preimage of $\si_d(\g)$ under this
covering map.
\end{dfn}

There is a useful geometric object associated with each lamination.

\begin{dfn}\label{d:geolam}
For a lamination $\sim$, take convex hulls of all its classes and
consider the family of chords which are their edges (together with
all single points of the unit circle). This family of (possibly
degenerate) chords is called a \emph{geolamination generated by
$\sim$} and is denoted by $\lam_\sim$.
\end{dfn}

Definition~\ref{d:geolam} makes sense for non-invariant and
invariant laminations; for invariant laminations we use notation
described in Definition~\ref{d:qlam}.

\begin{dfn}\label{d:qlam}
The family of all invariant geolaminations generated by laminations
is denoted by $\Q$; if we consider only $\si_d$-invariant
laminations, the corresponding family of geolaminations is denoted
by $\Q_d$. Geolaminations from $\Q$ ($\Q_d$) are called
\emph{$\Q$-geolaminations ($\Q_d$-geolaminations)}.
\end{dfn}

Similar notions were defined by Thurston in \cite{thu85} with no
equivalence relations involved. For a collection $\rc$ of chords of
$\disk$ set $\rc^+=\bigcup\rc$.

\begin{dfn}[Geolaminations]\label{d:geolam1}
Two chords of $\cdisk$ are called \emph{unlinked} if they are
disjoint in $\disk$. A \emph{geolamination} is a collection $\lam$
of (perhaps degenerate) pairwise unlinked chords of $\cdisk$ called
\emph{leaves} such that $\lam^+$ is closed, and all points of $\uc$
are elements of $\lam$. A \textit{gap} of $\lam$ is the closure of a
component of $\disk\sm\lam^+$. If $\lam=\lam_\sim$ is a
geolamination generated by a lamination $\sim$, gaps of $\lam_\sim$
will also be called \emph{gaps of $\sim$}. A gap $G$ is \emph{finite
(infinite)} depending on whether $G\cap\uc$ is finite (infinite).
Gaps $G$ such that $G\cap\uc$ is uncountable, are called \emph{Fatou
gaps}. For Fatou gaps $G$ define the monotone map $\psi_G:\bd(G)\to
\uc$ which collapses all edges of $G$ to points.
\end{dfn}

If $x, y\in\uc$ are the endpoints of a chord $\ell$, set
$\ell=\ol{xy}$. Given a closed set $A\subset \uc$, let
$\si_d(\ch(A))=\ch(\si_d(A))$ (in particular,
$\si_d(\ol{xy})=\ol{\si_d(x)\si_d(y)}$). Any non-degenerate chord
$\ell$ such that $\si_d(\ell)$ is a point is called \emph{critical}.
Given a geolamination $\lam$, we linearly extend $\si_d$ over leaves
of $\lam$; clearly, this extension is continuous and well-defined.
With this extension, we can define \emph{$\si_d$-invariant
geolaminations}. Here we rely on \cite{bmov13} where the definition
is a bit different from Thurston's \cite{thu85}.

\begin{dfn}[Invariant sibling geolaminations \cite{bmov13}]\label{d:sibli}
A geolamination $\lam$ is \emph{($\si_d$-)invariant (sibling)
geolamination} provided:

\begin{enumerate}

\item \label{d:sibli:fi} for each $\ell\in\lam$, we have $\si_d(\ell)\in\lam$,

\item \label{d:sibli:bi}for each $\ell\in\lam$ there exists $\ell^*\in\lam$
    so that $\si_d(\ell^*)=\ell$.

\item \label{d:sibli:si} for each $\ell\in\lam$ such that $\si_d(\ell)$
    is non-degenerate, there exist $d$ \textbf{pairwise disjoint}
    leaves $\ell_1$, $\dots$, $\ell_d$ in $\lam$ so that
    $\ell=\ell_1$ and $\si_d(\ell_i)=\si_d(\ell)$ for all
    $i=1$, $\dots$, $d$.

\end{enumerate}

\noindent If a geolamination satisfies only condition
\ref{d:sibli:fi}, %and \ref{d:sibli:si},
we call it a \emph{forward
($\si_d$-)invariant sibling geolamination}.
\end{dfn}

\emph{Any} leaf $\ell^*$ with $\si_d(\ell^*)=\ell$ is called a
\textit{pullback} of $\ell$. Also, two leaves with the same
$\si_d$-image are called \textit{siblings}, and the leaves
$\ell_1=\ell,\dots,\ell_d$ in (\ref{d:sibli:si}) are said to form a
\textit{disjoint sibling collection of }$\ell$ \cite{bmov13}. The
space of all $\si_d$-invariant sibling geolaminations is denoted
$\Lam_d$.

Thurston defines $\si_d$-invariant geolaminations differently. He
requires that geolaminations satisfy (1) above, be such that each
leaf $\ell$ has $d$ \emph{disjoint} pullbacks, and be
\emph{gap invariant}. %Recall that for a gap $G$ we set
%$\si_d(G)=\ch(\si_d(G\cap\uc))$.

\begin{dfn}[Gap invariance \cite{thu85}]\label{d:gap-inv}
Suppose that for any gap $G$ of a geolamination $\lam$ the set
$H=\si_d(G)$ is either a point of $\uc$, a leaf of $\lam$, or a gap
of $\lam$. Moreover, if $H$ is a gap then $\si_d:\bd(G)\to\bd(H)$ is
the positively oriented composition of a monotone map and a covering
map. Then $\lam$ is said to be \emph{gap invariant}.
\end{dfn}

Let us now define the degree of $\si_d$ on a gap or leaf of $\lam$.

\begin{dfn}[Degree of $\si_d$ on a gap or leaf]\label{d:degreeg}
Let $G$ be a gap or leaf of a gap invariant geolamination and let
$H=\si_d(G)$. Then the \emph{degree of $\si_d|_G$} is either the
cardinality of $G\cap \uc$ (if $H$ is a point), or, otherwise, the
number of components of the set $\si_d^{-1}(x)\cap \bd(G)$ where $x$
is any point of $\bd(H)$.
\end{dfn}

It is easy to see that the degree of $\si_d|_G$ does not depend on
the choice of the point $x\in \bd(H)$. In case when $\si_d(G)$ is a
gap, the degree of $\si_d|_G$ can be defined as the degree of a
covering map involved in the representation of $\si_d|_{\bd(G)}$ as
the composition of a monotone map and a covering map.

\begin{dfn}\label{d:crigap}
A gap $G$ all of whose edges are critical is said to be
\emph{all-critical}. A gap $G$ is called \emph{critical} if the
degree of $\si_d|_G$ is greater than $1$.
\end{dfn}

%We will write $\si_d(G)=\ch(\si_d(G\cap\uc))$. This can be made rigorous if we
%extend $\si_d$ over gaps by sending barycenters to barycenters and
%mapping linearly on line segments from the barycenter to the boundary of the gap.

Definition~\ref{d:sibli} conveniently deals only with leaves. This
leads to useful results on properties of $\si_d$-invariant sibling
geolaminations \cite{bmov13}. %To state the corresponding results,
We will agree to endow any family of compact subsets of $\cdisk$
with the Hausdorff metric and the induced topology. In particular,
we define a natural topology on $\Lam_d$ induced by the Hausdorff
metric on subcontinua $\lam^+$ of $\cdisk$ where $\lam\in \Lam_d$
(from now on talking about $\Lam_d$ we mean the just defined
topological space rather than a family of geolaminations).

\begin{thm}[\cite{bmov13}]\label{t:summary}
The following results hold:

\begin{enumerate}

\item $\Q_d\subset \Lam_d$ (i.e., geolaminations generated by
$\si_d$-invariant  laminations are
$\si_d$-invariant sibling geolaminations);

\item $\Lam_d$ is compact (thus, $\Lam_d$ contains
all limits of geolaminations from $\Q_d$);

\item $\si_d$-invariant sibling geolaminations are
$\si_d$-invariant in the sense of Thurston.

\end{enumerate}

\end{thm}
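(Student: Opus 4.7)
The plan is to address the three conclusions in turn, leveraging the definitions of $\sim$-invariance (Definition~\ref{d:si-inv-lam}) and of sibling geolaminations (Definition~\ref{d:sibli}). For part (1), given a $\si_d$-invariant lamination $\sim$ and $\lam=\lam_\sim$, forward invariance of $\lam$ (condition (1) of Definition~\ref{d:sibli}) is immediate from (D1): an edge of $\ch(\g)$ maps to a (possibly degenerate) edge of $\ch(\si_d(\g))$, and $\si_d(\g)$ is a $\sim$-class. For the sibling condition applied to a leaf $\ell=\ol{xy}\in\lam$ with non-degenerate image $\ol{uv}$, let $\g_u$ be the $\sim$-class containing $u$. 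By (D2) the map $\si_d|_{\g_u}$ extends to a $d$-to-one orientation-preserving covering of $\uc$, so $\g_u$ contains exactly $d$ preimages of $u$ and $d$ preimages of $v$, interleaved in cyclic order. Pairing consecutive $u$- and $v$-preimages yields $d$ pairwise disjoint edges of $\ch(\g_u)$, one of which is $\ell$, each mapping to $\ol{uv}$. Backward invariance follows analogously by choosing an appropriate edge of the convex hull of any $\sim$-preimage class.

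For part (2), I would realize $\Lam_d$ as a subset of the Hausdorff-compact hyperspace of closed subsets of $\cdisk$ and show it is closed. If $\lam_n\in\Lam_d$ and $\lam_n^+\to K$ in Hausdorff metric, then unlinkedness is a closed condition and every point of $\uc$ is a degenerate leaf in each $\lam_n$, so $K=\lam^+$ for some geolamination $\lam$. Forward invariance passes to limits by continuity of $\si_d$. For the sibling condition, approximate a non-degenerate leaf $\ell\in\lam$ by $\ell_n\in\lam_n$, extract a Hausdorff-convergent subsequence of sibling collections $\{\ell_{n,1},\dots,\ell_{n,d}\}$, and obtain chords $\ell_1,\dots,\ell_d\in\lam$ with common image $\si_d(\ell)$, one of which is $\ell$. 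The $\ell_i$ are pairwise unlinked but may share endpoints in the limit; whenever this happens a combinatorial replacement within the $\si_d$-preimage set of the endpoints of $\si_d(\ell)$, using leaves already present in $\lam$ at such boundary-sharing configurations, produces a genuinely disjoint sibling collection. Backward invariance is handled the same way.

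For part (3), a $\si_d$-invariant sibling geolamination satisfies Thurston's forward invariance by condition (1) of Definition~\ref{d:sibli}, and $d$ pairwise disjoint pullbacks of any leaf $\ell$ are obtained by taking any pullback guaranteed by (2) and invoking the sibling condition (3) on it. Gap invariance in the sense of Definition~\ref{d:gap-inv} is the subtler requirement: for a gap $G$ with image $H=\si_d(G)$, one shows that edges of $G$ map to leaves of $\lam$ whose union, together with $\si_d(G\cap\uc)$, outlines either a point, a leaf, or a gap; and that $\si_d|_{\bd(G)}\to\bd(H)$ factors as a monotone map followed by an orientation-preserving covering, the monotone factor collapsing exactly the boundary arcs swept out by sibling edges of $G$ sharing an image.

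The main obstacle is the limiting step in part (2). Hausdorff limits of pairwise \emph{disjoint} chords are only pairwise \emph{unlinked}, so the literal limits of sibling collections can degenerate by sharing endpoints, violating condition (3) as written. The essential work is therefore a combinatorial replacement argument showing that $\lam$ nevertheless carries some genuinely disjoint sibling family above every non-degenerate leaf; once this is secured, parts (1) and (3) are straightforward unfoldings of the respective definitions.
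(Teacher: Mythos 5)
First, note that the paper does not prove this statement at all: it is imported verbatim from \cite{bmov13}, so there is no in-paper argument to compare against. Judged on its own, your outline identifies the right architecture but has one substantive error and two acknowledged-but-unfilled gaps. The error is in part (1): condition (D2) does \emph{not} say that $\si_d|_{\g}$ extends to a $d$-to-one covering. The degree of the extension is $|\g|/|\si_d(\g)|$, which can be anything from $1$ to $d$; in particular a single class $\g_u$ of the image certainly does not "contain $d$ preimages of $u$" (a class contains preimages of its own points only in degenerate periodic situations — you appear to have conflated the image class with the full $\si_d$-preimage of that class). The full $\si_d$-fiber of the class of $u$ is in general a union of \emph{several} classes, each contributing as many edges over $\ol{uv}$ as the degree of $\si_d$ on it. So the disjoint sibling collection must be assembled across all these classes: within one class the interleaving argument you describe gives pairwise disjoint edges, across distinct classes disjointness comes from (E2), and the count $d$ comes from summing the degrees over the fiber. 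As written, your argument only produces the siblings living in one class.

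For parts (2) and (3) you have correctly located the hard steps but not carried them out. In (2), the passage from pairwise \emph{unlinked} limit chords (possibly sharing endpoints) to a genuinely \emph{disjoint} sibling collection is the main technical content of the compactness theorem in \cite{bmov13}; "a combinatorial replacement within the $\si_d$-preimage set \dots produces a genuinely disjoint sibling collection" is a statement of what must be proved, not a proof, and it is not obvious that such a replacement always exists using only leaves already in the limit $\lam$ (this is essentially the content of Lemma~\ref{l:nonisol}-type arguments and is where \cite{bmov13} expends most of its effort). Likewise in (3), deriving gap invariance (Definition~\ref{d:gap-inv}) from the leaf-level sibling condition is the nontrivial direction of the equivalence with Thurston's definition; your sketch restates the conclusion ("one shows that \dots $\si_d|_{\bd(G)}$ factors as a monotone map followed by a covering") without indicating how the sibling condition forces it. So the proposal is an accurate map of the proof but not a proof: part (1) needs the multi-class bookkeeping fixed, and parts (2) and (3) still require the central combinatorial arguments of \cite{bmov13}.
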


We will also need a few technical results of \cite{bmov13}. Denote by
$<$ the \emph{positive (circular) order on $\uc$}.  That is, given
points $x, y, z\in\uc$, $x<y<z$ if the counterclockwise arc of $\uc$
from $x$ to $y$ contains $y$ in its interior. A consequence of gap
invariance is that if an arc in the boundary of a gap of a
$\si_d$-invariant geolamination is mapped injectively by $\si_d$, then
$\si_d$ preserves the (circular) order of points of that arc.  In
\cite{bmov13} this property was explored for leaves of
$\si_d$-invariant sibling geolaminations emanating from the same point
of $\uc$ which may or may not be edges of a common gap.

\begin{lem}[\cite{bmov13}, Lemma 3.7]\label{l:triod_ord} Let $\lam$ be a
$\si_d$-invariant sibling geolamination and $T\subset\lam^+$ be an arc
consisting of two leaves with a common endpoint or a triod consisting
of three leaves with a common endpoint.  Suppose that $Y\subset\lam^+$
is an arc (triod) such that $\si_d(Y)=T$ and $\si_d|_Y$ is one-to-one.
Then $\si_d|_{Y\cap\uc}$ preserves circular order.
\end{lem}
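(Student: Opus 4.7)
The plan is to handle the arc case by a local analysis at the common endpoint and then reduce the triod case to it. For the arc case, write $T=\ell_1\cup\ell_2$ with common endpoint $v=\si_d(v^*)$ and other endpoints $a,b$, and $Y=\ell_1^*\cup\ell_2^*$ with other endpoints $a^*,b^*$ mapping to $a,b$. The goal is to show that the cyclic orders of $(v^*,a^*,b^*)$ and $(v,a,b)$ on $\uc$ agree.

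By the sibling property, each leaf $\ell_i$ has a disjoint sibling collection $S_i\subset\lam$---a non-crossing matching of $\si_d^{-1}(v)$ with $\si_d^{-1}(a_i)$---and since $\ell_i^*\in\lam$ is a pullback of $\ell_i$ through $v^*$, it is necessarily a member of $S_i$. The critical additional constraint is that $S_1\cup S_2\subset\lam$, so every pair of leaves drawn from these two sibling collections is unlinked. Labeling $\si_d^{-1}(v)$ as $v_0<v_1<\dots<v_{d-1}$ in positive cyclic order with $v^*=v_j$, and letting each sector $I_k=(v_k,v_{k+1})$ contain a unique preimage of $a$ and of $b$, I would show that if the cyclic orders at $v^*$ and $v$ were opposite, then the pairings of $v^*$ in $S_1$ and $S_2$, together with the nearest siblings in the two sectors $I_{j-1}$ and $I_j$ adjacent to $v^*$, would include a pair of leaves that must cross---contradicting the fact that $\lam$ is a geolamination. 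This rules out the order-reversing configuration and establishes the arc case.

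For the triod case, I would apply the arc case to each of the three sub-arcs $\ell_i^*\cup\ell_j^*$ of $Y$. This gives that the cyclic order of each triple $(v^*,a_i^*,a_j^*)$ matches that of $(v,a_i,a_j)$. Since the cyclic order of four points on $\uc$ is determined by the cyclic orders of the triples containing $v^*$, pairwise preservation implies full preservation of the cyclic order on $Y\cap\uc$.

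The main obstacle is the linking-contradiction step in the arc case: one needs to identify explicitly which pair of sibling leaves must cross under the hypothetical order reversal. This involves a direct but careful combinatorial check on the positions of preimages of $v$, $a$, $b$ within the adjacent sectors $I_{j-1}$ and $I_j$, using only the alternating-color pattern of these preimages on $\uc$ and the non-crossing requirement on $S_1\cup S_2$. Once this local combinatorial step is in place, the rest of the argument is automatic, and the reduction from triod to arc requires nothing beyond the elementary fact about determining a $4$-point cyclic order from its triples.
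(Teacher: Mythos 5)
This lemma is quoted in the paper from \cite{bmov13} and is not proved here, so there is no in-paper argument to compare against; judged on its own, your proposal follows what is essentially the standard (and, as far as I know, the original) route, and it is correct. The reduction of the triod case to the arc case via the three triples containing $v^*$ is fine, since cutting the circle at $v^*$ shows those triples determine the full cyclic order of the four points. The one step you flag as the ``main obstacle'' does go through, but the crossing need not occur among the leaves you name (the pairings of $v^*$ together with the nearest siblings in the two sectors adjacent to $v^*$); for larger $d$ the offending leaf of $S_1$ can emanate from a preimage of $v$ far from $v^*$. The clean way to finish is a counting argument rather than an explicit identification: normalize so that $v<a<b<v$, so the preimages occur in the cyclic pattern $v_0,a_0,b_0,v_1,a_1,b_1,\dots$, and suppose $v^*=v_0$ is joined to $a_p$ by $\ell_1^*$ and to $b_q$ by $\ell_2^*$ with the order reversed, i.e.\ $v_0<b_q<a_p<v_0$, which forces $q<p$. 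The open arc from $b_q$ to $a_p$ then contains the $p-q$ points $v_{q+1},\dots,v_p$ but only the $p-q-1$ points $a_{q+1},\dots,a_{p-1}$; since the disjoint sibling collection $S_1$ containing $\ell_1^*$ is a perfect matching of $\si_d^{-1}(v)$ with $\si_d^{-1}(a)$ and $a_p$ is already taken by $v_0$, some leaf of $S_1$ must join a $v_k$ inside this arc to an $a_m$ outside it, and any such chord crosses $\ell_1^*$ or $\ell_2^*$, contradicting that $S_1\cup\{\ell_2^*\}\subset\lam$ consists of pairwise unlinked leaves. (Note also that the sibling axiom should be invoked for the pullback $\ell_1^*$ itself, which directly yields a disjoint sibling collection containing it, rather than for $\ell_1$; and with this counting argument the collection $S_2$ is not even needed.) With that step made precise, the proof is complete.
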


Corollary~\ref{c:nocrit} easily follows from
Lemma~\ref{l:triod_ord}.

\begin{cor}\label{c:nocrit}
Let $\lam$ be a $\si_d$-invariant sibling geolamination. Let
$x<y<z<x$ be the endpoints of two non-critical leaves $\ol{xy}$ and
$\ol{yz}$ of $\lam$ such that $\si_d(x)=\si_d(z)$. If now $z<t<x$
and $\ol{yt}$ is a leaf of $\lam$ then either $\si_d(t)=\si_d(y)$ or
$\si_d(t)=\si_d(x)$; in particular, there are at most $2d-3$ such
leaves $\ol{yt}$.
\end{cor}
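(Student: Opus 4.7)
The plan is to apply Lemma~\ref{l:triod_ord} twice, to two different arcs emanating from the common vertex $y$, and derive a contradiction between the resulting preserved circular orders.

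Specifically, I would consider the arcs $Y_1 = \ol{xy} \cup \ol{yt}$ and $Y_2 = \ol{yz} \cup \ol{yt}$, each obtained from two leaves joined at $y$. If $\ol{yt}$ is critical then $\si_d(t) = \si_d(y)$ and we are done, so assume $\ol{yt}$ is non-critical. Assume in addition, toward contradiction, that $\si_d(t) \ne \si_d(x) = \si_d(z)$. Under these assumptions all three leaves $\ol{xy}$, $\ol{yz}$, $\ol{yt}$ are non-critical, hence $\si_d$ restricted to each individual leaf is injective. Because two distinct chords of $\cdisk$ sharing an endpoint meet only at that endpoint, $\si_d|_{Y_1}$ is injective iff the image chords $\si_d(\ol{xy}) = \ol{\si_d(x)\si_d(y)}$ and $\si_d(\ol{yt}) = \ol{\si_d(y)\si_d(t)}$ are distinct, which is equivalent to $\si_d(t) \ne \si_d(x)$; this holds by assumption. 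Similarly $\si_d|_{Y_2}$ is injective, using $\si_d(t)\ne\si_d(z)=\si_d(x)$.

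Now Lemma~\ref{l:triod_ord} applies to both arcs, so $\si_d$ preserves positive circular order on $\{x,y,t\}$ and on $\{y,z,t\}$. From $x<y<z<t<x$, the triples $(x,y,t)$ and $(y,z,t)$ are each in positive circular order. After applying $\si_d$ and using $\si_d(z)=\si_d(x)$, we obtain that both $(\si_d(x),\si_d(y),\si_d(t))$ and $(\si_d(y),\si_d(x),\si_d(t))$ are in positive circular order on $\uc$. Since these triples differ by a transposition of the first two entries, they carry opposite orientations; but the three images are pairwise distinct by our running assumptions, so this is a contradiction, giving $\si_d(t)\in\{\si_d(x),\si_d(y)\}$.

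For the counting part, note that $\si_d^{-1}(\si_d(y))$ consists of exactly $d$ points, one of which is $y\notin(z,x)$, leaving at most $d-1$ candidates for $t$; similarly $\si_d^{-1}(\si_d(x))$ has exactly $d$ points, two of which are $x$ and $z$, neither in the open arc $(z,x)$, leaving at most $d-2$ further candidates. In total there are at most $2d-3$ possible values of $t$, hence at most $2d-3$ leaves $\ol{yt}$ of the described form. The main subtlety I expect is the injectivity step — specifically, ruling out that two distinct chords in $\cdisk$ sharing an endpoint could overlap elsewhere — which is exactly what legitimizes the double application of Lemma~\ref{l:triod_ord}.
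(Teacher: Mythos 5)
Your proof is correct and follows essentially the same route as the paper: apply Lemma~\ref{l:triod_ord} to the two arcs $\ol{xy}\cup\ol{yt}$ and $\ol{yz}\cup\ol{yt}$ and observe that the two resulting circular-order conclusions are incompatible once $\si_d(x)=\si_d(z)$. The extra care you take (checking the injectivity hypothesis of the lemma, the critical-$\ol{yt}$ case, and the explicit $(d-1)+(d-2)$ count) is all sound and merely fills in details the paper leaves implicit.
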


\begin{proof}
Suppose otherwise. Then if we apply Lemma~\ref{l:triod_ord} to
$\ol{xy}\cup \ol{yt}$ we see that
$\si_d(x)<\si_d(y)<\si_d(t)<\si_d(x)$. On the other hand, if we
apply Lemma~\ref{l:triod_ord} to $\ol{yz}\cup \ol{yt}$ we see that
$\si_d(y)<\si_d(z)<\si_d(t)<\si_d(y)$. Since $\si_d(x)=\si_d(z)$,
this is a contradiction.
\end{proof}

Corollary~\ref{c:finsib} follows from Corollary~\ref{c:nocrit}. By
definition, if a non-critical leaf $\ell$ of a $\si_d$-invariant
sibling geolamination has overall $d-1$ other leaves of $\lam$ which
map to $\si_d(\ell)$, then all these $d$ leaves are pairwise
disjoint and form a unique disjoint sibling collection of leaves
with image $\si_d(\ell)$.

\begin{cor}\label{c:finsib}
There are finitely many leaves $\ell$ such that more than $d$ leaves
have the image $\si_d(\ell)$.
\end{cor}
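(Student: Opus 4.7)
The plan is to characterize ``bad'' leaves (those with more than $d$ sibling preimages) by a local collision and then invoke Corollary~\ref{c:nocrit} to bound these collisions. The first observation I would establish is that $|\si_d^{-1}(\si_d(\ell))\cap\lam|>d$ if and only if two distinct preimages of $\si_d(\ell)$ share an endpoint. The non-trivial direction uses Definition~\ref{d:sibli}(\ref{d:sibli:si}): a disjoint sibling collection consists of exactly $d$ pairwise disjoint leaves whose endpoints exhaust the $d$-point sets $\si_d^{-1}(a)$ and $\si_d^{-1}(b)$ (where $\si_d(\ell)=\ol{ab}$), so any extra preimage of $\ol{ab}$ must share an endpoint with one of the collection's leaves.

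Since any image leaf has at most $d^2$ preimages in $\lam$ (each preimage being a chord between $\si_d^{-1}(a)$ and $\si_d^{-1}(b)$), bounding the number of bad leaves reduces to bounding the number of ``collision pairs'', that is, pairs of distinct leaves $\ol{wu}, \ol{wv}\in\lam$ with common endpoint $w$ and $\si_d(u)=\si_d(v)$. For each such pair, Corollary~\ref{c:nocrit} applied at $y=w$ shows that in the arc $(v,u)$ of $\uc\setminus\{w\}$ opposite $w$ at most $2d-3$ further leaves $\ol{wt}$ of $\lam$ can exist, and each such $t$ satisfies $\si_d(t)\in\{\si_d(u),\si_d(w)\}$.

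The finiteness of collision pairs then follows by a compactness argument on $\lam^+$. Suppose for contradiction that infinitely many distinct collision pairs exist; extract convergent subsequences $w_n\to w^*$, $u_n\to u^*$, $v_n\to v^*$ so that by closedness of $\lam^+$ and continuity of $\si_d$ the limits $\ol{w^*u^*}, \ol{w^*v^*}$ are leaves of $\lam$ satisfying $\si_d(u^*)=\si_d(v^*)$. In the generic case where $w^*, u^*, v^*$ are pairwise distinct and the convergence is from inside the arc $(v^*, u^*)$, the limit pair is itself a collision pair at $w^*$, and Corollary~\ref{c:nocrit} applied to it permits only finitely many further leaves $\ol{w^* t}$ in that arc, contradicting the infinitely many approximating leaves $\ol{w_n u_n}$ concentrated there by the sequence.

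The hard part will be the degenerate cases of this compactness step: when $u^*=v^*$ or the two limit leaves coincide, when $\{w_n\}$ does not stabilize so that the approximating leaves do not emanate from $w^*$, or when $u_n$ approaches $u^*$ from outside the opposite arc. Handling these requires a refined case analysis, likely exploiting the finiteness of critical gaps and leaves (Definition~\ref{d:crigap}) in a $\si_d$-invariant sibling geolamination to cap the combinatorial possibilities for degeneration.
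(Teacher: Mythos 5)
Your first step --- reducing ``more than $d$ preimage leaves'' to the existence of two preimage leaves sharing an endpoint $w$ whose other endpoints $u,v$ have the same image --- is exactly how the paper begins, and your justification via Definition~\ref{d:sibli}(\ref{d:sibli:si}) is correct, as is the remark that finitely many collision pairs would imply finitely many bad leaves. The gap is in the finiteness of collision pairs, which is the entire content of the corollary, and your compactness argument does not close it. Even in your ``generic'' case the contradiction does not materialize: Corollary~\ref{c:nocrit} bounds only leaves $\ol{w^*t}$ emanating from the \emph{fixed} point $w^*$ with $t$ in the arc \emph{opposite} the wedge, whereas the approximating leaves $\ol{w_nu_n}$ emanate from $w_n\ne w^*$ in general; and even if $w_n\equiv w^*$, the corollary says nothing about leaves whose second endpoint lies \emph{inside} the wedge, where infinitely many leaves from $w^*$, with infinitely many collision pairs among them in nested wedges, could a priori accumulate. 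You then explicitly defer the remaining degenerate cases ($u^*=v^*$, non-stabilizing $w_n$, approach from the wrong side) to an unspecified ``refined case analysis'', so the proof is incomplete precisely where the work lies.

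The paper avoids any limiting procedure and argues combinatorially. To each collision it associates the triangle $T_\ell=\ch(x,y,z)$ ($y$ the shared endpoint), whose \emph{base} $\ol{xz}$ is a critical chord because $\si_d(x)=\si_d(z)$. A family of such triangles with pairwise unlinked bases has at most $2d-3$ members; fixing a maximal such subfamily $T^1,\dots,T^k$, every remaining $T_\ell$ has its base linked with the base of some $T^j$, and since all non-base sides are pairwise unlinked leaves of $\lam$, $T_\ell$ must share a vertex with $T^j$. That leaves only finitely many candidate vertices, and Corollary~\ref{c:nocrit} bounds the number of triangles with a given top vertex. If you want to rescue your route, you will end up needing this mechanism anyway: it is the critical-chord structure of the bases, not compactness of $\lam^+$, that forbids infinitely many collisions.
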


\begin{proof}
We may assume that $\ell$ is non-critical.  If $\ell$ has more than
$d-1$ other leaves with the image $\si_d(\ell)$ then at least two of
them will share an endpoint (say, $y$) with the two other endpoints
(say, $x$ and $z$) mapped to the same point. Set $\ch(x, y,
z)=T_\ell$. Call $\ol{xz}$ the \emph{base} of $T_\ell$ and call $y$
the \emph{top vertex} of $T_\ell$. It is easy to see that there are
at most finitely many (in fact, no more than $2d-3$) triangles whose
bases are pairwise unlinked; let $T^1, \dots, T^k$ be a maximal
collection of such triangles with pairwise unlinked bases. Any
remaining triangle $T_\ell$ is such that its base crosses the base
of some triangle $T^j, 1\le j\le k$. Since sides of all these
triangles which are not their bases are \emph{unlinked} leaves, then
$T_\ell$ and $T^j$ must share a vertex. Clearly, the number of
vertices of triangles $T^1, \dots, T^k$ is finite. By
Corollary~\ref{c:nocrit}, each of them can be the top vertex of
finitely many triangles $T_\ell$. Hence the overall number of such
triangles is finite as desired.
\end{proof}

This helps in studying disjoint sibling collections.

\begin{lem}\label{l:nonisol}
If $\ell$ is a non-critical non-isolated leaf, then there exists a
disjoint sibling collection which contains $\ell$ and consists of
non-isolated leaves. I.e., if $\hell$ is a non-degenerate
non-isolated from one (two) sides leaf then there is a disjoint
sibling collection of leaves mapped to $\hell$ such that leaves in
this collection are non-isolated from the appropriate side(s).
\end{lem}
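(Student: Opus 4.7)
The plan is to construct the required sibling collection as a Hausdorff limit of sibling collections of nearby leaves. Since $\ell$ is non-isolated from the specified side(s), I first choose a sequence $(\ell_n)\subset\lam$ of distinct leaves converging to $\ell$ from those sides; because $\ell$ is non-critical and $\si_d$ is continuous, we may arrange that every $\ell_n$ is also non-critical. By Definition~\ref{d:sibli}\,(\ref{d:sibli:si}), each $\ell_n$ extends to a disjoint sibling collection $\ell_n=\ell_n^1,\ell_n^2,\ldots,\ell_n^d$ of pairwise disjoint leaves, all with common image $\si_d(\ell_n)$. Using that the space of chords of $\cdisk$ is compact in the Hausdorff metric and that $\Lam_d$ is compact by Theorem~\ref{t:summary}, I pass to a subsequence so that $\ell_n^i\to L^i$ for each $i=1,\dots,d$, with $L^1=\ell$ and each $L^i\in\lam$ (the latter because $\lam^+$ is closed).

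Next, I verify that $\{L^1,\ldots,L^d\}$ has the required properties. Continuity of $\si_d$ yields $\si_d(L^i)=\si_d(\ell)$, which is non-degenerate, so each $L^i$ is a non-degenerate leaf; the $L^i$ are pairwise unlinked as Hausdorff limits of pairwise disjoint chords. To see that each $L^i$ is non-isolated from the appropriate side, I use that near the non-critical leaf $L^i$ the map $\si_d$ is a local orientation-preserving homeomorphism onto a neighborhood of $\si_d(\ell)$; hence the side from which $\ell_n\to\ell$ propagates through $\si_d^{-1}$ to a definite side from which $\ell_n^i\to L^i$, producing exactly the prescribed side configuration for each member of the sibling collection.

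The hard part will be ensuring that the $L^i$ are pairwise \emph{disjoint} and pairwise \emph{distinct}, rather than merely unlinked, since two leaves $\ell_n^i\neq\ell_n^j$ from a common disjoint sibling collection could a priori Hausdorff-converge either to leaves sharing an endpoint or to a single limit leaf. The key tool is Corollary~\ref{c:finsib}: there are only finitely many leaves whose $\si_d$-image has strictly more than $d$ preimage leaves. Away from this finite exceptional set, the disjoint sibling collection of a leaf is uniquely determined, so for all sufficiently large $n$ the sibling collection of $\ell_n$ is the unique one, and its $d$ members converge to $d$ pairwise disjoint, distinct limit leaves. For the finitely many exceptional images, a direct combinatorial selection among the available pullbacks---using Corollary~\ref{c:nocrit} to bound how pullbacks can cluster around a common vertex---produces a disjoint sibling collection containing $\ell$ and consisting of non-isolated leaves with the correct side specifications.
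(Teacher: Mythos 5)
Your overall strategy is the same as the paper's: approximate $\ell$ from the relevant side(s) by leaves $\ell_n$, use Corollary~\ref{c:finsib} to discard the finitely many exceptional leaves so that each $\ell_n$ has a canonical disjoint sibling collection of exactly $d$ leaves, and pass to Hausdorff limits. However, there is a genuine gap exactly at the step you yourself flag as ``the hard part.'' You correctly observe that two members $\ell_n^i\ne\ell_n^j$ of a disjoint sibling collection could a priori converge to a single limit leaf or to two leaves sharing an endpoint, but your proposed resolution --- that for large $n$ the sibling collection of $\ell_n$ is \emph{unique} --- does not address this at all. Uniqueness of the collection for each fixed $n$ is a statement about the leaves at stage $n$; it says nothing about whether the $d$ limit chords remain pairwise disjoint and distinct. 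The sentence ``its $d$ members converge to $d$ pairwise disjoint, distinct limit leaves'' is precisely the assertion to be proved, and as written it is unsupported.

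The paper closes this gap with a separate geometric argument based on the side of approach. Since $\ell_n^1\to\ell^1$ from one side and $\si_d$ is locally injective near the non-critical leaves involved, each $\ell_n^j$ approaches its limit $\ell^j$ from one side, and the images $\si_d(\ell_n^j)=\si_d(\ell_n^1)$ approach $\si_d(\ell)$ from one side. If two limits coincided, say $\ell^2=\ell^3$, then $\ell_n^2$ and $\ell_n^3$ would be disjoint chords, arbitrarily close to each other, with the same non-degenerate image --- impossible. If instead $\ell^2\ne\ell^3$ shared an endpoint, they would form a wedge, and the unlinked approximating leaves $\ell_n^2,\ell_n^3$ would have to lie outside the wedge near its two different sides, forcing their (equal) images to approach $\si_d(\ell)$ from \emph{opposite} sides --- a contradiction. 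Some argument of this kind is indispensable; without it your proof does not establish disjointness of the limit collection. A secondary remark: your last sentence about handling ``finitely many exceptional images'' by a combinatorial selection is both unexecuted and unnecessary --- since only finitely many leaves of $\lam$ are exceptional in the sense of Corollary~\ref{c:finsib} and the $\ell_n$ are infinitely many distinct leaves, one simply passes to a subsequence avoiding them, as the paper does.
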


\begin{proof}
Let $\ell^1_i\to \ell^1=\ell$ be a sequence of leaves which converge
to $\ell$ from one side. By Corollary~\ref{c:finsib} we may assume
that for each $i$ the leaf $\ell_i$ is a member of a disjoint
sibling collection $L_i=\{\ell_i^1, \ell^2_i, \dots, \ell^d_i\}$
which includes \emph{all} leaves mapped to $\si_d(\ell^1_i)$. By
compactness we may assume that leaves in these collections are
ordered so that for each $1\le j\le d$, $\lim_i \ell^j_i=\ell^j$.
Clearly, $\si_d(\ell^j)=\si_d(\ell)$ and all leaves $\ell^j$ are
non-isolated.

To show that all these leaves are pairwise disjoint, observe that
since the leaves $\ell^1_i$ approach $\ell^1$ from one side, then
for any $j$, the leaves $\ell^j_i$ approach $\ell^j$ from one side.
Thus, the leaves $\si_d(\ell^j_i)$ approach $\si_d(\ell^j)$ from one
side too. If now, say, $\ell^2=\ell^3$ then it would follow that
leaves $\ell^2_i, \ell^3_i$ are disjoint, close to each other, and
have the same image, which is impossible. Suppose that $\ell^2\ne
\ell^3$ come out of the same point but are otherwise disjoint. Then
the leaves $\ell^2$ and $\ell^3$ form a wedge, and the leaves
$\ell^2_i, \ell^3_i$ are located outside the wedge close to the
appropriate sides of the wedge (recall that all these leaves come
from the same geolamination and are therefore pairwise unlinked).
Then their images approach $\si_d(\ell^1)$ from opposite sides, a
contradiction.
\end{proof}

Lemma~\ref{l:nonisol} allows us to apply a certain ``cleaning''
procedure to construct other $\si_d$-invariant sibling
geolaminations out of a given geolamination.

\begin{thm}\label{t:cleanup}
Let $\lam$ be a $\si_d$-invariant sibling geolamination and $\A$ be
a family of grand orbits of its leaves. Construct a collection of
leaves $\lam'$ by removing from $\lam$ all isolated leaves belonging
to $\A$. Then $\lam'$ is a $\si_d$-invariant sibling geolamination.
%If $\lam'$ consists of all leaves of $\lam$ that are non-isolated or
%do not belong to $\A$, then $\lam'$ is a $\si_d$-invariant sibling
%geolamination.
\end{thm}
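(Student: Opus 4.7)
The plan is to verify the four defining conditions of a $\si_d$-invariant sibling geolamination for $\lam'$: (i) $\lam'^+$ is closed, (ii) forward invariance, (iii) existence of a pullback, and (iv) existence of a disjoint sibling collection. Throughout, the crucial structural fact is that $\A$ is a union of grand orbits, so any leaf in the forward, backward, or sibling orbit of a leaf in $\A$ is again in $\A$, and conversely a leaf whose grand orbit avoids $\A$ remains in $\lam'$ together with all of its forward images, pullbacks, and siblings.

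First I would handle closedness. Removing isolated leaves of $\A$ from $\lam^+$ amounts to deleting the relative interiors of countably many chords that, by isolation, have neighborhoods in $\cdisk$ meeting no other non-degenerate leaves of $\lam$. If $x_n\in\lam'^+$ converges to $x\in\lam^+$ and $x$ lay in the open interior of a removed leaf $\ell_i$, the isolation of $\ell_i$ would force $x_n\in\ell_i$ for large $n$, hence $x_n$ would equal an endpoint of $\ell_i$, contradicting $x_n\to x$ in the interior. So $\lam'^+$ is closed.

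Next, forward invariance. For $\ell'\in\lam'$ we must check $\si_d(\ell')\in\lam'$. If $\ell'\notin\A$, then $\si_d(\ell')$ is in the same grand orbit and so not in $\A$, so it is in $\lam'$. If $\ell'\in\A$ but $\ell'$ is non-isolated, take $\ell'_n\to\ell'$ with $\ell'_n\ne\ell'$; by Corollary~\ref{c:finsib} at most finitely many leaves share an image with $\ell'$, so for large $n$ one has $\si_d(\ell'_n)\ne\si_d(\ell')$, and since $\si_d(\ell'_n)\to\si_d(\ell')$ this shows $\si_d(\ell')$ is non-isolated and thus lies in $\lam'$. For (iii), given $\ell'\in\lam'$ we produce a pullback in $\lam'$ analogously. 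If $\ell'\notin\A$, every pullback $\ell^*$ of $\ell'$ in $\lam$ is in the grand orbit of $\ell'$, hence avoids $\A$ and lies in $\lam'$. If $\ell'\in\A$, then $\ell'$ is non-isolated, so choose $\ell'_n\to\ell'$ with $\ell'_n\ne\ell'$ and, for each, a pullback $\ell^*_n\in\lam$ from the sibling axiom of $\lam$. Passing to a subsequence, $\ell^*_n\to\ell^*\in\lam$ by compactness, and $\si_d(\ell^*)=\ell'$. If $\ell^*_n=\ell^*$ eventually, then $\ell'_n=\si_d(\ell^*_n)=\ell'$, a contradiction; hence $\ell^*$ is non-isolated and so belongs to $\lam'$.

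Finally, for (iv), suppose $\ell'\in\lam'$ has non-degenerate image. If $\ell'$ is non-isolated, Lemma~\ref{l:nonisol} produces a disjoint sibling collection $\{\ell'=\ell'_1,\dots,\ell'_d\}$ all of whose members are non-isolated; being non-isolated these automatically lie in $\lam'$ regardless of $\A$. If $\ell'$ is isolated, then $\ell'\in\lam'$ forces $\ell'\notin\A$, and any disjoint sibling collection $\{\ell'=\ell_1,\ell_2,\dots,\ell_d\}$ in $\lam$ has every $\ell_i$ in the grand orbit of $\ell'$ (they share the image $\si_d(\ell')$), so none of them lies in $\A$, and the whole collection survives in $\lam'$. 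The main subtlety in the argument is case (iv) for non-isolated $\ell'$: one must appeal to Lemma~\ref{l:nonisol} precisely to guarantee a sibling collection of non-isolated leaves, since without this a priori one of the siblings of a non-isolated leaf could be isolated and in $\A$ and hence be missing from $\lam'$. Once this is in place, the verification is routine.
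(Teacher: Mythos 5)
Your proof is correct and follows essentially the same route as the paper's: split on whether the leaf lies in $\A$, use closure of grand orbits under images, pullbacks and siblings in the former case, and Lemma~\ref{l:nonisol} in the latter; you are in fact somewhat more thorough, since the paper omits the closedness check and compresses the forward-invariance and pullback steps into the remark that non-isolated leaves map to non-isolated leaves plus a citation of Lemma~\ref{l:nonisol}. One small correction: Corollary~\ref{c:finsib} does not assert that only finitely many leaves share an image with $\ell'$ (it bounds the number of leaves having \emph{more than} $d$ siblings), but the fact you actually need is elementary, since a given non-degenerate chord has at most $d^2$ preimage chords under $\si_d$.
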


\begin{proof}
Since non-isolated leaves map to non-isolated leaves then $\lam'$ is
forward invariant. Let $\ell'\in \lam'$ be non-degenerate. Then
either $\ell'\notin \A$, or $\ell'\in \A$ is non-isolated. In the
former case by definition there is a leaf $\ell''\in \lam$ such that
$\si_d(\ell'')=\ell'$, and since $\ell'\notin \A$ it follows that
$\ell''\notin \A$ and hence $\ell''\in \lam'$. In the latter case
the claim follows from Lemma~\ref{l:nonisol}. Thus, any
non-degenerate leaf from $\lam'$ has a preimage in $\lam'$. Finally
let $\ell'\in \lam'$ be non-critical. Then again either $\ell'\notin
\A$, or $\ell'\in \A$ is non-isolated. In the former case the claim
is obvious as all disjoint sibling collections of $\ell'$ are
disjoint from $\A$. In the latter case the claim follows from
Lemma~\ref{l:nonisol}.
\end{proof}

Arguments, similar to those in the proof of Corollary~\ref{c:nocrit}
and based upon Lemma~\ref{l:triod_ord}, prove
Corollary~\ref{c:sameperiod}; we leave the proof to the reader.

\begin{cor}\label{c:sameperiod}
If $\lam$ is a $\si_d$-invariant sibling geolamination and $\ol{ab}$
is a leaf of $\lam$ with periodic endpoints then $a$ and $b$ are
periodic of the same period.
\end{cor}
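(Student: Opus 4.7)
The plan is to assume for contradiction that $a$ and $b$ have different $\si_d$-periods, and to derive a circular-order contradiction using Lemma~\ref{l:triod_ord}. Up to swapping $a$ and $b$, let $a$ have period $p$ and $b$ have period $q$ with $p<q$. Then $q\nmid p$, and the $\si_d^p$-orbit $b_k:=\si_d^{kp}(b)$, $k=0,1,\ldots,m-1$, of $b$ has cardinality $m=q/\gcd(p,q)\ge 2$. Since $\si_d^p(a)=a$ and $\ol{ab}$ is a leaf, forward invariance of $\lam$ yields that $\ol{ab_k}$ is a leaf of $\lam$ for every $k$, producing $m$ pairwise distinct leaves emanating from $a$.

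I would then show that $\si_d^p$ preserves positive circular order on $\{a,b_0,b_1\}$. For this, set $T_j:=\si_d^j(\ol{ab_0}\cup\ol{ab_1})$ for $0\le j\le p-1$ and iterate Lemma~\ref{l:triod_ord} $p$ times. At each step $T_j$ is a non-degenerate arc consisting of two non-critical leaves meeting at $\si_d^j(a)$, and $\si_d\colon T_j\to T_{j+1}$ is one-to-one. Both facts reduce to two checks: that $\si_d^{j+1}(b_0)\ne\si_d^{j+1}(b_1)$ for every $j$ (otherwise $q\mid p$, contradicting $p<q$), and that the $\si_d$-orbits of $a$ and $b$ are disjoint (which holds because their periods differ). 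Composing the $p$ applications of Lemma~\ref{l:triod_ord} then shows that $\si_d^p$ preserves positive circular order on $T_0\cap\uc=\{a,b_0,b_1\}$, with image $\{a,b_1,b_2\}$.

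Assume without loss of generality that $(a,b_0,b_1)$ is in positive circular order. The previous step then gives $(a,b_1,b_2)$ in positive circular order, and reapplying the argument to each arc $\ol{ab_k}\cup\ol{ab_{k+1}}$ inductively yields, for every $k$ (with indices mod $m$), that $(a,b_k,b_{k+1})$ is in positive circular order. Chaining these relations for $k=0,1,\ldots,m-2$ forces $b_0,b_1,\ldots,b_{m-1}$ to appear in this very order counterclockwise from $a$, so that $b_{m-1}$ is the last point encountered before returning to $a$. But the $k=m-1$ case requires $(a,b_{m-1},b_0)$ to be in positive circular order, i.e., $b_{m-1}$ must appear counterclockwise from $a$ before $b_0$ — the desired contradiction. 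I expect the main technical hurdle to be verifying, at each of the $p$ intermediate steps, that $T_j$ is a proper arc with non-critical leaves and that $\si_d|_{T_j}$ is one-to-one, so that Lemma~\ref{l:triod_ord} genuinely applies; once these conditions are secured by the period mismatch, the circular-order bookkeeping finishes the argument.
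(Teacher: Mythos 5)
Your proof is correct, and it follows exactly the route the paper prescribes: the paper omits the argument, stating only that Corollary~\ref{c:sameperiod} follows from Lemma~\ref{l:triod_ord} by reasoning similar to Corollary~\ref{c:nocrit}, which is precisely what you carry out. Your verifications that each intermediate triod $\si_d^j(\ol{ab_0}\cup\ol{ab_1})$ is non-degenerate and mapped injectively (via $q\nmid p$ and disjointness of the orbits of $a$ and $b$) are the right technical checks, and the final circular-order chaining around the $\si_d^p$-orbit of $b$ correctly delivers the contradiction.
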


% \begin{cor}\label{c:sibling_ord}  Suppose that $S,S'\subset\bigcup\lam$
% are sibling arcs, each consisting of two leaves with a common endpoint, or
% sibling triods, each consisting of three leaves with a common endpoint such
% that $\si_d$ is one-to-one on $S$ (and hence $S'$).
% Let $S\cap\uc=\{x_i\},S'\cap\uc=\{x_i'\},$ and $\si_d(x_i)=\si_d(x_i')$.
% Then $\{x_i\}$ and $\{x_i'\}$ have the same circular order.
% \end{cor}

The paper \cite{bmov13} also explored how disjoint sibling
collections must be located in $\ol\disc$.  To state the
corresponding lemma we need a notational agreement. If $X$ is a
collection of %leaf-preimages
pullbacks of a leaf $\ol{ab}$, we denote the endpoints of chords of
$X$ by the same letters as for the endpoints of their images but
with a hat and distinct subscripts, and call them correspondingly
($a$-points, $b$-points etc). Lemma~\ref{l:sibling_loc}  is
typically used in the situation of Figure~\ref{f:sibs}, when no
leaves cross the critical chord joining two sibling leaves. We say
that two distinct chords \emph{cross} each other, or are
\emph{linked}, if they intersect inside the open unit disk $\disk$.
%A chord $\ol{ab}$ is said to be \emph{critical} if $a\ne b$ and
%$\si_d(a)=\si_d(b)$.

%\begin{comment}

\begin{figure}[h!]
\centering\def\svgwidth{.61\columnwidth}\fbox{\fbox{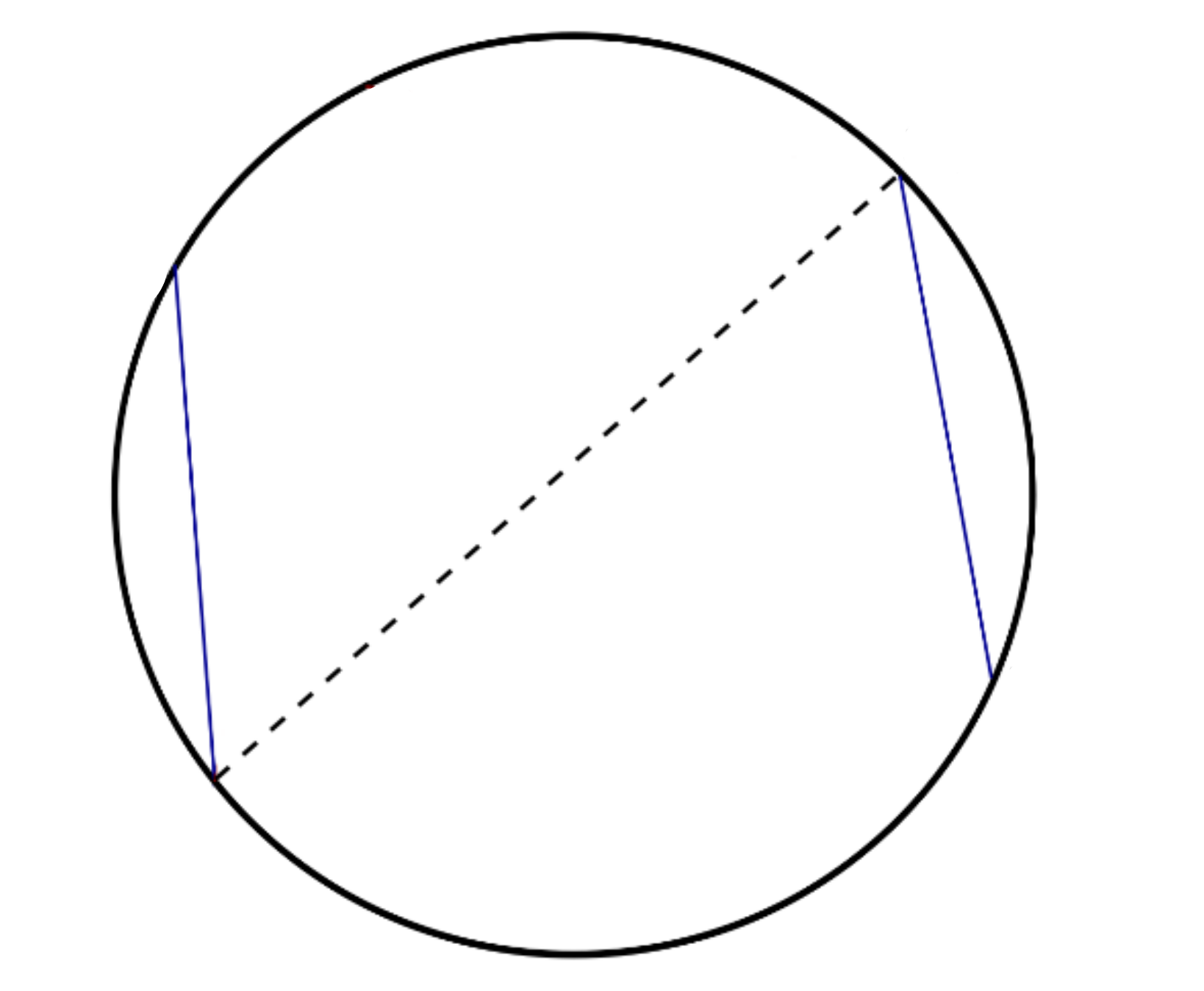}}
\caption{Siblings and critical leaves.} Siblings must be on opposite
sides of the chord $\ol{\ha_1\ha_2}$ which is not crossed by leaves
of the disjoint sibling collection. \label{f:sibs}
\end{figure}

%\end{comment}

\begin{lem}[\cite{bmov13}, Lemma 3.8]\label{l:sibling_loc}
Let $X$ be a disjoint sibling collection of leaves mapped to a leaf
$\ol{ab}$ and $\ol{\ha_1\hb_1}$, $\ol{\ha_2\hb_2}$ be two leaves
from $X$. Then the number of leaves from $X$ crossing the chord
$\ol{\ha_1\ha_2}$ inside $\disk$ is even if and only if either
$\ha_1<\hb_1<\ha_2<\hb_2$ or $\ha_1<\hb_2<\ha_2<\hb_1$. In
particular, if there exists a concatenation $Q$ of chords connecting
$\ha_1$ and $\ha_2$, disjoint  with leaves of $X$ except the points
$\ha_1, \ha_2$, then either $\ha_1<\hb_1<\ha_2<\hb_2$ or
$\ha_1<\hb_2<\ha_2<\hb_1$.
\end{lem}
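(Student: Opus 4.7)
The plan is to prove the lemma by a direct parity count of endpoints of leaves of $X$ on each side of the chord $\ol{\ha_1\ha_2}$, exploiting the alternation on $\uc$ of the preimages of $a$ and of $b$ under $\si_d$.

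First I would record the key combinatorial fact: since $\ol{ab}$ is non-degenerate (by clause~(\ref{d:sibli:si}) of Definition~\ref{d:sibli} a disjoint sibling collection is attached to a non-degenerate leaf), we have $a\ne b$, so the $d$ preimages of $a$ and the $d$ preimages of $b$ alternate on $\uc$ with period $1/d$. In particular, between any two $a$-preimages, the open arc from one to the other (in either direction) contains exactly one more $b$-preimage than $a$-preimage. Fix the chord $\ol{\ha_1\ha_2}$ and let $L$, $R$ denote the two open arcs into which it cuts $\uc$; let $\alpha$, $\beta$ be the numbers of $a$-preimages in $L$, $R$, and $\gamma=\alpha+1$, $\delta=\beta+1$ the numbers of $b$-preimages in $L$, $R$.

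Next I would classify each leaf of $X$ as lying in the $L$-half-disk, lying in the $R$-half-disk, or crossing $\ol{\ha_1\ha_2}$. Write $\epsilon_i=1$ if $\hb_i\in L$ and $\epsilon_i=0$ otherwise ($i=1,2$); note that $\ol{\ha_i\hb_i}$ never crosses $\ol{\ha_1\ha_2}$, and lies in the $L$-half-disk precisely when $\epsilon_i=1$. Split the number $k$ of crossing leaves as $k=k_a+k_b$ according to whether the $L$-endpoint is an $a$- or $b$-preimage. The $a$-preimages in $L$ used by leaves entirely contained in the $L$-half-disk (other than $\ol{\ha_i\hb_i}$) number $\alpha-k_a$; the corresponding $b$-preimages number $(\gamma-\epsilon_1-\epsilon_2)-k_b$. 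Since each such leaf matches one $a$-endpoint to one $b$-endpoint, these two counts agree, giving
\[
\alpha-k_a=\gamma-\epsilon_1-\epsilon_2-k_b=\alpha+1-\epsilon_1-\epsilon_2-k_b,
\]
so $k_b-k_a=1-\epsilon_1-\epsilon_2$ and therefore $k=2k_a+1-\epsilon_1-\epsilon_2$. In particular $k$ is even iff $\epsilon_1+\epsilon_2$ is odd, i.e.\ iff exactly one of $\hb_1,\hb_2$ lies in $L$; taking $L$ to be the counterclockwise arc from $\ha_1$ to $\ha_2$, these two cases translate to the circular orders $\ha_1<\hb_1<\ha_2<\hb_2$ and $\ha_1<\hb_2<\ha_2<\hb_1$ respectively, as claimed.

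For the ``in particular'' clause, suppose $Q$ is a concatenation of chords from $\ha_1$ to $\ha_2$ disjoint from $X$ off its endpoints. Together with either arc of $\uc\sm\{\ha_1,\ha_2\}$, the set $Q$ bounds a region of $\cdisk$, and $L$, $R$ lie on opposite sides of $Q$. A leaf of $X$ crossing $\ol{\ha_1\ha_2}$ must have its endpoints in the open arcs $L$ and $R$ and therefore on opposite sides of $Q$; such a leaf would have to meet $Q$, contradicting the hypothesis on $Q$. Hence $k=0$ is even, and the first part of the lemma forces one of the two orderings.

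The main obstacle is simply the bookkeeping: one must correctly isolate the two distinguished leaves $\ol{\ha_i\hb_i}$ (which touch $\ol{\ha_1\ha_2}$ but do not cross it) when counting matched endpoints in the $L$-half-disk, and keep the $a$- versus $b$-preimage roles straight so that the alternation identity $\gamma=\alpha+1$ produces the crucial $+1$ that drives the parity conclusion.
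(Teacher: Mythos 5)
The paper does not prove this lemma; it is quoted verbatim from \cite{bmov13} (Lemma 3.8 there), so there is no in-paper argument to compare against. Your parity count is a correct, self-contained proof: since $X$ consists of $d$ pairwise disjoint leaves matching the $d$ preimages of $a$ bijectively to the $d$ preimages of $b$, and since these preimages strictly alternate on $\uc$ (as $a\ne b$), the identity $\gamma=\alpha+1$ holds, your bookkeeping correctly isolates the two leaves $\ol{\ha_i\hb_i}$ (which share an endpoint with $\ol{\ha_1\ha_2}$ and hence cannot cross it), and the resulting formula $k=2k_a+1-\epsilon_1-\epsilon_2$ gives exactly the stated equivalence. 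For the ``in particular'' clause your separation argument is sound and can even be streamlined: a crossing leaf $\ell$ has its endpoints in the two open arcs $L$ and $R$, so it separates $\ha_1$ from $\ha_2$ in $\cdisk$; since $Q$ is connected, contains both $\ha_1$ and $\ha_2$, and meets leaves of $X$ only at these two points (which $\ell$ does not contain, the collection being disjoint), no such $\ell$ exists, whence $k=0$.
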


% \begin{thm}[\cite{thu85, kiw02}]\label{t:gaptrans}
% If $A$ is a finite $\si_d$-periodic laminational set of period $k$
% then either $A$ is a $d$-gon and $\si^k_d|_{A'}$ is the identity
% map, or there are at most $d-1$ periodic orbits of vertices of $A$.
% \end{thm}

% \begin{thm}[\cite{chi07}]\label{t:wandoug}
% If $A$ is a wandering non-(pre)critical $k$-gon then there exist
% $k-1$ critical chords with distinct grand orbits to which the orbit
% of $A$ accumulates.
% \end{thm}

We will also need the following easy corollary of
Lemma~\ref{l:sibling_loc}.

\begin{cor}\label{c:sibling_loc}
Let $X$ be a disjoint sibling collection of leaves mapped to a leaf
$\ol{ab}$ and $\ol{\ha_1\hb_1}$, $\ol{\ha_2\hb_2}$ be two leaves
from $X$. Suppose that there exists a concatenation $Q$ of pairwise
non-crossing critical chords connecting $\ha_1$ and $\ha_2$, with
endpoints separated by $\ol{\ha_1 \ha_2}$ from $\hb_1$ inside
$\disk$, and disjoint from the leaves of $X$ except for their
endpoints. Then either $\ha_1<\hb_1<\ha_2<\hb_2$ or
$\ha_1<\hb_2<\ha_2<\hb_1$.
\end{cor}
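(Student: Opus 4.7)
The plan is to reduce Corollary~\ref{c:sibling_loc} to Lemma~\ref{l:sibling_loc} by applying the latter to each individual critical chord in $Q$ and chaining the resulting local constraints via downward induction. Label the vertices of $Q$ as $c_0=\ha_1,c_1,\dots,c_{k-1},c_k=\ha_2$. Since each chord $\ol{c_{i-1}c_i}$ is critical, $\si_d(c_{i-1})=\si_d(c_i)$, and hence inductively every $c_i$ is a $\si_d$-preimage of $a:=\si_d(\ha_1)$. Because $X$ is a disjoint sibling collection of $d$ leaves, its $d$ distinct $\ha$-endpoints are exactly the $d$ preimages of $a$; so each $c_i$ is the $\ha$-endpoint of a unique leaf $\ol{c_id_i}\in X$, with $d_0=\hb_1$ and $d_k=\hb_2$. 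The standard fact that a path of pairwise non-crossing chords whose vertices lie on an arc visits them in the arc's cyclic order, combined with the hypothesis that $c_1,\dots,c_{k-1}$ lie on the arc $A\subset\uc$ opposite $\hb_1$, forces $c_0,c_1,\dots,c_k$ to appear on $A$ in this cyclic order.

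For each $i\in\{1,\dots,k\}$, the sub-chord $\ol{c_{i-1}c_i}\subset Q$ is disjoint from leaves of $X$ except at its endpoints $c_{i-1},c_i$; thus Lemma~\ref{l:sibling_loc}, applied to the pair of leaves $\ol{c_{i-1}d_{i-1}},\ol{c_id_i}\in X$ with the one-chord concatenation $\ol{c_{i-1}c_i}$, forces $d_{i-1}$ and $d_i$ to lie on opposite arcs of $\uc\sm\{c_{i-1},c_i\}$.

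I then argue by contradiction. Suppose $\hb_2=d_k$ lies on the arc $A'$ opposite $A$ (the arc containing $\hb_1$). Since $A'$ sits in the long arc of $\uc\sm\{c_{k-1},c_k\}$, the $i=k$ constraint forces $d_{k-1}$ into the short sub-arc $(c_{k-1},c_k)\subset A$. A downward induction on $i$ then gives $d_i\in(c_i,c_{i+1})\subset A$ for every $i=k-1,\dots,0$: at each step $(c_i,c_{i+1})$ lies in the long arc of $\uc\sm\{c_{i-1},c_i\}$, because $c_{i-1},c_i,c_{i+1}$ are consecutive on $A$, so the constraint places $d_{i-1}$ into the short sub-arc $(c_{i-1},c_i)$.

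At $i=0$ we reach $\hb_1=d_0\in(c_0,c_1)\subset A$, contradicting the hypothesis $\hb_1\in A'$. Therefore $\hb_2\in A$, which together with $\hb_1\in A'$ is exactly the cyclic-order conclusion of the corollary. The point I expect to require the most care is the arc bookkeeping at each step of the downward induction, namely checking that $(c_i,c_{i+1})\subset A$ always lies in the long arc of $\uc\sm\{c_{i-1},c_i\}$ rather than the short one; this reduces to the cyclic order of the $c_j$ along $A$.
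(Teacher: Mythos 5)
Your argument is correct and takes essentially the same route as the paper's proof: both reduce the corollary to chord-by-chord applications of Lemma~\ref{l:sibling_loc} along the concatenation, after observing that the vertices of $Q$ are all preimages of $a$, hence endpoints of leaves of $X$, and appear in path order along the arc not containing $\hb_1$ (a fact the paper likewise asserts without detailed proof). The only difference is that you run the chain of ``opposite sides'' constraints backwards by contradiction from $\hb_2$ to $\hb_1$, while the paper propagates them forward from $\hb_1$ to $\hb_2$; this is cosmetic.
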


\begin{proof}
Without loss of generality, assume that $\ha_1<\ha_2<\hb_1$. By the
assumptions of the lemma there is a chain $Y$ of pairwise non-crossing
critical chords $\ol{\ha_1x_1}, \ol{x_1x_2}, \dots, \ol{x_n\ha_2}$ with
$\ha_1<x_1<x_2<\dots<x_n<\ha_2$ each of which does not cross leaves
from $X$. By definition, $X$ must contain $n$ disjoint leaves with
endpoints $x_1, \dots, x_n$ respectively. Moreover, applying
Lemma~\ref{l:sibling_loc} step by step and using the fact that chords
in $Y$ do not cross chords of $X$ we see that $X$ contains leaves
$\ol{y_1x_1}, \dots, \ol{y_nx_n}$ such that
$${\ha_1<y_1<x_1<y_2<x_2<\dots<y_n<x_n<\hb_2<\ha_2}$$ which implies
that $\ha_1<\hb_2<\ha_2<\hb_1$ as desired.
\end{proof}

One benefit of working with geolaminations is the ability to construct
$\si_d$-invariant geolaminations from a small generating set.  This is
due to Thurston \cite{thu85} for his definition of $\si_d$-invariance,
but is easily modified for $\si_d$-invariant sibling geolaminations.
The geolamination of Theorem~\ref{t:pullback_lam} is constructed by
iteratively adding preimages of leaves of $\widehat\lam$ and taking the
closure after countably many steps.

\begin{thm}[cf \cite{thu85}]\label{t:pullback_lam}
Let $\widehat\lam$ be a forward $\si_d$-invariant sibling
geolamination. Then there exists a $\si_d$-invariant geolamination
$\lam\supset\widehat\lam$ for which grand orbits of leaves of $\widehat\lam$
are dense.
\end{thm}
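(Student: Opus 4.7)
The plan is to construct $\lam$ inductively as the closure of an increasing union $\widehat\lam = \lam_0 \subset \lam_1 \subset \cdots$ of forward $\si_d$-invariant sibling geolaminations obtained by repeatedly adjoining disjoint sibling collections of pullbacks. The key technical input is a \emph{pullback step}: if $\widetilde\lam$ is a forward $\si_d$-invariant sibling geolamination and $\ell$ is a leaf of $\widetilde\lam$, then there exists a disjoint sibling collection $\{\ell_1,\dots,\ell_d\}$ of pullbacks of $\ell$ (i.e.\ $\si_d(\ell_i)=\ell$) that is unlinked with every leaf of $\widetilde\lam$. To see this, write $\ell=\ol{ab}$ and lift $a$, $b$ to their $d$ preimages. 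The $d$ chords obtained by taking, for each $k=0,\dots,d-1$, the ``rotated copy'' of a chosen pullback under the rotation $z\mapsto e^{2\pi i/d} z$ form a pairwise disjoint collection of pullbacks of $\ell$. If $\ell$ is critical, pick an all-critical triangle/chord playing the role of the pullback collection. Because leaves of $\widetilde\lam$ themselves respect the sibling property, any leaf of $\widetilde\lam$ that crossed one of these $d$ rotated copies would, together with its own siblings, force a crossing above in $\widetilde\lam$; thus an appropriate rotational choice avoids all crossings.

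Next, I would do the iterative construction. At stage $n$, enumerate the (countable collection of) leaves already added in $\lam_{n-1}$ that still lack a full disjoint sibling collection of pullbacks; for each such leaf $\ell$ adjoin, via the pullback step, a disjoint sibling collection of preimages of $\ell$ that does not cross any leaf already present. Setting $\lam_n$ equal to the union of $\lam_{n-1}$ with all such adjoined collections yields a forward $\si_d$-invariant collection of pairwise unlinked chords (using that the forward invariance is preserved: images of the new chords are already in $\lam_{n-1}$). Now define $\lam^+$ to be the Hausdorff closure of $\bigcup_n \lam_n^+$ in $\cdisk$, and let $\lam$ be the associated set of chords together with all points of $\uc$.

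The remaining step is to verify that $\lam$ is a $\si_d$-invariant sibling geolamination. Closedness and pairwise unlinkedness pass to limits, so $\lam$ is a geolamination in the sense of Definition~\ref{d:geolam1}. Forward invariance is immediate from the continuity of $\si_d$ and the forward invariance of each $\lam_n$. For each $\ell\in\lam\setminus \bigcup_n \lam_n$, choose a sequence $\ell^{(k)}\to\ell$ with $\ell^{(k)}\in\lam_{n_k}$; each $\ell^{(k)}$ sits in a disjoint sibling collection of $d$ pullbacks of $\si_d(\ell^{(k)})$, and applying Hausdorff compactness (together with the argument of Lemma~\ref{l:nonisol} to guarantee that the limiting $d$ leaves remain pairwise disjoint) produces a disjoint sibling collection in $\lam$ containing $\ell$. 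This gives both \ref{d:sibli:bi} and \ref{d:sibli:si} of Definition~\ref{d:sibli}, so $\lam\in\Lam_d$. Finally, by construction every leaf of each $\lam_n$ lies in the grand orbit of some leaf of $\widehat\lam$, hence $\bigcup_n\lam_n^+$ is contained in the union of grand orbits of leaves of $\widehat\lam$; since $\lam^+$ is its closure, these grand orbits are dense in $\lam$, as required.

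The main obstacle is the pullback step: ensuring that among the many possible $d$-tuples of pullbacks of a given leaf one can always be chosen that is simultaneously pairwise disjoint \emph{and} unlinked with all previously constructed leaves. The rotational-symmetry argument above handles the generic case, but the verification that limit leaves inherit disjoint sibling collections (the analogue of Lemma~\ref{l:nonisol} in the closure step) is the other delicate point, requiring that siblings of converging leaves themselves converge to a valid disjoint sibling collection rather than collapsing or crossing.
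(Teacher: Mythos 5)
Your overall architecture---adjoin disjoint sibling collections of pullbacks countably many times, take the closure, and verify the sibling conditions on limit leaves---is exactly the construction the paper alludes to (it gives only a one-sentence description and defers to Thurston and \cite{bmov13}). The problem is the justification of the pullback step, which you yourself identify as the crux. First, the rotation orbit of an arbitrarily chosen pullback of $\ell$ need not be pairwise disjoint: for $d=3$ and $\ell=\ol{0\,\tfrac12}$ (a $\si_3$-fixed leaf, hence a pullback of itself), the rotated copies $\ol{\tfrac13\,\tfrac56}$ and $\ol{\tfrac23\,\tfrac16}$ each cross $\ol{0\,\tfrac12}$. This is repairable (take the pullback whose endpoints lie in a common arc of length less than $\tfrac1d$), but it signals the deeper issue: for a non-critical leaf the $d$ preimages of $a$ and the $d$ preimages of $b$ alternate on $\uc$, and among the many pairwise-unlinked sibling collections (non-crossing matchings) only \emph{two} are rotation orbits. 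Your claim that ``an appropriate rotational choice avoids all crossings'' with the previously constructed leaves is therefore a choice among just two candidates, and nothing you say rules out that one leaf of $\widehat\lam$ obstructs the first candidate while another obstructs the second, so that only a non-rotational collection works. Establishing that \emph{some} disjoint sibling collection of pullbacks is unlinked with everything already present is the genuinely hard content of the theorem; it is what forces the sector-by-sector bookkeeping (fixing critical data first) in Thurston's notes and the technical lemmas of \cite{bmov13}, and it is precisely the part missing here.

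Two smaller points. The sentence ``leaves of $\widetilde\lam$ themselves respect the sibling property'' is not available at stage zero: $\widehat\lam$ is only assumed \emph{forward} invariant, so by Definition~\ref{d:sibli} its leaves need not possess sibling collections at all, and any argument that derives a contradiction ``upstairs'' from a crossing must not presuppose them. In the closure step, invoking Lemma~\ref{l:nonisol} is mildly circular, since that lemma is stated and proved for geolaminations already known to be sibling invariant; the underlying compactness-plus-one-sided-approach argument does adapt to $\bigcup_n\lam_n$, so this is a presentational rather than substantive defect. The substantive gap is the existence claim in the pullback step.
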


In what follows we call $\si_2$-invariant ($\si_3$-invariant) sibling
geolaminations respectively \emph{quadratic} and \emph{cubic} (sibling)
geolaminations.

\subsection{Accordions and Linked Geolaminations}\label{ss:accord}
By \cite{thu85}, a quadratic geolamination $\lam$ has one or two
longest leaves called \textit{majors} of $\lam$. If the major is
unique then it must be a diameter, otherwise the two majors are
disjoint. In any case, the majors are denoted $M_\lam$ and
$M'_\lam$. The \textit{minor} $m_\lam$ of $\lam$ is defined as
$m_\lam=\si_2(M_\lam)=\si_2(M'_\lam)$. Thurston proves that no two
minors intersect in $\disc$ and the collection $\qml$ of all
quadratic minors is a geolamination itself called the
\emph{Quadratic Minor Lamination}.
%Thus, quadratic geolaminations are rigid.

In \cite{bopt14}, we partially generalize this result to cubic
geolaminations.  Using minors alone for such geolaminations is
impossible.  While in the quadratic case $m_\lam$ is uniquely pulled
back under $\si_2$ to a pair of majors, there are multiple ways to pull
leaves back under $\si_3$ which result in very different
geolaminations.  To avoid this ambiguity, we associate cubic
geolaminations with ordered pairs of \emph{generalized critical
quadrilaterals}; such pairs are called \emph{qc-portraits}.  In the
case of $\si_2$, this is equivalent to associating to a quadratic
geolamination $\lam$ the set $\ch(\si_2^{-1}(m_\lam))$. %We give the
%precise definitions below.

\begin{dfn}[Generalized critical \ql s]\label{d:qll}
A \emph{generalized quadrilateral} $Q$  is the circularly ordered
4-tuple $[a_0,a_1,a_2,a_3]$ of marked points $a_0\le a_1\le a_2\le
a_3\le a_0$ in $\uc$. Such $Q$ is said to be
\emph{($\si_d$-)critical} if $\ol{a_0a_2}$ and $\ol{a_1a_3}$ are
non-degenerate $\si_d$-critical chords (called \emph{spikes}).
Generalized $\si_d$-critical quadrilaterals $[a_0,a_1,a_2,a_3]$,
$[a_1,a_2,a_3,a_0]$, and the other circular permutations of the
vertices, are viewed as equal. The chords $\ol{a_0a_1}, \ol{a_1a_2},
\ol{a_2a_3}, \ol{a_3a_0}$ are called \emph{edges} of $Q$.
\end{dfn}

Note that a generalized critical \ql{} $Q$ is either a critical
leaf, or an all-critical triangle, or an all-critical \ql
(impossible if the degree is three), or a \ql{} which collapses to a
leaf (a \emph{collapsing \ql}). Although the definition of a
qc-portrait is suitable for $\si_d$-invariant sibling
geolaminations, we simplify it here for $\si_3$. When saying that a
geolamination $\lam$ \emph{has (or is with) a generalized critical
\ql{} $Q$} we mean that $Q$ is a leaf or a gap of $\lam$.

\begin{dfn}[Quadratic criticality]\label{d:quacintro}
Let $(\lam, \qcp)$ be a cubic geolamination with an ordered pair
$\qcp$ of {\em distinct} generalized critical \ql s which do not
intersect in $\disc$ other than over a common edge/vertex. Then
$\qcp$ is called a \emph{quadratically critical portrait
$($qc-portrait$)$ for $\lam$} while the pair $(\lam, \qcp)$ is
called a \emph{geolamination with qc-portrait}.
\end{dfn}

Note that we do \emph{not} require that the two generalized critical
\ql s be disjoint. In fact, one may even be a subset of the other.
However we do require that they be distinct. Thus, a critical leaf
repeated twice is \emph{not} a qc-portrait.

In the quadratic case the notion of a qc-portrait reduces to that of
a critical leaf or collapsing \ql (recall, that a collapsing \ql{}
is a critical \ql{} whose image is a non-degenerate chord). In this
case the image of a collapsing \ql{} or a critical leaf of a
quadratic geolamination $\lam$ is the minor $m_\lam$ of $\lam$.

%In light of the above discussion,
Now we discuss \emph{strong linkage} between generalized critical
\ql s and then \emph{linkage} between cubic geolaminations with
qc-portraits. For a generalized \ql{} $Q$, call a component of
$\uc\sm Q$ a \emph{hole} of $Q$.

\begin{dfn}[Strongly linked \ql s]\label{d:strolin}
Let $A$ and $B$ be generalized \ql s. Say that $A$ and $B$ are
\emph{strongly linked} if the vertices of $A$ and $B$ can be numbered so
that
$$a_0\le b_0\le a_1\le b_1\le a_2\le b_2\le a_3\le b_3\le a_0$$
where $a_i$, $0\le i\le 3$, are vertices of $A$ and $b_i$, $0\le
i\le 3$ are vertices of $B$. Note that the vertices of the two \ql s
here need not strictly alternate on the circle.
%Equivalently, $A$ and $B$ are
%\emph{strongly linked} if no hole of either \ql{} contains more than
%one vertex of the other one.
\end{dfn}

Definition~\ref{d:qclink1} is a cubic version of the corresponding
one from \cite{bopt14}.

\begin{dfn}[Linked qc-portraits]\label{d:qclink1}
Let qc-portraits $\qcp_1=(C^1_1,C^2_1)$ and
$\qcp_2=(C^1_2,C^2_2)$ be such that for every $1\le i\le 2$ the
sets $C^i_1$ and $C^i_2$ are either strongly linked generalized
critical \ql s or share a spike. Then the qc-portraits are said to be
\emph{linked} while the critical sets $C^i_1$ and $C_2^i$, $i=1, 2$
are called \emph{associated}. If $(\lam_1, \qcp_1)$ and $(\lam_2,
\qcp_2)$ are geolaminations with qc-portraits then they are called
\emph{linked} if either $\qcp_1$ and $\qcp_2$ are linked, or
$\lam_1$ and $\lam_2$ share an all-critical triangle.
\end{dfn}

Observe that in \cite{bopt14} geolaminations $\lam_1, \lam_2$ from
Definition~\ref{d:qclink1}, for which either both pairs of
associated generalized critical \ql s in qc-portraits share a spike
or $\lam_1, \lam_2$ themselves share an all-critical triangle, are
said to be \emph{essentially equal} rather than linked. However for
our purposes this fine distinction does not matter much, thus in
this paper we use a little less precise definition above.

Thurston's result that no quadratic geolaminations have linked minors
is equivalent to the statement that there are no quadratic
geolaminations with linked qc-portraits.
%(it suffices to consider qc-portraits $\ch(\si_2^{-1}(m_\lam))$ where $m_\lam$
%is the minor of $\lam$).
%As we said above,
Also, in \cite{bopt14} the notion of linked geolaminations was
introduced and various results were obtained in the degree $d$ case;
the results stated in the rest of this subsection are restatements
of the results of \cite{bopt14} for the cubic case. The main tool
used in \cite{bopt14} to study linked $\si_d$-invariant sibling
geolaminations with qc-portraits is an \textit{accordion} designed
to track linked leaves from linked geolaminations.

\begin{dfn}\label{d:accord}
Let $\ell_1,\ell_2$ be leaves of $\si_d$-invariant sibling
geolaminations $\lam_1,\lam_2$. Denote by $A_{\ell_2}(\ell_1)$ the
collection of leaves from the forward orbit of $\ell_2$ linked with
$\ell_1$ together with $\ell_1$.  We call $A_{\ell_2}(\ell_1)$ the
\textit{accordion of $\ell_1$ with respect to $\ell_2$}. Abusing
notation, we often denote $A^+_{\ell_2}(\ell_1)$ %(i.e., the union of
%all leaves included in $A_{\ell_2}(\ell_1)$)
by $A_{\ell_2}(\ell_1)$.
\end{dfn}

An important property of accordions is introduced in Definition~\ref{d:accord1}.

\begin{dfn}\label{d:accord1}
Say that $\ell_1$ has \textit{order preserving accordions} with
respect to $\ell_2$ if $A_{\ell_2}(\ell_1)\ne\{\ell_1\}$ and for all
$k\ge 0$, the restriction of $\si_d$ to
$A_{\ell_2}(\si_d^k(\ell_1))\cap \uc$ is order preserving. If
$\ell_1$ has order preserving accordions with respect to $\ell_2$
and vice versa, then these accordions are said to be
\textit{mutually order preserving} while $\ell_1,\ell_2$ are said to
have \textit{mutually order preserving accordions}.
\end{dfn}

Mutually order preserving accordions are not gaps of a single
sibling invariant geolamination as convex hulls of their forward
images may have intersecting interiors. Still, their dynamics
resembles that of gaps of sibling invariant geolaminations, and so
leaves from mutually order preserving accordions have a rigid
structure. This is supported by the findings made in Section 3 of
\cite{bopt14}; some of the main results from there are stated here.

\begin{lem}[Lemma 3.7 \cite{bopt14}]\label{l:sameperiod}
If $\ell_a=\ol{ab}$ and $\ell_x=\ol{xy}$, $a<x<b<y$, are linked
leaves with mutually order preserving accordions and $a, b$ are of
period $k$, then $x, y$ are also of period $k$.
\end{lem}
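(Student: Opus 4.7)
The plan is to show $\si_d^k(\ell_x)=\ell_x$, so that $x, y$ are periodic of period dividing $k$; then Corollary~\ref{c:sameperiod} gives them a common period, and a symmetric reapplication of the argument with the roles of $\ell_a$ and $\ell_x$ exchanged upgrades this to exactly $k$.

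Since $a, b$ have period $k$, the $\si_d$-orbit $O(\ell_a)=\{\ell_a,\si_d(\ell_a),\dots,\si_d^{k-1}(\ell_a)\}$ is a finite subset of $\lam_1$, and its $2k$ endpoints on $\uc$ are pointwise fixed by $\si_d^k$. Thus $A_{\ell_a}(\ell_x)$ is finite, and composing the one-step order preservation along the chain of accordions $A_{\ell_a}(\si_d^j(\ell_x))$, $j=0,\dots,k-1$, shows that $\si_d^k$ preserves the circular order on $A_{\ell_a}(\ell_x)\cap\uc$. Since $\si_d^k$ fixes every endpoint coming from $O(\ell_a)$, the images $\si_d^k(x)$ and $\si_d^k(y)$ must lie in the same open arcs of $\uc\sm(O(\ell_a)^+\cap\uc)$ as $x$ and $y$, so $\si_d^k(\ell_x)$ is linked with $\ell_a$ in the same pattern $a<\si_d^k(x)<b<\si_d^k(y)<a$ and links exactly the same members of $O(\ell_a)$.

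Iterating, the leaves $\si_d^{jk}(\ell_x)\in\lam_2$ are pairwise unlinked (same sibling geolamination) and share a common linking pattern with $O(\ell_a)$; their $x$-endpoints therefore form a monotone sequence in the arc of $\uc\sm(O(\ell_a)^+\cap\uc)$ containing $x$ (and likewise for the $y$-endpoints), converging to points $x^*, y^*$ fixed by $\si_d^k$. The limiting chord $\ell^*=\ol{x^*y^*}$ lies in $\lam_2^+$ and, if non-degenerate, is a $\si_d^k$-invariant leaf of $\lam_2$ linked with $\ell_a$, with endpoints of period dividing $k$. I claim $\ell^*=\ell_x$. If $\ell^*$ were non-degenerate and distinct from $\ell_x$, then the mutual order preservation hypothesis would transfer to the pair $(\ell_a,\ell^*)$ by a limiting argument, and an induction on the period $k$ (base case $k=1$ being an immediate fixed-point analysis on each arc of $\uc\sm\{a,b\}$) applied to $\ell^*$ of period strictly less than $k$ would force $\ell_x$ to be periodic, contradicting the strict monotonicity of $\si_d^{jk}(\ell_x)$. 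The degenerate case (where $x^*, y^*\in O(\ell_a)\cap\uc$) is excluded since non-degenerate leaves $\si_d^{jk}(\ell_x)\in\lam_2$ cannot collapse onto an edge of $\ell_a\in\lam_1$ while preserving sibling-unlinkedness together with closedness of $\lam_2^+$.

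Having established $\si_d^k(\ell_x)=\ell_x$, the endpoints $x, y$ are periodic of a common period $k'\mid k$ via Corollary~\ref{c:sameperiod}; the symmetric argument with $\ell_x$ (now the periodic leaf, of period $k'$) in place of $\ell_a$ gives $k\mid k'$, whence $k=k'$. The main obstacle is the identification $\ell^*=\ell_x$ in the limit step, where care is required to avoid circular invocation of the lemma; the induction on $k$ together with the combinatorial rigidity imposed by mutual order preservation on the finite set $A_{\ell_a}(\ell_x)\cap\uc$ is the intended route.
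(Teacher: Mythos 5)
This lemma is imported verbatim from \cite{bopt14} (it is Lemma 3.7 there); the present paper gives no proof of it, so there is nothing internal to compare against. Judged on its own terms, your overall plan --- prove $\si_d^k(\ell_x)=\ell_x$ by showing that the leaves $\si_d^{jk}(\ell_x)$ march monotonically across $\ell_a$ and then ruling out a non-trivial limit --- is the right one, but the two pivotal steps are not actually established.

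The first and more serious gap is the monotonicity claim. You assert that because the leaves $\si_d^{jk}(\ell_x)$ are pairwise unlinked and all cross $\ell_a$ in the same pattern, their endpoints in the arc $(a,b)$ ``therefore form a monotone sequence.'' Pairwise unlinked chords crossing $\ell_a$ are merely \emph{linearly ordered}; nothing you have proved prevents the sequence $x_j=\si_d^{jk}(x)$ from oscillating within that linear order. The only accordions you ever invoke are $A_{\ell_a}(\si_d^j(\ell_x))$, each of which contains exactly one image of $\ell_x$, so they give no way to compare $x_j$ with $x_{j+1}$. To get monotonicity you must use the \emph{other} half of the hypothesis of mutuality: $\si_d$ is order preserving on $A_{\ell_x}(\si_d^m(\ell_a))\cap\uc$, and composing these over $k$ steps shows $\si_d^k$ preserves the circular order on $A_{\ell_x}(\ell_a)\cap\uc$, a set that contains $a,b$ together with the endpoints of \emph{all} the leaves $\si_d^{jk}(\ell_x)$ simultaneously (each is in the forward orbit of $\ell_x$ and linked with $\ell_a$). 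Only then does $x_0<x_1$ propagate to $x_1\le x_2\le\cdots$. Your proof never uses this direction of the hypothesis, which is precisely what the ``mutually'' is for. The second gap is the limit step: having $x_j$ increase to a $\si_d^k$-fixed point $x^*$, the correct (and short) conclusion is that this is incompatible with expansion --- near the repelling fixed point $x^*$ the orientation-preserving expanding map $\si_d^k$ pushes points in $(x^*-\de,x^*)$ \emph{away} from $x^*$, whereas $x_{j+1}=\si_d^k(x_j)$ would have to move toward it; hence the sequence is eventually constant, and strict order preservation on $A_{\ell_x}(\ell_a)\cap\uc$ (which already contains $a$ and $b$) forbids a collision $x_j\ne x_{j+1}=x_{j+2}$, forcing $x_0=x_1$. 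Your substitute --- a ``limiting argument'' transferring mutual order preservation to $(\ell_a,\ell^*)$, the unexplained assertion that $\ell^*$ has period \emph{strictly less} than $k$ (its endpoints are only known to have period dividing $k$), and an induction on $k$ whose inductive step is never carried out --- does not derive a contradiction. The concluding divisibility argument via Corollary~\ref{c:sameperiod} and symmetry is fine once $\si_d^k(\ell_x)=\ell_x$ is in hand.
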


To state the main result of Section 3 of \cite{bopt14} we need
Definition~\ref{d:orderly}.

\begin{dfn}\label{d:orderly} Let $\ell_a=\ol{ab}$, $\ell_x=\ol{xy}$ be
linked chords. Set $B=\ch(\ell_a, \ell_x)$. Say that the
\emph{$\si_d$-dynamics of $B$ is orderly} if the following holds.

\begin{enumerate}

\item The order of vertices of $B$ is preserved under iterations of
    $\si_d$.

\item One of the following holds.

\begin{enumerate}

\item[(a)] All images of $B$ are pairwise disjoint.

\item[(b)] There exist the minimal $r, m$ and $n$ with
    $\si_d^r(B)\cap \si_d^{r+m}(B)\ne \0$ and
    $\si_d^{r+mn}(B)=B$, and if for any $i\ge 0$ we set
    $\si_d^{r+im}(a)=a_i, \si_d^{r+im}(b)=b_i,
    \si_d^{r+im}(x)=x_i, \si_d^{i+rm}(y)=y_i$, then either % the
    %order of vertices of sets $\si_d^r(B), \si_d^{r+m}(B),
    %\dots, \si_d^{r+mn}(B)$ is either as follows:
$$a_0<x_0<b_0\le a_1<y_0\le x_1<b_1\le a_2<y_1\le x_2<b_2\le \dots,$$
or %as follows:
$$x_0<a_0<y_0\le x_1<b_0\le a_1<y_1\le x_2<b_1\le a_2<y_2\le \dots,$$
and the set $\bigcup_{i=0}^{n-1}\si_d^{r+im}B$ is a component of the
orbit of $B$.

\item[(c)] The leaves $\ell_a$, $\ell_x$ are $($pre$)$periodic
    of the same eventual period of endpoints.

\end{enumerate}

\end{enumerate}

\end{dfn}

Despite its appearance, the order among vertices of $B$ and some of its
images given in Definition~\ref{d:orderly} can be easily described.

\begin{dfn}\label{d:posorder}
If there are chords $\ol{a_0b_0}, \ol{a_1b_1}, \dots, \ol{a_nb_n}$ with
$a_0\le a_1\le \dots \le a_n\le a_0$ and $b_0\le b_1\le \dots \le
b_n\le b_0$, we say that these chords are \emph{positively ordered} and
denote it by $\ol{a_0b_0}\le \ol{a_1b_1}\dots \le \ol{a_nb_n}\le
\ol{a_0b_0}$.
\end{dfn}

Here is how one can restate the long inequalities from
Definition~\ref{d:orderly}. Set $\si_d^{r+im}(\ell_a)=\ell^i_a,
\si_d^{r+im}(\ell_x)=\ell^i_x$. Then
$$\ell^0_a\le \ell^0_x\le \ell_a^1\le \ell_x^1\le \dots \ell^{n-1}_a\le
\ell_x^{n-1}\le \ell^n_a=\ell^0_a\le \ell^n_x=\ell^0_x$$ or
$$\ell^0_x\le \ell^0_a\le \ell_x^1\le \ell_a^1\le \dots \ell^{n-1}_x\le
\ell_a^{n-1}\le \ell^n_x=\ell^0_x\le \ell^n_a=\ell^0_a.$$ In
addition, only consecutive images of $\ell_a$ ($\ell_x$) in this
collection can intersect and at most at their endpoints.

\begin{comment}

\begin{figure}
\includegraphics[width=4.5cm]{T410-1.pdf}
{\caption{This figure illustrates
Theorem~\ref{t:compgap}.}}\label{f:compgap1}
\end{figure}

\end{comment}

The following is the main result of Section 3 of \cite{bopt14}

\begin{thm}[Theorem 3.12 \cite{bopt14}]\label{t:compgap}
Consider linked chords $\ell_a$, $\ell_x$ with mutually order
preserving accordions, and set $B=\ch(\ell_a,\ell_x)$. Then the
$\si_d$-dynamics of $B$ is orderly.
\end{thm}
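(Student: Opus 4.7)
\medskip

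\noindent\textbf{Proof plan.} The plan is to first show that the cyclic order of the four vertices of $B$ is preserved under every iterate of $\si_d$, and then to analyze when two forward images of $B$ can intersect, deriving case (a), (b), or (c) of Definition~\ref{d:orderly} according to the combinatorial pattern of those intersections.

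\smallskip

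\noindent\emph{Step 1 (order preservation).} Label the vertices so that $a<x<b<y$. Since $\ell_a$ and $\ell_x$ are linked, $\ell_x\in A_{\ell_x}(\ell_a)$, so the four vertices $\{a,b,x,y\}$ all lie in $A_{\ell_x}(\ell_a)\cap\uc$. By hypothesis $\si_d$ preserves circular order on this set, hence
\[
\si_d(a)\le \si_d(x)\le \si_d(b)\le \si_d(y)\le \si_d(a).
\]
I would then proceed by induction: assuming the cyclic order of the vertices of $\si_d^k(B)$ agrees with that of $B$, the leaves $\si_d^k(\ell_a)$ and $\si_d^k(\ell_x)$ are still linked or share an endpoint, so $\si_d^k(\ell_x)\in A_{\ell_x}(\si_d^k(\ell_a))$, and applying the order-preserving hypothesis at level $k$ carries the order to level $k+1$. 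This gives item (1) of the definition at once.

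\smallskip

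\noindent\emph{Step 2 (case split).} If the convex bodies $\si_d^i(B)$ are pairwise disjoint for all $i\ge 0$, we are in case~(a) and are done. Otherwise let $r\ge 0$ be minimal such that $\si_d^r(B)\cap \si_d^{r+m}(B)\ne \emptyset$ for some $m\ge 1$, and take the smallest such $m$. Write $\ell_a^i=\si_d^{r+im}(\ell_a)$ and $\ell_x^i=\si_d^{r+im}(\ell_x)$. Using Step~1 and the minimality of $m$, I would argue that the intersection of $\si_d^r(B)$ and $\si_d^{r+m}(B)$ can only occur on the boundary: any crossing would produce a pair of linked leaves from $A_{\ell_x}(\ell_a^0)$ whose $\si_d^m$-image fails to respect order, contradicting the accordion hypothesis. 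So the two quadrilaterals share either a vertex or an edge. Applying Step~1 repeatedly to the consecutive pairs $\si_d^{r+im}(B), \si_d^{r+(i+1)m}(B)$ and keeping track of which vertices can coincide, I would produce exactly one of the two chains of inequalities
\[
a_0<x_0<b_0\le a_1<y_0\le x_1<b_1\le a_2<y_1\le x_2<b_2\le\cdots
\]
or the symmetric one, i.e., the consecutive images are positively ordered in the sense of Definition~\ref{d:posorder}.

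\smallskip

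\noindent\emph{Step 3 (closing the loop and the (pre)periodic case).} Once the positively ordered chain is established, I would observe that the sequence $\{\si_d^{r+im}(B)\}$ traces a simple cycle on the disk: each successive quadrilateral is obtained by shifting the previous one strictly in one direction around $\uc$ (modulo the allowed vertex sharings). Since there are only finitely many quadrilaterals with distinct vertex sets consistent with a fixed cyclic shift, some iterate must close up: there is an $n\ge 1$ with $\si_d^{r+mn}(B)=B$. This gives case~(b). The remaining possibility is that one of the two leaves $\ell_a, \ell_x$ already has periodic endpoints before the chain closes geometrically; then Lemma~\ref{l:sameperiod} forces the endpoints of the other leaf to have the same period, which is precisely case~(c), and more generally the (pre)periodic situation is handled by applying the same analysis to the eventual image of $B$ under a sufficiently high iterate.

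\smallskip

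\noindent\emph{Main obstacle.} The routine part is Step~1; the serious work lies in Step~2, where one must rule out all intersection patterns of $\si_d^r(B)$ with $\si_d^{r+m}(B)$ other than vertex/edge contact of the form prescribed in (b). The argument must carefully use both directions of the mutually order preserving hypothesis together with the minimality of $r$ and $m$ to exclude crossings and ``backwards'' overlaps, and must also verify that no vertex of $\si_d^{r+m}(B)$ can fall strictly inside an edge of $\si_d^r(B)$ except in the two permitted alternating patterns. Once this geometric rigidity is in place, closing the cycle in Step~3 and invoking Lemma~\ref{l:sameperiod} for case~(c) are comparatively short.
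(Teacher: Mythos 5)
First, a point of comparison: this paper does not prove Theorem~\ref{t:compgap} at all --- it is quoted verbatim as Theorem 3.12 of \cite{bopt14} --- so there is no in-paper argument to measure yours against, and I can only judge the proposal on its own terms. Your overall architecture (first order preservation of the vertices of $B$, then a case analysis on how forward images of $B$ can meet) is the right one and matches the cited source in outline. Step 1 is essentially correct: since $\si_d$ restricted to $A_{\ell_x}(\si_d^k(\ell_a))\cap\uc$ is (strictly) order preserving for every $k$, the images of $\ell_a$ and $\ell_x$ remain linked and the cyclic order of the four vertices propagates by induction. One small repair: Definition~\ref{d:accord} places in the accordion only leaves \emph{linked} with $\si_d^k(\ell_a)$, so your phrase ``linked or share an endpoint'' needs the prior observation that strict order preservation prevents endpoint collisions, which is what keeps the images genuinely linked at every step.

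The genuine gaps are in Steps 2 and 3, which you yourself flag as ``the serious work'' but then do not carry out. In Step 2, the assertion that a transversal crossing of $\si_d^r(B)$ with $\si_d^{r+m}(B)$ ``would produce a pair of linked leaves whose $\si_d^m$-image fails to respect order'' is not an argument: a leaf of $\si_d^{r+m}(B)$ crossing a leaf of $\si_d^r(B)$ merely places that leaf inside the accordion $A_{\ell_x}(\si_d^r(\ell_a))$, where the hypothesis \emph{guarantees} order preservation rather than contradicting it. One must actually enumerate the mutual positions of the eight vertices and show that every pattern other than the two alternating chains of Definition~\ref{d:orderly}(b) forces, after iterating $\si_d^m$, a violation of the order established in Step 1 or of the minimality of $r$ and $m$. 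In Step 3, the finiteness claim ``there are only finitely many quadrilaterals with distinct vertex sets consistent with a fixed cyclic shift'' is false as stated: there are infinitely many candidate quadrilaterals, and the positively ordered, consecutively touching chain $B^0\le B^1\le\cdots$ could a priori wind around $\uc$ and accumulate without ever closing up. Ruling this out requires a further dynamical input --- for instance, that near an accumulation point the sets $\si_d^{r+im}(B)$ would have to shrink, contradicting expansion of $\si_d^m$ away from critical chords, or an appeal to non-wandering results for polygons --- and this is precisely where the content of the theorem lies. As written, the proposal is a plausible plan whose two decisive steps are asserted rather than proved.
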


In the rest of this subsection we assume that \emph{$\lam_1, \lam_2$
are linked cubic geolaminations}. Linked leaves of $\lam_1, \lam_2$
will often have images which are linked and have mutually order
preserving accordions. The only alternative is that they eventually
map to leaves which are not linked but share an endpoint. We call
this behavior \textit{collapsing (around a chain of spikes)}.

\begin{dfn}\label{d:accorder2}
Non-disjoint leaves $\ell_1\ne \ell_2$ are said to \emph{collapse
around chains of spikes} if there are two chains of spikes (one
chain in each of our geolaminations) connecting the same two
adjacent endpoints of $\ell_1, \ell_2$.
\end{dfn}

%\begin{comment}

\begin{figure}
\includegraphics[width=4.5cm]{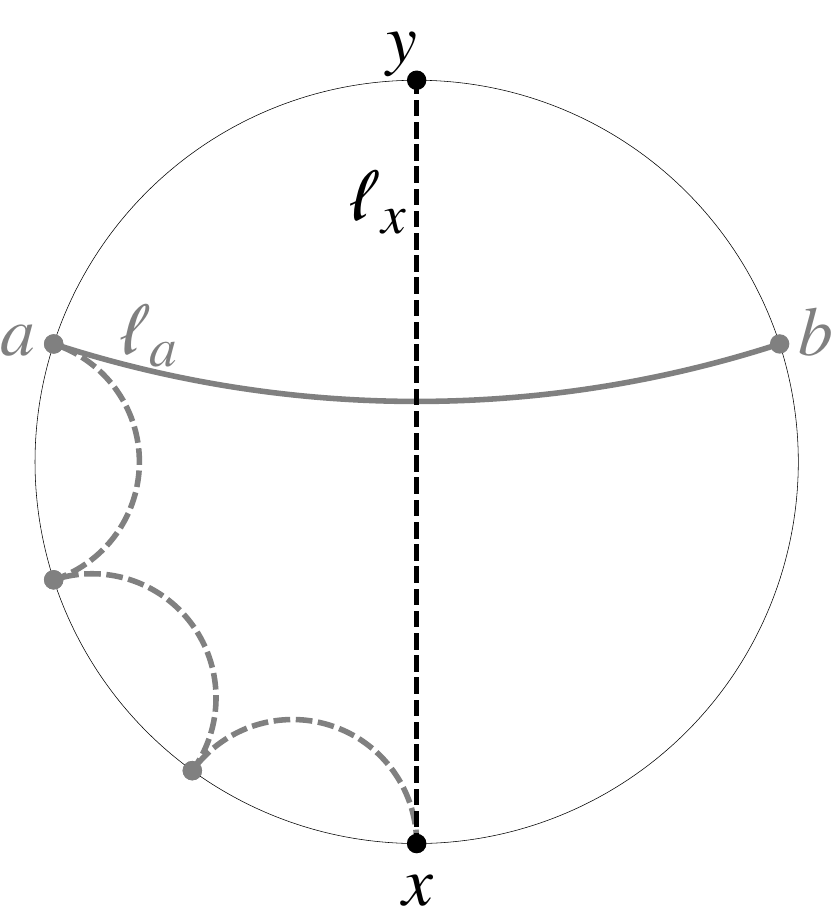}
{\caption{This figure illustrates Definition~\ref{d:accorder2}. Here
the leaves $\ell_a,\ell_x$ collapse around a chain of spikes shown
as dashed grey geodesics.}}\label{f:accorder2}
\end{figure}

%\end{comment}

We now list a few results of \cite{bopt14} on accordions of leaves
of $\lam_1 (\lam_2)$. Observe that the results obtained in
\cite{bopt14} hold in the degree $d$ case (i.e., for $\si_d$);
however, in this paper we confine ourselves to the case $d=3$ (in
particular, we define the linkage between geolaminations only for
cubic geolaminations) which is why we state results from
\cite{bopt14} in the cubic case only. If $X\subset \uc$ is a set of
points such that $\si_3|_X$ preserves orientation except that some
points may be mapped to one, we say that $\si_3|_X$ \emph{weakly
preserves orientation}.

\begin{lem}[Corollary 4.2 \cite{bopt14}]\label{l:linleabeh}
Let $\ell_1\in \lam_1, \ell_2\in\lam_2$; then for all $k\ge 0$, the
restriction of $\si_d$ to $A_{\ell_2}(\ell_1)\cap \uc$ is
(non-strictly) order preserving. Moreover, if $\ell_1$ and $\ell_2$
are linked leaves or share an endpoint then the order among their
endpoints is weakly preserved under $\si_3$
unless the common endpoint of $\ell_1, \ell_2$ is a common vertex of
associated critical \ql s of our geolaminations.
\end{lem}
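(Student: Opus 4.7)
The plan is to analyze how the critical structure of $\lam_2$ constrains $\si_3$ on endpoints of the accordion, and then invoke strong linkage of the qc-portraits to show that $\ell_1$ sits compatibly with that critical structure. First, by forward invariance of $\lam_2$, every leaf of $A_{\ell_2}(\ell_1)$ other than $\ell_1$ itself lies in $\lam_2$, so these leaves are pairwise unlinked and have endpoints alternating between the two holes of $\ell_1$.

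Second, I would identify when $\si_3$ can reverse circular order on a finite set $X\subset\uc$: the only obstruction is that a chord whose endpoints are mapped by $\si_3$ to the same point strictly separates points of $X$ in a way inconsistent with their images. For leaves of $\lam_2$ the relevant critical chords are edges and spikes of the generalized critical quadrilaterals $C^1_2, C^2_2$ in the qc-portrait $\qcp_2$ of $\lam_2$.

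The key step is then to invoke strong linkage of $\qcp_1=(C^1_1,C^2_1)$ and $\qcp_2=(C^1_2,C^2_2)$: the vertices of each associated pair $C^i_1, C^i_2$ weakly alternate on $\uc$. Since $\ell_1\in\lam_1$ is unlinked with the edges of $C^i_1$, its endpoints lie in a common hole of $C^1_1\cup C^2_1$, and strong linkage then forces the endpoints of $\ell_1$ to sit in a single sector of $\uc$ cut out by the spikes of $C^1_2\cup C^2_2$. Consequently no critical chord of $\lam_2$ strictly separates endpoints of leaves of the accordion from each other, and Lemma~\ref{l:triod_ord} applied to the appropriate gap of $\lam_2$ through which these leaves pass yields that $\si_3$ weakly preserves circular order on $A_{\ell_2}(\ell_1)\cap\uc$. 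The statement for all $k\ge 0$ then follows by induction, using that $A_{\ell_2}(\si_3^{k+1}(\ell_1))\cap\uc$ is controlled by $\si_3(A_{\ell_2}(\si_3^k(\ell_1))\cap\uc)$ together with possibly newly-linked leaves, whose entry does not disturb the order.

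For the ``moreover'' clause I would consider separately the cases where $\ell_1,\ell_2$ are linked (the argument above extends directly to the enlarged four-endpoint set) and where $\ell_1,\ell_2$ share an endpoint $p$. In the latter case $\si_3$ preserves order near $p$ unless $p$ is a common vertex of associated critical quadrilaterals of the two qc-portraits: in that exceptional situation the two nearby critical chords can fold $\ell_1,\ell_2$ in opposite directions, producing the only possible order reversal. The hard part will be bookkeeping this shared-endpoint boundary case, since $\si_3$ there is only \emph{weakly} order preserving and the strong linkage of qc-portraits must be used carefully to rule out any genuine order reversal outside the exceptional common-vertex configuration.
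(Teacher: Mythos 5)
This lemma is not proved in the paper at all: it is imported verbatim as Corollary~4.2 of \cite{bopt14} (the text even warns that the cited corollary ``contains much more detail''), so there is no in-paper argument to compare with. Judged on its own merits, your strategy is the right one in spirit -- it is the ``smart criticality'' mechanism of \cite{bopt14}: use strong linkage of the qc-portraits to trap the whole accordion in a region of $\cdisk$ cut out by critical chords on which $\si_3$ acts injectively. But the pivotal step is stated incorrectly, and as written the argument has a genuine gap.

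The gap is the claim that ``strong linkage forces the endpoints of $\ell_1$ to sit in a single sector of $\uc$ cut out by the spikes of $C^1_2\cup C^2_2$,'' and the ensuing assertion that \emph{no} critical chord of $\lam_2$ separates endpoints of accordion leaves. Both are false in general: a leaf $\ell_1\in\lam_1$ can perfectly well cross one spike of an associated quadrilateral $C^i_2$ of the other portrait. What strong linkage actually buys is weaker but sufficient: since $\ell_1$ is unlinked with $C^i_1$, its endpoints lie in the closure of a single hole of $C^i_1$, and weak alternation of the vertices of $C^i_1$ and $C^i_2$ puts at most one vertex of $C^i_2$ in that hole, so $\ell_1$ misses \emph{at least one} of the two spikes of $C^i_2$. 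One must then \emph{choose} such a spike for each $i=1,2$; the two chosen critical chords form a full collection cutting $\cdisk$ into three pieces, each mapped injectively and order-preservingly by $\si_3$, and the entire accordion lies in the closure of one piece (leaves of $\lam_2$ never cross spikes of its own critical quadrilaterals, and every accordion leaf meets $\ell_1$). Your appeal to Lemma~\ref{l:triod_ord} cannot substitute for this: that lemma concerns triods inside a \emph{single} sibling geolamination, whereas the accordion mixes $\ell_1\in\lam_1$ with leaves of $\lam_2$, so it is not contained in $\lam_2^+$. Finally, the ``moreover'' clause -- pinning the only possible collision of endpoints to a common vertex of associated quadrilaterals -- is exactly where the chosen spikes degenerate to a shared vertex, and your sketch defers precisely this bookkeeping; it does not follow from the first part without tracking which spike was chosen and where its endpoints sit relative to $\ell_1,\ell_2$.
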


Corollary 4.2 in \cite{bopt14} contains much more detail than
Lemma~\ref{l:linleabeh}. It serves as a major tool as it shows that
even though geolaminations $\lam_1, \lam_2$ are distinct, crossing
leaves of those geolaminations form a set which behaves more or less
like a gap of one geolamination.

\begin{lem}[Lemma 4.3 \cite{bopt14}]\label{l:linleabeh2}
Suppose that:

\begin{enumerate}

\item $\ell_1=\ol{a_1b_1}\in \lam_1, \ell_2=\ol{a_1b_2}\in\lam_2$ are
linked leaves or share an endpoint (in the latter case the shared
endpoint is not a common vertex of two associated generalized \ql
s);

\item at least of these leaves is not critical;

\item the order among the endpoints of these leaves is weakly preserved
so that, say, $a_1\ne a_2$ while
$\si_3(a_1)=\si_3(a_2)$.

\end{enumerate}

Then there are chains of spikes of $\lam_1$ and of $\lam_2$
connecting $a_1$ and $a_2$ with endpoints in the arc with endpoints
$a_1, a_2$ not containing points $b_1, b_2$.
\end{lem}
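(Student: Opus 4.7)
The plan is to construct the chain of spikes of $\lam_1$ connecting $a_1$ and $a_2$ through the arc $I$; the analogous chain in $\lam_2$ is produced by interchanging the roles of $\lam_1,\lam_2$ and of $\ell_1,\ell_2$. Without loss of generality I assume $\ell_1$ is non-critical, which is permissible because by hypothesis at least one of $\ell_1,\ell_2$ is non-critical.

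First I pin down the geometric setup. The relation $\si_3(a_1)=\si_3(a_2)$ with $a_1\ne a_2$ places $a_1,a_2$ in a three-element $\si_3$-fiber $\{a_1,a_2,a_3\}$ of equally spaced points on $\uc$. The weakly order-preserving hypothesis on $\{a_1,a_2,b_1,b_2\}$, combined with the linkage or non-exceptional shared-endpoint condition on $\ell_1,\ell_2$, forces $b_1,b_2$ (and $a_3$) to lie in the complementary arc $\hat I=\uc\setminus \ol I$, so in particular $|I|=1/3$. The role of excluding common vertices of associated generalized \ql s is precisely to rule out the ``collapsing around a shared critical vertex'' degenerate case that would otherwise permit the strictly order-preserving conclusion at the shared endpoint.

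Next I exploit the sibling structure of $\lam_1$. Applying Definition~\ref{d:sibli} (condition \ref{d:sibli:si}) to the non-critical leaf $\ell_1\in\lam_1$ produces a disjoint sibling collection $X=\{\ell_1,\ell_1',\ell_1''\}\subset\lam_1$ mapping to $\si_3(\ell_1)$; since the three $a$-endpoints of $X$ coincide with $\{a_1,a_2,a_3\}$, exactly one sibling $\ell_1'=\ol{a_2\hat b_1}$ has $a_2$ as an endpoint, and disjointness of $\ell_1,\ell_1'$ together with $b_1\in\hat I$ pins $\hat b_1$ to $\hat I$ as well, on the $a_2$-side of $\ell_1$.

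The chain is then extracted inside the closed lune $K=\ch(\ol I)$ bounded by $\ol{a_1a_2}$ and $I$ by an iterative pullback argument within $\lam_1$. At each stage I either find a critical chord of $\lam_1$ with endpoints in $\ol I$ that joins two already-connected components of the chain-under-construction, or refine by applying the sibling property to some non-critical leaf of $\lam_1$ with at least one endpoint in $I$, producing a new pair of disjoint sibling leaves whose other endpoints land in $\ol I$. Because $\si_3$ maps $\ol I$ monotonically onto $\uc$, this pullback process mirrors a one-dimensional dynamical structure on $\uc$ that must close up at $a_2$, and the obstruction hypothesis of Corollary~\ref{c:sibling_loc} is engineered to certify that any ``gap'' in the critical-chord chain would produce an incompatible path witnessing a forbidden non-linked order. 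The main obstacle I anticipate is rigorously establishing finiteness and termination of the pullback; for this I would invoke Corollary~\ref{c:finsib} to bound the number of relevant sibling triples in $K$, combined with Theorem~\ref{t:pullback_lam} on density of grand orbits and the compactness of $\lam_1^+$, which together extract a finite concatenation of critical chords of $\lam_1$ linking $a_1$ to $a_2$ through $I$.
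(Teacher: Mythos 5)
A preliminary remark: the paper does not actually prove this statement — it is imported verbatim as Lemma 4.3 of \cite{bopt14} — so there is no internal proof to compare against; I am judging your argument on its own terms. As written, it has a concrete error and a missing central step. The error: your reduction to $|I|=\frac13$ is unjustified and false in general. The arc $I$ between $a_1$ and $a_2$ avoiding $b_1,b_2$ can have length $\frac23$ and contain the third preimage $a_3$ (for instance when $\lam_1$ carries the all-critical triangle $\ch(a_1,a_3,a_2)$, so that the chain is $\ol{a_1a_3}\cup\ol{a_3a_2}$); this is exactly why the conclusion speaks of \emph{chains} of spikes rather than of the single critical chord $\ol{a_1a_2}$. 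The gap: the heart of the lemma is the \emph{existence} of critical chords with endpoints in $\ol I$ concatenating $a_1$ to $a_2$, and your ``iterative pullback'' never produces them — it is described only as a procedure that ``either finds a critical chord or refines,'' with no argument for why a critical chord must ever appear, and you concede yourself that termination is unestablished. Note also that the objects in the conclusion are \emph{spikes}, i.e.\ the critical diagonals of the generalized critical \ql s forming the qc-portraits (Definition~\ref{d:qll}); these are typically not leaves of $\lam_1$ at all (e.g.\ the diagonals of a collapsing \ql{} that occurs as a gap), so even a successful pullback of sibling leaves inside $\lam_1$ would produce the wrong kind of object.

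The mechanism that actually proves the lemma in \cite{bopt14} is smart criticality, which your sketch never invokes: one chooses a full collection of spikes of $\qcp_1$ unlinked with $\ell_2$ (and symmetrically for $\qcp_2$ and $\ell_1$); since $\si_3$ is injective on each complementary component of such a collection, the identification $\si_3(a_1)=\si_3(a_2)$ with $a_1\ne a_2$ forces $a_1$ and $a_2$ into different components, i.e.\ forces them to be separated by spikes, and unlinkedness with $\ell_1,\ell_2$ then pins the endpoints of the separating spikes into $I$. Your sibling-collection observation (the existence of $\ol{a_2\hat b_1}\in\lam_1$ via Lemma~\ref{l:sibling_loc}) is correct but is an input to, not a substitute for, that step. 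Finally, your opening ``by symmetry'' is too quick: if $\ell_2$ is critical the two laminations play asymmetric roles, yet the conclusion demands a chain in each of them.
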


The main problem with applications of Lemma~\ref{l:linleabeh} is
that the order among the endpoints of linked leaves of
geolaminations $\lam_1, \lam_2$ is only weakly preserved. In other words, endpoints of these leaves
may collide. Lemma~\ref{l:linleabeh2} describes how this can happen.
and shows that if two linked leaves $\ell_1, \ell_2$ of linked
geolaminations have colliding endpoints then these endpoints are
connected with specifically located chains of spikes. This allows us
to apply Corollary~\ref{c:sibling_loc} and prove
Proposition~\ref{p:clps_sib_loc} which shows that the colliding
endpoints are in fact endpoints of specifically located sibling
leaves of $\ell_1, \ell_2$ respectively.

\begin{prop}\label{p:clps_sib_loc}
Suppose that $\lam_1, \lam_2$ are linked geolaminations and the leaves
$\ol{ab}\in\lam_1$ and $\ol{xy}\in\lam_2$ are non-critical, linked, and
such that $a<x<b<y<a$ and $\si_3(a)=\si_3(y)$. Then there exists a leaf
$\ol{ax'}$ which is a sibling of $\ol{xy}$ such that $x'<a<x<y<x'$.
\end{prop}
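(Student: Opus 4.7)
The plan is to first locate the sibling leaf of $\ol{xy}$ containing $a$, and then use the chain-of-spikes machinery from Lemma~\ref{l:linleabeh2} combined with Lemma~\ref{l:sibling_loc} or Corollary~\ref{c:sibling_loc} to pin down its other endpoint. Since $\ol{xy}\in\lam_2$ is non-critical, it belongs to a disjoint sibling collection $X$ of three leaves of $\lam_2$, all with image $\si_3(\ol{xy})$; the six endpoints of $X$ are exactly the three preimages of $\si_3(x)$ together with the three preimages of $\si_3(y)$. Because $\si_3(a)=\si_3(y)$ and $a\ne y$, the point $a$ is one of those preimages and so is an endpoint of some leaf of $X$, which I will call $\ol{ax'}$. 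Non-criticality of $\ol{xy}$ forces $\si_3(x)\ne\si_3(y)$, so $x'\notin\{a,y\}$.

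Next I would invoke Lemma~\ref{l:linleabeh2} on $\ol{ab}\in\lam_1$ and $\ol{xy}\in\lam_2$, using the collision $\si_3(a)=\si_3(y)$, to obtain a chain $Q$ of spikes of $\lam_2$ from $a$ to $y$ whose intermediate vertices lie in the open arc from $y$ to $a$ that avoids $b$ and $x$. All vertices of $Q$ are preimages of $\si_3(y)$, of which there are only three, so $Q$ consists either of the single critical chord $\ol{ay}$, or of two spikes $\ol{ap}\cup\ol{py}$, where $p$ is the third preimage of $\si_3(y)$ and lies in $(y,a)$; in the latter case $p$ is the endpoint of the remaining sibling in $X$, say $\ol{px''}$. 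Since the chords of $Q$ are spikes, that is, diagonals (or degenerate analogues) of generalized critical \ql s of $\lam_2$, none of them crosses any leaf of $\lam_2$; hence $Q$ is disjoint from the leaves of $X$ inside $\disk$ and meets them only at common endpoints on $\uc$.

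If $Q=\ol{ay}$ is a single spike, it meets $X$ only at $\ha_1=a$ and $\ha_2=y$, so Lemma~\ref{l:sibling_loc} applies directly to the pair $\ol{ax'}$, $\ol{xy}$ in $X$ and returns either $a<x'<y<x$ or $a<x<y<x'$; the former is incompatible with $a<x<b<y$, so $x'<a<x<y<x'$, as required.

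The main difficulty is the two-spike case, where $Q$ passes through the interior vertex $p$ which is itself an endpoint of the sibling $\ol{px''}$. Here Lemma~\ref{l:sibling_loc} no longer applies, and one must instead invoke Corollary~\ref{c:sibling_loc}; its hypothesis requires the intermediate vertex $p$ to be separated from $\hb_1=x'$ by $\ol{\ha_1\ha_2}=\ol{ay}$. Since $p\in(y,a)$, the separation condition is equivalent to $x'\in(a,y)$, which is a priori unknown, so I would argue by contradiction: assume $x'\in(a,y)$; then Corollary~\ref{c:sibling_loc} produces one of the orderings $a<x'<y<x$ or $a<x<y<x'$, the first contradicting $x\in(a,y)$ and the second contradicting the assumption. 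Therefore $x'\in(y,a)$, and together with $a<x<b<y$ this gives the desired positive cyclic order $x'<a<x<y<x'$.
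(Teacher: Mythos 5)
Your proof is correct and follows essentially the same route as the paper's (which is only three sentences long): pass to a disjoint sibling collection of $\ol{xy}$, extract the chain of spikes from $a$ to $y$ with vertices in $[y,a]$ via Lemma~\ref{l:linleabeh2}, and conclude with Lemma~\ref{l:sibling_loc}/Corollary~\ref{c:sibling_loc}. The only cosmetic difference is that by labelling $\hb_1=x$ (whose position in $(a,y)$ is already known) instead of $\hb_1=x'$, the separation hypothesis of Corollary~\ref{c:sibling_loc} is verified directly and your contradiction argument in the two-spike case becomes unnecessary.
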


\begin{proof} Consider a disjoint sibling collection $\mathcal A$
of the leaf $\ol{xy}$ (formed by leaves of  $\lam_2$). %Since
%$\lam_2$ is cubic, $\mathcal A$ consists of $\ol{xy}$ and two more
%leaves of $\lam_2$ with the same image as $\ol{xy}$.
By Lemma~\ref{l:linleabeh2} there is a chain of spikes of $\lam_2$
from $a$ to $y$ with endpoints contained in $[y, a]$. Choose the
leaf $\ol{ax'}\in \mathcal A$; then by Corollary~\ref{c:sibling_loc}
$x'<a<x<y<x'$.
\end{proof}

Finally, we state here Lemma 4.5 from \cite{bopt14}.

\begin{lem}[Lemma 4.5 \cite{bopt14}]\label{l:linleabeh3}
If $\ell_1\in \lam_1, \ell_2\in\lam_2$ are linked leaves or share an
endpoint and $\si_d^t(\ell^1)$ and $\si_d^t(\ell^2)$ do not collapse
around chains of critical chords for any $t$, then there exists an
$N$ such that the $\si_d^N$-images of $\ell^1, \ell^2$ are linked
and have mutually order preserving accordions. Thus, if $\m_1\in
\lam_1, \m_2\in\lam_2$ are non-disjoint leaves then for any $n$ the
leaves $\si_3^n(\m_1), \si_d^n(\m_2)$ are non-disjoint.
\end{lem}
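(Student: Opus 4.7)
The plan is to combine the weak order-preservation on accordions (Lemma~\ref{l:linleabeh}) with the spike-chain characterization of endpoint collisions (Lemma~\ref{l:linleabeh2}), and then to use the non-collapsing hypothesis to rule out such collisions along the entire forward orbit.

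First, I would apply Lemma~\ref{l:linleabeh} to every forward image of $\ell_1$ and $\ell_2$. This yields weak (i.e., non-strict) preservation of the circular order on both $A_{\ell_2}(\si_3^k(\ell_1))\cap \uc$ and $A_{\ell_1}(\si_3^k(\ell_2))\cap \uc$ for every $k\ge 0$. Hence the only way mutual \emph{strict} order preservation can fail at some stage is that two \emph{distinct} endpoints of linked (or endpoint-sharing) leaves from $\lam_1$ and $\lam_2$ collide under $\si_3$.

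Second, I would invoke Lemma~\ref{l:linleabeh2}: any such collision at time $t$ forces chains of spikes in both $\lam_1$ and $\lam_2$ to connect the colliding endpoints through the arc disjoint from the other endpoints of the two leaves. By Definition~\ref{d:accorder2}, this is precisely the phenomenon of $\si_3^t(\ell_1)$ and $\si_3^t(\ell_2)$ collapsing around chains of critical chords, which is excluded by hypothesis. Therefore no such collision occurs along the entire forward orbit, and $\si_3$ acts in a strictly order preserving fashion on every relevant accordion. I would then choose $N$ such that $\si_3^N(\ell_1)$ and $\si_3^N(\ell_2)$ are actually \emph{linked} (not merely sharing an endpoint) and such that the accordion $A_{\si_3^N(\ell_2)}(\si_3^N(\ell_1))$ is non-trivial; strict order preservation at every subsequent step then yields mutually order preserving accordions. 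The concluding ``Thus'' clause follows by the same dichotomy: if $\m_1, \m_2$ were non-disjoint but $\si_3^n(\m_1), \si_3^n(\m_2)$ were disjoint for some $n$, then at the last step where they remain non-disjoint two distinct endpoints would have to collide under $\si_3$, and Lemma~\ref{l:linleabeh2} would again force the forbidden chains of spikes.

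The main obstacle is the existence of $N$ in the third step: even with collisions ruled out, one must show that endpoint-sharing cannot persist indefinitely without resolving into proper linkage. I expect to handle this by tracking sibling pullbacks via Proposition~\ref{p:clps_sib_loc} and observing that a persistently endpoint-sharing orbit would eventually either meet a critical set in a way that forces an endpoint collision on the next iterate (and hence a spike chain by Lemma~\ref{l:linleabeh2}), or would itself produce chains of spikes in both $\lam_1$ and $\lam_2$ joining the shared endpoints to common vertices of associated critical quadrilaterals, in either case contradicting the hypothesis.
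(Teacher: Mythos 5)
First, note that the paper you are reading does not actually prove this statement: Lemma~\ref{l:linleabeh3} is quoted as Lemma 4.5 of \cite{bopt14}, so there is no in-paper proof to match your argument against. Judged on its own, your skeleton --- weak order preservation from Lemma~\ref{l:linleabeh}, plus the observation that any endpoint collision produces, via Lemma~\ref{l:linleabeh2}, exactly the two chains of spikes that constitute ``collapsing'' in Definition~\ref{d:accorder2}, which the hypothesis forbids --- is the natural reconstruction and is consistent with how these results are deployed elsewhere in the paper (e.g.\ in the proof of Proposition~\ref{p:no_per_collapse}).

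There are, however, two genuine gaps. The first you flag but do not close: producing $N$ with $\si_d^N(\ell_1)$, $\si_d^N(\ell_2)$ genuinely \emph{linked} and with \emph{non-trivial} accordions. Ruling out collisions only shows that the circular order of the four endpoints is strictly preserved forever; it does not show that some forward image of one leaf ever crosses the other (if $\ell_1,\ell_2$ merely share an endpoint, all their images share the image of that endpoint, so a separate expansion/recurrence argument is needed to obtain an actual crossing and hence $A_{\ell_2}(\ell_1)\ne\{\ell_1\}$), and it does not verify the hypotheses of Lemma~\ref{l:linleabeh2} (at least one leaf non-critical; a shared endpoint not a common vertex of associated quadrilaterals). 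The second gap is in your derivation of the ``Thus'' clause: a collision of two distinct endpoints does \emph{not} make the images disjoint --- it makes them share an endpoint, hence still non-disjoint. For $\si_3^n(\m_1)$ and $\si_3^n(\m_2)$ to become disjoint, the weak cyclic order $a\le x\le b\le y\le a$ of the four endpoints would have to be broken outright, which is exactly what Lemma~\ref{l:linleabeh} forbids; so the last sentence follows directly from that lemma. Note also that the ``Thus'' clause carries no non-collapsing hypothesis, so ``the forbidden chains of spikes'' is not available as a contradiction there.
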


% \begin{thm}\label{t:noesli}
% If $\lamm$ is an admissible qc-modification of a perfect lamination
% $\lam$ and $\lam_2$ is a %quadratically critical
% geolamination linked with $\lamm$ then $\lam_2\supset \lam$. Thus,
% $\lamm$ is not strongly linked with any geolamination.
% %In particular, two
% %distinct perfect geolaminations are not linked.
% \end{thm}

% \begin{prop}\label{p:forconcat}
% Let $P$ be a $\si_d$-periodic finite laminational set
% and let $X$ be the union of the forward orbit of $P$.
% Then, for every connected component $R$ of $X$, there is an $m$-tuple
% of points $a_0<a_1<\dots<a_{m-1}<a_m=a_0$ in $\uc$
% such that $R$ consists of eventual images of $P$ containing $\ol{a_ia_{i+1}}$.
% If $m>1$, then the remap of $R$ is a combinatorial rotation sending $a_i$ to $a_{i+1}$.
% \end{prop}

\subsection{Proper Geolaminations}\label{ss:pr_geolam}

Given a lamination $\sim$, the generated geolamination $\lam_\sim$
is formed by the edges of $\sim$-classes (see
Definition~\ref{d:geolam}); if $\sim$ is invariant, the family of
all such geolaminations is denoted $\Q$. While $\Q$-geolaminations
are natural objects, they are often difficult to work with. % as it
%can be hard to demonstrate that a geolamination is in $\Q$.
To overcome these difficulties we define a wider and often more
convenient class of geolaminations. This will also be helpful in
dealing with geolaminations in the parameter space.

\subsubsection{Basic definitions and
properties}\label{sss:bas-def-pro}

First we introduce an inverse process to that from
Definition~\ref{d:geolam}.

\begin{dfn}\label{d:lam-from-geo}
Suppose that a family $\mathcal F$ of chords in $\cdisk$ is given.
Define an equivalence relation $\sim_{\mathcal F}$ as follows:
$x\sim_{\mathcal F} y$ if and only if there exists a finite
concatenation of chords of $\mathcal F$ joining $x$ and $y$. We say
that $\mathcal F$ \emph{generates (or gives rise to)} the
equivalence relation $\sim_{\mathcal F}$.
\end{dfn}

Clearly, $\sim_{\mathcal F}$ \emph{is} an equivalence relation for
\emph{any} collection of chords $\mathcal F$ ($\mathcal F$ does not
have to be invariant or even closed). We are interested in families
$\mathcal F$ such that the equivalence relation $\sim_{\mathcal F}$
is a lamination with the specific properties from
Definition~\ref{d:lam} (in the non-invariant case) and, in addition,
from Definition~\ref{d:si-inv-lam} (in the invariant case). Observe
that there are invariant geolaminations not from $\Q$ which generate
invariant laminations. For example, inserting a leaf into the
interior of a finite gap $G$ of an $\Q$-geolamination and then
inserting leaves from its grand orbit into appropriate sets from the
grand orbit of $G$ does not change the underlying equivalence
relation. Whether this can be done depends on the dynamics of $G$
(e.g., it can always be done if $G$ is critical or wandering), but
the resulting geolamination is no longer in $\Q$.

\begin{dfn}\label{d:proper0}
A family of chords $\mathcal F$ such that the generated equivalence
relation $\sim_\lam$ is a lamination is said to be \emph{proper}.
Denote by $\pr$ the family of all \emph{invariant} proper
geolaminations and by $\pr_d$ the set of $\si_d$-invariant
geolaminations from $\pr$.
\end{dfn}

The following lemma can be found in \cite{bmov13}.

\begin{lem}\label{l:propinva}
Suppose that a $\si_d$-invariant geolamination $\lam$ is proper
(i.e. generates a lamination $\sim_\lam$). Then $\sim_\lam$ is a
$\si_d$-invariant lamination.
\end{lem}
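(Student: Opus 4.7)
The plan is to verify properties (D1) and (D2) of Definition~\ref{d:si-inv-lam} directly, exploiting the sibling axioms (Definition~\ref{d:sibli}) and Theorem~\ref{t:summary}(3).

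For (D1), fix a $\sim_\lam$-class $\g$. If $x \sim_\lam y$ via a chain $x = x_0, x_1, \ldots, x_n = y$ of leaves of $\lam$, then by Definition~\ref{d:sibli}(1) each $\ol{\si_d(x_i)\si_d(x_{i+1})} = \si_d(\ol{x_i x_{i+1}})$ belongs to $\lam$, so $\si_d(x)\sim_\lam \si_d(y)$; hence $\si_d(\g)$ lies in a single class $\g'$. For the reverse inclusion I would pull back chains using the sibling collection axiom. Fix $x_0 \in \g$ and $z \in \g'$, with chain $\si_d(x_0) = z_0, z_1, \ldots, z_n = z$ in $\lam$. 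Given $x_i$ with $\si_d(x_i) = z_i$ and $x_0 \sim_\lam x_i$, Definition~\ref{d:sibli}(2) provides some pullback $\ell^*$ of $\ol{z_i z_{i+1}}$ and Definition~\ref{d:sibli}(3) provides $d$ pairwise disjoint siblings of $\ell^*$ all mapping onto $\ol{z_i z_{i+1}}$; since they are pairwise disjoint their $2d$ endpoints are distinct, hence exhaust $\si_d^{-1}(\{z_i, z_{i+1}\})$, so exactly one sibling has $x_i$ as an endpoint, producing $x_{i+1}$ with $\si_d(x_{i+1}) = z_{i+1}$. The concatenation yields $x_0 \sim_\lam x_n$, so $x_n \in \g$ and $z = \si_d(x_n) \in \si_d(\g)$.

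For (D2), by Theorem~\ref{t:summary}(3) $\lam$ is gap invariant in the sense of Definition~\ref{d:gap-inv}. The key preliminary step is to identify $\ch(\g)$ with a leaf or a gap of $\lam$ whenever $|\g| \ge 2$. No leaf of $\lam$ can cross an edge $\ol{ab}$ of $\ch(\g)$: such a crossing leaf would separate $a$ from $b$, while any $\lam$-chain witnessing $a\sim_\lam b$ consists of leaves pairwise unlinked with the crossing leaf, which is impossible. Hence $\ol{ab}$ either already lies in $\lam$ or sits inside the closure of a gap $G$ of $\lam$; in the latter case proper-ness (property (E2)) together with a chain argument along $\bd(G)$ forces $\ch(\g) = G$. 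Gap invariance then decomposes $\si_d|_{\bd(\ch(\g))}$ as a monotone map followed by an orientation-preserving covering map of some degree $k$, and extending the covering factor to a degree-$k$ covering $F: \uc \to \uc$ by interpolation between consecutive vertices of $\g$ gives the extension required by (D2), under which $\g$ is automatically the full preimage of $\si_d(\g)$.

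The main obstacle is the preliminary identification of $\ch(\g)$ with a gap or leaf of $\lam$, which is where proper-ness of $\sim_\lam$ really enters (absent proper-ness, $\ch(\g)$ could sit inside a gap of $\lam$ without spanning all its vertices, and the covering-map extension required by (D2) would be unavailable). Once this structural fact is secured, (D1) follows from the sibling-based chain pullback and (D2) is a direct consequence of gap invariance together with the degree-$k$ covering factor it supplies.
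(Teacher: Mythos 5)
The paper does not actually prove this lemma---it is quoted from \cite{bmov13}---so there is no internal argument to measure yours against; I will judge the proposal on its own terms. Your treatment of (D1) is correct: forward invariance of classes follows from Definition~\ref{d:sibli}(1), and the lifting of a chain in $\si_d(\g)$ works because a disjoint sibling collection over a non-degenerate leaf $\ol{z_iz_{i+1}}$ has $2d$ distinct endpoints, so the $d$ of them lying over $z_i$ exhaust $\si_d^{-1}(z_i)$ and exactly one sibling emanates from $x_i$.

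The genuine gap is in (D2), precisely at the step you yourself single out as the key one. For a proper $\si_d$-invariant geolamination $\lam$ the convex hull $\ch(\g)$ of a $\sim_\lam$-class need \emph{not} be a leaf or a gap of $\lam$: the paper emphasizes (see the discussion preceding Definition~\ref{d:proper0} and Definition~\ref{d:differ}) that $\lam$ may carry extra leaves \emph{inside} $\ch(\g)$, e.g.\ a diagonal inserted into a finite critical or wandering gap of a $\Q$-geolamination together with its grand orbit. Then $\ch(\g)$ is subdivided into several gaps of $\lam$, Definition~\ref{d:gap-inv} does not apply to $\bd(\ch(\g))$, and gap invariance does not hand you the covering factor you propose to extend. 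Moreover, your dichotomy does not yield what you claim: if an edge $\ol{ab}$ of $\ch(\g)$ is not a leaf of $\lam$ and lies in a gap $G$, then for finite $G$ the chain argument along $\bd(G)$ produces a \emph{contradiction} (all vertices of $G$ would lie in $\g$, yet $G$ has vertices strictly outside $\ch(\g)$), while for infinite $G$ the boundary arc between two vertices need not be a finite concatenation of leaves at all; in neither case does one obtain $\ch(\g)=G$. The correct intermediate statement is only that every \emph{edge} of $\ch(\g)$ is a leaf of $\lam$, which leaves the subdivision problem untouched. To close the argument one must assemble the covering datum from the pieces, e.g.\ apply gap invariance to each of the finitely many sub-gaps of $\ch(\g)$ and use Lemma~\ref{l:triod_ord} to see that $\si_d$ acts in an order-preserving way on the finitely many leaves of $\lam$ meeting at each vertex of $\g$, so that the local degrees fit together into a single degree $k$ with every point of $\si_d(\g)$ having exactly $k$ preimages in $\g$. (A smaller point: your one-line reason why no leaf of $\lam$ crosses an edge of $\ch(\g)$ omits the case where the connecting chain passes through an endpoint of the crossing leaf; one must add that this endpoint then lies in $\g$, hence so does the other endpoint of the crossing leaf, which is what actually produces the contradiction.)
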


There are useful criteria for an invariant geolamination to be
proper.

\begin{dfn}\label{d:proper} Two non-degenerate leaves with common endpoint
$v$ and the same non-degenerate image are said to form a
\textit{critical wedge} (the point $v$ is said to be its
\emph{vertex}). Call a non-degenerate leaf \textit{improper} if it
has exactly one periodic endpoint.
\end{dfn}

The dynamics of improper leaves is described in
Lemma~\ref{l:impleaf}.

\begin{lem}\label{l:impleaf}
Let $\ell=\ol{px}$ be an improper leaf with $p$ periodic of period
$k$ and $x$ non-periodic. Then it has an eventual image-leaf which
is either a critical leaf with a periodic endpoint or a leaf with
exactly one periodic endpoint which maps to a periodic leaf (in the
latter case this image-leaf and the appropriate image of it form a
critical wedge).
\end{lem}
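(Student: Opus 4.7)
The plan is to track the forward orbit of $\ell$ under $\si_d$ and identify the first image-leaf meeting one of the two listed conditions. Since $p$ is periodic of period $k$, every image $\si_d^n(\ell)=\ol{\si_d^n(p)\si_d^n(x)}$ still has a periodic first endpoint, so the dichotomy is controlled entirely by the orbit of $x$. I would split on whether some forward iterate of $x$ is periodic.

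In the case when some $\si_d^m(x)$ is periodic, let $n\ge 0$ be minimal with $\si_d^{n+1}(x)$ periodic, and set $\ell'=\si_d^n(\ell)$. Then $\ell'$ is itself still improper, but $\si_d(\ell')$ has two periodic endpoints. If $\si_d(\ell')$ is a single point, then $\ell'$ is a critical leaf with periodic endpoint $\si_d^n(p)$ and we are in case (a). Otherwise Corollary~\ref{c:sameperiod} forces the two endpoints of $\si_d(\ell')$ to have equal period; since one of them lies in the $\si_d$-orbit of $p$, that common period must be exactly $k$, so $\si_d(\ell')$ is a periodic leaf of period $k$. Hence $\si_d^{n+k+1}(\ell)=\si_d^{n+1}(\ell)$, i.e., $\ell'$ and $\si_d^k(\ell')$ have the same $\si_d$-image. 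Since $\si_d^k(p)=p$, these two leaves share the endpoint $\si_d^n(p)$, and they are distinct because otherwise $\si_d^n(x)$ would be periodic of period $k$, contradicting the minimality of $n$. Thus $\ell'$ and $\si_d^k(\ell')$ form a critical wedge at $\si_d^n(p)$, and $\ell'$ realises case (b).

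In the remaining case, $x$ has infinite forward orbit, so the route to case (b) is unavailable and the goal is to exhibit a critical image of $\ell$, thereby giving case (a). The plan here is a proof by contradiction: assume no $\si_d^n(\ell)$ is critical. Then the leaves $\ell_i:=\si_d^{ik}(\ell)=\ol{py_i}$ with $y_i:=\si_d^{ik}(x)$ form infinitely many distinct leaves of the ambient geolamination through the periodic point $p$. By compactness, a subsequence of $\{y_i\}$ converges to some $q^*\in\uc$, and $\ol{pq^*}$ is then a leaf of the geolamination. The accumulation set of $\{y_i\}$ is $\si_d^k$-invariant and closed, and, using that $\si_d^k$ fixes $p$ with multiplier $d^k\ge 2$, I would derive a contradiction with infinitely many $y_i$ clustering near $q^*$; if $q^*\ne p$, one first observes that invariance forces $q^*$ to be $\si_d^k$-periodic and then applies the analogous expansion argument at $q^*$.

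I expect the main obstacle to be Case 2. Case 1 is essentially combinatorial bookkeeping driven by Corollary~\ref{c:sameperiod}; Case 2 requires coupling the metric expansion of $\si_d^k$ at a periodic endpoint with the closed, non-crossing structure of a sibling invariant geolamination, in order to rule out the pathological scenario where infinitely many leaves persistently cluster at a single periodic point without producing a critical leaf.
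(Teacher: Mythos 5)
Your Case 1 (some iterate of $x$ is periodic) is correct and is a clean route for the preperiodic situation: minimality of $n$ also guarantees that no image of $\ell$ before $\si_d^n(\ell)=\ell'$ is critical, so $\ell'$ is non-degenerate, and Corollary~\ref{c:sameperiod} then forces $\si_d(\ell')$ to be fixed by $\si_d^k$, after which the wedge $\ell'\cup\si_d^k(\ell')$ is exactly what the lemma asks for. This differs only cosmetically from the paper, which likewise reduces everything to the orbit of $x$ under $\si_d^k$ hitting a $\si_d^k$-fixed point.

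The gap is in Case 2, and it is genuine. (A small logical remark first: if $x$ had an infinite forward orbit, then no image of $\ell$ could be critical at all, since $\si_d^j(\ell)$ critical forces $\si_d^{j+1}(x)=\si_d^{j+1}(p)$ to be periodic; so what your contradiction must really show is that Case 2 is vacuous, i.e.\ $x$ is preperiodic and you fall back into Case 1.) The sketched contradiction does not go through as stated, for two reasons. First, the accumulation set of $\{y_i\}$ being closed and $\si_d^k$-invariant does \emph{not} force $q^*$ to be $\si_d^k$-periodic: $\omega$-limit sets of expanding circle maps are typically infinite (for a point with dense orbit the accumulation set is all of $\uc$), and most of their points are not periodic. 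Second, and more importantly, ``infinitely many $y_i$ cluster near $q^*$'' is not in conflict with expansion at $q^*$: expansion only says that consecutive iterates move apart locally, and recurrent orbits of $\si_d$ return arbitrarily close to repelling fixed points infinitely often without any contradiction. So neither half of the intended contradiction has traction.

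The missing ingredient is the order-preservation input from Lemma~\ref{l:triod_ord}. Applying that lemma to $\si_d^k$ and the triods formed by consecutive leaves of the fan $\{\ol{py_i}\}$ at the common $\si_d^k$-fixed endpoint $p$, one gets that the points are \emph{monotonically} circularly ordered: $p<y_0<y_1<y_2<\dots<p$ (up to orientation). A monotone sequence on the circle converges, so the \emph{full} sequence (not merely a subsequence) converges to some $q^*$, and $y_{i+1}=\si_d^k(y_i)$ together with continuity gives $\si_d^k(q^*)=q^*$. Monotone convergence to a fixed point of the uniformly expanding map $\si_d^k$ is then impossible: once $y_i$ lies within angular distance $\tfrac{1}{2d^k}$ of $q^*$, the arc from $y_{i+1}$ to $q^*$ is $d^k$ times longer than the arc from $y_i$ to $q^*$. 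This is precisely the paper's argument; without the monotonicity supplied by Lemma~\ref{l:triod_ord}, the expansion of $\si_d$ alone cannot rule out persistent clustering of the $y_i$.
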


\begin{proof}
Consider the leaves %$\ol{p\si_d^{kn}(x)}, k=0, 1, \dots$.
$\si_d^{kn}(\ell)$. Suppose that all points $p, x,$ $\si_d^k(x),$
$\si_d^{2k}(x),$ $\dots$ are distinct. Then by
Lemma~\ref{l:triod_ord} applied to $\si_d^k$, we may assume that
$p<x<\si_d^k(x)<\si_d^{2k}(x)<\dots$. Since $\si_d$ is expanding,
this is impossible. Hence there exists a minimal $N$ such that
either $\si_d^{Nk}(x)=p$ or (again by Lemma~\ref{l:triod_ord})
$\si_d^{Nk}(x)$ is a $\si_d^k$-fixed point distinct from $p$. In the
first case there exists $0\le i<k$ such that
%$\ol{\si_d^i(p)\si_d^{k(N-1)+i}(x)}$
$\si_d^{k(N-1)+i}(\ell)$ is a critical leaf with a periodic endpoint
and in the second case there exists $0\le i<k$ such that
%$(\ol{\si_d^i(p)\si_d^{k(N-1)+i}(x)},
%\ol{\si_d^i(p)\si_d^{k(N-1)+i+1}(x)})$
$\si_d^{k(N-1)+i}(\ell)\cup \si_d^{k(N-1)+i+1}$ is a critical wedge
with a periodic edge.
\end{proof}

Lemma~\ref{l:perioproper} presents the above mentioned criteria for
an invariant geolamination to be proper.

%relates proper geolaminations and those from $\pr$.

\begin{lem}[cf \cite{bmov13}]\label{l:perioproper}
The following properties are equivalent.

\begin{enumerate}

\item  The $\si_d$-invariant geolamination $\lam$ is proper (i.e., $\sim_{\lam}$ is a lamination).

\item The $\si_d$-invariant geolamination $\lam$ has
no critical leaves (wedges) with a periodic endpoint (vertex).

\item The $\si_d$-invariant geolamination $\lam$ has no improper leaves.

\item The $\si_d$-invariant geolamination has neither a critical leaf with a
periodic endpoint nor a critical wedge with a periodic leaf.

%Any non-periodic leaf $\ell$ of $\lam$ has non-periodic endpoints.

\end{enumerate}

\end{lem}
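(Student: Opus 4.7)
The plan is to run a cyclic chain $(2)\Rightarrow(4)\Rightarrow(3)\Rightarrow(2)$ to establish the equivalence of (2), (3), (4), and then handle $(1)\Leftrightarrow(3)$ separately. The main tools are Lemma \ref{l:impleaf} and Corollary \ref{c:sameperiod}.

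The implication $(2)\Rightarrow(4)$ is immediate: a periodic leaf of a critical wedge contains its vertex as a periodic endpoint, so the hypothesis of (2) is strictly stronger than that of (4). For $(4)\Rightarrow(3)$ I argue by contrapositive: given an improper leaf $\ell$, Lemma \ref{l:impleaf} yields an eventual image of $\ell$ that is either a critical leaf with a periodic endpoint or an edge of a critical wedge with a periodic edge, each of which directly violates one of the two clauses of (4). Conversely, for $(3)\Rightarrow(2)$, a critical leaf $\ol{pq}$ with a periodic endpoint has, by Corollary \ref{c:sameperiod}, only one periodic endpoint: if both $p,q$ were periodic of a common period $k$, then $\si_d^k$ would fix them yet $\si_d$ identifies them, forcing $p=q$. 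Hence such a leaf is improper. A critical wedge with leaves $\ell_1=\ol{vw_1}$, $\ell_2=\ol{vw_2}$, periodic vertex $v$ of period $k$, and $\si_d(w_1)=\si_d(w_2)$ contains an improper leaf because at least one $w_i$ must be non-periodic: otherwise both would be periodic of period $k$ by Corollary \ref{c:sameperiod}, contradicting $\si_d^k(w_2)=\si_d^{k-1}(\si_d(w_1))=w_1\ne w_2$.

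Turning to $(1)\Leftrightarrow(3)$: for $(1)\Rightarrow(3)$, again by contrapositive, an improper leaf $\ell$ produces via Lemma \ref{l:impleaf} a critical leaf $\ol{pq}$ in the forward orbit of $\ell$ with $p$ periodic of period $k$ and $q$ non-periodic. The identification $p\sim_\lam q$ together with forward invariance of $\sim_\lam$ forces $p=\si_d^{jk}(p)\sim_\lam\si_d^{jk}(q)$ for every $j\ge 0$; since $q$ is non-periodic these points are pairwise distinct, so the $\sim_\lam$-class of $p$ is infinite, violating the finiteness axiom (E3). The wedge case is entirely analogous.

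For $(3)\Rightarrow(1)$, I verify (E1), (E2), (E3) for $\sim_\lam$. Condition (E2) is automatic: convex hulls of distinct $\sim_\lam$-classes can meet inside $\disc$ only at a crossing of two boundary leaves, which is forbidden by the unlinkedness of leaves of $\lam$. Condition (E1) follows from the closedness of $\lam^+$ by a standard compactness argument once (E3) is in hand. The heart of the proof is thus (E3), and this is where the main obstacle lies: one must show that the absence of improper leaves forces every chain of leaves producing a single $\sim_\lam$-class to be finite. The argument is a pullback-and-limit argument in the spirit of Theorem \ref{t:pullback_lam}: an infinite $\sim_\lam$-class would, by compactness and $\si_d$-invariance, accumulate to a limit leaf whose iterated forward orbit, analyzed via Lemma \ref{l:triod_ord} and Corollary \ref{c:sameperiod}, must reproduce either a critical leaf with a periodic endpoint or a critical wedge with a periodic edge, contradicting the already-established equivalence $(3)\Leftrightarrow(4)$. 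This step is the most technical and parallels the arguments of \cite{bmov13}.
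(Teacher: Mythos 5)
Your treatment of the equivalences among (2), (3) and (4) is correct and essentially matches the paper, which likewise derives $(4)\Rightarrow(3)$ from Lemma~\ref{l:impleaf} and treats $(3)\Rightarrow(4)$ as immediate; your use of Corollary~\ref{c:sameperiod} to show that such a critical leaf, resp.\ one leaf of such a wedge, is forced to be improper is exactly the right point. The paper, however, does \emph{not} reprove $(1)\Leftrightarrow(3)$: it cites \cite{bmov13} for the equivalence of (1)--(3) and only argues $(3)\Leftrightarrow(4)$. You attempt to prove the link with (1) from scratch, and this is where the proposal breaks down.

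The concrete error is in your $(1)\Rightarrow(3)$. If $\ol{pq}$ is a critical leaf with $p$ periodic of period $k$, then $\si_d(q)=\si_d(p)$, hence $\si_d^{k}(q)=\si_d^{k}(p)=p$ and, more generally, $\si_d^{jk}(q)=p$ for every $j\ge 1$. The point $q$ is non-periodic but it is \emph{preperiodic}, landing on $p$ after $k$ steps, so the points $\si_d^{jk}(q)$ are all equal to $p$ rather than pairwise distinct, and the class your argument exhibits is just $\{p,q\}$; the wedge case collapses the same way, since there $\si_d^{k}(x)$ is already periodic. Forward invariance alone therefore produces nothing infinite. The genuine obstruction to properness lies on the \emph{pullback} side: sibling invariance forces, at each stage of pulling $\ol{pq}$ back under $\si_d^{k}$, a new non-degenerate leaf attached at $p$ (or at a point already joined to $p$ by a concatenation of leaves), so the $\sim_\lam$-class of $p$ acquires infinitely many points, violating (E3), or else the relation fails to be closed. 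That is the content of the argument in \cite{bmov13}, and it cannot be replaced by iterating forward. Your $(3)\Rightarrow(1)$ is likewise only a sketch of the hard direction (finiteness and closedness of classes), again implicitly deferring to \cite{bmov13}. Given that the lemma is labelled ``cf \cite{bmov13}'' and the paper's own proof explicitly outsources $(1)$--$(3)$ to that reference, citing it is acceptable; but the forward-orbit argument you substitute for it is incorrect as written.
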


In particular, concatenations of leaves of $\lam$ are finite.

\begin{proof}
Parts (1) - (3) of the lemma are equivalent by \cite{bmov13}.
Clearly, (3) implies (4). The opposite direction follows from
Lemma~\ref{l:impleaf}.
\end{proof}

Consider the difference between a proper geolamination $\lam$ and
the $\Q$-ge\-o\-la\-mi\-na\-tion $\lam_{\sim_\lam}$ generated by the
lamination $\sim_\lam$.

\begin{dfn}\label{d:differ}
Let $\lam$ be a proper geolamination. For a $\sim_\lam$-class $\g$,
denote by $\g_\lam$ the collection of all leaves of $\lam$
connecting points of $\g$.
\end{dfn}

Lemma~\ref{l:differ} easily follows from the definitions.

\begin{lem}\label{l:differ}
Let $\lam$ be a proper geolamination and let $\g_\lam$ be a
$\sim_\lam$-class. Then all leaves of $\lam$ intersecting $\g_\lam$
are disjoint from all other leaves of $\lam$; for any points $a,
b\in \g_\lam$ there is a finite chain of leaves from $\g_\lam$
connecting $a$ and $b$.
\end{lem}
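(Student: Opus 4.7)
The plan is to derive both conclusions directly by unwinding Definition~\ref{d:lam-from-geo}. First I would establish the following key fact: whenever a leaf $\ell=\ol{pq}$ of $\lam$ has an endpoint lying in the class $\g$, then both endpoints of $\ell$ lie in $\g$, so $\ell\in\g_\lam$. Indeed, the single leaf $\ell$ is itself a finite concatenation of chords of $\lam$ joining $p$ and $q$, hence by Definition~\ref{d:lam-from-geo} one has $p\sim_\lam q$, so $q$ lies in the $\sim_\lam$-class of $p$, which is $\g$. In particular, every leaf of $\lam$ incident to a point of $\g$ is a leaf of $\g_\lam$.

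From this the first assertion is immediate: if a leaf $\ell\in\lam$ meets $\g_\lam^+$, then since leaves of $\lam$ are pairwise unlinked in $\disk$ (Definition~\ref{d:geolam1}), $\ell$ must share an endpoint with some leaf of $\g_\lam$; that endpoint is then in $\g$, so by the key fact $\ell\in\g_\lam$. Conversely, any leaf $\ell'\in\lam\setminus\g_\lam$ has neither endpoint in $\g$ (else the same argument places $\ell'\in\g_\lam$), so $\ell'\cap\g_\lam^+=\emptyset$ in $\cdisk$, as the two cannot cross in $\disk$ and cannot share endpoints on $\uc$.

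For the second assertion I would argue by induction along an arbitrary chain. Given $a,b\in\g$ (reading the statement as written: $a, b$ range over the endpoints of leaves of $\g_\lam$, which by the key fact is exactly $\g$), the relation $a\sim_\lam b$ produces, by Definition~\ref{d:lam-from-geo}, a finite concatenation $a=x_0,x_1,\dots,x_n=b$ with each $\ol{x_{i-1}x_i}\in\lam$. Since $x_0\in\g$, the key fact applied to $\ol{x_0x_1}$ gives $x_1\in\g$ and $\ol{x_0x_1}\in\g_\lam$; an obvious induction on $i$ then places every $x_i$ in $\g$ and every chord of the concatenation in $\g_\lam$, yielding the desired chain.

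The main (and essentially the only) obstacle is notational: distinguishing the $\sim_\lam$-class $\g$ (a subset of $\uc$) from the leaf collection $\g_\lam$ (a subset of $\lam$) as used in Definition~\ref{d:differ}, and interpreting ``intersecting'' and ``disjoint'' as taking place in $\cdisk$. Once this is settled, nothing beyond Definitions~\ref{d:lam-from-geo} and~\ref{d:geolam1} is needed; properness of $\lam$ enters only through Lemma~\ref{l:propinva}, which makes $\sim_\lam$ a bona fide lamination so that the $\sim_\lam$-class $\g$ is a well-defined finite set.
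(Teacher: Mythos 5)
Your proof is correct, and it matches the paper exactly in spirit: the paper gives no argument at all, stating only that the lemma ``easily follows from the definitions,'' and your definitional unwinding (a leaf with one endpoint in $\g$ is itself a concatenation witnessing $\sim_\lam$-equivalence of its endpoints, hence lies in $\g_\lam$; unlinked chords meet only at common endpoints) is precisely the intended argument. Nothing is missing.
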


For a $\si_d$-invariant geolamination, by Theorem~\ref{t:summary}
and Definition~\ref{d:gap-inv} for any gap $G$ the map
$\si_d|_{\bd(G)}$ is the composition of a monotone map and a
covering map of some degree. If $G$ is of period $k$, then the same
holds for $\si_d^k:\bd(G)\to \bd(G)$; the degree of the
corresponding map is called the \emph{degree of $G$}. A
concatenation of chords is \emph{trivial} if it consists of one
chord.

Consider a lamination $\sim$. By \cite{bl02} for any infinite gap
$G$ of $\sim$ the set $G\cap \uc$ is a Cantor set (thus, by
Definition~\ref{d:geolam1} $G$ is a Fatou gap). Recall, that
$\psi_G:\bd(G)\to \uc$ is the monotone map which collapses all edges
of $G$ to points (since $G\cap \uc$ is a Cantor set, there are no
non-trivial concatenations of edges of $G$). By \cite{bl02}, a
periodic infinite gap $G$ of $\sim$ of period $k$ can be of one of
two types. If $G$ is of degree 1 we associate to it its irrational
\emph{rotation number $\rho$}, and $\si_d^k|_{\bd(G)}$ is
semiconjugate by $\psi_G$ to the circle rotation by the angle
$\rho$. Such gaps are called \emph{Siegel} gaps. Now,
$\si_d^k|_{\bd(G)}$ can be of degree $r>1$. Then by \cite{bl02} the
map $\psi_G$ semiconjugates $\si_d^k|_{\bd(G)}$ and $\si_r|_\uc$.
Such gaps $G$ are called \emph{Fatou gaps of degree $r$}. Observe
finally that by \cite{bl02} any infinite gap $G$ of $\sim$ is
(pre)periodic.

Now, if $\lam$ is a proper geolamination, we can construct the
lamination $\sim_\lam$, the geolamination $\lam_{\sim_\lam}$, and
apply to $\sim_\lam$ and $\lam_{\sim_\lam}$ the results of
\cite{bl02} quoted in the previous paragraph together with
Lemma~\ref{l:differ}. This gives the following description of
infinite gaps of proper geolaminations.  A periodic gap of period
$k$ is said to be \emph{identity return (under $\si_d^k$)} if
$\si_d^k$ fixes its vertices.

% summarized in
%Lemma~\ref{l:fatgaps} given here without a proof.

\begin{lem}\label{l:fatgaps}
Any infinite gap $G$ of a proper geolamination $\lam$ is a
(pre)\-pe\-ri\-odic Fatou gap. If $G$ is a $k$-periodic gap of
$\lam$ of degree $r$, then the following holds.

\begin{enumerate}

\item If $r=1$, then the map $\psi_G$ semiconjugates $\si_d^k|_{\bd(G)}$
and the rotation of $\uc$ by an irrational number $\rho_G$.

\item If $r>1$, then the map $\psi_G$ semiconjugates $\si_d^k|_{\bd(G)}$
and $\si_r$. If a maximal concatenation $L$ of periodic leaves
on the
boundary of $G$ is non-trivial and $\si_d^{kn}(L)\cap L\ne\0$, then its convex hull $H$ is a
periodic identity return gap under $\si^{kn}$.

\end{enumerate}

\end{lem}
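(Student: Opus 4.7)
The plan is to reduce the lemma to the results of \cite{bl02} via the underlying lamination $\sim_\lam$, and then establish the identity return claim combinatorially using order-preservation.

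First, by Lemma~\ref{l:propinva}, $\sim_\lam$ is a $\si_d$-invariant lamination, so the cited results of \cite{bl02} apply to it: every infinite gap of $\sim_\lam$ is a (pre)periodic Fatou gap, and a periodic infinite gap is either a Siegel gap of degree $1$ with irrational rotation number, or a Fatou gap of degree $r > 1$ semiconjugate via $\psi$ to $\si_r$.

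Next I would establish the correspondence between infinite gaps of $\lam$ and infinite gaps of $\sim_\lam$. Since every $\sim_\lam$-class is finite (property (E3)), its convex hull is a finite polygon, and any gap of $\lam$ lying inside a class hull (bounded by diagonals) is necessarily finite. Consequently, any infinite gap $G$ of $\lam$ must be bounded by leaves of $\lam$ connecting distinct $\sim_\lam$-classes, and $G \cap \uc$ coincides with the $\uc$-intersection of a unique infinite gap of $\sim_\lam$. The period $k$, the degree $r$, and the monotone semiconjugating map $\psi_G$ depend only on this outer structure, so statements (1) and (2), save for the identity return claim, follow at once from \cite{bl02}.

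For the identity return claim, let $L = \ol{a_0a_1}\cup\dots\cup\ol{a_{m-1}a_m}$ be a maximal non-trivial concatenation of periodic leaves on $\bd(G)$ with $\si_d^{kn}(L)\cap L\ne\0$. By Corollary~\ref{c:sameperiod} applied inductively along $L$, all $a_i$ are periodic of a common period under $\si_d$. Since $G$ has period $k$, the map $\si_d^{kn}$ sends $\bd(G)$ to itself, so $\si_d^{kn}(L)$ is again a concatenation of periodic leaves on $\bd(G)$. As this image shares a leaf with $L$, maximality of $L$ forces $\si_d^{kn}(L)\subseteq L$; in particular $\si_d^{kn}$ maps the vertex set $\{a_0,\dots,a_m\}$ into itself.

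The last step, which I view as the main obstacle, is to show this map is the identity. Each leaf $\ol{a_ia_{i+1}}$ is periodic and non-degenerate, hence so is its image, which forces $\si_d^{kn}(a_i)\ne\si_d^{kn}(a_{i+1})$. Because $G\cap\uc\ne\uc$ the $a_i$ lie in a proper arc of $\uc$, and $\si_d^{kn}$ preserves circular order on $\uc$; this forces $\si_d^{kn}|_{\{a_0,\dots,a_m\}}$ to be strictly increasing in the linear order $a_0 < a_1 < \dots < a_m$. A strictly increasing self-map of a finite linearly ordered set into itself must be the identity, so $\si_d^{kn}$ fixes every $a_i$ and $H = \ch(L)$ is a periodic identity return gap under $\si_d^{kn}$, as claimed.
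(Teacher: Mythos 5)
Your overall strategy matches the paper's: the first claims are obtained by passing to $\sim_\lam$ and quoting \cite{bl02}, the identification of infinite gaps of $\lam$ with those of $\sim_\lam$ is the role played by Lemma~\ref{l:differ}, and your derivation of $\si_d^{kn}(L)\subseteq L$ from maximality is a sound way to use the hypothesis. You also correctly extract, by chaining Corollary~\ref{c:sameperiod} along $L$, that all vertices of $L$ share a common period. The problem is the last step, where the identity-return conclusion is actually proved.

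Two things go wrong there. First, the assertion that ``$\si_d^{kn}$ preserves circular order on $\uc$'' is false: $\si_d^{kn}$ is a covering of degree $d^{kn}$, and in case (2) its restriction to $\bd(G)$ already has degree $r^n>1$, so it cannot preserve the circular order even of triples of points of $\bd(G)\cap\uc$. Order preservation in this setting is only available locally, for arcs of leaves mapped one-to-one (Lemma~\ref{l:triod_ord}), and transferring it to the full vertex set $\{a_0,\dots,a_m\}$ requires an argument you do not supply. Second, even granting circular-order preservation on the vertex set, ``strictly increasing in the linear order $a_0<\dots<a_m$'' does not follow: a nontrivial cyclic rotation $a_i\mapsto a_{(i+t)\bmod (m+1)}$ preserves circular order and maps the vertex set into itself, yet sends the top vertex below the bottom one. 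Excluding it is exactly the content of the paper's key observation, which your write-up omits: a nontrivial shift would force some leaf of $L$ onto the closing chord $\ol{a_0a_m}$ of $H$, which is not a leaf of $L$, contradicting $\si_d^{kn}(L)\subseteq L$. The paper in fact sidesteps your order argument altogether: the common vertex period furnished by Corollary~\ref{c:sameperiod} is identified with $kn$, so $\si_d^{kn}$ fixes every vertex outright, and the chord $\ol{a_0a_m}$ is used only to rule out a smaller period for $H$. As written, your proof of the crucial identity claim does not go through.
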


\begin{proof}
We only need to prove the last claim from part (2). Suppose that
$L=\ell_1\cup\dots\cup\ell_m$ is a maximal concatenation (in this
order) of periodic leaves on the boundary of $G$ and $m>1$. Then the
convex hull $H$ of $L$ can be obtained by connecting the appropriate
endpoints of $\ell_1$ and $\ell_m$ with a chord which we denote by
$\n$. If $kn$ is the period of all the leaves in $L$ then
$\si_d^{kn}(H)=H$ and $\si_d^k$ fixes all the vertices of $H$
(observe that by Lemma~\ref{l:sameperiod} all leaves in $L$ have the
same period). Suppose that there exists a number $s<k$ such that
$\si_d^s(H)=H$. Then leaves in $L$ are not fixed under $\si_d^s$ and
one of them will have to be mapped onto $\n$, a contradiction.
\end{proof}

\begin{dfn}\label{d:siegfat}
Periodic gaps of degree $r$ of proper geolaminations are said to be
either \emph{Siegel gaps} (if $r=1$) or \emph{Fatou gaps of degree
$r$} (if $r>1$). For any gap $G$ of period $k$, a $k$-periodic edge
of $G$ is said to be \emph{refixed}; $\si_d^k|_{\bd(G)}$ is called a
\emph{remap}.
\end{dfn}

Let $\np$ be the set of all critical leaves with non-periodic
endpoints.

\begin{lem}\label{l:crit-2}
Let $\lam$ be a proper geolamination with a critical leaf $D\in
\np$. Let $A$ be the convex hull of the $\sim_\lam$-class containing
the endpoints of $D$. Then $A$ is a leaf or a finite gap and the
following cases are possible.

\begin{enumerate}

\item The set $A$ is critical
such that $\si_3|_A$ is $3$-to-$1$. Moreover, either $A$ is an
all-critical triangle, or there is a critical gap or leaf\,
$C\subset A, C\ne D$ of $\lam$ such that $\si_3|_C$ is of degree
$2$.

\item Otherwise there is a critical set $C$ that is either an infinite gap or the
convex hull of a $\sim_\lam$-class such that $\si_3|_C$ is of degree
two and one of the following holds.

\begin{enumerate}

\item[(a)] $C\cap \uc$ is finite, $C$ is disjoint from $A$, and $\si_3|_C$ is
two-to-one.

\item[(b)] $C$ is a periodic Fatou gap.

\item[(c)] $C$ is a preperiodic infinite gap which eventually maps to a
periodic Siegel gap $U$ with $D$ being an edge of $U$.

\end{enumerate}

\end{enumerate}

\end{lem}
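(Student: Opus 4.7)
The plan is first to observe that $A$ is automatically a leaf or finite gap, then to do a case analysis on $\deg(\si_3|_A)$, driven by the Riemann--Hurwitz-type bookkeeping that the sum $\sum_K(\deg(\si_3|_K)-1)$ over critical gaps/leaves $K$ of a cubic invariant geolamination equals $d-1=2$. First, since $\lam$ is proper, Lemma~\ref{l:propinva} yields that $\sim_\lam$ is a $\si_3$-invariant lamination; in particular condition~(E3) of Definition~\ref{d:lam} forces every $\sim_\lam$-class to be finite, so $A$ is a leaf or finite gap. Moreover, since the two endpoints of $D\subset A$ are distinct vertices of $A$ identified by $\si_3$, the map $\si_3|_A$ is not injective, so by Definition~\ref{d:degreeg} we have $\deg(\si_3|_A)\in\{2,3\}$.

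If $\deg(\si_3|_A)=3$, then $A$ absorbs all available critical weight. When $|A\cap\uc|=3$, cardinality constraints force every vertex of $A$ to be sent to a single point, making $A$ an all-critical triangle. Otherwise $|A\cap\uc|>3$; subtracting the single critical unit contributed by $D$ leaves one more unit of critical weight inside $A$ unaccounted for, so there must exist another critical leaf or gap $C\subset A$ of $\lam$ with $C\ne D$ and $\deg(\si_3|_C)=2$. This gives case~(1).

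If $\deg(\si_3|_A)=2$, then $A$ uses its one critical unit through $D$, and the remaining unit must live outside $A$ as a critical set $C$ of degree $2$ (meeting $A$ in at most a shared vertex, by the same unit count). If $C$ is finite, we land in case~(2)(a). If $C$ is infinite, Lemma~\ref{l:fatgaps} says $C$ is a (pre)periodic Fatou gap: if $C$ is periodic we get case~(2)(b), and if $C$ is strictly preperiodic its forward orbit eventually lands on a periodic gap $U$. Since $U$ is periodic and carries no additional critical weight, the covering degree along $\bd(C)\to\cdots\to\bd(U)$ must drop from $2$ to $1$; the only critical leaf available to produce such a drop is $D$, which forces $D$ to be an edge of $U$, making $U$ a Siegel gap and yielding case~(2)(c).

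The main obstacle I anticipate is making the critical-weight bookkeeping precise: justifying that in the cubic setting the sum $\sum_K(\deg(\si_3|_K)-1)$ equals exactly $2$ and no contribution is double-counted when $A$ contains internal leaves of $\lam$, and, specifically in subcase~(2)(c), pinning the degree drop from $2$ to $1$ along the orbit of $C$ to the unique event that $D$ becomes an edge of some image of $C$. Both steps rest on the gap-invariance covering structure of Definition~\ref{d:gap-inv} together with Lemma~\ref{l:fatgaps}, but the second in particular requires a careful analysis of how consecutive forward images of $\bd(C)$ lose degree precisely when they meet $D$.
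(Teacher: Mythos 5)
Your overall architecture matches the paper's: establish that $A$ is finite via properness, split on $\deg(\si_3|_A)\in\{2,3\}$, and in the degree-$2$ case run the trichotomy finite / periodic infinite / preperiodic infinite for the second critical set $C$. Cases (1), (2)(a) and (2)(b) are handled at essentially the same level of detail as the paper (which itself leaves the counting inside case (1) to the reader). However, there is a genuine gap in your treatment of case (2)(c).

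The degree bookkeeping cannot force $D$ to be an edge of $U$. The ``drop from $2$ to $1$'' you invoke is entirely produced by $C$ itself: $\si_3|_{\bd(C)}$ has degree $2$ because $C$ is critical, and since $C$ is strictly preperiodic it does not lie in the cycle of $U$, so the return map of $U$ has degree $1$. This accounting is perfectly consistent with a (hypothetical) configuration in which the entire cycle of $U$ is disjoint from $D$ --- both units of critical weight ($D$ inside $A$, and $C$) are already spent, and nothing in the count places $D$ anywhere near $U$. What actually forces $D$ onto $\bd(U)$ is a separate dynamical fact, which the paper imports as Lemma~2.13 of \cite{bopt14}: every cycle of Siegel gaps must contain a gap with at least one critical edge (this uses the irrationality of the rotation number together with the expansion of $\si_3$, not a degree count). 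Combined with the observation that in case (2) the only critical leaf available is $D$ (since $C$ is an infinite gap and $A$ has no critical edge other than $D$, else $\deg(\si_3|_A)=3$), that lemma yields that $D$ is an edge of some gap in the cycle of $U$, which one then takes to be $U$. Note also that the logical order is reversed in your write-up: $U$ is Siegel because it is a periodic infinite gap of degree $1$ (Lemma~\ref{l:fatgaps}), and only \emph{then} does the Siegel property force the critical edge; being Siegel is not a consequence of $D$ being an edge. Your own stated plan to resolve the obstacle by analyzing ``how consecutive forward images of $\bd(C)$ lose degree when they meet $D$'' would not succeed: after the first application of $\si_3$ to $C$ there is no further degree to lose, regardless of where $D$ sits.
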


\begin{proof} Since $\sim_\lam$ is a lamination, then $A$ is
either a leaf (then it coincides with $D$), or a finite critical
gap. If $A$ is of degree 3 then it is the unique critical set of
$\sim_\lam$ because we are considering the cubic case. The rest of
case (1) is left to the reader (recall that the degree of a gap is
defined in Definition~\ref{d:degreeg}).

Now, suppose that $A$ is of degree 2 and denote the second critical
set of $\sim_\lam$ by $C$. If $C\cap \uc$ is finite then (a) holds.
Thus we may assume that $C$ is an infinite gap. If $C$ is periodic,
then by Lemma~\ref{l:fatgaps} (b) holds. If $C$ is preperiodic then
an eventual image of $C$ is a periodic gap $U$ of degree 1 (it
cannot be of degree 2 because we work with the cubic case and there
is not enough criticality). By Lemma~\ref{l:fatgaps} $U$ is a Siegel
gap. It is well-known that in any cycle of Siegel gaps there must be
a gap with at least one critical edge (see, e.g., Lemma 2.13
\cite{bopt14}). Hence we may assume that $U$ has $D$ as its critical
edge (under the assumptions $D$ is the unique critical leaf of
$\lam$ as otherwise $C$ would have been another critical edge of $U$
while we assume that $C$ is an infinite gap).
\end{proof}

Proper geolaminations described in case (2)(c) of
Lemma~\ref{l:crit-2} are said to be \emph{Siegel geolaminations of
capture type}.

% (all periodic Fatou gaps of degree greater than one
%which we study in this paper are of degree two).

\subsubsection{Finding proper sub-geolaminations}
\label{sss:finding}

By Theorem~\ref{t:summary} the space $\Lam_d$ of all
$\si_d$-invariant sibling geolaminations is compact. This allows one
to assign geolamination(s) to every polynomial approximated by
polynomials with locally connected Julia sets: if $P_i\to P$ and
$P_i$'s are polynomials with locally connected Julia sets and
geolaminations $\lam_{P_i}$, then any limit geolamination
$\lim_{i\to\infty}\lam_{P_i}$ can be associated to $P$. However,
while %the geolaminations
$\lam_{P_i}$'s are proper, %belong to $\pr_d$,
their limit geolaminations might be not proper %do not have to belong
%to $\pr_d$
because the set of proper geolaminations $\pr_d\subset \Lam_d$ is
not closed. To overcome this difficulty, we will develop techniques
to associate a proper geolamination to many geolaminations of
$\Lam_3$. A fact which plays a central role in this procedure is
that improper leaves are isolated. Say that a leaf $\ell$ of a
geolamination $\lam$ is a \emph{limit} leaf if it is the limit of a
sequence of leaves of $\lam$ distinct from $\ell$ itself.

Recall (Definition~\ref{d:proper}) that a non-degenerate leaf is
\textit{improper} if it has exactly one periodic endpoint. Also,
given an arc $I\subset \uc$ set $|I|$ to be its length.

\begin{prop}\label{p:impr_iso}
Let $\lam\in \Lam_d$ be a $\si_d$-invariant sibling geolamination.
Then every improper leaf of $\lam$ is isolated in $\lam$.
\end{prop}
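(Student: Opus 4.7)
The plan is to argue by contradiction. Suppose $\ell=\ol{px}$ is a non-isolated improper leaf in $\lam$, with $p$ periodic of period $k$ and $x$ non-periodic; I will derive an inconsistency by iterating $\si_d^k$ and exploiting its local expansion at $p$.

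The first step is to show that non-isolatedness is preserved under $\si_d$. If $\ell_n\to\ell$ with $\ell_n\ne\ell$ in $\lam$, then by Corollary~\ref{c:finsib} only finitely many $\ell_n$ can share the image $\si_d(\ell)$, so for large $n$ we have $\si_d(\ell_n)\ne\si_d(\ell)$ and the distinct images $\si_d(\ell_n)\to\si_d(\ell)$ witness that $\si_d(\ell)$ is also non-isolated. Combining this with Lemma~\ref{l:impleaf}, I may replace $\ell$ by an appropriate eventual image and reduce to one of two cases: (i) $\ell=\ol{pq}$ is a critical leaf with $p$ periodic of period $k$ and $q$ non-periodic; or (ii) $\ell$ is improper and $\ell\cup\si_d(\ell)$ is a critical wedge with periodic vertex $p$, whose image $\si_d(\ell)$ is a fixed leaf of $\si_d$.

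In case (i), take $m_n=\ol{r_ns_n}\to\ell$ with $r_n\to p$ and $s_n\to q$. Only finitely many critical chords of $\cdisk$ have image $\si_d(p)$, so the $m_n$ are non-critical for large $n$. Since $\si_d(q)=\si_d(p)$ gives $\si_d^k(q)=p$, the iterate $\si_d^k(m_n)$ is a non-degenerate leaf with both endpoints close to $p$. Near $p$ the map $\si_d^k$ acts as multiplication by $d^k$, so for each sufficiently large $n$ there is a largest $j_n$ with $\si_d^{kj_n}(m_n)$ having both endpoints within a fixed small distance $\delta$ of $p$; by expansion its diameter is of the same order as $\delta$. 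Hausdorff compactness then yields a limit leaf $L\in\lam$ in the closed $\delta$-disk around $p$. Meanwhile the sibling axiom applied to each non-critical $m_n$ furnishes $d$ pairwise disjoint sibling leaves in $\lam$, one of which, call it $m_n^\dagger$, approaches $\ell$ on its opposite side (connecting the mirror preimages of $\si_d(r_n),\si_d(s_n)$ located across $p$ and $q$). The same expansion analysis applied to $m_n^\dagger$ produces a second family converging to a leaf $L^\dagger\in\lam$ near $p$ on the opposite side. The contradiction comes when these two families are combined with the infinitely many distinct leaves $\si_d^{jk}(\ell)$ all emanating from $p$: invoking Proposition~\ref{p:clps_sib_loc} along the iterated orbit forces two leaves of $\lam$ to cross inside a small neighborhood of $p$, violating the geolamination property.

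Case (ii) is handled analogously, with the expansion applied near the fixed leaf $\si_d(\ell)$ rather than at a single fixed point; the endpoints of approximating leaves are pushed outward along $\si_d(\ell)$ while remaining in $\lam$, and the same sibling-location argument closes the proof. The main obstacle is the final linking step, which requires tracking simultaneously how iteration moves the approximating family, its sibling partner, and the fan of iterates $\{\si_d^{jk}(\ell)\}$ at $p$, and then checking that some pair among them must cross inside $\disk$.
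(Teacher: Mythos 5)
Your opening reduction is sound and even makes explicit a step the paper leaves implicit: via Corollary~\ref{c:finsib}, non-isolatedness passes to forward images, so by Lemma~\ref{l:impleaf} one may assume $\ell$ itself is a critical leaf with a periodic endpoint or part of a critical wedge. The problem is that after this reduction the actual contradiction is never derived. In case (i) you set up two families of leaves accumulating near $p$ and then assert that ``invoking Proposition~\ref{p:clps_sib_loc} along the iterated orbit forces two leaves of $\lam$ to cross,'' but this is exactly the step that needs a proof, and the mechanism you describe cannot supply it. First, the ``fan of iterates $\{\si_d^{jk}(\ell)\}$ emanating from $p$'' is vacuous: in case (i) the leaf $\ell$ is \emph{critical}, so $\si_d(\ell)$ and all further images are single points of the orbit of $p$, not a fan of leaves. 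Second, Proposition~\ref{p:clps_sib_loc} is a statement about two \emph{linked geolaminations} with qc-portraits ($\lam_1$ and $\lam_2$) and has no content for a single sibling geolamination; the tool that governs where siblings sit inside one geolamination is Lemma~\ref{l:sibling_loc}.

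For comparison, the paper's argument in case (i) runs as follows: a leaf close to but disjoint from $\ol{px}$ has a $\si_d^k$-image crossing $\ol{px}$, so approximating leaves must share an endpoint with $\ol{px}$; a leaf $\ol{xp'}$ emanating from $x$ has a $\si_d^k$-image crossing itself by expansion, so it is ruled out; and for a leaf $\ol{px'}$ emanating from $p$, Lemma~\ref{l:sibling_loc} produces a sibling $\ol{xz}$ of $\si_d^k(\ol{px'})=\ol{py}$ emanating from $x$, and the arc-length estimate $|[z,x]|\ge|[y,p]|=d^k|[x',x]|>|[x',x]|$ forces $\ol{zx}$ to cross $\ol{px'}$ --- that is the crossing your sketch only gestures at. Case (ii) in the paper is not ``analogous'' to case (i) but uses a different device: Corollary~\ref{c:nocrit} bounds the number of leaves separating the two legs $\ol{px}$ and $\ol{px'}$ of the wedge, while any accumulating sequence would produce infinitely many such separating images. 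As written, your proposal has a genuine gap at its central step in both cases.
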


\begin{proof}
Let $\ell$ be an improper leaf of $\lam$. By Lemma~\ref{l:impleaf},
an eventual image $\si_d^m(\ell)=\ol{px}$ of $\ell$ is either (1) a
critical leaf with one periodic endpoint $p$ of period $k$ or (2) a
leaf with exactly one periodic endpoint of period $k$ whose
$\si_d^k$-image $\si^k(\si_d^m(\ell))$ is a $\si_d^k$-periodic leaf
($\si^m_d(\ell)$ and $\si_d^{m+k}(\ell)$ form a critical wedge for
$\si_d^k$). We show by way of contradiction that
$\si_d^m(\ell)=\ol{px}$ is isolated in either case.

(1) In this case $\si_d^k(x)=p$. Since $\si_d$ is locally order
preserving then for any chord $\ol{p'x'}$ close to but disjoint from
$\ol{px}$ its $\si_d^k$-image crosses $\ol{px}$. Hence $\ol{px}$ is
a limit of leaves which meet $\ol{px}$. Since $\si_d$ is
ex\!panding, then the $\si_d^k$-image of a chord $\ol{xp'}$ close to
$\ol{px}$ crosses itself. On the other hand, let $\ol{px'}$ be a
leaf of $\lam$, and assume (without loss of generality) that
$p<x'<x$ and that $x'$ is close to $x$. Set
$\si_d^k(\ol{px'})=\ol{py}$; clearly, $x<y<p$. Then by
Lemma~\ref{l:sibling_loc} there exists a sibling leaf $\ol{xz}$ of
$\ol{py}$ emanating from $x$ and such that $p<z<x$. Since $x'$ is
close to $x$ we may assume that $[y, p]=\si_d^k[x', x]$. Since
$\si_d^k$ is expanding, $[y, p]=d\cdot [x', x]>|[x', x]|$. On the
other hand, the fact that $\ol{py}$ and $\ol{xz}$ are siblings and
the fact that $[y, p]$ is a small arc (because $x'$ and $x$ are
close) imply that the $|[z, x]|\ge |[y, p]|$. Thus, $|[z, x]|>|[x',
x]|$. Hence, $\ol{zx}$ crosses $\ol{px'}$, a contradiction.

(2) Let $\si_d^m(\ell)=\ol{px},$ $\si_d^{m+k}(\ell)=\ol{px'},$
$\si_d^k(x)=x'<p<x$ (thus, $\ol{px}\cup \ol{px'}$ is a critical
wedge). Then by Corollary~\ref{c:nocrit} so there are at most
finitely many leaves separating $\ol{px}$ and $\ol{px'}$. This
implies that if the leaf $\ol{px}$ is a limit leaf then there exists
a sequence of leaves $\ol{qt}$ which converge to $\ol{px}$ and are
such that $p\le q<t\le x$. Since $\si_d$ is expanding, if $p<q$ then
the $\si_d^k$-image of $\ol{qt}$ crosses $\ol{px}$. Thus, $p=q$.
Then, by continuity infinitely many leaves $\si_d^k(\ol{qt})$
separate $\ol{px}$ and $\ol{px'}$, a contradiction with the above.
\end{proof}

Call a disjoint sibling collection \emph{improper} if it includes an
improper leaf.

\begin{lem}\label{l:nipe}
If a leaf $\ell$ is periodic or has non-preperiodic endpoints, then
no disjoint sibling collection of $\ell$ is improper.
\end{lem}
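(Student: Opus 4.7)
The plan is to argue by contradiction. Assume $\ell$ satisfies the hypothesis and that some leaf $\ell' = \ol{a'b'}$ in a disjoint sibling collection of $\ell = \ol{ab}$ is improper, with $a'$ periodic and $b'$ non-periodic. Observe first that $\ell$ itself cannot be improper under either hypothesis, so $\ell' \ne \ell$. After possibly swapping labels, we may assume $\si_3(a') = \si_3(a)$ and $\si_3(b') = \si_3(b)$.

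If $\ell$ has non-preperiodic endpoints, the contradiction is immediate: $\si_3(a')$ is periodic because $a'$ is, so $\si_3(a)$ is periodic and hence $a$ is preperiodic, contrary to the hypothesis.

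For the periodic case, let $\ell$ have period $k$. Since $\{a, b\}$ is $\si_3^k$-invariant, both endpoints of $\ell$ are periodic; hence every iterate $\si_3^n(\ell)$ is non-degenerate (and therefore non-critical, since a critical leaf has a degenerate image) and has two periodic endpoints (and therefore is not improper). Apply Lemma~\ref{l:impleaf} to $\ell'$: some eventual image-leaf $\ell^* = \si_3^n(\ell')$ is either (a) a critical leaf with a periodic endpoint, or (b) an improper leaf such that $\ell^* \cup \si_3(\ell^*)$ forms a critical wedge. For $n \ge 1$ we have $\si_3^n(\ell') = \si_3^n(\ell)$, which by the preceding remark is neither critical nor improper; thus the lemma's conclusion must apply with $n = 0$, i.e., to $\ell'$ itself.

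Case (a) is ruled out because $\ell'$ critical would force $\si_3(\ell') = \si_3(\ell)$ to be degenerate, contradicting the non-degeneracy of the periodic leaf $\si_3(\ell)$. In case (b), the leaves $\ell'$ and $\si_3(\ell')$ share an endpoint and have the same $\si_3$-image, so $\si_3(\ell') = \si_3^2(\ell')$, which reads $\si_3(\ell) = \si_3^2(\ell)$; together with $\si_3^k(\ell) = \ell$ this forces $k = 1$, whence $\si_3(\ell') = \si_3(\ell) = \ell$. Consequently $\ell'$ shares an endpoint (necessarily the periodic one, $a' \in \{a,b\}$) with $\ell$, contradicting the fact that the sibling collection is pairwise disjoint. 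The main obstacle is the periodic case, specifically observing that every forward iterate of $\ell'$ beyond the first coincides with an iterate of the periodic $\ell$ and so inherits its rigidity; once this is in hand, the final case analysis is short.
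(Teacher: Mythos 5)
Your reduction to $n=0$, your non-preperiodic case, and your case (a) are all fine, but case (b) rests on a misquotation of Lemma~\ref{l:impleaf}. The critical wedge produced there is not $\ell^*\cup\si_3(\ell^*)$ with a common $\si_3$-image: as the proof of Lemma~\ref{l:impleaf} (and its restatement inside the proof of Proposition~\ref{p:impr_iso}) shows, the wedge is formed by $\ell^*$ and $\si_3^{k'}(\ell^*)$, where $k'$ is the period of the periodic endpoint of $\ell^*$, and it is a critical wedge \emph{for} $\si_3^{k'}$ (common endpoint, same $\si_3^{k'}$-image). So the identity $\si_3(\ell')=\si_3^{2}(\ell')$, and with it the conclusion $k=1$, does not follow. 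What you actually obtain is that $\ell'$ shares its periodic endpoint $a'$ with $\si_3^{k'}(\ell')=\si_3^{k'}(\ell)$, which need not belong to the sibling collection, so disjointness gives no immediate contradiction. As written, the only nontrivial case of your proof therefore has a gap.

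The gap is easy to close, and closing it shows the detour through Lemma~\ref{l:impleaf} is unnecessary; this is essentially the paper's one-line argument. If $\ell=\ol{ab}$ is periodic, then $\si_3(a)$ and $\si_3(b)$ are periodic, and a periodic point has exactly one periodic $\si_3$-preimage (namely the point preceding it on its cycle); those unique periodic preimages are $a$ and $b$ themselves. Hence if a sibling $\ell'$ had a periodic endpoint $a'$ with $\si_3(a')=\si_3(a)$, then $a'=a$ and $\ell'$ would meet $\ell$, contradicting disjointness of the collection. So every disjoint sibling of a periodic leaf has both endpoints non-periodic, and no leaf of the collection is improper.
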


\begin{proof} The case when the endpoints of $\ell$ are
non-preperiodic is obvious. If $\ell=\ol{xy}$ is periodic and
$\ell'$ is a disjoint sibling of $\ell$ then both endpoints of
$\ell'$ are non-periodic as desired.
\end{proof}

We need the following definition.

\begin{dfn}\label{d:propersbl}
Let $\lam\in\Lam_d\sm \pr_d$. Let $\lam^p\subset \lam$ be the set of
proper leaves $\ell\in \lam$ such that they and all their (eventual)
non-critical images have disjoint sibling collections consisting of
proper leaves.
\end{dfn}

Arguments similar to those used in the proof of
Theorem~\ref{t:cleanup} allow us to find proper sub-geolaminations.

\begin{lem}\label{l:pr_cleaning}
%Let $\lam\not\in\pr_d$.
Let $\lam\in\Lam_d\sm \pr_d$. Then $\lam^p$ is a proper
$\si_d$-invariant sibling geolamination containing all periodic
leaves of $\lam$, all leaves of $\lam$ with non-preperiodic
endpoints, and all critical leaves of $\ell$ with non-periodic
endpoints.
\end{lem}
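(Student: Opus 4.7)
The plan is to verify the three axioms of a $\si_d$-invariant sibling geolamination for $\lam^p$ directly from its definition, mimicking the cleanup procedure in Theorem~\ref{t:cleanup}. For forward invariance, suppose $\ell\in\lam^p$; if $\si_d(\ell)$ is non-critical, the defining condition at $n=1$ supplies a disjoint sibling collection of $\si_d(\ell)$ consisting of proper leaves, forcing $\si_d(\ell)$ itself to be proper as a member of that collection. The case when $\si_d(\ell)$ is critical with a periodic endpoint is ruled out by a sibling-counting argument at $n=0$ for $\ell$: such a critical leaf would force an improper member into $\ell$'s sibling collection, contradicting condition~(2). The non-critical iterates of $\si_d(\ell)$ are iterates of $\ell$ at index $\ge 1$ and inherit proper disjoint sibling collections from $\ell\in\lam^p$, so $\si_d(\ell)\in\lam^p$.

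The heart of the argument is showing that disjoint sibling collections and preimages of leaves of $\lam^p$ remain in $\lam^p$. For a non-critical $\ell\in\lam^p$, condition~(2) at $n=0$ supplies a proper disjoint sibling collection $\{\ell_1,\dots,\ell_d\}$ with $\ell_1=\ell$; each $\ell_i$ shares its image with $\ell$, so its forward iterates past $n=0$ match those of $\ell$, and its own disjoint sibling collection at $n=0$ is the same proper set $\{\ell_1,\dots,\ell_d\}$. Hence each $\ell_i\in\lam^p$ and the whole collection lies inside $\lam^p$. For a preimage $\ell^*\in\lam$ of $\ell$ guaranteed by sibling invariance of $\lam$, a case analysis by endpoint type handles matters: if $\ell$ has non-periodic endpoints then so does $\ell^*$ (preimages of non-periodic points are non-periodic, as any periodic preimage would force its image to be periodic), so $\ell^*$ and every sibling of $\ell^*$ are proper; if $\ell$ is periodic, Lemma~\ref{l:nipe} applied to $\ell$ ensures that the disjoint sibling collection of preimages of $\ell$ is proper, so $\ell^*$ automatically sits in a proper sibling collection. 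Forward iterates of $\ell^*$ from $n=1$ onward coincide with iterates of $\ell$ and inherit the $\lam^p$-property, so $\ell^*\in\lam^p$.

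Properness of $\lam^p$ then follows from the definition --- no leaf of $\lam^p$ is improper --- together with Lemma~\ref{l:perioproper}. The three inclusions are immediate: periodic leaves of $\lam$ are proper by Corollary~\ref{c:sameperiod}, have periodic iterates, and by Lemma~\ref{l:nipe} every disjoint sibling collection of a periodic leaf is proper; leaves with non-preperiodic endpoints have non-preperiodic iterates, again falling under Lemma~\ref{l:nipe}; critical leaves with non-periodic endpoints are proper and have no non-critical iterates, so condition~(2) is vacuously satisfied.

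The main technical obstacle is the preimage step in the periodic case: one must guarantee that at least one preimage of a periodic $\ell$ lies inside a proper disjoint sibling collection, which relies essentially on Lemma~\ref{l:nipe} to preclude improper sibling preimages. Proposition~\ref{p:impr_iso} and Lemma~\ref{l:nonisol} play a supporting role throughout: since improper leaves are isolated, whenever the sibling or preimage construction must be performed on a non-isolated leaf, Lemma~\ref{l:nonisol} supplies a disjoint sibling collection of non-isolated (hence automatically proper) leaves, sidestepping any improper competitor that might otherwise enter the collection. This is the analogue, in the setting of $\lam^p$, of the role played by the same tools in the proof of Theorem~\ref{t:cleanup}.
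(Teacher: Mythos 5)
Your verification of the sibling axioms follows the same general line as the paper's own proof, but there are two problems, one of which is a genuine gap. The smaller issue is in the pullback step for a periodic leaf $\ell$: you invoke Lemma~\ref{l:nipe} ``applied to $\ell$'' to conclude that the disjoint sibling collection of preimages of $\ell$ is proper, so that an arbitrary preimage $\ell^*$ ``automatically sits in a proper sibling collection.'' Lemma~\ref{l:nipe} says nothing about preimages of $\ell$; it says that no collection of leaves sharing the \emph{image} of $\ell$ is improper. A pullback of a periodic leaf $\ol{xy}$ of period $s$ can perfectly well be improper (join the periodic preimage of $x$ to a non-periodic preimage of $y$), so an arbitrary $\ell^*$ need not lie in a proper collection. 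The correct move --- the one the paper makes --- is to \emph{choose} the purely periodic pullback $\si_d^{s-1}(\ell)$ and apply Lemma~\ref{l:nipe} to \emph{that} leaf; its disjoint sibling collection, consisting of leaves mapping onto $\ell$, is then proper, and this one good choice is all that sibling invariance of $\lam^p$ requires.

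The more serious omission is that you never show $\lam^p$ is closed, which is part of the definition of a geolamination (Definition~\ref{d:geolam1} requires $(\lam^p)^+$ to be a closed set) and is the real technical content of the lemma: deleting leaves from a closed family destroys closedness unless every deleted leaf is isolated. The paper's argument is exactly this. If $\ell\notin\lam^p$, then for some $i\ge 0$ every disjoint sibling collection of $\si_d^i(\ell)$ contains an improper leaf; if $\si_d^i(\ell)$ were non-isolated, Lemma~\ref{l:nonisol} would supply a disjoint sibling collection of non-isolated leaves, and by Proposition~\ref{p:impr_iso} non-isolated leaves are proper --- a contradiction. Hence every removed leaf is isolated and $\lam^p$ remains closed. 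You cite Proposition~\ref{p:impr_iso} and Lemma~\ref{l:nonisol} only in a vague ``supporting role'' for the sibling constructions (where they are not actually needed, since Lemma~\ref{l:nipe} and the choice of the periodic pullback already do the job); the place where they are indispensable is the closedness argument, which is missing from your proof.
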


%Consider the union $\B$ of all backward orbits of improper leaves
%and all leaves $\ell$ such that any disjoint sibling collection of
%$\ell$ contains improper leaves. Then $\lam\sm \B=\lam^p$ is a
%proper geolamination.

\begin{proof}
Let $\ell\in \lam^p$. Then $\ell$ is proper. By definition,
$\si_d(\ell)\in \lam^p$. Moreover, by definition $\ell$ has a
disjoint sibling collection consisting of proper leaves all of which
also belong to $\lam^p$. Now consider pullbacks of $\ell$ and show
that among them we can choose a disjoint sibling collection
consisting of proper leaves. Indeed, the only way $\ell$ can have an
improper pullback $\ell'$ is when $\ell$ is periodic while $\ell'$
has exactly one periodic endpoint. To handle this case, notice that
$\ell$ must also have a purely periodic pullback (e.g., if $\ell$ is
of period $s$ we can always choose $\si_d^{s-1}(\ell)$ as such a
leaf). Choosing this pullback and its disjoint sibling collection we
see that this entire collection consists of proper leaves as
desired.

By definition, it remains to show that $\lam^p$ is closed. To this
end we need to check if all removed leaves are isolated. Indeed, a
leaf $\ell$ is removed if for some $i\ge 0$ all disjoint sibling
collections of $\si_d^i(\ell)$ include an improper leaf (which is
isolated by Proposition~\ref{p:impr_iso}). By Lemma~\ref{l:nonisol}
this implies that $\si_d^i(\ell)$ is isolated. Indeed, if
$\si_d^i(\ell)$ is non-isolated, then by Lemma~\ref{l:nonisol} it
has a disjoint sibling collection consisting of non-isolated leaves
which (by Proposition~\ref{p:impr_iso}) are all proper, a
contradiction. Hence $\si_d^i(\ell)$ is isolated as desired. The
last claim of the lemma follows from Lemma~\ref{l:nipe} and the
definition of $\lam^p$.
\end{proof}

Proposition~\ref{p:pr_per_insert} provides conditions whereby (the
grand orbit of) a periodic leaf can be added to a proper
geolamination $\lam$ to create another proper geolamination.

\begin{prop}\label{p:pr_per_insert}
Let $\lam$ be a proper geolamination and $L$ be a cycle of periodic
leaves which do not cross %non-degenerate
leaves of $\lam$. Then there exists a proper geolamination
$\widehat\lam$ containing $L\cup \lam$ such that
$\widehat\lam\sm\lam$ consists of leaves which either eventually map
to $L$ or are limits of such leaves.
\end{prop}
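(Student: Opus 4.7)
The plan is to build $\widehat\lam$ as the Hausdorff closure of $\lam$ together with iterated pullbacks of $L$, making the sibling pullback choices so as not to introduce any improper leaves; the resulting geolamination is then shown to be proper via Lemma~\ref{l:perioproper}.

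First I would check that $\lam\cup L$ is a forward $\si_d$-invariant sibling geolamination: its chords are pairwise unlinked (both $\lam$ and $L$ individually are collections of unlinked chords, and by hypothesis leaves of $L$ do not cross those of $\lam$), its union is closed and covers $\uc$, and it is forward invariant since both $\lam$ and $L$ are. Theorem~\ref{t:pullback_lam} then produces a $\si_d$-invariant sibling geolamination $\widehat\lam\supset \lam\cup L$ whose leaves outside $\lam\cup L$ are obtained by iteratively adjoining pullbacks and then taking the closure. Since $\lam$ is already $\si_d$-invariant, the only genuinely new leaves arise as pullbacks of leaves of $L$ and their limits, giving the description of $\widehat\lam\sm\lam$ required in the statement.

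The crucial step is to control the sibling pullbacks of leaves of $L$ so as to avoid creating improper leaves. For each $\ell=\ol{ab}\in L$ of period $k$, the predecessor $\si_d^{k-1}(\ell)\in L$ is already a valid sibling of $\ell$ with both endpoints periodic. Because $\ell$ is contained in a gap $G$ of $\lam$ (or coincides with a leaf of $\lam$), gap invariance of $\lam$ lets me choose the remaining $d-1$ siblings inside the other preimage gaps of $\si_d(G)$; such siblings are automatically unlinked with $\lam$ and with each other, and they pair non-periodic preimages of $a$ with non-periodic preimages of $b$, so both of their endpoints are non-periodic.

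Finally, I would invoke Lemma~\ref{l:perioproper} to verify properness of $\widehat\lam$. No critical leaves are added, since pullbacks of non-degenerate leaves are non-degenerate and $\lam$ itself is proper. A critical wedge with periodic vertex $v$ in $\widehat\lam$ would consist of two distinct leaves through $v$ with a common image; enumerating cases (two leaves of $\lam$, contradicting properness of $\lam$; one of $\lam$ and one newly added, which would force $\lam$ to contain an improper leaf) rules this out, while the sibling choices above exclude improper leaves among the finite-stage pullbacks. Limits are handled by Proposition~\ref{p:impr_iso}: improper leaves in any sibling geolamination are isolated, so limits of proper leaves remain proper. The main obstacle in carrying out this plan is the geometric verification that the non-periodic pullbacks can indeed be positioned inside preimage gaps of $\lam$ so as to be unlinked with $\lam$ and with one another; once that placement is justified via gap invariance, the properness check reduces to the routine applications of Lemma~\ref{l:perioproper} and Proposition~\ref{p:impr_iso} described above.
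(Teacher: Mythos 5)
Your proposal is correct and follows essentially the same route as the paper's own (sketched) proof: build $\widehat\lam$ by iteratively pulling back $L$ while choosing disjoint sibling collections that introduce no improper leaves (the periodic predecessor plus siblings with non-periodic endpoints, placed inside the appropriate preimage gaps), take the closure, and deduce properness from Proposition~\ref{p:impr_iso} via Lemma~\ref{l:perioproper}. The paper leaves the placement of the non-periodic pullbacks at the same level of detail you do, so there is no gap relative to the source.
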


\begin{proof}
The proof is rather straightforward, so we only sketch it. To
construct $\hlam$, we need to pull back $L$ in a step by step
fashion so that on each step disjoint sibling collections are
formed. This is immediate if a leaf which is being pulled back is
not contained in a gap which itself is the image of a critical set.
Otherwise it suffices to choose the pullbacks of the leaf in
question inside of the appropriate critical sets so that again we
will be getting one or several disjoint sibling collections.
Repeating this countably many times we will make the first step in
the construction. It is easy to see that when we take the closure,
the resulting geolamination is a $\si_d$-invariant sibling
geolamination \cite{bmov13}. It follows from
Proposition~\ref{p:impr_iso} that $\hlam$ is a proper geolamination.
\end{proof}

\section{Main Results}\label{s:results}

Consider cubic (geo)laminations. We need the following definition.

\begin{dfn}\label{d:admis}
Two generalized critical quadrilaterals form an \emph{admissible
(cubic) qc-portrait} if (1) if a generalized critical quadrilateral
has a periodic vertex and a non-degenerate image then it must have a
periodic edge, and (2) the elements of the qc-portrait and all their
images intersect at most over a common edge or vertex. By
Theorem~\ref{t:pullback_lam}, the sets of the orbits of elements of
an admissible qc-portrait $\qcp$ can be pulled back to form a
$\si_3$-invariant geolamination; any such geolamination is denoted
by $\lam_\qcp$.
\end{dfn}

Recall that $\np$ is the set of critical leaves with non-periodic
endpoints.

\begin{dfn}\label{d:notat1}
Let $\prnp_3$ be the set of all proper geolaminations with a
critical leaf from $\np$ except for Siegel geolaminations of capture
type. Let $\qnp_3$ be the collection of all admissible qc-portraits
with a critical leaf from $\np$ as the second element. For $D\in
\np$, $\Ss_D$ is the collection of admissible qc-portraits with $D$
as the second element.
\end{dfn}

Recall that the sets $\prnp_3, \qnp_3$ and $\Ss_D$ are endowed with
the Hausdorff metric and the induced topology. From now on whenever
we talk about a privileged qc-portrait of a geolamination $\lam\in
\prnp_3$ (the notion of a privileged qc-portrait can be found in the
Introduction and is formally given in Definition~\ref{d:pripo}) we
always assume that its second element is a critical leaf with
non-periodic endpoints.

\begin{thmA}
Each $\lam\in\prnp_3$ has at least one and no more than finitely
many privileged qc-portraits. For every $\qcp\in\qnp_3$, there
exists %$\lam_\qcp\in\prnp_3$
$\lam\in\prnp_3$ such that $\qcp$ is privileged for $\lam$.
Moreover, $\Ss_D$ is compact.
\end{thmA}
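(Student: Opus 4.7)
The plan addresses the three claims in turn, leveraging the classification of critical sets in Lemma~\ref{l:crit-2}, the pullback construction of Theorem~\ref{t:pullback_lam}, and the cleaning procedure of Lemma~\ref{l:pr_cleaning}.

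For the first claim, I would fix $\lam \in \prnp_3$ with critical leaf $D \in \np$ and apply Lemma~\ref{l:crit-2}. Excluding the Siegel-capture type leaves two cases for the other critical set $C$ of $\lam$: (a) $C$ is finite (an all-critical triangle containing $D$ as an edge, a finite two-to-one critical gap, or a critical leaf disjoint from $D$); or (b) $C$ is a periodic Fatou gap of degree two and period $k$. In case (a) any two disjoint critical spikes formed by pairing vertices of $C$ with equal $\si_3$-image determine a generalized critical quadrilateral $Q \subset C$; only finitely many such $Q$ arise from the finite vertex set. In case (b), Lemma~\ref{l:fatgaps} provides the semiconjugacy $\psi_C$ of the remap $\si_3^k|_{\bd(C)}$ with $\si_2$, so the finitely many $\si_2$-fixed points locate the $\si_3^k$-refixed edges $\ell$ of $C$; each has a unique sibling edge $\hell \subset \bd(C)$ with $\si_3(\hell) = \si_3(\ell)$, giving the privileged $Q = \ch(\ell \cup \hell)$.

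For the second claim, I would start with $\qcp = (Q, D) \in \qnp_3$, invoke Theorem~\ref{t:pullback_lam} to obtain a $\si_3$-invariant sibling geolamination $\lam_\qcp \supset \qcp$, and apply Lemma~\ref{l:pr_cleaning} to pass to the proper sub-geolamination $\lam_\qcp^p$. Since $D$ has non-periodic endpoints it survives cleaning, and admissibility condition (1) of Definition~\ref{d:admis} prevents the orbit of $Q$ from containing improper leaves, so the edges of $Q$ survive too. Next I would verify $\lam_\qcp^p \in \prnp_3$: because $Q$ and $D$ together exhaust the cubic critical budget, no periodic Siegel gap with $D$ on its boundary can appear, ruling out the Siegel-capture type. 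The critical set $C$ of $\lam_\qcp^p$ containing $Q$ then either equals $Q$ (if $Q$ is a quadrilateral gap) or is a finite or periodic-Fatou gap enveloping the degenerate $Q$, matching the privileged description by construction.

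For the third claim, since $\Ss_D$ is bounded it suffices to show that it is closed under Hausdorff convergence. Let $(Q_n, D) \to (Q, D)$ with each $(Q_n, D) \in \Ss_D$. The diagonals of $Q$ are limits of critical chords, hence critical, so $Q$ is a generalized critical quadrilateral; and the non-crossing admissibility condition (2) of Definition~\ref{d:admis} passes to the limit by continuity of each iterate $\si_3^i$ on the finitely many orbit members that can meet $Q$. The principal obstacle, and the hardest part of the proof, is admissibility condition (1): ensuring that if $Q$ acquires a periodic vertex not present among the $Q_n$, then a periodic edge of $Q$ at that vertex is created. The plan is to invoke the second claim to realize each $(Q_n, D)$ as privileged for some $\lam_n \in \prnp_3$, extract a subsequence converging to $\lam_\infty \in \Lam_3$ by compactness (Theorem~\ref{t:summary}), clean to a proper $\lam_\infty^p$ via Lemma~\ref{l:pr_cleaning}, verify $\lam_\infty^p \in \prnp_3$, and identify the critical set of $\lam_\infty^p$ containing $Q$ via Lemma~\ref{l:crit-2}; the structure given by that Lemma then forces $Q$ to have the required periodic edge, exactly as in the analysis of the first claim. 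The subtle last step is to control how Hausdorff limits can enlarge or merge critical gaps and to confirm that the limiting critical set realizes $(Q, D)$ as privileged in the correct form.
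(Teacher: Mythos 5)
Your treatment of the first claim is essentially the paper's: Lemma~\ref{l:qcp-for-lam} is stated there as an immediate consequence of Lemma~\ref{l:crit-2} and Definition~\ref{d:pripo}, and your case analysis is the intended one. The other two claims, however, have genuine gaps.

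For the second claim, the assertion that the critical set of the cleaned pullback geolamination containing $Q$ ``match[es] the privileged description by construction'' fails precisely in the hard case, namely when $Q$ has a periodic edge $\ell_Q$ of vertex period $N$. (Note first a small error: the edges of $Q$ need \emph{not} survive cleaning -- a collapsing quadrilateral with a periodic edge also has improper edges, which Lemma~\ref{l:pr_cleaning} removes; the paper points this out right after Definition~\ref{d:pripo}.) The real problem is that after pullback and cleaning, $Q$ may sit inside a periodic Fatou gap $U$ of degree two whose period $k$ is \emph{strictly less} than $N$; then $\ell_Q$ is an edge of $U$ but not a period-$k$ (refixed) edge, so $(Q,D)$ is not privileged for that geolamination under Definition~\ref{d:pripo}. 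Eliminating this case is the entire content of the paper's Theorem~\ref{t:priv_qcp}: one chooses, among \emph{all} pullback geolaminations containing $Q$ and $D$, one with a maximal family of non-degenerate periodic leaves of period at most $N$, projects $U$ to the quadratic picture by $\psi_U$, and uses Thurston's refixed-major result (Proposition~\ref{p:quad_refixed_major}) to pull a smaller Fatou gap $V_Q\subset U$ back into $U$ with refixed edge $\ell_Q$; the new periodic leaf $M_Q$ so produced contradicts maximality. Nothing in your outline supplies this step.

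For compactness of $\Ss_D$, your proposed route does not close, as you half-acknowledge. If the limit $Q$ acquires a periodic vertex, the two edges of $Q$ at that vertex are exactly the improper leaves that Lemma~\ref{l:pr_cleaning} deletes, so the cleaned limit geolamination and Lemma~\ref{l:crit-2} give you no direct control over whether one of those edges is periodic. The paper sidesteps the whole issue with Proposition~\ref{p:per_edge}: for $D\in\np$, the non-crossing condition (2) of admissibility alone already implies condition (1), because the only possible critical wedge of a pullback geolamination containing $(Q,D)$ is the one at the periodic vertex of $Q$, and Lemma~\ref{l:impleaf} forces some critical wedge to have a periodic edge. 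So once one checks (as you do, correctly) that a crossing among images of $Q$ and $D$ in the limit would already be present for $Q_n$ with $n$ large, admissibility of $(Q,D)$ follows at once, with no need to pass to limits of geolaminations.
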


It turns out that privileged portraits properly capture dynamics.

\begin{thmB}
Suppose that $\qcp_1=(Q_1, D),\qcp_2=(Q_2, D)$ are privileged
qc-portraits for geolaminations $\lam_1,\lam_2\in\prnp_3$ such that
$\si_3(Q_1)\cap \si_3(Q_2)\ne \0$. Then
$\sim_{\lam_1}=\sim_{\lam_2}$.
\end{thmB}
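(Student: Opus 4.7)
The plan is to exploit that both qc-portraits share the critical leaf $D$ as second element and that their first components have overlapping $\si_3$-images. First I would establish that $\qcp_1$ and $\qcp_2$ are linked in the sense of Definition~\ref{d:qclink1}. Since the second components of the two qc-portraits both equal $D$, they share a spike trivially. The work is then to show that $Q_1$ and $Q_2$ are strongly linked or share a spike. This will follow from a direct preimage analysis: because each $Q_i$ is a generalized critical \ql{} whose image under $\si_3$ is a single chord or point, the intersection $\si_3(Q_1) \cap \si_3(Q_2) \ne \0$ provides vertices of $Q_1$ and $Q_2$ mapping to a common point. Running through the possibilities (critical leaf, all-critical triangle, collapsing \ql) and using that $Q_i$ is either a finite critical set of $\lam_i$ or sits inside a periodic Fatou gap of degree two, one gets strong linkage of $Q_1, Q_2$ or a shared spike.

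Second, with linkage of qc-portraits established, I would invoke the accordion machinery from Subsection~\ref{ss:accord}. By Lemma~\ref{l:linleabeh3}, for any pair of non-disjoint leaves $\ell_1 \in \lam_1$ and $\ell_2 \in \lam_2$, either some forward iterates are linked and have mutually order preserving accordions, or the pair collapses around chains of critical chords. In the former case, Theorem~\ref{t:compgap} forces the dynamics of the convex hull to be orderly, which by Lemma~\ref{l:sameperiod} implies $\ell_1$ and $\ell_2$ share periodic endpoints of the same period. Because $\lam_1, \lam_2 \in \prnp_3$ are proper, Lemma~\ref{l:perioproper} excludes critical leaves or wedges with periodic vertices; combined with the fact that the only critical chords available to either geolamination lie in $D$ or in the privileged critical sets $C_1, C_2$ (and the latter are constrained by the linkage of $Q_1, Q_2$), this localizes any possible disagreement between $\lam_1$ and $\lam_2$. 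The upshot is that no leaf of $\lam_1$ genuinely crosses a leaf of $\lam_2$: whenever two such leaves meet, Proposition~\ref{p:clps_sib_loc} and Lemma~\ref{l:linleabeh2} provide chains of spikes of both $\lam_1$ and $\lam_2$ identifying their endpoints.

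Third, I would translate this geometric control into equality of equivalence relations. By Lemma~\ref{l:differ}, $\sim_{\lam_i}$-classes are precisely the vertex sets of maximal connected unions of leaves of $\lam_i$. Given $x \sim_{\lam_1} y$ witnessed by a chain of leaves in $\lam_1$, each such leaf is either already present in $\lam_2$, or else by the previous paragraph its endpoints are joined by a chain of spikes in $\lam_2$; in either case the endpoints are $\sim_{\lam_2}$-equivalent. Symmetry gives the reverse inclusion and thus $\sim_{\lam_1} = \sim_{\lam_2}$.

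The main obstacle I expect is the case in which one critical set, say $C_1$, is finite while the other, $C_2$, is a periodic Fatou gap of degree two; here the two geolaminations can genuinely differ as geolaminations, so we must show they nonetheless generate the same equivalence relation. The delicate point is showing that the periodic edges of the Fatou gap in $\lam_2$ and the finite-class structure in $\lam_1$ identify the same points of $\uc$. This will require using the specific shape of a privileged qc-portrait in the Fatou case (a collapsing \ql{} formed from a refixed edge and its sibling), combined with Lemma~\ref{l:fatgaps}(2) about identity-return remaps on maximal concatenations of periodic leaves, to ensure that the $\sim_{\lam_2}$-classes along the Fatou boundary match exactly the critical class built from $C_1$ on the other side.
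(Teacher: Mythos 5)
Your first step contains the critical gap. It is not true that $\si_3(Q_1)\cap\si_3(Q_2)\ne\0$ forces $Q_1$ and $Q_2$ to be strongly linked or to share a spike. Lemma~\ref{l:c1c2} isolates exactly the exceptional configuration: the two minors can meet only at the point $d=\si_3(D)$, with $Q_1=(a,x,v,x')$ having spike $\ol{av}$ and $Q_2=(b,y,v,y')$ having spike $\ol{bv}$, where $v$ is the third vertex of the all-critical triangle $\De$. These two \ql s share only the vertex $v$ and are not linked, so the accordion machinery of Subsection~\ref{ss:accord} is unavailable to them. This ``Case V'' is not a marginal technicality: when $v$ is non-periodic one can finish quickly (Lemma~\ref{l:v-non-perio}, using \cite{sch04}), but when $v$ is periodic one must first prove that the refixed edge of the relevant degree-two Fatou gap is unique and that no edge of that gap crosses the other critical chord $\ol{va}$ --- this requires the central strip estimates of \cite{chmmo} (Lemmas~\ref{l:long}, \ref{l:csl}, \ref{l:vperiod}) --- and then replace $Q_2$ by a different \ql{} $Q_2'$ that is still privileged for $\lam_2$ but is genuinely linked with $\qcp_1$ (Lemma~\ref{l:v-perio}). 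Your ``direct preimage analysis'' skips all of this.

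Second, your claim that no leaf of $\lam_1$ genuinely crosses a leaf of $\lam_2$ is false, and the subsequent chain-of-spikes argument for $\sim_{\lam_1}=\sim_{\lam_2}$ does not go through. Linked leaves of $\lam_1,\lam_2$ with mutually order preserving accordions do occur; Lemma~\ref{l:linleabeh2} produces chains of spikes only in the collapsing case, i.e.\ when the order of endpoints is weakly but not strictly preserved. What is actually established is weaker and different: no leaf of one geolamination crosses infinitely many leaves of the other (Proposition~\ref{p:limunlink}), and any leaf of $\lam_2$ meeting the boundary of a Fatou gap $U$ of $\lam_1$ projects to a point under $\phi_U$ (Lemma~\ref{l:fgapproj1}). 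From these two facts one concludes that the canonical geolaminations $\hlam_1=\lam_{\sim_1}$ and $\hlam_2=\lam_{\sim_2}$ have the same Fatou gaps and the same limit leaves, hence coincide; that is how $\sim_1=\sim_2$ is obtained. Your proposed route --- showing that every leaf of a connecting chain in $\lam_1$ either lies in $\lam_2$ or has its endpoints joined by spikes of $\lam_2$ --- would not survive the case of genuinely linked (pre)periodic leaves. Finally, your treatment of the case where one critical set is a periodic Fatou gap of degree two is a restatement of the difficulty rather than a proof; the needed input is precisely Lemma~\ref{l:fgapproj1}, whose proof rests on Proposition~\ref{p:no_per_collapse} and the semiconjugacy with $\si_2$ on the gap boundary, not on Lemma~\ref{l:fatgaps}.
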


Observe that if $\qcp_1,\qcp_2$ are linked privileged qc-portraits
for geolaminations $\lam_1,\lam_2\in\prnp_3$ then there exists a
critical leaf $D\in \np$ such that $\qcp_1, \qcp_2\in \Ss_D$.

In contrast to the quadratic case, where there are no linked minors,
there may be linked qc-portraits in $\qnp_3$, but the laminations
specified by these linked qc-portraits are the same.  We use this
result to study $\Ss_D$, which is naturally identified with the set
of $\pr_3$-geolaminations containing $D$.  Each $(Q,D)\in\Ss_D$ is
tagged by the chord or point (the \emph{minor}) $\si_3(Q)$. Denote
by $\Pp_D$ the family of all such leaves $\si_3(Q)$. We will show
that $\Pp_D$ is an appropriate cubic analog of Thurston's $\qml$.

\begin{thmC}
The family $\Pp_D$ is proper, so that the generated equivalence
relation $\sim_{\Pp_D}=\sim_D$ is a lamination. Each $\sim_D$-class
corresponds to a unique cubic lamination $\lam$. Conversely, every
cubic lamination which is not of capture Siegel type and is such
that endpoints of $D$ are equivalent corresponds to a
$\sim_D$-equivalence class.
\end{thmC}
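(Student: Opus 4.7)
The plan is to combine Theorems A and B with the intersection analysis promised in Section~\ref{s:interminor}. Theorem A gives compactness of $\Ss_D$ and finiteness of the set of privileged qc-portraits for any single $\lam\in\prnp_3$, while Theorem B says that any two minors which meet in $\cdisk$ must come from geolaminations with the same induced lamination. Since $(Q,D)\mapsto\si_3(Q)$ is Hausdorff-continuous, $\Pp_D$ inherits compactness from $\Ss_D$, which will drive the closed-graph condition.

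First I would verify that $\Pp_D$ is proper. The key geometric step is the no-crossing assertion: if two distinct minors $\si_3(Q_1)$ and $\si_3(Q_2)$ were linked, then Theorem B would force $\sim_{\lam_1}=\sim_{\lam_2}$, and both minors would sit in the single canonical $\Q$-geolamination of this common lamination, contradicting the pairwise unlinkedness of leaves of a geolamination. An analogous argument handles minors sharing only an endpoint, so convex hulls of distinct $\sim_D$-classes will be disjoint, giving (E2) of Definition~\ref{d:lam}. Property (E1) (closedness of the graph of $\sim_D$) follows from compactness of $\Pp_D$ plus the standard observation that a Hausdorff limit of concatenations of chords from a compact family is again such a concatenation. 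Property (E3) (finiteness of every class) reduces, via Theorem B, to the observation that all minors in a single class are produced by privileged qc-portraits of one and the same lamination $\sim$, combined with the Theorem A bound on the number of privileged qc-portraits of any fixed $\lam\in\prnp_3$.

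Next I would establish the claimed bijection. For a $\sim_D$-class $\mathcal K$, pick any non-degenerate minor $\si_3(Q)\in\mathcal K$; Theorem A supplies a $\lam\in\prnp_3$ for which $(Q,D)$ is privileged, and Theorem B ensures that the underlying lamination $\sim_\lam$ does not depend on the choice of representative in $\mathcal K$. Conversely, given a cubic lamination $\sim$ with $a\sim b$ (where $D=\ol{ab}$) that is not of capture Siegel type, the canonical geolamination $\lam_\sim$ lies in $\prnp_3$; Theorem A then produces at least one privileged qc-portrait $(Q,D)$, and all privileged qc-portraits of $\lam_\sim$ have minors that fall into the same $\sim_D$-class by Theorem B.

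The main obstacle I anticipate is sharpening the no-crossing step when two genuinely distinct privileged qc-portraits $(Q_1,D)$, $(Q_2,D)$ come from the same $\lam$ whose second critical set $C$ is a periodic Fatou gap of degree two. Here $Q_i$ is a collapsing quadrilateral built from a refixed edge $\ell_i$ of $C$ together with its sibling edge $\hell_i$ inside $C$; one must use the structural information in Lemma~\ref{l:fatgaps} that the refixed edges of $C$ are concatenated into identity-return subgaps and that their $\si_3$-images are refixed edges of another gap in the cycle. This organizes all candidate minors $\si_3(Q_i)$ as leaves of a common periodic cycle of identity-return gaps, hence pairwise unlinked, and recovers (E2). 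This is exactly the intersection-of-minors content I expect Section~\ref{s:interminor} to supply.
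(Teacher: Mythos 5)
Your overall architecture (Theorem A for closedness and existence, Theorem B for deciding when two minors come from the same lamination) matches the paper's, but two of your key steps have genuine gaps. The ``no-crossing assertion'' on which you build (E2) is false: $\Pp_D$ does contain linked chords. If the first critical set $C_\approx$ of some $\approx\in\lami_D$ is a finite $2m$-gon with $m\ge 4$, collapsing \ql s can be inserted into $C_\approx$ in several ways, and the resulting minors include \emph{diagonals} of $m_\approx=\si_3(C_\approx)$, which cross one another. Moreover, your proposed proof of that assertion fails even where crossings do not occur: when the critical set is a degree-two Fatou gap $U$ whose refixed edge $M$ is an edge of an identity-return triangle $T$ with other edges $\m$, $\n$, the chords $\si_3(\m)$ and $\si_3(\n)$ belong to $\Pp_D$ but are \emph{not} leaves of the canonical $\Q$-geolamination of $\sim$ (the paper says so explicitly), so ``both minors sit in one canonical geolamination, hence are unlinked'' is not available. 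What is true, and what is actually needed, is that linked or touching minors lie in the \emph{same} $\sim_D$-class because, by Theorem B, they are all edges or diagonals of a single minor set $m_\sim$; disjointness of convex hulls of \emph{distinct} classes then follows. This requires explicitly identifying each $\sim_D$-class with a minor set, which your argument never does.

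That identification is also what your (E3) step is missing. You bound the number of privileged qc-portraits of a \emph{fixed geolamination} via Theorem A, but a single $\sim_D$-class aggregates minors coming from many different proper geolaminations that merely generate the same lamination $\sim$ (for instance, the canonical geolamination of $\sim$ with and without the grand orbit of $M$ in the identity-return situation above), and Theorem A gives no bound over that whole family. The paper's proof instead runs a case analysis on the critical data of $\sim$ --- a finite first critical set disjoint from $D$; a unique critical class containing $a$ and $b$; a periodic degree-two Fatou gap together with its major set --- and shows in each case that every minor produced by any such geolamination is an edge or a diagonal of the finite set $m_\sim$. Finiteness of each class, disjointness of distinct classes, and both directions of the stated correspondence all drop out of this identification at once; it is the core of the proof and the piece your proposal leaves out.
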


If $D=\ol{ab}$, then Theorem C equips the set $\lami_D$ of cubic
laminations $\approx$ such that $a \approx b$ and which are not of
Siegel capture type with the quotient topology of the unit circle.
Also, suppose that a lamination $\approx$ with $a\approx b$ is of
Siegel capture type. Then there is only one way this can happen.
Namely, $\approx$ must then have a periodic Siegel gap $U$ with $D$
being an edge of $U$ and a non-periodic pullback $V$ of $U$ which
maps forward in a two-to-one fashion.

Finally we study the $\sim_D$-classes. First we modify the classical
notion of the \emph{minor} of a geolamination in the quadratic case.

\begin{dfn}[Minor sets in the  quadratic case \cite{thu85}]\label{d:minor2}
For a quadratic  la\-mi\-nation $\sim$, let $C$ be the
critical set of $\sim$. If $C$ is finite, let $m_\sim=\si_2(C)$. If
$C$ is a Fatou gap of degree two, let $m_\sim$ be the $\si_2$-image
of the refixed edge of $C$. The set $m_\sim$ is called the
\emph{minor set} of $\sim$. \end{dfn}

Note that in the quadratic case minor sets of quadratic laminations
coincide with the convex hulls of $\sim_\qml$-classes. We want to
extend these ideas to the cubic case. In the cubic case, similar to
the quadratic case, minor sets can be introduced for all laminations
$\sim$ from $\lami_D$ as images of the first critical set of $\sim$
(if it is finite) or the image of the refixed edge of the periodic
Fatou gap $U$ of degree two of $\sim$ (if it exists). However in the
cubic case there is a new phenomenon which causes these collections
of minor sets taken for laminations $\sim$ from $\lami_D$ to be insufficient.
% (equivalently
%we can talk about taking minor sets of geolaminations $\hlam_\sim$
%from $\Q_3$ associated with laminations $\sim$ from $\lami_D$).
We overcome this difficulty by modifying Definition~\ref{d:minor2}
below.

\begin{dfn}[Minor sets in the cubic case] \label{d:minset1}
If $\sim\in \lami_D$, let $D_\sim$ be the $\sim$-class containing
the endpoints of $D$. Also, if $\sim$ has a $k$-periodic critical
Fatou gap $U$, of degree two, let $M_\sim$ be the $\sim$-class of
the unique edge of $U$ of period $k$. Let $C_\sim$ be either the
first critical set of $\sim$ (if it is different from $D_\sim$ and
finite), $M_\sim$ (if the first critical set of $\sim$ is a periodic
Fatou gap $U$ of degree two), or $\ch(D_\sim)$ (if $\sim$ has a
unique critical class $D_\sim$). Set $\si_3(C_\sim)=m_\sim$ and call
$m_\sim$ the \emph{minor set} of $\sim$.
\end{dfn}

Now we are ready to state Theorem D.

\begin{thmD}
Classes of $\sim_D$ coincide with the minor sets $m_\sim$ where
$\sim\in \lami_D$.
\end{thmD}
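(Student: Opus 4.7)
My plan is to establish a bijection between the classes of $\sim_D$ and the minor sets $m_\sim$ for $\sim\in\lami_D$: namely, (a) for every such $\sim$, the set $m_\sim\cap\uc$ is a single $\sim_D$-class, and (b) every $\sim_D$-class arises this way. Since a class is determined by its points in $\uc$, this will identify the two families of sets after passing to convex hulls.

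For (a) I would fix $\sim\in\lami_D$ and let $\lam\in\prnp_3$ be the proper geolamination associated to $\sim$. By Theorem A there is at least one privileged qc-portrait $(Q,D)$ for $\lam$, and by the definition every such $Q$ is built from the non-$D$ critical set $C$ of $\lam$. I would then run a case analysis driven by Lemma~\ref{l:crit-2}: (i) if $C$ is a finite $2m$-gon disjoint from $D$ with $\si_3|_C$ two-to-one, then for any two paired vertices of $m_\sim=\si_3(C)$ I would exhibit a privileged $Q\subset C$ whose image is the chord joining them, and varying $Q$ over the $\binom{m}{2}$ such choices would yield enough leaves of $\Pp_D$ for transitive closure to identify all vertices of $m_\sim$; (ii) if $C$ is a finite $(2n{+}1)$-gon with $D$ as an edge, I would pair each non-$D$ edge with its $\si_3$-sibling in $C$ to form privileged quadrilaterals and conclude as above; (iii) if $C$ is a periodic Fatou gap of degree two, then by the privileged condition $Q=\ch(\ell,\hell)$ is the collapsing \ql{} formed by a (possibly degenerate) refixed edge $\ell$ of $C$ and its sibling edge $\hell$, so $\si_3(Q)=\si_3(\ell)=m_\sim$; (iv) if $\sim$ has a unique critical class $D_\sim$ (an all-critical triangle), then $m_\sim$ is a single point and the claim is immediate. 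In each case the endpoints of the produced $\Pp_D$-leaves form precisely the vertex set of $m_\sim$.

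For (b), I would let $\xi$ be a $\sim_D$-class. By Theorem C, $\xi$ corresponds to a unique lamination $\sim_\xi\in\lami_D$. Each $\Pp_D$-leaf whose endpoints lie in $\xi$ has the form $\si_3(Q)$ for some $(Q,D)\in\Ss_D$, and by Theorem B every such $Q$ is privileged for the geolamination generating $\sim_\xi$. Applying (a) to $\sim_\xi$ then shows that the endpoints of these leaves coincide with the vertices of $m_{\sim_\xi}$; hence $\xi=m_{\sim_\xi}\cap\uc$ as subsets of the circle, and the corresponding convex hulls agree.

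The principal obstacle will be the periodic Fatou case~(iii): I must unambiguously identify the correct refixed edge $\ell$ of $C$ (up to its $\si_3^k$-cycle), verify that $\si_3(\ch(\ell,\hell))$ equals $m_\sim$, and show that no other privileged qc-portrait for $\lam$ produces a chord with an endpoint outside $m_\sim$. A second delicate point will be excluding accidental transitive chains that could glue together vertices of $m_{\sim_1}$ and $m_{\sim_2}$ for distinct $\sim_1,\sim_2\in\lami_D$; this will follow from Theorem B combined with the fact, established in Theorem C, that $\sim_D$ is itself a lamination.
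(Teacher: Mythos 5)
There is a genuine gap, and it sits exactly where you predicted trouble: case (iii). Your identification $\si_3(Q)=\si_3(\ell)=m_\sim$ is not correct in general. By Definition~\ref{d:minset1}/\ref{d:minset}, when the first critical set of $\sim$ is a periodic Fatou gap $U$ of degree two, $m_\sim$ is the $\si_3$-image of the \emph{$\sim$-class} (major set) of the refixed edge $M$, not of the edge itself. The paper's key example is when $M$ is also an edge of an identity return triangle $T$ with other edges $\m,\n$: then $m_\sim=\si_3(T)$ is a triangle, while the unique privileged qc-portrait of the canonical geolamination $\hlam_\sim$ only yields the single edge $\si_3(M)$ of that triangle. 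Worse for your scheme, the remaining chords $\si_3(\m),\si_3(\n)$ needed to exhaust the $\sim_D$-class are \emph{not} minors of any qc-portrait privileged for a geolamination canonically associated with a lamination in $\lami_D$; they arise only from a restructured proper geolamination $\lam$ obtained by deleting the grand orbit of $M$ from $\hlam_\sim$, which turns $U\cup T$ into a larger Fatou gap with refixed edges $\m,\n$. Since your part (a) quantifies over privileged qc-portraits of ``the proper geolamination $\lam$ associated to $\sim$'' (one geolamination per lamination), it produces too few leaves of $\Pp_D$ to show that all of $m_\sim$ lies in one $\sim_D$-class, and your part (b) inherits the same defect: to conclude $\xi=m_{\sim_\xi}\cap\uc$ you must enumerate \emph{all} leaves of $\Pp_D$ meeting $\xi$, including these non-canonical ones. (Relatedly, Theorem B does not say that all such $Q$ are privileged for a single geolamination generating $\sim_\xi$, only that the generated laminations agree.)

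Apart from this, your route is essentially the paper's: in the finite-critical-set cases the paper likewise inserts collapsing \ql s into $C_\sim$ in all admissible ways and observes that their images are the edges and diagonals of $m_\sim$, and the converse direction is exactly the association of each minor in $\Pp_D$ to a proper geolamination and hence to a lamination in $\lami_D$. To close the gap you need the restructuring device above (or an equivalent), which is precisely the ``new phenomenon'' the paper flags before Definition~\ref{d:minset1} and the reason the minor set is defined as the image of the whole major set rather than of the refixed edge.
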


%In fact using proper geolaminations rather than just geolaminations
%from $\Q_3$ is motivated in particular because for this wider class
%of proper geolaminations our construction leads to a closed set of
%minors.

%(unlike what happens for geolaminations $\hlam_\sim$ taken for all
%laminations $\sim$ from $\lami_D$).

%% END SECTION %%%%%%%%%%%%

\section{Privileged QC-Portraits for Proper Geolaminations}\label{s:priv_port}

%The subset $\prnp_3\subset\pr_3$ consists of those proper cubic
%geolaminations which have a critical leaf with no periodic vertices.
%Given $\lam\in\prnp_3$, we will construct special, though not
%unique, qc-portraits $\qcp_\lam\in\fqcp_3^{np}$ called
%\textit{privileged qc-portraits (corresponding to $\lam$)}. The
%major obstacle here is that not all geolaminations have
%qc-portraits.

%\begin{lem}[\cite{bopt14}]\label{l:qcpoexi}
%A geolamination $\lam$ has a qc-portrait if and only if all its
%critical sets are collapsing \ql s, critical leaves or all-critical
%gaps.
%\end{lem}

Let us recall the definition of a privileged portrait.

\begin{dfn}\label{d:pripo}
If $\lam\in \prnp_3$ has a critical leaf $D$ with non-periodic
endpoints then a qc-portrait $\qcp=(Q, D)$ is called
\emph{privileged for $\lam$} if and only if $Q\subset C$ where $C\ne
D$ is a critical set of $\lam$ and either $C$ is finite, or $C$ is a
periodic Fatou gap of degree two and period $k$ and $Q$ is a
collapsing \ql{} which is a convex hull of a (possibly degenerate)
edge $\ell$ of $C$ of period $k$ and another edge $\hell$ of $C$
such that $\si_3(\ell)=\si_3(\hell)$.
\end{dfn}

Observe that edges of the set $Q$ from Definition~\ref{d:pripo} are
not necessarily leaves of $\lam$. For example, if $Q$ is contained
in a periodic Fatou gap of degree two then improper edges of $Q$ are
not leaves of the proper geolamination $\lam$.

Lemma~\ref{l:qcp-for-lam} immediately follows from the definitions
and Lemma~\ref{l:crit-2} and is stated here without proof.

\begin{lem}\label{l:qcp-for-lam}
Each $\lam\in\prnp_3$ has at least one and no more than finitely
many privileged qc-portraits.
\end{lem}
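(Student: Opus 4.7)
The plan is to feed Lemma~\ref{l:crit-2} into Definition~\ref{d:pripo} and verify the result case by case. Since $\lam\in\prnp_3$ excludes Siegel capture type, Lemma~\ref{l:crit-2} leaves four scenarios: (1a) the $\sim_\lam$-class hull $A$ of the endpoints of $D$ is an all-critical triangle; (1b) $A$ is a larger all-critical polygon containing a critical sub-gap or leaf $C\ne D$ of $\lam$ with $\si_3|_C$ of degree two; (2a) $\si_3|_A$ has degree two and there is a finite critical set $C$ of $\lam$ disjoint from $A$ with $\si_3|_C$ of degree two; and (2b) the second critical set $C$ is a periodic Fatou gap of degree two.

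For existence, in (1a) I would take $Q=A$; the all-critical triangle $A$ is a generalized critical \ql{} distinct from $D$ and meeting $D$ only along the common edge $D$, so $(A,D)$ is admissible and privileged. In (1b) and (2a) the finite gap $C$ has $\si_3|_C$ of degree two, so the vertices of $C\cap\uc$ pair up antipodally in cyclic order into $\si_3$-identified pairs; any two such pairs automatically interlace, producing a collapsing \ql{} $Q\subset C$ (and one simply takes $Q=C$ if $C$ is itself a critical leaf). In (2b) I would invoke Lemma~\ref{l:fatgaps}: the semiconjugacy $\psi_C$ from $\si_3^k|_{\bd(C)}$ to $\si_2|_\uc$ sends the unique refixed edge $\ell$ (possibly degenerate) to the $\si_2$-fixed point $0$, and the degree-two map $\si_3|_C$ supplies a second edge $\hell$ of $C$ with $\si_3(\hell)=\si_3(\ell)$, so $Q=\ch(\ell\cup\hell)$ is the required generalized critical \ql{} inscribed in $C$.

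Finiteness is pure bookkeeping: the cubic criticality budget $d-1=2$ caps the number of critical sets (and the number of critical leaves in $\np$) of $\lam$ at two. For each admissible second coordinate $D\in\np$ and each eligible $C$, the number of $Q$'s that can arise is finite — bounded by the number of choices of two $\si_3$-identified vertex pairs of $C$ when $C$ is finite, and equal to one when $C$ is a periodic Fatou gap of degree two (since $0$ is the only fixed point of $\si_2$, so both $\ell$ and $\hell$ are forced). The only mildly non-trivial ingredient is extracting the refixed edge in (2b) through Lemma~\ref{l:fatgaps}; every remaining point is direct inspection, which is why the authors flag the lemma as immediate.
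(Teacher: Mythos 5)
Your argument is correct and is precisely the argument the paper has in mind: the authors state this lemma without proof, remarking only that it ``immediately follows from the definitions and Lemma~\ref{l:crit-2},'' and your case-by-case unpacking of Lemma~\ref{l:crit-2} through Definition~\ref{d:pripo} is exactly that omitted verification. The one small overstatement --- that the choice of $Q$ in the degree-two Fatou gap case is unique (a proper geolamination may carry a finite concatenation of refixed edges on $\bd(C)$, cf.\ Lemma~\ref{l:fatgaps}(2)) --- does not affect either existence or finiteness.
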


In the rest of this section we show that every $\qcp\in\fqcp_3^{np}$
is a privileged qc-portrait of some $\lam\in\prnp_3$. Recall
Definition~\ref{d:admis}.

\begin{dfn}\label{d:admipo}
An \emph{admissible (cubic) qc-portrait} is an ordered pair of
generalized critical \ql s $(A, B)$ such that $A\ne B$, and,
moreover, $A, B$ and all their images intersect at most over a
common edge or vertex, and if $A$ or $B$ has a periodic vertex then
it either has a degenerate image or has a periodic edge.
\end{dfn}

It turns out that if $B\in \np$ then it suffices to make sure that
the first part of the definition of an admissible qc-portrait holds,
the second one then will automatically follow. To prove that, given
a qc-portrait $\qcp$ one needs to use Theorem~\ref{t:pullback_lam}
and construct corresponding (sibling) invariant geolaminations each
of which is denoted by $\lam_\qcp$.
%These results provide a full
%description of geolaminations from $\prnp_3$.

\begin{prop}\label{p:per_edge}
Let $(Q, D)$ be a cubic qc-portrait with $D\in \np$ and such that
$Q\ne D$ and all their images intersect at most over a common edge
or vertex. Moreover, if $Q$ is an all-critical triangle viewed as a
generalized \ql{} then we assume that if $Q$ has a periodic vertex
this vertex is considered as an edge of $Q$. Then $(Q, D)$ is
admissible. Moreover, if $Q$ has no periodic vertices, then any
geolamination containing $\{Q,D\}$ is proper and $\qcp=(Q,D)$ is a
privileged qc-portrait for it.
\end{prop}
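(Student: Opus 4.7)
The plan is to check both conditions in Definition~\ref{d:admipo} and then read off the ``moreover'' statement from Lemma~\ref{l:perioproper} and Definition~\ref{d:pripo}. Condition~(2) of admissibility (non-crossing of elements and all their images) is built into the hypothesis. For condition~(1), the leaf $D\in\np$ has no periodic endpoints, so the implication is vacuous for $D$. If $Q$ is an all-critical triangle, the stated convention turns a periodic vertex of $Q$ into a degenerate edge; if $Q$ is a critical leaf, then $\si_3(Q)$ is degenerate; in either of these cases the implication for $Q$ holds trivially. The substantive case is when $Q=[a_0,a_1,a_2,a_3]$ is a collapsing quadrilateral having a periodic vertex, say $a_0$, of period $k$.

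Setting $z_j=\si_3^{jk}(a_1)$, I would track the chords $\si_3^{jk}(Q)=\ol{a_0 z_j}$ for $j\ge 1$. Non-crossing of $\si_3^{jk}(Q)$ with $Q$ forces $z_j$ to lie in the closed arc $T$ from $a_3$ to $a_1$ that contains $a_0$, because every other position of $z_j$ would make $\ol{a_0 z_j}$ enter the interior of $Q$ or coincide with the spike $\ol{a_0 a_2}$. If $z_j\in\{a_1,a_3\}$ for some $j$, then $\si_3^{jk}(Q)$ is an edge of $Q$; since every edge of $Q$ maps to $\si_3(Q)$ under $\si_3$, one obtains $\si_3^{jk}(\ol{a_0 a_1})=\si_3^{jk}(Q)=\ol{a_0 a_1}$ (or similarly for $\ol{a_0 a_3}$), producing the required periodic edge. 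Two alternatives remain. If $z_j=a_0$ for some $j$, then some earlier chord $\si_3^{j_0}(Q)$ is critical, and viewed inside the invariant geolamination $\lam_\qcp$ of Definition~\ref{d:admis} it must coincide with $D$ or with a spike of $Q$. Coincidence with $D$ is excluded because $\si_3^{j_0}(a_0)$ is periodic while $D\in\np$; matching $\{\si_3^{j_0}(a_0),\si_3^{j_0}(a_1)\}$ against the vertex sets $\{a_0,a_2\}$ and $\{a_1,a_3\}$ of the spikes contradicts either the distinctness of the vertices of $Q$ or the equation $\si_3(a_0)=\si_3(a_2)$ (which, combined with the $k$-periodicity of both $a_0$ and $a_2$, yields $a_0=a_2$ upon applying $\si_3^{k-1}$). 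If instead $z_j\in(a_0,a_1)\cup(a_3,a_0)$ for every $j\ge 1$, the chords $\ol{a_0 z_j}$ form an infinite family of pairwise non-crossing leaves of $\lam_\qcp$ issuing from $a_0$ into $T\setminus\{a_1,a_3\}$; since $a_0$ is a $\si_3^k$-repelling fixed point, I would argue that the orbit $\{z_j\}$ cannot indefinitely avoid $\{a_0,a_1,a_3\}$, reducing this case to one already handled.

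For the ``moreover'' part, assume $Q$ has no periodic vertices, and let $\lam$ be any $\si_3$-invariant sibling geolamination containing $\{Q,D\}$. The total critical degree $3-1=2$ of $\si_3$ is fully absorbed by $\{Q,D\}$ (whether $Q$ is a collapsing quadrilateral and $D$ a separate critical leaf, or $Q$ is an all-critical triangle with $D$ among its edges, or $Q$ is itself a critical leaf distinct from $D$), so the critical chords of $\lam$ are exactly $D$, the spikes of $Q$ in the collapsing case, and the edges of $Q$ in the all-critical triangle case. Each such chord has its endpoints among the vertices of $Q$ and the endpoints of $D$, all of which are non-periodic; hence $\lam$ contains no critical leaf with a periodic endpoint and no critical wedge with a periodic vertex, and Lemma~\ref{l:perioproper} yields that $\lam$ is proper. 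Finally, since $Q$ is a finite critical set of $\lam$ distinct from $D$, taking $C=Q$ in Definition~\ref{d:pripo} exhibits $\qcp=(Q,D)$ as a privileged qc-portrait of $\lam$. The hardest step will be closing the last (expansion) case of the admissibility argument, where one needs to rule out that an infinite family of leaves from the periodic vertex $a_0$ could persist in $T\setminus\{a_1,a_3\}$ without eventually returning to $\{a_0,a_1,a_3\}$.
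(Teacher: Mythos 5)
Your setup is fine: the reduction to a collapsing \ql{} $Q$ with a periodic vertex $a_0$ of period $k$, the confinement of $z_j=\si_3^{jk}(a_1)$ to the closed arc from $a_3$ to $a_1$ through $a_0$, and the treatment of the cases $z_j\in\{a_1,a_3\}$ and $z_j=a_0$ are all workable, and your ``moreover'' argument is essentially the paper's (criticality count plus Lemma~\ref{l:perioproper} plus Definition~\ref{d:pripo}). But the case you yourself flag as open --- the orbit $\{z_j\}$ staying forever in $(a_0,a_1)\cup(a_3,a_0)$ --- is the entire substance of the admissibility claim, and the appeal to $a_0$ being $\si_3^k$-repelling cannot close it on its own: for an expanding circle map there genuinely exist orbits that remain in a fixed union of arcs forever (they form a Cantor set), so expansion alone does not force $\{z_j\}$ to return to $\{a_0,a_1,a_3\}$. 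The missing ingredient is order preservation coming from the laminational structure, not just expansion.

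The paper closes exactly this gap with Lemma~\ref{l:impleaf}: the edge $\ol{a_0a_1}$ of $Q$ is an improper leaf of any pullback geolamination $\lam_\qcp$, and every improper leaf has an eventual image that is either a critical leaf with a periodic endpoint or a leaf forming a critical wedge with a periodic edge. Since $D\in\np$ and (by the criticality count you already carried out) the only critical wedge of $\lam_\qcp$ with a periodic vertex is the wedge of $Q$ at $a_0$, that wedge must contain a periodic edge, which is condition (1) of admissibility. The proof of Lemma~\ref{l:impleaf} is where your ``hardest step'' actually gets done: Lemma~\ref{l:triod_ord}, applied to the triods $\ol{a_0z_j}\cup\ol{a_0z_{j+1}}$, shows that as long as $a_0,a_1,z_1,z_2,\dots$ are distinct they are monotonically ordered on the circle; a monotone sequence $z_j=\si_3^{k}(z_{j-1})$ would converge to a $\si_3^k$-fixed point, contradicting expansion there. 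Hence some collision occurs, and the remaining bookkeeping is what you already wrote. So the skeleton of your argument is compatible with the paper's, but as written it does not prove admissibility; you need to cite (or reprove) Lemma~\ref{l:impleaf}, or at least supply the order-preservation step from Lemma~\ref{l:triod_ord} before invoking expansion.
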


\begin{proof}
Let $\lam_\qcp$ denote a geolamination which contains $(Q, D)$. If
$\lam_\qcp$ is not proper then there is a periodic point $p$ and a
critical leaf/wedge $L$ with vertex at $p$. Since the endpoints of
$D$ are non-periodic, $p\notin D$. Thus if vertices of $Q$ are
non-periodic, then $p\notin Q$ which implies that such a set $L$
does not exist, $\lam_\qcp$ is proper, and by definition $\qcp$ is a
privileged qc-portrait for $\lam_\qcp$. This proves the second claim
of the lemma.

To prove the first claim, assume that $Q$ is a collapsing \ql{} with
a periodic vertex $p$. Clearly, two edges of $Q$ with periodic
vertex, say, $p$, form a critical wedge $L$. Moreover, it follows
that $\lam$ has no critical leaves with a periodic endpoint, and the
unique critical wedge of $\lam$ is $L$. Since by
Lemma~\ref{l:impleaf} $\lam$ has at least one critical wedge with a
periodic edge, the second claim of the lemma follows.
\end{proof}

Corollary~\ref{c:prop_orb} immediately follows.

\begin{cor}\label{c:prop_orb}
Let $\qcp=(Q, D)\in\qnp_3$. Then $\si_3^k(Q)$ is not an improper
leaf for all $k>0$.
\end{cor}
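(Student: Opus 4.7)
The plan is to combine the admissibility hypothesis with Proposition~\ref{p:per_edge} and the description of proper geolaminations, splitting on the type of the generalized critical quadrilateral $Q$. In the cubic setting $Q$ is a critical leaf, an all-critical triangle, or a collapsing \ql{}. In the first two cases $\si_3(Q)$ is a single point, so for every $k\ge 1$ the set $\si_3^k(Q)$ is degenerate and in particular is not an improper (hence non-degenerate) leaf. Thus only the case in which $Q$ is a collapsing \ql{} requires work; in that case $\si_3(Q)$ is a non-degenerate chord.

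I would then split the collapsing-\ql{} case according to whether $Q$ has a periodic vertex. If $Q$ has no periodic vertex, then by Proposition~\ref{p:per_edge} any $\si_3$-invariant geolamination $\lam_\qcp$ containing $\{Q,D\}$ (produced via Theorem~\ref{t:pullback_lam}) is proper. By Lemma~\ref{l:perioproper} a proper geolamination has no improper leaves. Since $\lam_\qcp$ is forward invariant, every $\si_3^k(Q)$ with $k\ge 1$ either lies in $\lam_\qcp$ (and so is a non-improper leaf) or is degenerate (if some iterate happens to be critical), and in neither case is it an improper leaf.

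If, on the other hand, $Q$ has a periodic vertex, then since $\si_3(Q)$ is non-degenerate the admissibility of $(Q,D)$ forces $Q$ to contain a periodic edge $e=\overline{pq}$; by Corollary~\ref{c:sameperiod}, $p$ and $q$ are periodic of the same period. Every non-spike edge of a collapsing \ql{} has image equal to $\si_3(Q)$, so $\si_3(Q)=\overline{\si_3(p)\si_3(q)}$ has two periodic endpoints. Iterating, each $\si_3^k(Q)$ for $k\ge 1$ is a leaf with two periodic endpoints, and in particular is not improper.

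The main obstacle is a careful bookkeeping step: one has to use admissibility to upgrade ``periodic vertex'' to ``periodic edge'', and then exploit the fact that every edge of a collapsing \ql{} carries the full image $\si_3(Q)$ in order to conclude that \emph{both} endpoints of $\si_3(Q)$ are periodic rather than just one. Once this observation is in place, the corollary drops out from Proposition~\ref{p:per_edge} and Lemma~\ref{l:perioproper}.
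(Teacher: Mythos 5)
Your proof is correct and follows essentially the same route as the paper: the paper's proof is exactly the dichotomy on whether $Q$ has a periodic vertex, using Proposition~\ref{p:per_edge} to produce a periodic edge (whence all images of $Q$ have periodic endpoints) in one case, and to conclude that the pullback geolamination $\lam_\qcp$ is proper (hence has no improper leaves) in the other. You merely spell out the details the paper leaves as ``immediate,'' including the observation that a non-spike edge of a collapsing quadrilateral maps onto $\si_3(Q)$, so both endpoints of every image are periodic.
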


\begin{proof}
If $Q$ has a periodic vertex, then by Proposition~\ref{p:per_edge}
it has a periodic edge, and the result is immediate.  Otherwise, by
Proposition~\ref{p:per_edge} any pullback geolamination $\lam_\qcp$
is proper.  Since all $\si_3$-images of $Q$ are in $\lam_\qcp$, none
of them are improper.
\end{proof}

To prove that if $\qcp=(Q, D)\in \qnp_3$ then $\qcp$ is a privileged
qc-portrait of a proper cubic geolamination we use the following
strategy. First we use Theorem~\ref{t:pullback_lam} and construct a
pull-back geolamination $\lam_\qcp$. Then we clean $\lam_\qcp$ using
Lemma~\ref{l:pr_cleaning} and obtain a proper geolamination
$\lam^{pr}$. Now consider two cases. The case when sets from $\qcp$
have no periodic vertices is easier to handle. In this case by
Proposition~\ref{p:per_edge} any pullback geolamination $\lam_\qcp$
is proper. By definition $\lam_\qcp$ is not of Siegel capture type.
Since $Q$ and $D$ by construction remain critical sets of
$\lam_\qcp$, $\qcp$ is privileged for $\lam_\qcp$ by definition.

Consider now the case when $Q$ has a periodic vertex. By
Proposition~\ref{p:per_edge} then $Q$ is either a critical leaf, or
an all-critical triangle, or a collapsing \ql{} with a periodic
edge. To show that $(Q, D)$ is a privileged qc-portrait for some
proper cubic geolamination we need the following fact.

\begin{prop}[\cite{sch09, thu85}]\label{p:quad_refixed_major}
Let $\ol c$ be a $\si_2$-critical leaf with a periodic endpoint $p$
of period $k>1$. Then there exists a unique leaf $M_{\ol c}=\ol{px}$
and a Fatou gap $V_{\ol c}$ such that $V_{\ol c}$ is of period $k$
and $M_{\ol c}$ is its refixed leaf.
\end{prop}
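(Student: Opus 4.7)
The plan is to construct $V_{\ol c}$ and $M_{\ol c}$ directly from $\ol c$ using the dynamics of $\sigma_2$ near the periodic endpoint $p$, and then to verify their defining properties and uniqueness.

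Write $\ol c=\ol{pq}$ with $q=p+1/2$ and set $p_j=\sigma_2^j(p)$, so that $p_0=p=p_k$. Since $k>1$, the point $q$ is strictly preperiodic: otherwise $q=p_j$ for some $j$ would give $p_{j+1}=\sigma_2(q)=p_1$, hence $p_j=p$ and $q=p$, a contradiction. Let $I$ denote the open half-circle of $\uc\setminus\{p,q\}$ containing $p_1$. Among the $2^k-1$ fixed points of $\sigma_2^k$ on $\uc$, one is $p$; let $x$ be the one in $I$ closest to $p$ along $I$, and set $M_{\ol c}=\ol{px}$. Then $M_{\ol c}$ shares the endpoint $p$ with $\ol c$ (so is unlinked from $\ol c$), and both its endpoints are $\sigma_2^k$-fixed; since $p$ has exact period $k$, so does $M_{\ol c}$.

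Let $H$ denote the closed half-disk of $\cdisk\setminus M_{\ol c}$ whose arc on $\uc$ is the short sub-arc of $I$ from $p$ to $x$. I construct $V_{\ol c}\subset H$ by iteratively pulling back $M_{\ol c}$ inside $H$ under $\sigma_2^k$. The choice of $x$ as the closest $\sigma_2^k$-fixed point in $I$ ensures that the first $\sigma_2^k$-pullback of $M_{\ol c}$ that lies in $H$ consists of exactly one additional chord, and iterating produces a dyadic tree of nested pullbacks whose closure bounds a gap $V_{\ol c}\subset H$ with uncountable $\uc$-boundary. The boundary dynamics $\sigma_2^k|_{\bd V_{\ol c}}$ is semiconjugate via $\psi_{V_{\ol c}}$ to $\sigma_2$ on $\uc$, which makes $V_{\ol c}$ a Fatou gap of degree $2$ with $M_{\ol c}$ as its refixed edge; the iterates $\sigma_2^j(V_{\ol c})$ for $0\le j<k$ sit near the $k$ distinct orbit points $p_j$, so $V_{\ol c}$ has period exactly $k$.

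For uniqueness, suppose $V'$ is another Fatou gap of period $k$ with refixed edge $M'=\ol{py}$, compatible with $\ol c$ (i.e., coexisting with $\ol c$ in a common sibling invariant geolamination). Then $y$ is a $\sigma_2^k$-fixed point and $\sigma_2(V')$ has refixed edge $\sigma_2(M')=\ol{p_1\,\sigma_2(y)}$ with $p_1$ as a vertex; combinatorial constraints from the orbit of $p$ force $V'$ to lie in the half-disk of $\cdisk\setminus M'$ whose boundary arc lies in the same half-circle as $p_1$, so $y\in I$. Minimality of $x$ then forces $y=x$ and $V'=V_{\ol c}$. The main technical step is the \emph{exactly one additional pullback} claim: the existence of one pullback chord in $H$ comes from the degree-$2^k$ covering structure of $\sigma_2^k$, while the absence of any further pullback in $H$ follows because any such would require a new $\sigma_2^k$-fixed point in $I$ strictly between $p$ and $x$, contradicting minimality.
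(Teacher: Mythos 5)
First, note that the paper itself offers no proof of this proposition: it is quoted from Thurston and Schleicher, so there is no internal argument to compare yours against, and the only question is whether your construction is correct. It is not. The fatal step is the choice of $x$ as the $\si_2^k$-fixed point of $I$ \emph{closest to $p$}. Take $k=3$ and $p=2/7$, so $q=11/14$, $p_1=4/7$ and $I=(2/7,11/14)$; the $\si_2^3$-fixed points in $I$ are $3/7,4/7,5/7$, and your rule selects $x=3/7$, i.e.\ $M_{\ol c}=\ol{(2/7)(3/7)}$. But $\si_2(M_{\ol c})=\ol{(4/7)(6/7)}$ and $\si_2^2(M_{\ol c})=\ol{(1/7)(5/7)}$ are linked, so $\ol{(2/7)(3/7)}$ is not a leaf of any invariant geolamination, let alone the refixed edge of a period-$3$ Fatou gap. (The correct leaf here is $\ol{(2/7)(5/7)}$, the refixed edge of the critical gap of the ``airplane'' lamination, which is the gap that actually contains $\ol c$ as a chord; note that $5/7$ is the fixed point of $I$ adjacent to $q$, not to $p$.) The underlying omission is that you never verify the one genuinely non-trivial requirement: that the forward orbit of the chosen leaf $\ol{px}$ is pairwise unlinked and unlinked with $\ol c$. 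That verification is the real content of the classical result, and it is exactly what fails for the ``closest to $p$'' choice.

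Two further problems. The ``exactly one additional pullback'' claim and the uniqueness argument are asserted rather than proved: $\si_2^k$-pullback chords of $M_{\ol c}$ inside $H$ need not have $\si_2^k$-fixed endpoints, so ``minimality of $x$'' does not control them; and ``combinatorial constraints from the orbit of $p$'' is not an argument. Moreover, read literally, uniqueness is delicate: for $p=1/7$ both $\ol{(1/7)(2/7)}$ and $\ol{(1/7)(4/7)}$ are refixed edges of period-$3$ Fatou gaps (two different gaps in the rabbit's cycle), so the statement only becomes true once one requires compatibility of $V_{\ol c}$ and of the whole orbit of $M_{\ol c}$ with $\ol c$ --- a requirement your proof neither states nor uses, but which the application in Theorem~\ref{t:priv_qcp} relies on (there $V_{\ol c}$ must contain $\ol c$ as a chord, so that $Q\subset V_Q$). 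A correct argument must either appeal to the cited sources or characterize $x$ by the unlinkedness of the orbit of $\ol{px}$ relative to $\ol c$ and prove that exactly one $\si_2^k$-fixed point in $I$ has this property.
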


Now, construct a proper geolamination $\lam^{pr}$ out of $\lam_\qcp$
using Definition~\ref{d:propersbl} and Lemma~\ref{l:pr_cleaning}. If
$\qcp$ is a privileged qc-portrait for $\lam^{pr}$, there is nothing
to prove. Suppose that $\qcp$ is not a privileged qc-portrait for
$\lam^{pr}$. By definition the only way it can happen is when the
set $Q$ is contained in a quadratic Fatou gap $U$ of $\lam^{pr}$,
and, moreover, the period of the periodic edge of $Q$ is greater
than the period of the gap $U$. In this case we apply the map
$\psi_U$ which sends $Q$ to a $\si_2$-critical leaf $\ol c$ with a
periodic endpoint, say, $p$ of period $k>1$ ($k>1$ \emph{exactly
because} $\qcp$ is not privileged for $\lam_\qcp$). Then we use
Proposition~\ref{p:quad_refixed_major}, find a $\si_2$-periodic
Fatou gap $V$ for which $p$ is an endpoint of a refixed edge, and
pull $V$ back to $U$ using the projection map $\psi_U$. This finally
produces a proper geolamination for which $\qcp$ is a privileged
qc-portrait.

%DSP 11.24.14

\begin{thm}\label{t:priv_qcp}
Every $\qcp=(Q, D)\in\qnp_3$ is a privileged qc-portrait of a
geolamination from $\prnp_3$.
\end{thm}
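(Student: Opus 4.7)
The plan is to carry out exactly the two-step strategy sketched immediately before the statement. Given $\qcp=(Q,D)\in\qnp_3$, I would first apply Theorem~\ref{t:pullback_lam} to the forward $\si_3$-invariant collection $\{Q,D\}\cup\uc$ (after adding a disjoint sibling collection for $Q$ and $D$, which exists as they are generalized critical quadrilaterals) and obtain a $\si_3$-invariant sibling geolamination $\lam_\qcp\supset\{Q,D\}$. Then I would pass to its proper sub-geolamination $\lam^{pr}=(\lam_\qcp)^p$ of Definition~\ref{d:propersbl}; Lemma~\ref{l:pr_cleaning} guarantees that $\lam^{pr}$ is a proper $\si_3$-invariant sibling geolamination still containing $D$, since $D$ has non-periodic endpoints. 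The proof then splits into two cases according to whether $Q$ has a periodic vertex.

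In the easy case $Q$ has no periodic vertex. Proposition~\ref{p:per_edge} then yields directly that $\lam_\qcp$ itself is proper, so $\lam^{pr}=\lam_\qcp$, and by construction both critical sets $Q$ and $D$ persist. The geolamination $\lam_\qcp$ cannot be of Siegel capture type: the critical edge of a Siegel gap must lie on the Siegel cycle, but the only critical leaf of $\lam_\qcp$ with non-periodic endpoints is $D$, and the other critical set, containing $Q$, is either finite or a Fatou gap of degree two (so does not give rise to a Siegel cycle). Hence $\lam_\qcp\in\prnp_3$, and $\qcp$ is privileged for it by Definition~\ref{d:pripo}.

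The substantive case is when $Q$ has a periodic vertex $p$; by Proposition~\ref{p:per_edge}, $Q$ is then a collapsing quadrilateral with a periodic edge $\ell$ of some period $k_0$. After cleaning, the critical set $C$ of $\lam^{pr}$ containing $Q$ may be a quadratic Fatou gap $U$ of period $k$ with $k<k_0$, in which case $\qcp$ is not yet privileged because Definition~\ref{d:pripo} requires the period of $\ell$ to equal the period of $U$. To remedy this, I would apply the semiconjugacy $\psi_U$ of Lemma~\ref{l:fatgaps}(2) between $\si_3^k|_{\bd(U)}$ and $\si_2$, under which $Q$ projects to a $\si_2$-critical leaf $\ol c$ whose periodic endpoint $\psi_U(p)$ has $\si_2$-period $k_0/k>1$. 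Proposition~\ref{p:quad_refixed_major} then furnishes a unique quadratic refixed leaf $M_{\ol c}$ and Fatou gap $V_{\ol c}$ of period $k_0/k$ containing $\ol c$. Pulling $M_{\ol c}$ through $\psi_U^{-1}$ produces a chord $\widehat M\subset U$ which together with its $\si_3$-orbit $L$ is unlinked with every leaf of $\lam^{pr}$; Proposition~\ref{p:pr_per_insert} then allows me to enlarge $\lam^{pr}$ to a proper geolamination $\widehat\lam$ containing $L\cup\lam^{pr}$. By construction, $U$ is subdivided into a $\si_3^{k_0}$-periodic Fatou gap $\widehat U$ of degree two whose refixed edge has $\ell$ as a sibling edge, so $Q=\ch(\ell,\widehat\ell)\subset\widehat U$ realizes Definition~\ref{d:pripo} and $\qcp$ is privileged for $\widehat\lam\in\prnp_3$.

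The main obstacle is this last case. Specifically, one has to verify: that the chord $\widehat M\subset U$ produced by the $\psi_U^{-1}$ lift really is a leaf (i.e.\ a single chord, not a concatenation forced by edges of $U$) and does not cross any existing leaf of $\lam^{pr}$; that the new Fatou gap $\widehat U$ containing $Q$ has the correct period $k_0$ rather than a proper divisor; and that inserting the grand orbit of $L$ does not create improper leaves or turn $\widehat\lam$ into a Siegel-capture type geolamination. The first two points follow from the uniqueness part of Proposition~\ref{p:quad_refixed_major} together with the fact that $\psi_U$ is a degree-preserving semiconjugacy; the third is automatic because $D$ remains a critical leaf with non-periodic endpoints and the other critical set of $\widehat\lam$ is the finite-period Fatou gap $\widehat U$, not an infinite gap with a Siegel orbit.
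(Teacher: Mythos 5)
Your proposal is correct in outline, but it executes the \emph{announced strategy} of the paper (the paragraph preceding Theorem~\ref{t:priv_qcp}) rather than the argument the paper actually uses in the formal proof. The paper does not perform the insertion at all: it first makes an extremal choice --- among all geolaminations containing $Q$ and $D$ it picks one, $\lam$, with a \emph{maximal} family of non-degenerate periodic leaves of period at most $N$ (the vertex period of $\ell_Q$) --- cleans it via Lemma~\ref{l:pr_cleaning}, and then argues by contradiction. If $\qcp$ were not privileged for $\lam^{pr}$, the semiconjugacy $\psi_{U_Q}$ and Proposition~\ref{p:quad_refixed_major} produce a periodic leaf $M_Q$ of vertex period $N$, lying inside the gap $U_Q$ and hence unlinked with $Q$, $D$ and everything in $\lam$; its existence already contradicts the maximality of $\lam$, with no need to build the enlarged geolamination and check that it witnesses privilegedness. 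Your direct route instead must actually produce $\widehat\lam$ and verify that $\qcp$ is privileged for it; the extremal trick is precisely what lets the paper skip that verification.

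Because you take the direct route, the burden you acknowledge in your last paragraph is real and is not discharged by the references you cite. Two points in particular need work. First, before $\psi_U$ can be applied to $Q$ you must know that $\ell_Q$ and $\hell_Q$ survive the cleaning of Definition~\ref{d:propersbl} and sit on $\bd(U)$ as edges; the paper spends a paragraph on this (arguing via disjoint sibling collections that $\hell_Q\in\lam^{pr}$), whereas the uniqueness statement in Proposition~\ref{p:quad_refixed_major} says nothing about it. Second, after invoking Proposition~\ref{p:pr_per_insert} you must check that the gap of $\widehat\lam$ containing $Q$ is exactly the lift $V_Q$ of $V_{\ol c}$ --- i.e., that the pullbacks of the inserted cycle can be chosen so as not to subdivide $V_Q$, that $\hell_Q$ lands on $\bd(V_Q)$, and that $\ell_Q$ itself (not merely the new leaf $\widehat M$) is a refixed edge of $V_Q$ of period exactly $k_0$, as Definition~\ref{d:pripo} requires. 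These can all be verified, but as written they are asserted rather than proved; if you prefer not to carry them out, adopting the paper's maximality argument is the cleaner option.
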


\begin{proof}
We may assume that $Q$ has a periodic (possibly degenerate) edge
$\ell_Q$ of vertex period $N$ and its sibling-edge $\hell_Q$.
Consider the family $\mathcal{A}_\qcp$ of \emph{all} cubic
geolaminations containing $Q$ and $D$ (by
Theorem~\ref{t:pullback_lam} $\mathcal{A}_\qcp\ne \0$). Among
geolaminations from $\mathcal{A}_\qcp$, choose a geolamination
$\lam$ with a maximal family of non-degenerate periodic leaves of
periods at most $N$. Clearly, such a geolamination $\lam$ exists. By
Lemma~\ref{l:pr_cleaning} we can find a proper geolamination
$\lam^{pr}\subset \lam$ with $\ell_Q\in \lam^{pr}$.

By way of contradiction we may assume that $\qcp$ is not privileged
for $\lam^{pr}$. Then there must exist a periodic Fatou gap $U_Q$
which contains $Q$. On the other hand, by Lemma~\ref{l:pr_cleaning}
$D$ is a leaf of $\lam^{pr}$. It follows that $U_Q$ is a Fatou gap
such that $\si_3|_{\bd(U_Q)}$ is of degree 2. Consider the case when
$\ell_Q$ is non-degenerate. Since by Lemma~\ref{l:pr_cleaning}
$\ell_Q\in \lam^{pr}$ then $\ell_Q$ is an edge of $U_Q$. Let us show
that $\hell_Q\in \lam^{pr}$. Indeed, by Lemma~\ref{l:pr_cleaning} we
include in $\lam^{pr}$ those leaves of $\lam$ which have disjoint
sibling collections consisting of proper leaves and whose all images
have the same property. Now, take $\hell_Q$ and its disjoint sibling
collection (which must exist by definition of a sibling invariant
geolamination). It follows that such collection \emph{must} include
$\ell_Q$. Hence $\hell_Q\in \lam^{pr}$ as desired. By construction,
$\ell_Q$ and $\hell_Q$ are edges of $U_Q$. Now, if $\ell_Q$ is
degenerate then $Q$ can be a critical leaf or an all-critical
triangle (viewed as a generalized \ql). In either case $\ell_Q$ is a
vertex of (an edge of) $U_Q$ and $\hell_Q$ is either a vertex of
$U_Q$ or an edge of $U_Q$ (the latter holds if $Q$ is an
all-critical triangle).

%Observe that the same arguments with slightly different
%justifications go through in the case when $Q$ is the all critical
%triangle $\De$ and $v$ is periodic.

Let us show that then $\lam^{pr}$ cannot be a Siegel geolamination
of capture type. Indeed, suppose otherwise. Then $U_Q$ eventually
maps onto a periodic Siegel gap, and so the original set $Q$ has
edges with non-preperiodic leaves, a contradiction. Hence
$\lam^{pr}$ is not a Siegel geolamination of capture type.

%Since $U_Q$ contains a periodic edge $\ell_Q$ and a non-periodic
%edge $\hell_Q$, $U_Q$ must be an infinite gap of degree $d\ge 2$.
%Since $D$ is a non-periodic critical leaf, $d=2$.

%Since $\ell_Q$ is periodic, then so is $U_Q$. In particular,
%$\lam^{pr}$ cannot be a Siegel geolamination of capture type.
The map $\phi_{U_Q}:\bd(U_Q)\to\uc$ collapses concatenations of
edges in $\bd(U_Q)$ to points and semiconjugates the remap of
$\bd(U_Q)$ to $\si_2$. Clearly, $\phi_{U_Q}(Q)=\ol{ab}$ is a
$\si_2$-critical leaf with a periodic endpoint, say, $z$. If $z=0$
then $\ell_Q$ is a refixed edge of $U_Q$ and we are done. Assume
that $z\ne 0$ and bring this to a contradiction. Set $M=M_{\ol{ab}}$
as in Proposition~\ref{p:quad_refixed_major}, and use
$\phi_{U_Q}^{-1}$ to pull the Fatou gap $V=V_{\ol{ab}}$ from
Proposition~\ref{p:quad_refixed_major} back to a gap $V_Q\subset
U_Q$ by taking full $\psi_{U_Q}$-preimages of points from $V\cap
\uc$ and then taking the convex hull of such preimages. Then
$\ell_Q$ is an edge of $V_Q$ and a periodic edge $M_Q$ of $V_Q$ of
vertex period $N$ maps to $M$ under $\psi_U$. By construction,
$Q\subset V_Q$ while $M_Q$ is a periodic leaf whose orbit consists
of leaves which do not cross $Q$. This contradicts the choice of the
geolamination $\lam$ as a geolamination with the maximal number of
non-degenerate periodic leaves of period at most $N$.
\end{proof}

\noindent\emph{Proof of Theorem A}. By Lemma~\ref{l:qcp-for-lam}
there is at least one and at most finitely many qc-portraits
privileged for a given geolamination from $\prnp_3$. On the other
hand, by Theorem~\ref{t:priv_qcp} every qc-portrait $\qcp\in\qnp_3$
is a privileged qc-portrait of a geolamination from $\prnp_3$.

Recall that for $D\in \np$, $\Ss_D$ is the collection of admissible
qc-portraits with $D$ as the second element. To show that $\Ss_D$ is
compact, let $(Q_n, D)\to (Q, D)$. By definition the fact that
$Q_n$'s are generalized critical \ql s implies that $Q$ is a
generalized critical \ql. Moreover, if $(Q, D)$ is not admissible
then either there exist numbers $i, j$ with $\si^i_3(Q)$ crossing
$\si^j_3(Q)$, or there exists a number $k$ such that $\si^k_3(Q)$
crosses $D$. In either case, the same crossing will have to take
place for the corresponding images of $Q_n$'s and $D$ contradicting
the fact that $(Q_n, D)$ is admissible. By
Proposition~\ref{p:per_edge} $(Q, D)$ is admissible. \hfill \qed

Notice that if $\qcp$ is a privileged qc-portrait of a proper
geolamination $\lam$ and all vertices of sets from $\qcp$ are
non-periodic then $\qcp$ remains a privileged qc-portrait of the
$\Q$-geolamination $\lam_{\sim_\lam}$ generated by $\lam$.

\section{Intersecting Minors of {$\qnp_3$}}\label{s:interminor}

By definition a proper family of chords $\mathcal F$ generates a
lamination $\sim_\mathcal F$ and, if $\mathcal F=\lam$ is also a
($\si_d$-)invariant geolamination, then $\sim_\lam$ is a
($\si_d$-)invariant lamination. Throughout the rest of this section
the following holds.

\begin{staa}
We fix a critical leaf $D=\ol{ab}\in \np$ with a positive arc $(a,
b)\subset\uc$ of length $\frac13$, set $\si_3(D)=d$, and denote by
$\De$ the all-critical triangle $\ch(a, b, v)$. Fix admissible
qc-portraits $(Q_1, D)$ $=$ $\qcp_1$ and $(Q_2, D)=\qcp_2$ such that
their \emph{minors} $\si_3(Q_1)$ and $\si_3(Q_2)$ are non-disjoint.
Assume that $\qcp_i$ is a privileged qc-portrait for a proper
geolamination $\lam_i$ for $i=1, 2$ such that neither $\lam_1$ nor
$\lam_2$ is of Siegel capture type. Set $\sim_{\lam_1}=\sim_1,
\sim_{\lam_2}=\sim_2$. Let $\hlam_1=\lam_{\sim_1}$ and $
\hlam_2=\lam_{\sim_2}$.
\end{staa}

By definition, $Q_1\ne D, Q_2\ne D$. On the other hand, it is
possible that $Q_1=\De$ or $Q_2=\De$.

Our aim is to prove Theorem B, i.e. to prove that $\sim_1=\sim_2$.
If $Q_1$ and $Q_2$ are strongly linked or share a spike, tools from
\cite{bopt14} apply and eventually imply the desired. However we
need to prove Theorem B under weaker assumptions that $\si_3(Q_1)$
and $\si_3(Q_2)$ are non-disjoint. Thus we need to study the case
when $\si_3(Q_1)$ and $\si_3(Q_2)$ are non-disjoint but $Q_1$ and
$Q_2$ are neither strongly linked nor share a spike.
Lemma~\ref{l:c1c2} shows that this is an exceptional case.

\begin{lem}\label{l:c1c2}
Qc-portraits $\qcp_1$ and $\qcp_2$ are linked unless one of the sets
$Q_1, Q_2$ coincides with $(a, x, v, x')$  while the other one
coincides with $(b, y, v, y')$.
\end{lem}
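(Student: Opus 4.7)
The plan is to trace what non-emptiness of $\si_3(Q_1)\cap\si_3(Q_2)$ forces at the preimage level, combining the combinatorics of $\si_3^{-1}$ with admissibility of the qc-portraits $(Q_i,D)$. The key structural input from admissibility is that $D$ cannot be a spike of $Q_i$: if $D$ were a diagonal of a collapsing $Q_i$, then $Q_i$ would properly contain $D$ in its interior, violating the requirement that $Q_i\cap D$ is at most a common edge or vertex. Hence any spike of $Q_i$ that maps to $d=\si_3(D)$ is either $\ol{av}$ or $\ol{bv}$, never $D$ itself.

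First I would dispatch the cases where at least one $Q_i$ has a degenerate minor, i.e.\ is a critical leaf or an all-critical triangle: then its vertices lie in a three-element set $\si_3^{-1}(p)$, the intersection condition forces the other set $Q_j$ to have a vertex (endpoint of a spike) in the same $\si_3^{-1}(p)$, and since the three critical chords on these three preimages pairwise share vertices, one verifies by direct inspection that $Q_1$ and $Q_2$ share a spike or strongly link (here the admissibility reduction excludes $D$ and so rules out borderline configurations involving $\De$). Next, assume both $Q_i$ are collapsing quadrilaterals with image chords $\si_3(Q_i)=\ol{p_iq_i}$. If $\ol{p_1q_1}$ and $\ol{p_2q_2}$ cross in the open disk, the interleaving of their endpoints on $\uc$ lifts under $\si_3$ (using admissibility against $D$) to an interleaving of the eight vertices of $Q_1\cup Q_2$, producing strong linkage. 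The remaining configuration is that the image chords share an endpoint $p$, and the corresponding spikes $s_i\subset Q_i$ both lie in $\si_3^{-1}(p)$.

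In this shared-endpoint subcase, if $s_1=s_2$ then $Q_1$ and $Q_2$ share a spike and are linked; if $s_1\ne s_2$ and $p\ne d$, the triangle of preimages of $p$ is disjoint from $\{a,b,v\}$, and admissibility gives enough room to force strong linkage from the vertex shared by $s_1$ and $s_2$. The only obstruction to linkage is $s_1\ne s_2$ with $p=d$: then $\{s_1,s_2\}\subset\{D,\ol{av},\ol{bv}\}$, and the admissibility reduction excludes $D$, leaving $\{s_1,s_2\}=\{\ol{av},\ol{bv}\}$. Up to swapping $Q_1$ and $Q_2$, this forces $s_1=\ol{av}$ and $s_2=\ol{bv}$, whence $Q_1=[a,x,v,x']$ and $Q_2=[b,y,v,y']$ with $\ol{xx'},\ol{yy'}$ the remaining spikes---exactly the exceptional configuration of the lemma. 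The main obstacle I anticipate is the crossing-chords subcase: one must verify carefully that the three-to-one cover $\si_3$ actually transfers interleaving of minors on $\uc$ to interleaving of the eight vertices of $Q_1$ and $Q_2$, and here the admissibility hypothesis against $D$ is essential to eliminate exotic preimage arrangements that would otherwise break strong linkage.
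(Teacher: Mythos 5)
Your overall strategy---splitting on how the two minors meet and using admissibility against $D$ to force every spike other than $D$ to have both endpoints in $[b,a]$---is essentially the paper's argument, and your treatment of the two branches where both $Q_i$ are true collapsing quadrilaterals (minors crossing; minors sharing an endpoint) is sound and reaches the right exceptional configuration. However, your opening step contains a genuine error. You claim that whenever at least one of $Q_1,Q_2$ has a degenerate minor (is a critical leaf or an all-critical triangle), the portraits must be linked, on the grounds that the three critical chords over $\si_3^{-1}(p)$ ``pairwise share vertices.'' Sharing a vertex is neither sharing a spike nor strong linkage. Concretely, take $Q_1=\ol{av}$ and $Q_2=\ol{bv}$ (or $Q_1=\ol{av}$ and $Q_2=(b,y,v,y')$): both minors contain $d$, so the hypothesis $\si_3(Q_1)\cap\si_3(Q_2)\ne\0$ holds, yet the only spike of $Q_1$ is $\ol{av}$ while the spikes of $Q_2$ are $\ol{bv}$ (and $\ol{yy'}$), so no spike is shared, and a direct check of Definition~\ref{d:strolin} shows they are not strongly linked. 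These are precisely degenerate instances of the exceptional configuration $(a,x,v,x')$ versus $(b,y,v,y')$ (e.g.\ $x=a$, $x'=v$), and they genuinely occur: when $v$ is not periodic, $(\ol{av},D)$ and $(\ol{bv},D)$ are both admissible and privileged, and the paper's Case V analysis (Lemma~\ref{l:v-non-perio}) exists exactly to handle them. So the exceptional case is not confined to the branch where both $Q_i$ are true quadrilaterals, and your degenerate-minor branch must conclude ``linked or exceptional,'' not ``linked.''

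Two smaller points. In the shared-endpoint subcase, the configuration ``$s_1\ne s_2$ and $p\ne d$'' is vacuous rather than a case to be forced into strong linkage: for $p\ne d$ the only critical chord over $\si_3^{-1}(p)$ that does not cross $D$ is the one with both endpoints in $(b,a)$, so admissibility already gives $s_1=s_2$. And equal non-degenerate minors fall through your trichotomy (they neither cross nor share exactly one endpoint in the sense you use); this case is easy---either $Q_1=Q_2$, or, when $d$ is an endpoint of the common minor, the quadrilaterals share the spike over the other endpoint---but it should be stated.
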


\begin{proof}
Let us show that if $\si_3(Q_1)\cap \si_3(Q_2)\ne \{d\}$ then
$\qcp_1$ and $\qcp_2$ are linked. Observe that $\si_3|_{[b, a]}$ is
two-to-one except that $d$ has three preimages $a, b, v\in [b, a]$.
Consider cases. Assume first that $\si_3(Q_1)\cap \si_3(Q_2)\cap
\disk\ne 0$. Since both minors have points inside $\disk$, they are
non-degenerate, and the generalized \ql s $Q_1, Q_2$ are true \ql s.
If the minors do not coincide, it immediately follows from
properties of $\si_3|_{[b, a]}$ and the assumptions that $Q_1$ and
$Q_2$ are strongly linked. If minors coincide and do not have $d$ as
an endpoint, it follows that $Q_1=Q_2$ (and hence by definition
$Q_1$ and $Q_2$ are strongly linked). Assume that
$\si_3(Q_1)=\si_3(Q_2)=\ol{dx}$ and denote by $x', x''\in (b, a)$
two points with $\si_3(x')=\si_3(x'')=x$. Then either $Q_1=Q_2$, or
otherwise we may assume that $Q_1=(a, x', v, x'')$ and $Q_2=\ch(b,
x', v, x'')$. Clearly, in this case again $Q_1$ and $Q_2$ are
strongly linked. Assume now that $\si_3(Q_1)\cap \si_3(Q_2)=\{y\}$
where $y\ne d$. Let $y', y''\in (b, a)$ so that
$\si_3(y')=\si_3('')=y$. Then both $Q_1$ and $Q_2$ share the spike
$\ol{y'y''}$. By definition, in all these cases $\qcp_1$ and
$\qcp_2$ are linked. If now $\si_3(Q_1)\cap \si_3(Q_2)=\{d\}$ then
both $Q_1$ and $Q_2$ have either $\ol{av}$ or $\ol{bv}$ as a spike.
If either $Q_1$ or $Q_2$ is $\De$, $\lam_1$ and $\lam_2$ are linked.
Hence one of the sets $Q_1, Q_2$ coincides with $(a, x, v, x')$
while the other one coincides with $(b, y, v, y')$.
\end{proof}

For brevity, if $\qcp_1$ and $\qcp_2$ are not linked we will say
that \emph{Case V} holds; without loss of generality we will then
always assume that $Q_1=(a, x, v, x')$ and $Q_2=(b, y, v, y')$. We
use Lemma~\ref{l:c1c2} to handle Case V. It turns out that then
either Theorem B follows from known results, or specific
restructuring allows us to find geolaminations which \emph{are}
linked and generate the same laminations as the given
geolaminations. This show that it is enough to prove Theorem B in
the case when $\qcp_1$ and $\qcp_2$ \emph{are} linked.

\begin{lem}\label{l:v-non-perio}
Suppose that Case V holds and $v$ is not periodic. Then
$\sim_1=\sim_2$.
\end{lem}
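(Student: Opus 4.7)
The strategy is to reduce Case V to the already-linked configuration by introducing the all-critical triangle $\De=\ch(a,b,v)$ as a common reference qc-portrait, exploiting the non-periodicity of $v$ to ensure everything fits together. Recall that in Case V we have $Q_1=(a,x,v,x')$ with spikes $\ol{av}$ and $\ol{xx'}$, and $Q_2=(b,y,v,y')$ with spikes $\ol{bv}$ and $\ol{yy'}$. Viewing $\De$ as a degenerate generalized critical quadrilateral with vertex list $[a,b,v,v]$, its two spikes are exactly the critical chords $\ol{av}$ and $\ol{bv}$. Thus $Q_1$ and $\De$ share the spike $\ol{av}$, and $Q_2$ and $\De$ share the spike $\ol{bv}$.

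The first step is to verify that $(\De,D)\in\qnp_3$ and that it is privileged for a suitable geolamination. Since the endpoints of $D$ are non-periodic (because $D\in\np$) and $v$ is non-periodic by hypothesis, no vertex of $\De$ is periodic, so admissibility follows from Proposition~\ref{p:per_edge}. Theorem~\ref{t:priv_qcp} then yields a proper geolamination $\lam_\De\in\prnp_3$ for which $\qcp_\De=(\De,D)$ is privileged; since the critical set associated with $\De$ is a finite triangle, $\lam_\De$ falls under case (1) of Lemma~\ref{l:crit-2} and so is not of Siegel capture type. Moreover $\si_3(\De)=\{d\}$, and since $d$ is an endpoint of each of $\si_3(Q_1)$ and $\si_3(Q_2)$, the minors of each pair $(\qcp_i,\qcp_\De)$ are non-disjoint.

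Next I would invoke Definition~\ref{d:qclink1} to conclude that the pair $(\qcp_1,\qcp_\De)$ is linked: the first components $Q_1$ and $\De$ share the spike $\ol{av}$, and the second components both equal the critical leaf $D$, which trivially shares its unique spike with itself. Analogously $(\qcp_2,\qcp_\De)$ is linked via $\ol{bv}$. Once the linked case of Theorem~B has been established in the remainder of this section using the accordion machinery of \cite{bopt14} (quoted as Lemmas~\ref{l:linleabeh}--\ref{l:linleabeh3} and Theorem~\ref{t:compgap}), I would apply it to the pairs $(\qcp_1,\qcp_\De)$ and $(\qcp_2,\qcp_\De)$ to obtain $\sim_1=\sim_{\lam_\De}$ and $\sim_2=\sim_{\lam_\De}$, and therefore $\sim_1=\sim_2$, completing the proof.

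The main obstacle will be ensuring that the linked case of Theorem~B applies cleanly when one of the generalized critical quadrilaterals is the degenerate triangle $\De$; since the accordion and mutual-order-preservation arguments in \cite{bopt14} were written for genuine collapsing quadrilaterals, one must track what happens when some edges of the quadrilateral degenerate to a point. This is precisely where the non-periodicity of $v$ enters: it guarantees that $\De$ carries no periodic vertex, so the pullback construction of $\lam_\De$ (via Theorem~\ref{t:pullback_lam} and Lemma~\ref{l:pr_cleaning}) never produces improper leaves and cannot force $\lam_\De$ into Siegel capture type, and the comparisons between accordions of $\lam_i$ and $\lam_\De$ retain their usual rigidity.
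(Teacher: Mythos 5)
Your argument is essentially correct, but it takes a genuinely different route from the paper. The paper's proof is direct and does not pass through the linked-case machinery at all: since $a$, $b$, $v$ are all non-periodic, no edge of $Q_1$ or $Q_2$ can be periodic, so neither \ql{} can be inscribed in a periodic Fatou gap of degree two; hence each $\sim_i$ has a unique \emph{finite} critical class $\g_i$ on which $\si_3$ is three-to-one, and $\{a,b,v\}\subset\g_1\cap\g_2$. Since $d\in\si_3(\g_1)\cap\si_3(\g_2)$, the rigidity result of \cite{sch04} forces $\g_1=\g_2$, and a standard pullback argument gives $\sim_1=\sim_2$. Your proof instead interposes the auxiliary portrait $(\De,D)$, observes that it is linked with each $\qcp_i$ through the shared spikes $\ol{av}$ and $\ol{bv}$, and invokes the linked case of Theorem B twice. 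This is logically sound and not circular (the linked-case argument later in the section does not use Lemma~\ref{l:v-non-perio}, and the paper's own treatment of the periodic-$v$ subcase via Lemma~\ref{l:v-perio} makes exactly the same kind of forward reference), and it has the merit of uniformity; what it costs is that it rests on the much heavier accordion machinery, whereas the paper's argument for this subcase is short and self-contained. Two small points you should make explicit: admissibility of $(\De,D)$ requires checking that no forward image of $d$ returns to $\{a,b,v\}$ (immediate from non-periodicity of $a$, $b$, $v$, and asserted without proof in Section~\ref{s:unlink} of the paper), and Proposition~\ref{p:per_edge} only delivers admissibility once that intersection hypothesis is verified --- it is a hypothesis of that proposition, not a conclusion.
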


\begin{proof}
Since $a, b$ and $v$ are not periodic, $Q_1$ and $Q_2$ cannot be
critical \ql s generated by periodic Fatou gaps. Hence $\sim_1$
($\sim_2$) has a unique finite critical class $\g_1$ ($\g_2$) on
which $\si_3$ is three-to-one. Clearly, $\{a, b, v\}$ is contained
in both $\g_1$ and $\g_2$ so that $d=\si_3(a)\in \si_3(\g_1)\cap
\si_3(\g_2)$. By \cite{sch04}, $\g_1=\g_2$. A standard pull-back
argument then shows that $\sim_1=\sim_2$.
\end{proof}

To complete considering Case V we now assume that $v$ \emph{is}
periodic. We need a few lemmas from \cite{chmmo} where they are
proven in the degree $d$ case (we restate them for the cubic case).

\begin{lem}[Lemma 1.5 \cite{chmmo}]\label{l:long}
Suppose that $\lam$ is a cubic geolamination. Then for any leaf
$\ell=\ol{ab}$ of $\lam$ there exists $i\ge 0$ such that both circle
arcs with endpoints $\si_3^i(a), \si_3^i(b)$ are of length at least
$\frac14$.
\end{lem}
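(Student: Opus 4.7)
The plan is to track the length $s_i \in [0, \tfrac12]$ of the shorter of the two arcs cut off on $\uc$ by $\ell_i := \si_3^i(\ell) = \ol{a_ib_i}$. Since the two arcs sum to $1$, the conclusion ``both arcs have length at least $\tfrac14$'' is equivalent to $s_i \ge \tfrac14$, so the goal is to produce some $i \ge 0$ with $s_i \ge \tfrac14$. The argument is essentially arithmetic and uses only the expansion of $\si_3$, not the laminar structure.

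First I would dispose of trivial cases. If $s_0 \ge \tfrac14$, take $i = 0$. If some iterate $\ell_j$ is a critical chord, then its endpoints differ by exactly $\tfrac13$ along the short side, so $s_j = \tfrac13 \ge \tfrac14$. So it suffices to treat the case in which no $\ell_i$ is critical and all $s_i$ are strictly positive and less than $\tfrac13$.

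Next I would derive a recurrence for $s_i$ under the assumption $s_i < \tfrac14 < \tfrac13$. Parametrizing $\ell_i$ so that its short arc has angular length $s_i < \tfrac13$, the map $\si_3$ is injective on this arc and sends it onto an arc of length $3s_i < 1$ joining $\si_3(a_i)$ and $\si_3(b_i)$. Hence the two arcs cut off by $\ell_{i+1}$ have lengths $3s_i$ and $1 - 3s_i$, which gives
$$
s_{i+1} \;=\; \min\bigl(3s_i,\; 1 - 3s_i\bigr).
$$
Case analysis then concludes the proof. If $s_i \in [\tfrac16, \tfrac14)$, then $3s_i \in [\tfrac12, \tfrac34)$, so $s_{i+1} = 1 - 3s_i \in (\tfrac14, \tfrac12]$, and we stop at step $i+1$. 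If instead $s_i \in (0, \tfrac16)$, then $s_{i+1} = 3s_i$; this tripling cannot persist below $\tfrac16$ indefinitely, so after finitely many steps some $s_j$ enters $[\tfrac16, \tfrac12]$, and either $s_j \ge \tfrac14$ outright or the preceding subcase yields $s_{j+1} \ge \tfrac14$.

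The main (and only mild) obstacle is bookkeeping at the breakpoint $s_i = \tfrac16$ separating the \emph{tripling} regime from the \emph{reflection} regime; the threshold $\tfrac14$ is the correct one precisely because it is the fixed point of $s \mapsto 1 - 3s$, so the interval $[\tfrac14, \tfrac12]$ is forward invariant under the recurrence as long as the shorter arc has length below $\tfrac13$. Since neither the lamination hypothesis nor any sibling/pullback information is used, the result is really a general fact about $\si_3$ acting on chords.
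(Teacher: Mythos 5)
Your argument is correct: the reduction to the shorter-arc length $s_i$, the recurrence $s_{i+1}=\min(3s_i,\,1-3s_i)$ valid while $s_i<\tfrac13$, and the two-regime case analysis (tripling below $\tfrac16$, reflection on $[\tfrac16,\tfrac14)$ landing in $(\tfrac14,\tfrac12]$) together give a complete elementary proof, and you are right that no laminar structure is needed. Note that the paper itself does not prove this statement --- it is imported verbatim as Lemma 1.5 of [CHMMO14] --- so there is no in-paper argument to compare against; your proof simply makes the cited fact self-contained. One small inaccuracy in your closing remark: $[\tfrac14,\tfrac12]$ is \emph{not} forward invariant under the recurrence (e.g.\ $s=0.3$ maps to $0.1$, since $\tfrac14$ is a repelling fixed point of $s\mapsto 1-3s$), but this is immaterial because the lemma only asserts the existence of a single index $i$ with $s_i\ge\tfrac14$, which your argument does establish.
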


Given a circle chord $\ell=\ol{xy}$ which is not a diameter, let
$||\ell||$ be the length of the smallest of the two arcs in $\uc$
with endpoints $x, y$. Let $\ell_0=\ol{ab}$ be a chord with
$0<||\ell_0||=b-a<\frac13$. Let $I_0=[b, a+\frac13]$; let $I_1$ be
$I_0$ rotated by $\frac13$ and $I_2$ be $I_0$ rotated by $\frac23$.
Define the \emph{central strip $\cs(\ell_0)$} of $\ell_0$ as the
convex hull of $I_0\cup I_1\cup I_3$.

\begin{lem}[Theorem 2.10 \cite{chmmo}]\label{l:csl}
Suppose that $\lam$ is a cubic geolamination, $\ell$ is a leaf of
$\lam$ such that $\frac14\le ||\ell||<\frac13$. Let $j>0$ be the
least number such that $\si^j_3(\ell)$ has an endpoint in
$\cs(\ell)$. Then $\si_3^j(\ell)$ has endpoints in two distinct
circle arcs on the boundary of\, $\cs(\ell)$.
\end{lem}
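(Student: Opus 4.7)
The plan is to exploit the three-fold rotational symmetry of $\cs(\ell)$ together with the non-crossing property of leaves in $\lam$. Writing $\ell=\ol{a_0b_0}$ with $s=b_0-a_0\in[1/4,1/3)$, the boundary of $\cs(\ell)$ in $\cdisk$ consists of three chords $\ell^{(0)}=\ell,\ell^{(1)},\ell^{(2)}$ (the rotations of $\ell$ by $0,1/3,2/3$) and three circular arcs $I_0,I_1,I_2\subset\uc$ of length $1/3-s$ each. The three complementary cap arcs $K_0,K_1,K_2$ have length $s$ each, and together the six arcs $I_0,\ldots,K_2$ partition $\uc$. The key structural observation I would exploit is that $\si_3$ identifies all three caps $K_i$ onto the single arc $R=[3a_0,3b_0]$ of length $3s$, all three strip arcs $I_i$ onto the single arc $S=[3b_0,3a_0]$ of length $1-3s$, and all three boundary chords $\ell^{(i)}$ onto the single chord $\si_3(\ell)$.

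My first step is to note that since $\ell\in\lam$, every iterate $\si_3^k(\ell)$ is also a leaf of $\lam$ and hence does not cross $\ell$, so both of its endpoints lie in the same closed arc of $\uc\setminus\{a_0,b_0\}$. By the minimality of $j$, for each $1\le k<j$ both endpoints of $\si_3^k(\ell)$ lie in the union $\dot K_0\cup\dot K_1\cup\dot K_2$ of open caps; in particular, $\si_3^{j-1}(\ell)$ has both endpoints in these open caps, while $\si_3^j(\ell)$ has at least one endpoint in $N=I_0\cup I_1\cup I_2$.

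I would then argue by contradiction, in two cases, using preimage analysis. Suppose first that $\si_3^j(\ell)$ had both endpoints in a single arc $I_{i_0}$. The three components of $\si_3^{-1}(I_{i_0})$ are arcs of length $(1/3-s)/3\le 1/12$; since the hypothesis $s\ge 1/4$ forces $1-3s\le s$, each such component sits strictly inside one of the six partition arcs. Both endpoints of $\si_3^{j-1}(\ell)$ would then lie in a single such component. If that component lies in some $I_{i'}$, then $\si_3^{j-1}(\ell)$ has an endpoint in $\cs(\ell)$, contradicting minimality of $j$. If instead the component lies in some cap $K_{i'}$, I would invoke the sibling invariance axiom: $\si_3^{j-1}(\ell)$ has a disjoint sibling collection in $\lam$ consisting of three short leaves, one in each of the three preimage arcs, and a tracking of these siblings together with $\ell$ itself (which bounds the cap $K_0$) produces a configuration contradicting either the non-crossing property or the existence of $\si_3(\ell)$ as a leaf of $\lam$. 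The second case, where exactly one endpoint of $\si_3^j(\ell)$ lies in $N$ and the other in a cap interior, is handled by the same preimage/sibling argument applied to the single endpoint entering $\cs(\ell)$.

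The main obstacle will be to organize the case analysis robustly with respect to the rotational position of $\ell$: the assignment of each of the three preimage components of $I_{i_0}$ to some $I_{i'}$ or some $K_{i'}$ depends sensitively on $a_0\bmod 1/3$. The hypothesis $s\ge 1/4$ plays two roles: it guarantees that $R$ covers at least three-quarters of $\uc$, and it ensures each preimage arc of $I_{i_0}$ is short enough to sit strictly inside a single partition arc, which is what makes the clean dichotomy ``either in some $I_{i'}$ or in some $K_{i'}$'' above possible.
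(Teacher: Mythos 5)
First, a point of reference: the paper does not prove this lemma at all --- it is imported verbatim as Theorem~2.10 of \cite{chmmo} and used as a black box --- so your argument has to stand entirely on its own. Your setup is sound and is the natural opening: the partition of $\uc$ into the three strip arcs $I_0,I_1,I_2$ and the three cap arcs $K_0,K_1,K_2$, the observation that the three boundary chords $\ell^{(0)}=\ell,\ell^{(1)},\ell^{(2)}$ of $\cs(\ell)$ are the rotations of $\ell$ by $0,\tfrac13,\tfrac23$ and all map to $\si_3(\ell)$, and the reduction to the two bad configurations are all correct. The problems begin exactly where the contradiction has to be produced.

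The first genuine gap is the assertion, in case (a), that both endpoints of $\si_3^{j-1}(\ell)$ lie in a \emph{single} component of $\si_3^{-1}(I_{i_0})$. Nothing forces this: the two endpoints may lie in two different components, so that $\si_3^{j-1}(\ell)$ has length near $\tfrac13$ or $\tfrac23$, and if those endpoints sit in the two caps $\dot K_1,\dot K_2$ not bounded by $\ell$ itself, such a leaf crosses nothing you control and your dichotomy never gets started. (Your justification that each preimage component ``sits strictly inside one partition arc'' is also off: shortness does not prevent an arc from straddling a partition point $a+\tfrac m3$ or $b+\tfrac m3$; what prevents it, for $j\ge 2$, is that these points map to the endpoints of $\si_3(\ell)$, which avoid $N=I_0\cup I_1\cup I_2$ by minimality of $j$. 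For $j=1$ your setup fails outright, since $\si_3^{j-1}(\ell)=\ell$ has its endpoints on the partition boundary rather than in open caps, so that case needs a separate argument.) The second, more serious gap is that the contradiction in the main sub-case (preimage component inside a cap) is never exhibited: ``a tracking of these siblings \dots\ produces a configuration contradicting \dots'' is not an argument, and its premise --- that the disjoint sibling collection of $\si_3^{j-1}(\ell)$ consists of three short leaves, one per preimage arc --- is false in general, since disjoint siblings of a leaf with image $\ol{xy}$ may pair the preimages of $x$ with those of $y$ in the ``long'' pattern as well; controlling which pattern occurs is exactly the content of Lemma~\ref{l:sibling_loc} and Corollary~\ref{c:sibling_loc} and requires extra input such as a connecting chain of critical chords. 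Finally, only $\ell^{(0)}=\ell$ among the three boundary chords is known to be a leaf of $\lam$, so a crossing of $\si_3^j(\ell)$ with $\ell^{(1)}$ or $\ell^{(2)}$ (which is what case (b) amounts to when the cap involved is $K_1$ or $K_2$) yields no immediate contradiction; this is precisely where the real work of \cite{chmmo} lies, and it is missing here.
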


We use these tools to prove the next lemma.

\begin{lem}\label{l:vperiod}
Let a proper geolamination $\lam$ have a periodic critical Fatou gap
$U$ of degree two and a refixed edge $\ell$. Let $\ell'$ be the
sibling of $\ell$ contained in $\bd(U)$. If  $\ell'$ is contained in
a finite critical gap $G'$ of $\sim_{\lam}$ then $\ell$ is the
unique refixed edge of $U$ and $\frac14\le ||\ell||<\frac13 $.
Moreover, the entire orbit of $U$ except for $U$ itself is disjoint
from $\cs(\ell)$.
\end{lem}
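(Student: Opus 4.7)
The plan is to establish the three assertions in turn: uniqueness of $\ell$, the length bounds $\tfrac14\le ||\ell||<\tfrac13$, and the disjointness of the rest of the $U$-orbit from $\cs(\ell)$.

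\emph{Uniqueness of $\ell$.} By Lemma~\ref{l:fatgaps}(2) the map $\psi_U\colon \bd(U)\to \uc$ collapses each edge of $U$ to a point and semiconjugates $\si_3^k|_{\bd(U)}$ with $\si_2|_{\uc}$. Any refixed edge must be collapsed by $\psi_U$ to a $\si_2$-fixed point. Since $\psi_U$ is monotone, and hence injective on distinct edges of $U$, while $\si_2$ has a unique fixed point on $\uc$, there is a unique refixed edge.

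\emph{Length bounds.} By Corollary~\ref{c:sameperiod} both endpoints of $\ell$ are periodic of the same period; if $\ell$ were additionally critical, Lemma~\ref{l:perioproper} would contradict the properness of $\lam$, giving $||\ell||\neq \tfrac13$. To upgrade this to $||\ell||<\tfrac13$ I argue by contradiction: writing $\ell=\ol{ab}$ with $b-a=t>\tfrac13$, the sibling $\ell'$ is geometrically forced to be nested inside the short arc of $\ell$, so the two critical spikes of $\ch(\ell\cup \ell')$ have their endpoints among the finitely many vertices of $G'$. Tracking the orbits of these spikes under $\si_3^k$, combined with the fact that $G'$ is finite and critical while $\ell$ is $k$-periodic and $U$ has degree $2$, yields a contradiction with the periodicity of the $U$-orbit. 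For the lower bound, Lemma~\ref{l:long} applied to the periodic leaf $\ell$ produces an iterate $\si_3^i(\ell)$ with $||\si_3^i(\ell)||\ge \tfrac14$. The orbit $\ell\mapsto \si_3(\ell)\mapsto\cdots\mapsto \si_3^{k-1}(\ell)\mapsto\ell$ traverses the gaps $U,\si_3(U),\dots,\si_3^{k-1}(U)$; all critical activity is concentrated at $U$, so each $\si_3^j(U)$ with $1\le j\le k-1$ has degree $1$ and the restriction of $\si_3$ to its boundary is locally length-$3$ expansion. Combining the length constraints around this cycle with the known ``large'' iterate from Lemma~\ref{l:long} pins the length bound back to $\ell$ itself, yielding $||\ell||\ge \tfrac14$.

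\emph{Disjointness.} With $||\ell||\in [\tfrac14,\tfrac13)$ in hand, the central strip $\cs(\ell)$ is a well-defined hexagon whose boundary alternates between three arcs $I_0,I_1,I_2$ (rotates of one another by $\tfrac13$) and three ``inner'' chords. A direct inspection shows that $\ell$ is one of these inner chords (joining $\bd(I_2)$ and $\bd(I_0)$), and the other two inner chords are precisely the two disjoint siblings of $\ell$, one of which is $\ell'$; consequently $U$ is nested in $\cs(\ell)$ between $\ell$ and $\ell'$. Lemma~\ref{l:csl} provides a least $j^\ast>0$ with $\si_3^{j^\ast}(\ell)$ having an endpoint in $\cs(\ell)$, and forces $\si_3^{j^\ast}(\ell)$ to join two distinct boundary arcs. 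Since $\si_3^k(\ell)=\ell$ already has its endpoints in $\bd(I_2)$ and $\bd(I_0)$, we have $j^\ast\le k$. Using the degree-$1$ action on each intermediate gap $\si_3^j(U)$, $1\le j\le k-1$, together with the three-fold rotational symmetry of $\cs(\ell)$ and the fact that the degree-$2$ collapse happens only at $U$, I will argue $j^\ast=k$, so no iterate $\si_3^j(\ell)$ with $1\le j<k$ meets $\cs(\ell)$. A short convexity argument then upgrades this from edges to gaps: each $\si_3^j(U)$, $1\le j\le k-1$, lies entirely outside $\cs(\ell)$.

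The main obstacle will be the upper bound $||\ell||<\tfrac13$: it is here that the hypothesis $\ell'\subset G'$ is used essentially, since without it one runs into the Siegel-capture-type laminations explicitly excluded from $\prnp_3$. Making the resulting geometric contradiction precise will require careful bookkeeping of the critical spikes of $\ch(\ell\cup \ell')$ and their orbits under the degree-$2$ dynamics of $U$, as well as keeping straight which edges of $U$ survive into $\hlam=\lam_{\sim_\lam}$ (the ones in $\hlam$) versus those internal to a $\sim_\lam$-class (such as $\ell'$ itself when it is a diagonal of $G'$).
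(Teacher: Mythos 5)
Your proposal has genuine gaps, and the first step is based on a false premise. You derive uniqueness of the refixed edge from the claim that $\psi_U$ is ``injective on distinct edges of $U$.'' That is not true for a proper geolamination: by Lemma~\ref{l:fatgaps}(2) the boundary of $U$ may carry a non-trivial concatenation of periodic (refixed) edges, whose convex hull is an identity return gap attached to $U$, and $\psi_U$ collapses the whole concatenation to the single fixed point of $\si_2$. Section~\ref{ss:slice} explicitly builds such an example: a degree-two gap $V$ of a proper geolamination with \emph{two} refixed edges $\m$ and $\n$. Your uniqueness argument never invokes the hypothesis that $\ell'$ lies in a finite critical gap $G'$ of $\sim_\lam$, yet without that hypothesis uniqueness is false; so the argument cannot be correct. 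In the paper, uniqueness is not a separate first step at all: it is deduced \emph{after} showing that every refixed edge $\hell$ satisfies $||\hell||\ge\frac14$, which forces the boundary arcs of $\cs(\hell)$ to have length at most $\frac1{12}$ and leaves no room for a second refixed edge of $U$.

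The two length bounds are also only announced, not proved, and the sketches do not go through as stated. For the upper bound the paper's argument is short and uses the hypothesis on $G'$ directly: writing $\ell=\ol{pq}$, the gap $G'$ is critical and contains the sibling $\ell'$, hence contains both $p+\frac13$ and $p+\frac23$; since $\ell$ cannot cross an edge or diagonal of $G'$, it cannot cross $\ol{(p+\frac13)(p+\frac23)}$, so $||\ell||\le\frac13$, with strict inequality because a refixed edge is not critical. Your alternative (``tracking the orbits of these spikes \dots yields a contradiction with the periodicity of the $U$-orbit'') is not an argument, and you acknowledge as much. For the lower bound, one large iterate from Lemma~\ref{l:long} plus ``length constraints around the cycle'' does not pin $||\ell||$ itself: local tripling of arc length says nothing that prevents $||\ell||$ from being tiny and only its later images from being large. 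The paper's proof instead chooses $i$ with $||\si_3^i(\hell)||$ maximal, notes $\frac14\le||\si_3^i(\hell)||<\frac13$ and $T\subset\cs(\si_3^i(\hell))$, and applies Lemma~\ref{l:csl} to the first return of the orbit to $\cs(\si_3^i(\hell))$ to contradict maximality. Your treatment of the final disjointness claim is closest to the paper's (both rest on Lemma~\ref{l:csl}), but even there the assertion that $\ell'$ must be one of the two boundary chords of $\cs(\ell)$ needs justification, since a priori a sibling of $\ol{ab}$ need not be $\ol{(a+\frac13)(b+\frac13)}$ or $\ol{(a+\frac23)(b+\frac23)}$.
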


A weaker version of the last claim of the lemma, which may be easier
to apply, is that the images of $\ell$ do not intersect the two
all-critical triangles one of whose vertices is an endpoint of
$\ell$.

\begin{proof}
Suppose that $\ell=\ol{pq}$. Then $p+\frac13, p+\frac23\in G'$.
Since $\ell$ does not cross edges or diagonals of $G'$, $||\ell||\le
\frac13$. Moreover, since $\ell$ is refixed, $||\ell||<\frac13$. Let
$T$ be the all-critical triangle with vertex $p$. Clearly, images of
$\ell$ or any other periodic edge of $U$ do not cross edges of $T$.
Consider now any refixed edge $\hell$ of $U$ and prove that
$||\hell||\ge \frac14$.

Indeed, if $||\hell||<\frac14$, choose $i$ with $||\si_3^i(\hell)||$
maximal. Clearly, $T\subset \cs(\si_3^i(\hell))$. By
Lemma~\ref{l:long} we have that $\frac14\le
||\si_3^i(\hell)||<\frac13$, the set $\cs(\si_3^i(\hell))$ is
well-defined, and $\si_3^i(\hell)\ne \hell$. Since $\hell$ is itself
an eventual image of $\si_3^i(\hell)$, $\si_3^i(\hell)$ at some
later moment enters $\cs(\si_3^i(\hell))$. However, by
Lemma~\ref{l:csl} the \emph{first time} $\si^i_3(\hell)$ enters
$\cs(\si_3^i(\hell))$, the corresponding image of $\si_3^i(\hell)$
will have its endpoints in distinct arcs of the boundary of
$\cs(\si_3^i(\hell))$, a contradiction with maximality of
$||\si_3^i(\hell)||$. Hence $||\hell||\ge \frac14$, and the circle
arcs on the boundary of $\cs(\hell)$ are of length at most
$\frac13-\frac14=\frac1{12}$. Hence $\ell$ is the unique refixed
edge of $U$. The last claim of the lemma follows from
Lemma~\ref{l:csl}.
\end{proof}

Now we can go back to the case of two proper geolaminations for
which Case V holds.

\begin{lem}\label{l:v-perio}
Suppose that Case V holds and $v$ is periodic. Then we can replace
the \ql{} $Q_2$ by a \ql{} $Q_2'$ so that the new qc-portrait
$(Q_2', D)$ is still privileged for $\lam_2$ and is linked with
$\qcp_1$.
%there exist geolaminations $\lam'_1$ and $\lam'_2$ with
%privileged portraits $\qcp'_1$ and $\qcp'_2$ such that $\lam'_1$
%generates the same lamination $\sim_1$ as $\lam_1$, $\lam'_2$
%generates the same lamination $\sim_2$ as $\lam_2$, and $\qcp'_1$
%with $\qcp'_2$ are linked (so that $(\lam'_1, \qcp'_1)$ and
%$(\lam'_2, \qcp'_2)$ are linked).
\end{lem}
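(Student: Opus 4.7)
The natural candidate for the replacement is $Q_2' = (a, y, v, y')$, obtained from $Q_2 = (b, y, v, y')$ by swapping $b \leftrightarrow a$. Its two critical spikes are $\ol{av}$ (critical because $\si_3(a) = \si_3(v) = d$) and $\ol{yy'}$ (inherited from $Q_2$). Since $\ol{av}$ is also one of the spikes of $Q_1 = (a, x, v, x')$, the generalized \ql s $Q_1$ and $Q_2'$ share the spike $\ol{av}$, so by Definition~\ref{d:qclink1} the qc-portraits $(Q_1, D)$ and $(Q_2', D)$ are linked. The remaining task is to verify that $(Q_2', D)$ is still privileged for $\lam_2$.

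Let $C$ be the critical set of $\lam_2$ containing $Q_2$. Because $\lam_2$ is proper and $v$ is periodic, the critical chord $\ol{bv}$ cannot be a leaf of $\lam_2$ and so is a diagonal (not an edge) of $C$; in particular $b$ and $v$ are non-adjacent vertices of $C$. By Lemma~\ref{l:crit-2} and the exclusion of the Siegel capture type, $C$ is either a finite critical gap with $\si_3|_C$ of degree $2$, or a $k$-periodic Fatou gap of degree $2$; in the latter case one of the edges of $Q_2$ incident to $v$, say $\ell = \ol{yv}$, is the $k$-periodic refixed edge of $C$ witnessing that $(Q_2, D)$ is privileged, with sibling edge $\hell = \ol{y'b}$.

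The key step is to show that $a$ is a vertex of $C$ (equivalently, that $D = \ol{ab}$ is an edge of $C$), so that $Q_2' \subset C$. In the Fatou gap case I combine the periodicity of $\ell$ with Lemma~\ref{l:sibling_loc}, applied to disjoint sibling collections of $\ell$ inside $C$, to show that $D$ must coincide with the edge of $C$ adjacent to $b$ on the $a$-side; otherwise $D$ would cross some edge of $C$ or force a sibling leaf of $\ell$ to cross $\ol{av}$, violating the laminarity of $\lam_2$. Once $a \in C$, the edge pair $\{\ol{yv}, \ol{y'a}\}$ has common image $\ol{\si_3(y)d}$ and witnesses $(Q_2', D)$ as privileged for $\lam_2$ with the same refixed edge as $(Q_2,D)$.

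The main obstacle is the finite-gap subcase, in which $C = Q_2$ has exactly the four vertices $\{b, y, v, y'\}$ and admits no other critical \ql{} distinct from $Q_2$ itself. I plan to rule out this subcase entirely by a parallel analysis for $\lam_1$, whose privileged qc-portrait $\qcp_1 = (Q_1, D)$ also has $v$ as a vertex of its critical set $C_1$; combined with the Case~V non-linkage hypothesis this should force at least one of $\lam_1, \lam_2$ to contain the critical chord $\ol{av}$ or $\ol{bv}$ as a leaf, contradicting the prohibition against critical leaves with periodic endpoints in proper geolaminations. Making this final exclusion precise, using the orderly-dynamics machinery of Subsection~\ref{ss:accord}, is the delicate part of the argument.
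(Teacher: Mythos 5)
Your overall strategy -- produce a replacement whose spike is $\ol{av}$, so that it shares a spike with $Q_1$ and the portraits become linked by Definition~\ref{d:qclink1} -- is the right one, and it matches the paper's. But your candidate $Q_2'=(a,y,v,y')$ cannot be made privileged. Since $v$ is periodic, the critical set $C$ containing $Q_2$ is a periodic Fatou gap $U$ of degree two (the finite subcase is excluded immediately: a finite critical gap of a proper geolamination cannot contain a periodic vertex, because a periodic finite class returns to itself bijectively while criticality strictly drops its cardinality -- your proposed detour through a ``parallel analysis for $\lam_1$'' is both unnecessary and not an argument). In the Fatou case, Definition~\ref{d:pripo} requires $Q$ to be the convex hull of \emph{two edges of $U$} with the same image. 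The sibling pairs of edges of $Q_2'$ are $\{\ol{ay},\ol{vy'}\}$ and $\{\ol{yv},\ol{y'a}\}$, and neither $\ol{ay}$ nor $\ol{y'a}$ is an edge of $U$: once $D=\ol{ab}$ and the genuine sibling edge through $b$ are both edges of $U$, the chord from $a$ to $y$ (or to $y'$) is a diagonal cutting through $U$. Worse, your stated configuration is internally inconsistent: if the refixed edge is $\ol{yv}$ with sibling edge $\hell=\ol{y'b}\subset\bd(U)$, then $\ol{y'b}$ separates $a$ from the remaining vertices of $U$, so $a$ cannot be a vertex of $U$ and $D$ cannot be an edge of $U$. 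The configuration in which $a\in U$ is the other one: refixed edge $\ol{vy'}$, sibling edge $\ol{by}$. The paper's replacement is accordingly the \emph{degenerate} quadrilateral, i.e.\ the critical leaf $\ol{va}=\ch(\{v\},\{a\})$, which is the convex hull of two (degenerate) edges of $U$ with common image $d$ and hence is privileged.

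The remaining substantive step -- showing $a\in U$ -- is also underargued in your sketch. ``A sibling leaf of $\ell$ crossing $\ol{av}$ violates laminarity'' is not a contradiction by itself, since $\ol{av}$ need not be a leaf of $\lam_2$. The paper's argument is: by Lemma~\ref{l:vperiod} (whose hypothesis holds because the sibling edge $\ol{by}$ lies in the finite critical class of $a,b$ generated by $D$), the refixed edge of $U$ is \emph{unique}; an edge of $U$ crossing $\ol{va}$ would have to emanate from $b$, forcing by the sibling machinery a leaf $\ol{zv}$ which, by properness, is periodic, and this ultimately manufactures a second refixed edge of $U$ -- contradiction. You do not invoke Lemma~\ref{l:vperiod}, and without the uniqueness of the refixed edge the argument does not close.
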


\begin{proof}
Since $v$ is periodic and $\qcp_2$ is privileged for $\lam_2$,
either $\ol{yv}$ or $\ol{vy'}$ is a refixed edge of a periodic Fatou
gap $U$ of degree two. Assume that $\ol{vy'}$ is a refixed edge of
$U$. Then the leaves $\ol{vy}, \ol{by'}$ are not leaves of $\lam_2$.

By Lemma~\ref{l:vperiod} $\ol{vy'}$ is actually the unique refixed
edge of $U$. Let us show that then no edge of $U$ crosses the
critical chord $\ol{va}$. Indeed, suppose otherwise. Clearly, the
only way an edge $\ell$ of $U$ can cross $\ol{va}$ is when it has
$b$ as an endpoint. By \cite{bmov13} this implies that $\lam$ has a
leaf $\ol{zv}$ with $b<z<v$ which is a sibling leaf of $\ell$. Since
$\lam_2$ is proper, the leaf $\ol{vz}$ must be periodic of the same
period as $\ell$. Since $\ell$ is the unique refixed edge of $U$,
the leaf $\ol{zv}$ cannot be an edge of $U$. This implies that for
geometric reasons there still must exist an edge of $U$ coming out
of $v$ and different from $\ol{vy}$. However, as above, this edge of
$U$ will then have to be refixed, again contradicting
Lemma~\ref{l:vperiod}.

Thus, we can remove $\ol{vy}$ and $\ol{y'b}$ (i.e., two sides of
$Q_2$ which are not edges of $U$) and replace them by the critical
leaf $\ol{va}$. By definition, the qc-portrait $(\ol{va}, D)$ is
privileged for $\lam_2$. On the other hand, by definition $(\ol{va},
D)$ is linked with $\qcp_1$. This completes the proof.
\end{proof}

Thus, to prove Theorem B it suffices to consider the case when $Q_1,
Q_2$ are linked as by Lemma~\ref{l:v-perio} Case V can be reduced to
this case. So, from now on we may assume that $Q_1, Q_2$ are linked
and therefore $(\lam_1, Q_1)$ and $(\lam_2, Q_2)$ are linked.

We will need Lemma 4.8 from \cite{bmov13}.

\begin{lem}[Lemma 4.8 \cite{bmov13}]\label{l:4.8}
If $\lam$ is a $\si_d$-invariant geolamination and $\ell\in \lam$ is
a leaf such that $\si_d^n(\ell)$ is concatenated with $\ell$ for
some $n$ then the endpoints of $\ell$ are (pre)periodic.
\end{lem}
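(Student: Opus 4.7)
The plan is to analyze the forward $\si_d^n$-orbit of $\ell = \ol{xy}$ by combining circular-order preservation (Lemma~\ref{l:triod_ord}) with expansion of $\si_d$. Let $p$ denote the endpoint shared by $\ell$ and $\si_d^n(\ell)$. Two structurally different cases arise: either (I) $p$ is $\si_d^n$-fixed, i.e.\ (up to swapping $x$ and $y$) $p = x = \si_d^n(x)$, or (II) $p$ is the $\si_d^n$-image of the opposite endpoint, i.e.\ $p = y = \si_d^n(x)$. The remaining subcases follow by symmetry.

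In Case~(I), set $y_k = \si_d^{kn}(y)$, so that $\si_d^{kn}(\ell) = \ol{x\,y_k}$ is a leaf of $\lam$ emanating from $x$. Assuming for contradiction that all $y_k$ are distinct, I would apply Lemma~\ref{l:triod_ord} iteratively to the triod $\ol{x\,y_{k-1}} \cup \ol{x\,y_k}$, which $\si_d^n$ maps homeomorphically onto $\ol{x\,y_k} \cup \ol{x\,y_{k+1}}$; this forces $\si_d^n$ to preserve the circular order of $\{x,y_{k-1},y_k\}$. By induction $(y_k)$ is monotone on the linearly ordered arc $\uc\sm\{x\}$, and hence converges to some $y^\ast\in\uc$. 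Continuity forces $\si_d^n(y^\ast) = y^\ast$; but near $y^\ast$ the map $\si_d^n$ acts as an expansion by the factor $d^n\ge 2$, so $|y_{k+1}-y^\ast| = d^n|y_k-y^\ast|$ for large $k$, contradicting $y_k\to y^\ast$ unless $y_k$ is eventually equal to $y^\ast$. Thus $y$ is preperiodic, as required.

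In Case~(II), set $x_k = \si_d^{kn}(x)$, so $x_0 = x$, $x_1 = y$, and $\ell_k := \si_d^{kn}(\ell) = \ol{x_k\,x_{k+1}}$ is a leaf of $\lam$ with $\ell_{k-1}\cap\ell_k = \{x_k\}$. Assuming all $x_k$ distinct, Lemma~\ref{l:triod_ord} applied to $Y = \ell_{k-1}\cup\ell_k$ (on which $\si_d^n$ is injective) with image $T = \ell_k\cup\ell_{k+1}$ yields circular-order preservation on $\{x_{k-1},x_k,x_{k+1}\}$. Crucially, pairwise unlinkedness of the $\ell_k$ prevents the monotonic sequence $(x_k)$ from wrapping past $x_0$: such a wrap would produce some $\ell_j$ with endpoints separated by $\ell_0$, forcing $\ell_j$ to cross $\ell_0$. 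Hence $(x_k)$ is trapped in a proper arc and converges to some $x^\ast$, which must be $\si_d^n$-fixed. The expansion argument from Case~(I) again contradicts distinctness, so the $\si_d^n$-orbit of $x$ is finite, and both $x$ and $y = x_1$ are preperiodic.

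The principal technical obstacle is the expansion-based step showing that a monotone $\si_d^n$-orbit on $\uc$ cannot accumulate at a $\si_d^n$-fixed point without eventually being stationary; this relies on $\si_d^n$ being locally conjugate to multiplication by $d^n\ge 2$, so fixed points are uniformly repelling. A secondary care point is verifying the injectivity hypothesis of Lemma~\ref{l:triod_ord} on the two-leaf arcs in each step, which follows from the running assumption that the orbit points under consideration are distinct. All other ingredients---the circular-order preservation and the wrap-around obstruction---are direct consequences of Lemma~\ref{l:triod_ord} and the defining property of a geolamination, respectively.
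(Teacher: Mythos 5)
Your argument is correct: the case analysis on the shared endpoint, the iterated use of Lemma~\ref{l:triod_ord} to force monotonicity of the $\si_d^n$-orbit along an arc (with unlinkedness blocking wrap-around in the second case), and the repelling-fixed-point contradiction all go through, and a failure of the injectivity hypothesis at an intermediate step only produces a collision of orbit points, which yields (pre)periodicity directly. The paper itself imports this lemma from \cite{bmov13} without proof, but your method is exactly the one the paper deploys for the closely related Lemma~\ref{l:impleaf} (circular-order preservation via Lemma~\ref{l:triod_ord} combined with expansion of $\si_d$), so the approach matches.
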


Proposition~\ref{p:per_de} studies possible intersections between
various leaves and edges of $\De$.

\begin{prop}\label{p:per_de}
If $\qcp=(Q,D)\in\Ss_D$ with $d\in \si_3(Q)$, then no eventual
$\si_3$-image of $Q$ crosses an edge of $\De$. If, moreover, $\lam$
is a proper geolamination for which $\qcp$ is privileged, then no
periodic leaf of $\lam$ crosses an edge of $\De$.
\end{prop}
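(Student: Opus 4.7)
The plan is to begin by classifying $Q$ using the hypothesis $d\in\si_3(Q)$. Since $\si_3^{-1}(d)=\{a,b,v\}$, the image $\si_3(Q)$ is either the single point $\{d\}$ or a non-degenerate chord having $d$ as an endpoint. In the point case, $Q$ is a critical leaf or all-critical triangle with vertices in $\{a,b,v\}$, so (using $Q\ne D$) $Q\in\{\ol{av},\ol{bv},\De\}$. In the chord case, $Q$ is a collapsing quadrilateral whose spike $S$ mapping to $d$ lies among $D,\ol{av},\ol{bv}$. I will then check in each of the three sub-cases for the spike (and for both possible placements of the remaining pair of vertices of $Q$) that the chord $D$ ends up contained in $Q$ as a non-edge subset, contradicting the admissibility of the qc-portrait $(Q,D)$. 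This rules out the chord case and leaves $Q\in\{\ol{av},\ol{bv},\De\}$. Since $\si_3(Q)=\{d\}$ in every surviving case, all forward $\si_3$-iterates of $Q$ are points that cross no chord; meanwhile $Q$ itself is built from edges and vertices of $\De$ and hence does not cross any edge of $\De$.

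For the second claim, two general observations apply. First, $D$ is a leaf of $\lam$ (as it appears in the privileged portrait), so no leaf of $\lam$ can cross $D$. Second, by the privileged condition $Q\subset C$ for a critical set $C$ of $\lam$; since $Q$ is not a collapsing quadrilateral, the periodic Fatou case in Definition~\ref{d:pripo} is excluded and $C$ is either a critical leaf equal to $Q$ or a finite critical gap containing $Q$. In the leaf case $Q$ is itself a leaf of $\lam$; in the gap case $Q$ is either an edge of $C$ (again a leaf of $\lam$) or lies in $\inte C$, and gap-invariance of $\lam$ forbids any leaf from entering $\inte C$. Either way, no leaf of $\lam$ crosses $Q$. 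These two observations already settle the case $Q=\De$, since the three edges of $\De$ are then either $D$ or edges of $\De=Q$.

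By symmetry it remains to treat $Q=\ol{av}$ and show that no periodic leaf of $\lam$ crosses $\ol{bv}$. Using the standing normalization $a=0,\,b=1/3,\,v=2/3$, any chord strictly crossing $\ol{bv}$ has one endpoint in $(1/3,2/3)$ and the other in the complementary open arc, which is $(2/3,1)\cup\{a\}\cup(0,1/3)$. A direct cyclic-order alternation check shows that if the second endpoint lies strictly in $(0,1/3)$ then the chord also strictly crosses $D$, and if it lies strictly in $(2/3,1)$ the chord strictly crosses $\ol{av}$. The only remaining placement of the second endpoint is $a$ itself, but because $a$ is non-periodic by the standing assumption, no periodic leaf of $\lam$ has $a$ as an endpoint. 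Thus any periodic leaf crossing $\ol{bv}$ would also cross $D$ or $\ol{av}$, contradicting the preceding paragraph. The most delicate ingredient is ensuring that $\ol{av}$ is genuinely uncrossable when it occurs only as a diagonal of a larger finite critical gap $C$ rather than as a leaf of $\lam$; here the gap-invariance of $\lam$ is what does the work.
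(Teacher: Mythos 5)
Your classification step is incorrect, and it eliminates precisely the case that carries all the content of the first claim. You assert that a collapsing quadrilateral $Q$ with $d\in\si_3(Q)$ and $\si_3(Q)$ non-degenerate cannot occur because $D$ would sit inside $Q$ as a non-edge subset. But take the spike of $Q$ mapping to $d$ to be $\ol{av}$ and place the other two vertices as $x\in(b,v)$ and $x'=x+\frac13\in(v,a)$: then $Q=\ch(a,x,v,x')$ is a genuine collapsing quadrilateral, $\si_3(Q)=\ol{d\,\si_3(x)}$ is a non-degenerate chord with endpoint $d$, and $Q\cap D=\{a\}$ is a single common vertex, so $(Q,D)$ is perfectly admissible (your check only covers the placement $x\in(a,b)$, where $D$ does cross the edge $\ol{xv}$). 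For such $Q$ the forward images $\si_3^k(Q)$ are non-degenerate leaves, and showing they do not cross $\ol{bv}$ is exactly where the work lies: the paper argues that a crossing image, being unable to cross $D$ or $Q$, would have to be a leaf $\ol{ay}$ concatenated with the edge $\ol{ax}$ of $Q$; Lemma~\ref{l:4.8} then forces $a$ and $y$ to be (pre)periodic, and since $a$ is non-periodic, iterating the concatenation produces an improper image-leaf, contradicting Corollary~\ref{c:prop_orb}. None of this dynamical argument appears in your proposal, so the first claim is not proved.

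Your second paragraph is closer to the mark: the mechanism that no leaf of $\lam$ crosses $D$ or the spike $\ol{av}$ (because both lie in critical sets of $\lam$), so that any leaf crossing $\ol{bv}$ must emanate from the non-periodic point $a$ and hence cannot be periodic, is exactly the paper's argument for the second claim. However, as written it again rests on the false premise that $Q$ is never a collapsing quadrilateral, so you explicitly exclude the periodic Fatou-gap alternative of Definition~\ref{d:pripo}; that case does arise and must be included (the spike $\ol{av}$ still lies inside the critical gap $C\supset Q$, so the same mechanism applies once the case is admitted). To repair the proof you need to restore the true-quadrilateral case and supply the concatenation/(pre)periodicity argument for the first claim.
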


\begin{proof} Let $\ol{av}$ be a spike of $Q$.
To prove the first claim of the lemma, assume that $Q=(a,
x, v, x')$ is a true \ql, and show that no eventual
$\si_3$-image of $Q$ crosses $\ol{bv}$. Indeed, suppose that
$\si_3^k(Q)$ crosses $\ol{bv}$. Then, since images of $Q$ cannot
cross $D$ or $Q$, it follows that the leaf $\si_3^k(Q)$ equals
$\ol{ay}$ and is concatenated with the leaf $\ol{xa}$. By
Lemma~\ref{l:4.8}, points $a$ and $y$ are (pre)periodic. By the
assumptions $a$ is not periodic. By Corollary~\ref{c:prop_orb},
$\ol{ay}$ and all its images are proper leaves. However if we now
follow the concatenation of leaves $\ol{ay}, \si_3^k(\ol{ay})$ etc,
we will at some point encounter the situation when exactly one
endpoint of some image of $\ol{ay}$ is periodic, a contradiction.

Consider the above situation for the sake of definiteness.
Assume that $\qcp$ is a privileged qc-portrait of a cubic proper
geolamination $\lam$; then leaves of $\lam$ cannot cross $\ol{av}$
or $D$. Hence if a leaf $\ell$ of $\lam$ crosses an edge of $\De$ then
$\ell=\ol{az}$ which cannot be periodic because $a$ is not
periodic.
\end{proof}

For a point $z\in\uc$, let $F(z)$ be the set of endpoints of leaves
from both $\lam_1$ and $ \lam_2$ containing $z$.  Note that $z\in
F(z)$. Recall that by Lemma~\ref{l:perioproper}(3) if an endpoint of
a leaf $\ell$ of a proper geolamination is periodic, the other
endpoint of $\ell$ is also periodic of the same period. Thus, the
notion of vertex period is well-defined for periodic leaves.

%Say that a chord $\ell$ is \emph{compatible} with a geolamination
%$\lam$ if $\ell$ does not cross any leaves of $\lam$.

\begin{lem}\label{l:perfanorder}
If $z\in\uc$ is periodic under $\si_3$, then $\si_3|_{F(z)}$ is
order preserving.
\end{lem}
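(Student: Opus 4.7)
The plan is to proceed in two steps: establish that $\si_3$ is injective on $F(z)$, and then upgrade this to strict circular-order preservation, treating separately pairs of leaves based at $z$ coming from the same geolamination and from the two different geolaminations.

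First I would observe that every point of $F(z)$ is periodic of the same period as $z$. Indeed, because $\lam_1, \lam_2$ are proper, Lemma~\ref{l:perioproper} forbids improper leaves, so any leaf $\ol{zp}\in\lam_i$ with $z$ periodic has $p$ periodic as well, and Corollary~\ref{c:sameperiod} pins the period of $p$ to coincide with the period $k$ of $z$. It follows that if $p,q\in F(z)$ satisfy $\si_3(p)=\si_3(q)$, then iterating yields $p=\si_3^k(p)=\si_3^{k-1}(\si_3(q))=\si_3^k(q)=q$, so $\si_3|_{F(z)}$ is injective.

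Next I would take distinct $p_1,p_2\in F(z)\sm\{z\}$ and argue that the cyclic order of $(z,p_1,p_2)$ is preserved. If $\ol{zp_1}$ and $\ol{zp_2}$ both lie in the same $\lam_i$, their union $Y$ is an arc in $\lam_i^+$ consisting of two non-critical leaves (critical leaves with periodic endpoints are excluded by properness). By the injectivity from the first step, $\si_3|_Y$ is one-to-one, and Lemma~\ref{l:triod_ord} applied to $Y$ together with the arc $\si_3(Y)\subset\lam_i^+$ yields circular-order preservation on $\{z,p_1,p_2\}$. If instead $\ol{zp_1}\in\lam_1$ and $\ol{zp_2}\in\lam_2$ share the endpoint $z$, Lemma~\ref{l:linleabeh} gives weak order preservation, and the injectivity from the first step promotes this to strict preservation.

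The main obstacle is the exceptional clause in Lemma~\ref{l:linleabeh}, which fails when the common endpoint $z$ is itself a common vertex of two associated critical quadrilaterals of $\qcp_1$ and $\qcp_2$. Since $D\in\np$ has non-periodic endpoints and $z$ is periodic, this can only occur when $z$ is a common vertex of $Q_1$ and $Q_2$; then each $Q_i$ has a periodic vertex at $z$, so Proposition~\ref{p:per_edge} forces each $Q_i$ to carry a periodic edge at $z$, producing a very restricted local geometry. I would dispense with this case by hand, analyzing the three $\si_3$-preimages of $\si_3(z)$ — namely $z$ itself and the other endpoints of the critical chords of $Q_1$ and $Q_2$ through $z$ (possibly coinciding if $Q_1, Q_2$ share that spike) — to verify directly that the small fan of $\lam_1\cup\lam_2$-leaves based at $z$ is arranged in a cyclically order-preserving way. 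Everything else reduces to a routine combination of Lemma~\ref{l:triod_ord} and Lemma~\ref{l:linleabeh} anchored by the injectivity from the first step.
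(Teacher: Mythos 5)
Your first two steps match the paper's setup: all points of $F(z)$ are periodic of the same period (properness plus Corollary~\ref{c:sameperiod}), same-geolamination pairs are handled by Lemma~\ref{l:triod_ord}, and for cross pairs the weak order preservation of Lemma~\ref{l:linleabeh} together with injectivity on a set of periodic points of equal period gives strict preservation outside the exceptional case. The gap is in the exceptional case, which is the real content of the lemma, and your proposed fix --- a direct local verification at $z$ using the three $\si_3$-preimages of $\si_3(z)$ --- does not work as stated. Those preimages $z, c_1, c_2$ cut $\uc$ into three arcs, each of which $\si_3$ maps onto $\uc\sm\{\si_3(z)\}$ in an orientation-preserving way; if $x$ and $y$ lie in different arcs, the relative position of $\si_3(x)$ and $\si_3(y)$ is a priori unconstrained, so no purely local inspection of the fan at $z$ can rule out a reversal. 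The paper does not attempt such a local argument (except in the one sub-case where $Q_1=Q_2$ share a periodic edge): it only establishes that there is a \emph{unique} position of $z$ at which a reversal could occur, and then closes the case globally. Since $x,y,z$ all have period $k$, the map $\si_3^k$ fixes all three, so after $k$ steps the cyclic order returns to its initial value; a reversal can occur only at the step where the image of $z$ sits at the unique bad location, hence at most once per period; an odd number of reversals would contradict the return to the initial order, so no reversal occurs at all. This parity-plus-periodicity argument is the missing idea, and without it (or a genuinely new local argument you have not supplied) the exceptional case is open.

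A secondary issue: in Case V, when $Q_1$ and $Q_2$ are neither strongly linked nor share a spike (Lemma~\ref{l:c1c2}), Lemma~\ref{l:linleabeh} is not available for cross pairs. The paper instead uses Proposition~\ref{p:per_de} to confine $x,y,z$ to a single complementary arc of $\De$ whenever $z\ne v$, again isolating $z=v$ as the unique bad location to be absorbed by the parity argument. Your proposal silently assumes the linked case throughout.
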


\begin{proof}
By Corollary~\ref{c:sameperiod} and since $\lam_1, \lam_2$ are
proper, all points in $F(z)$ are periodic of the same period. It
suffices to show that for any distinct points $x, y\ne z$ of
$F(z)$ the points $x, y, z$ are kept in order by $\si_3$. % as once
%this fact is established for $a,b,v$, we can inductively apply it to
%their $\si_3^k$-images for any $k>0$.
By Lemma~\ref{l:triod_ord} we may assume that $\ol{xz}\in\lam_1$ and
$\ol{yz}\in\lam_2$. %Set $Y=\ol{xy}\cup \ol{xz}$.
We show that \emph{there is only one point such that when $z$
coincides with this point it becomes possible for the order among
the points $x, y, z$ to be reversed under $\si_3$}.

First let $Q_1, Q_2$ be neither strongly linked nor share a spike.
By Lemma \ref{l:c1c2} we may assume that $Q_1$ has a spike $\ol{av}$
and $Q_2$ has a spike $\ol{bv}$. If $z\ne v$ then $z\notin \De$. By
Proposition~\ref{p:per_de}, $\ol{xz}, \ol{yz}$ do not cross $\De$.
Hence $x, y, z$ belong either to $[a, b]$, or to $[b, v]$, or to
$[v, a]$. Also, $a, b\notin \{x, y, z\}$ because $a, b$ are
non-periodic. It follows that $\si_3|_{\{x, y, z\}}$ preserves the
order. Hence in this case the unique location for $z$ for which the
order among $a, y, z$ can be reversed is when $z=v$.

It remains to consider the case when $Q_1$ and $Q_2$ are strongly
linked or share a spike. In this case the qc-portraits $\qcp_1,
\qcp_2$ are linked and by Lemma~\ref{l:linleabeh} the order among
the points $x, y, z$ can be reversed only if $z$ is a common vertex
of associated \ql s. If there is a unique such periodic vertex, then
the claim is proven. Assume now that $Q_1$ and $Q_2$ have more than
one common \emph{periodic} vertex. If so, then both $Q_1$ and $Q_2$
must have at least three vertices (because for any critical chord
only one of its endpoints can be periodic). If, say, $Q_1$ is a
triangle then it must coincide with $\De$, hence in that case the
unique location of $z$ which allows for the reversal of orientation
is $z=v$, and the claim is once again verified.

Consider finally the case when both $Q_1$ and $Q_2$ are \ql s
sharing an edge $\ol{ut}$ with both endpoints periodic. Let $\g_i$
be the class of $\sim_i$ which contains $u$ and $t$. Since the class
$\g_i$ is periodic and finite, it is disjoint from $\ol{ab}$. Hence
the siblings of $u$ and $t$ in $(b,a)$ are unique and $Q_1=Q_2=Q$ is
the same quadrilateral. Let us show that then $\si_3|_{\{x,y,z\}}$
is order preserving. We may assume that $z=u$. Then there are two
cases. First, the points $x$ and $y$ may be separated by the spike
of $Q$ with an endpoint $u$. Then the remaining spike of $Q$ and $D$
cut $\cdisk$ in pieces one of which contains $x, y$ and $z$ which
implies that $\si_3|_{\{x,y,z\}}$ is order preserving. Otherwise the
points $x$ and $y$ are not separated by the spike of $Q$ with an
endpoint $u$. Then this very spike and $D$ again cut $\cdisk$ in
pieces one of which contains $x, y$ and $z$ which implies that
$\si_3|_{\{x,y,z\}}$ is order preserving. By induction
$\si^k_3|_{\{x,y,z\}}$ is order preserving for all $k$.

Thus, there is a unique location of $z$ for which the orientation of
the points $x, y, z$ can be reversed. However the periods of point
$x, y, z$ are the same, hence the power of $\si_3$ which maps $z$
back to itself sends both $x$ and $y$ back to themselves too. Since
by the above claim the reversal of orientation can happen along the
way only once, it follows that it does not take place at all. We
conclude that the claim holds for all periodic $z$.
\end{proof}

Suppose that say, $\ch(Q_1)=\De$. By definition either
$Q_1=(a,b,b,v)$, or $Q_1=(a,b,v,v)$, or $Q_1=(a,a,b,v)$. Then either
$\De$ is a gap of $\lam_2$ as well, or $Q_1$ and $Q_2$ are strongly
linked, or $Q_1$ and $Q_2$ share a spike. Let us show that either
$\ol{va}$ or $\ol{bv}$ is a spike of $Q_2$. This is obvious if $\De$
is a gap of $\lam_2$. Suppose that $\De$ is not a gap of $\lam_2$
and neither $\ol{va}$ nor $\ol{bv}$ is a spike of $Q_2$. Since by
definition $\ch(Q_2)\ne D$, then it is easy to see that no vertex of
$Q_2$ may coincide with $a, b$ or $v$. However then it follows that
$Q_2$ and $Q_1$ cannot be strongly linked. Thus, if $\ch(Q_1)=\De$
then we can always assume that either $\ol{va}$ or $\ol{bv}$ is a
spike of $Q_2$.

Proposition~\ref{p:no_per_collapse} studies vertex periods of linked
leaves of $\lam_1, \lam_2$.

%DSP 11.26.14

\begin{prop}\label{p:no_per_collapse}
Suppose that a leaf $\ell_1\in\lam_1$ is periodic of vertex period
$n$ and a leaf $\ell_2\in\lam_2$ meets $\ell_1$. Then the order
among the endpoints of $\ell_1$ and $\ell_2$ is preserved under
$\si_3$ and $\ell_2$ is periodic with vertex period $n$.
\end{prop}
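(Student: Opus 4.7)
Plan: The argument splits by whether $\ell_1$ and $\ell_2$ share an endpoint or are linked with four distinct endpoints, and each case establishes order preservation and the period claim simultaneously.

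If $\ell_1=\ol{p_1q_1}$ and $\ell_2$ share an endpoint $z$, then $z$ has period $n$ as an endpoint of $\ell_1$. The set $F(z)$ is finite, because in each proper geolamination $\lam_i$ all leaves through $z$ lie in a single $\sim_{\lam_i}$-class (Lemma~\ref{l:differ}). Lemma~\ref{l:perfanorder} gives that $\si_3|_{F(z)}$ is circular-order preserving, and since $\si_3^n(z)=z$ and $\si_3^n(F(z))\subseteq F(z)$, the map $\si_3^n|_{F(z)}$ is an order preserving self-map of a finite set fixing a point, hence the identity. Then the second endpoint of $\ell_2$ is $\si_3^n$-fixed, so periodic of period dividing $n$, and Corollary~\ref{c:sameperiod} applied to $\lam_2$ forces its period to be exactly $n$. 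Order preservation follows from Lemma~\ref{l:perfanorder}.

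Now suppose $\ell_1=\ol{p_1q_1}$ and $\ell_2=\ol{u_2v_2}$ are linked with $p_1<u_2<q_1<v_2<p_1$ cyclically. Lemma~\ref{l:linleabeh} yields weak order preservation of the four endpoints; the plan is to upgrade this to strict preservation at every iterate, after which no iterate can collapse around chains of critical chords, so Lemma~\ref{l:linleabeh3} provides an iterate with mutually order preserving accordions and Lemma~\ref{l:sameperiod} delivers period $n$ for $\ell_2$. Since iterates of $\ell_1$ remain periodic $\lam_1$-leaves of period $n$, it suffices to rule out a coincidence of images at the first step. Assume for contradiction that $\si_3(p_1)=\si_3(v_2)$ (the other coincidences are symmetric). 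Then $\si_3(\ell_1)$ and $\si_3(\ell_2)$ share the periodic endpoint $\si_3(p_1)$, and applying the shared endpoint case to them shows that $\si_3(u_2)$ is periodic of period $n$. On the other hand, Lemma~\ref{l:linleabeh2} produces a chain of spikes in $\lam_2$ connecting $p_1$ to $v_2$ along the arc avoiding $u_2,q_1$, and Proposition~\ref{p:clps_sib_loc} then yields a sibling leaf $\ol{p_1v'}\in\lam_2$ of $\ell_2$ with $v'<p_1<u_2<v_2<v'$ and $\si_3(v')=\si_3(u_2)$; by properness of $\lam_2$ and Corollary~\ref{c:sameperiod}, the endpoint $v'$ has period $n$.

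Applying Lemma~\ref{l:perfanorder} to $F(p_1)\ni q_1,v'$ preserves the cyclic order $p_1\to q_1\to v'\to p_1$; substituting $\si_3(v')=\si_3(u_2)$ and $\si_3(p_1)=\si_3(v_2)$ produces the cyclic order $(\si_3(v_2),\si_3(q_1),\si_3(u_2))$. But Lemma~\ref{l:linleabeh} applied to the linked pair forces the opposite cyclic order $(\si_3(v_2),\si_3(u_2),\si_3(q_1))$. The two are reconcilable only if two of the three image points coincide, forcing one of $\ell_1$, $\ell_2$, or $\ol{u_2q_1}$ to be critical. The main obstacle will be excluding each of these degenerate sub-cases: if $\ell_1$ or $\ell_2$ is critical then it contains two distinct periodic points of period $n$ at circular distance $\tfrac13$, which, together with $D\in\np$, Proposition~\ref{p:per_edge} and Proposition~\ref{p:per_de}, and the privileged nature of $\qcp_1,\qcp_2$, will yield a contradiction; and if $\ol{u_2q_1}$ is critical, then the chain of spikes furnished by Lemma~\ref{l:linleabeh2} combines with this critical chord to force a critical chord with a periodic endpoint in one of the proper geolaminations, contradicting Lemma~\ref{l:perioproper}. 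Once these cases are ruled out, strict order preservation holds throughout, and Lemma~\ref{l:sameperiod} finishes the proof.
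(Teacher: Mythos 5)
Your shared-endpoint case and the core of your order-preservation contradiction in the linked case (the chain of spikes, the sibling $\ol{p_1v'}$ from Proposition~\ref{p:clps_sib_loc}, and the clash between the cyclic order on $\si_3(F(p_1))$ and the weak order from Lemma~\ref{l:linleabeh}) follow the paper's argument. However, there are two genuine gaps. First, Proposition~\ref{p:clps_sib_loc} requires \emph{both} leaves to be non-critical, so the possibility that $\ell_2$ is critical must be excluded \emph{before} you invoke it, not deferred to the end; and your sketch for that sub-case does not work. A critical leaf can never have two periodic endpoints (if $u_2\ne v_2$ were both periodic with $\si_3(u_2)=\si_3(v_2)$, applying a suitable power of $\si_3$ gives $u_2=v_2$), and in the critical case you never established periodicity of $u_2,v_2$ anyway, since that came out of the very machinery whose applicability is in question. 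The paper excludes criticality of $\ell_2$ by a separate combinatorial argument: $\ell_2\ne D$ (as $D\in\lam_1$ cannot cross $\ell_1$), so $\ch(Q_2)$ must be $\De$, an edge of $\De$, or $\ell_2$ itself; in every case $Q_2$ has vertices on both sides of $\ell_1$, whereas $Q_1$, being a leaf or gap of $\lam_1$, lies on one side of $\ell_1$, so $Q_1$ and $Q_2$ can neither be strongly linked nor share a spike, contradicting the standing assumption. You need an argument of this kind. (Incidentally, once $\ell_2$ is known to be non-critical, your three ``degenerate sub-cases'' evaporate: all points of $F(p_1)$ are periodic of the same period, so distinct points have distinct images and Lemma~\ref{l:perfanorder} already gives a \emph{strict} cyclic order, which outright contradicts the weak order from Lemma~\ref{l:linleabeh}.)

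Second, your route to the period statement falls short of the claim. Lemma~\ref{l:linleabeh3} followed by Lemma~\ref{l:sameperiod} applies to the $\si_3^N$-images and only shows that the endpoints of $\ell_2$ are \emph{eventually} periodic of period $n$; the proposition asserts that $\ell_2$ itself is periodic of vertex period $n$. The paper closes this gap in two steps: strict order preservation makes the intersection points of $\si_3^{nk}(\ell_2)$ with $\ell_1$ monotone along $\ell_1$, so these pairwise unlinked leaves converge and expansion of $\si_3$ forces $\ell_2$ to be (pre)periodic; then, if $\ell_2$ were strictly preperiodic, some $\si_3^{i}(\ell_2)$ and $\si_3^{n+i}(\ell_2)$ would be disjoint siblings both linked with $\si_3^{i}(\ell_1)$, the leaf $D$ cannot separate them, so both spikes of $Q_2$ must separate them and hence cross the relevant image of $\ell_1$, forcing a spike of $Q_1$ to be linked with it as well --- impossible since $Q_1$ is unlinked with leaves of $\lam_1$. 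Nothing in your proposal substitutes for this last step, so the conclusion that $\ell_2$ is periodic, rather than merely preperiodic, remains unproved.
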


\begin{proof}
If $\ell_1$ and $\ell_2$ meet at an endpoint, the claim follows from
Lemma~\ref{l:perfanorder}. Otherwise we may assume that
$\ell_1=\ol{p_1q_1},\ell_2=\ol{p_2q_2}$ and $p_1<p_2<q_1<q_2$.
Observe that $\ell_2$ cannot be critical. Indeed, suppose otherwise.
Since $\ell_2$ and $\ell_1$ cross, $\ell_2\ne D$. Then there are the
following options for $Q_2$: $\ch(Q_2)=\De, \ch(Q_2)$ is the edge of
$\De$ not equal to $D$ or $\ell_2$, and finally $\ch(Q_2)=\ell_2$.
Let us first assume that $\ch(Q_2)=\ell_2=\ol{st}$. Then
$Q_2=(s,s,t,t)$. Since $\ell_1$ crosses $\ell_2$, $Q_1$ must be
located on one side of $\ell_1$. However, this would imply that
$Q_1$ and $Q_2$ can neither be strongly linked nor share a spike, a
contradiction.

Thus, either $\ch(Q_2)=\De,$ or $\ch(Q_2)$ is the edge of $\De$ not
equal to $D$ or $\ell_2$, and $\ell_2$ is either $\ol{bv}$ or
$\ol{va}$. Observe that in either case either $a, a$, or $a, b$, or
$b, b$ are consecutive vertices of $Q_2$. On the other hand, the
leaf $\ell_1$ crosses $\ell_2$ and, therefore, is linked with both
$\ol{bv}$ and $\ol{va}$. As before, it means that $Q_1$ is located
on one side of $\ell_1$ and hence, by definition, $Q_1$ cannot be
linked with $Q_2$, a contradiction. So we may assume that $\ell_2$
is not critical.

%Consider two cases. Assume first that $(\lam_1, \qcp_1)$ and
%$(\lam_2, \qcp_2)$ are linked.
Let us show that the order among the
endpoints of $\ell_1, \ell_2$ is preserved. Indeed, suppose
otherwise. By Lemma~\ref{l:linleabeh}, without loss of generality we
may assume that $\si_3(p_1)=\si_3(q_2)$. By
Lemma~\ref{p:clps_sib_loc} there is a sibling $\ol{p_1p'_2}$ of
$\ol{p_2q_2}$ with $q_2<p'_2<p_1$. Since $\lam_2$ is proper, $p'_2$
is periodic. Then by Lemma~\ref{l:perfanorder},
$\si_3(p'_2)=\si_3(p_2)<\si_3(p_1)<\si_3(q_1)$. On the other hand,
since $(\lam_1, \qcp_1), (\lam_2, \qcp_2)$ are linked, then by
Lemma~\ref{l:linleabeh} the order among the points $p_1,p_2,q_1,q_2$
is weakly preserved under $\si_3$ so that
$\si_3(p_1)\le\si_3(p_2)\le\si_3(q_1)$, a contradiction. Thus, the
order among the endpoints of $\ell_1, \ell_2$ is preserved. In
particular, $\ell_2$ is not (pre)critical.

Let us show that $\ell_2$ is (pre)periodic. Consider
$A_{\ell_2}(\ell_1)=A$. By the above, the order of endpoints of
$\ell_1, \ell_2$ is preserved under $\si_3^{nk}$ for every $k\ge 0$.
Hence the sequence of points of intersection of leaves
$\si_3^{nk}(\ell_2)$ with $\ell_1$ is monotone on $\ell_1$ while all
leaves $\si_3^{nk}(\ell_2)$ are pairwise unlinked. Then there exists
a leaf $\ell_\infty$ equal to the limit of leaves
$\si_3^{nk}(\ell_2)$. Since $\si_3$ is locally expanding, $\ell_2$
is (pre)periodic. Moreover, by Lemma~\ref{l:sameperiod} the vertex
period of the eventual periodic image of $\ell_2$ is $n$.

Let us now show that in fact $\ell_2$ is periodic itself. Suppose
otherwise. Then for some $i$, $\si_3^i(\ell_2)$ and
$\si_3^{n+i}(\ell_2)$ are disjoint siblings which are both linked
with $\si_3^i(\ell_1)$. Clearly, $D$ does not separate
$\si_3^i(\ell_2)$ and $\si_3^{n+i}(\ell_2)$ as otherwise $D$ and
$\ell_1$ would cross. Thus, $\si_3^i(\ell_2)$ and
$\si_3^{n+i}(\ell_2)$ are located in the closure $\ol{W}$ of the
component $W$ of $\cdisk \setminus D$ with boundary circle arc of
length $\frac23$. Moreover, since $\si_3^i(\ell_2)$ and
$\si_3^{n+i}(\ell_2)$ are disjoint siblings, both spikes of $Q_2$
must separate $\si_3^i(\ell_2)$ and $\si_3^{n+i}(\ell_2)$. Thus,
both spikes of $Q_2$ cross $\ell_1$. Since $Q_1$ and $Q_2$ are
strongly linked or share a spike, it follows that $Q_1$ has a spike
linked with $\ell_1$, a contradiction. Thus, $\ell_2$ is periodic.
\end{proof}

Observe that if $\lam$ is proper then all $\sim_\lam$-classes are
finite. Hence for any point $x\in \uc$ there are at most finitely
many leaves of $\lam$ containing $x$.

%This is the fundamental technical result that we need to prove that
%$\hat\lam_1=\hat\lam_2$.

\begin{prop}\label{p:limunlink}
No leaf $\ell_1\in\lam_1$ (resp. $\ell_2\in\lam_2$) can intersect
infinitely many leaves of $\lam_2$ (resp. $\lam_1$). In particular,
no leaf $\ell_1\in\lam_1$ (resp. $\ell_2\in\lam_2$) is linked with a
limit leaf of $\lam_2$ (resp. $\lam_1$).
\end{prop}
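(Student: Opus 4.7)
The plan is to argue by contradiction. First, the ``in particular'' statement reduces to the main one: if $\ell_1\in\lam_1$ were linked with a limit leaf $\m_\infty\in\lam_2$, then since $\sim_{\lam_2}$-classes are finite (so only finitely many leaves of $\lam_2$ can share an endpoint with $\ell_1$), all but finitely many of a sequence $\m_k\to\m_\infty$ in $\lam_2$ cross $\ell_1$ transversally, contradicting the main claim.

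For the main claim, suppose $\ell_1\in\lam_1$ is linked with infinitely many $\m_k\in\lam_2$. Since these leaves are pairwise unlinked as members of $\lam_2$, after passing to a subsequence we may assume the intersection points $\m_k\cap\ell_1$ are monotone along $\ell_1$ and $\m_k\to\m_\infty\in\lam_2$ with endpoints converging monotonically. By the final clause of Lemma~\ref{l:linleabeh3}, non-disjointness propagates under $\si_3$; combined with Corollary~\ref{c:finsib} (only finitely many leaves have more than $3$ preimages), for every $n\ge 0$ the iterate $\si_3^n(\ell_1)$ remains non-disjoint from infinitely many leaves of $\lam_2$.

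The key step is to force $\ell_1$ to be (pre)periodic. Apply Lemma~\ref{l:linleabeh3} to the pair $(\ell_1,\m_\infty)$. In the collapse case, some iterate $\si_3^t(\ell_1)$ has an endpoint at one of the finitely many endpoints of the spike chains built from the critical leaves and spikes of $Q_1,Q_2,D$; repeating for two distinct values of $t$ makes an endpoint of $\ell_1$ preperiodic. In the accordion case, Theorem~\ref{t:compgap} gives orderly dynamics of $B=\ch(\si_3^N(\ell_1),\si_3^N(\m_\infty))$; cases (b) and (c) of Definition~\ref{d:orderly} directly produce periodic vertices (via Lemma~\ref{l:sameperiod}), while in case (a) the accumulation $\m_k\to\m_\infty$ combined with the local tripling of arc length by $\si_3$ contradicts the required pairwise disjointness of the iterates of $B$. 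Since $\lam_1$ is proper, Lemma~\ref{l:perioproper} upgrades a single preperiodic endpoint of $\ell_1$ to both. Then, replacing $\ell_1$ by a forward iterate (observing that $\si_3^i(\ell_1)=D$ is impossible because $D\in\lam_2$ is not crossed by any leaf of $\lam_2$), we may assume $\ell_1$ is periodic of vertex period $n$; Proposition~\ref{p:no_per_collapse} then forces every leaf of $\lam_2$ meeting $\ell_1$ to be periodic of the same vertex period $n$, contradicting the fact that $\lam_2$ contains only finitely many periodic leaves of any given period. The main obstacle is case (a) of orderly dynamics, where ruling out disjoint iterates of $B$ requires combining the $\si_3$-expansion with the monotone accumulation of the $\m_k$'s; the remaining cases follow directly from the structural results in Subsection~\ref{ss:accord}.
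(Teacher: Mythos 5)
Your overall strategy is essentially the paper's: push the linking forward, invoke the accordion machinery of Lemma~\ref{l:linleabeh3} and Theorem~\ref{t:compgap}, extract (pre)periodicity of $\ell_1$, and contradict Proposition~\ref{p:no_per_collapse} (the paper runs the dichotomy in the opposite order, disposing of the periodic case first, but that is cosmetic). However, two of your steps have genuine gaps.

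First, the collapse case. From a collapse at times $t_1<t_2$ you only learn that \emph{some} endpoint of $\si_3^{t_1}(\ell_1)$ and \emph{some} endpoint of $\si_3^{t_2}(\ell_1)$ lie in the finite set $S$ of spike endpoints; these may be images of \emph{different} endpoints of $\ell_1$, and even if they track the same endpoint, hitting $S$ twice at two different points of $S$ gives no recurrence. So ``repeating for two distinct values of $t$ makes an endpoint of $\ell_1$ preperiodic'' is false as stated; you would need infinitely many collapses, a pigeonhole on $S$, and bookkeeping of which endpoint is involved. The paper avoids this entirely: having first excluded the case where some image of $\ell_1$ is periodic (via Proposition~\ref{p:no_per_collapse}), it notes that an endpoint with infinite orbit visits each point of the finite set $S$ at most once, so for all $i\ge i_0$ no collapse around a chain of spikes is possible, and Lemma~\ref{l:linleabeh3} then yields mutually order preserving accordions outright.

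Second, and more seriously, case (a) of Definition~\ref{d:orderly} (all images of $B$ pairwise disjoint, i.e.\ $B$ wandering). Your proposed resolution --- that the accumulation $\m_k\to\m_\infty$ together with ``local tripling of arc length'' contradicts pairwise disjointness --- is not an argument: disjointness of the images forces their areas (hence, up to degeneration, their diameters) to tend to $0$, and nothing in the expansion of $\si_3$ or in the presence of infinitely many $\m_k$ crossing $\ell_1$ produces a contradiction at this level. This is exactly the point where the paper invokes an external theorem: true quadrilaterals cannot be wandering under $\si_3$ (Kiwi \cite{kiw02}). Without that input (or a full reproduction of the Thurston--Kiwi central-strip argument for quadrilaterals), the wandering case remains open, and you correctly identified it as the main obstacle but did not overcome it. The remaining steps --- the reduction of the ``in particular'' clause, the propagation of non-disjointness via Corollary~\ref{c:finsib}, and the final contradiction from having infinitely many leaves of $\lam_2$ of a fixed vertex period --- are sound.
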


\begin{proof}
If $\ell_1\in\lam_1$ intersects infinitely many leaves
$\{\ell_2^j\}_{1}^\infty$ of $\lam_2$ then by
Lemma~\ref{l:linleabeh} $\si_3^i(\ell_1)$ intersects
$\si_3^i$-images of all leaves $\{\ell_2^j\}_{1}^\infty$ for any
$i$. Since for any point $x\in \uc$ there are at most finitely many
leaves of $\lam$ containing $x$, $\si_3^i(\ell_1)$ is linked with
infinitely many leaves of $\lam_2$. If $\si_3^i(\ell_1)$ is periodic
for some $i$ this would contradict
Proposition~\ref{p:no_per_collapse}. Thus we may assume that the
orbits of the endpoints of $\ell_1$ are infinite. As there are only
finitely many chains of spikes of $\lam_1$ or $\lam_2$, for some
$i_0$ and any $i\ge i_0$ the leaf $\si_3^i(\ell_1)$ cannot collapse
around any chain of spikes. Since $\si_3^{i_0}(\ell_1)$ is linked
with infinitely many leaves of $\lam_2$, Lemma~\ref{l:linleabeh}
implies that $\si_3^{i_0}(\ell_1)$ and those leaves of $\lam_2$ have
mutually order preserving accordions. Since true \ql s cannot be
wandering under $\si_3$ \cite{kiw02}, by Theorem~\ref{t:compgap}
$\ell_1$ is (pre)periodic, a contradiction.
\end{proof}

Let us study Fatou gaps of our geolaminations. For a Fatou gap $U$
of $\lam_1$, let $\phi_U:\partial U\to \uc$ be the map collapsing
all leaves in $\bd(U)$ to points. For a leaf $\ell_2\in\lam_2$ with
$\ell_2\cap\bd (U)\ne 0$, let $\ell', \ell''$ be edges of $\bd (U)$
which intersect $\ell_2$ such that $\phi_U(\ell')\ne \phi_U(\ell'')$
if possible. If so, define
$\phi_U(\ell_2)=\ol{\phi_U(\ell')\phi_U(\ell'')}$ and define
$\phi_U(\ell_2)=\phi_U(\ell_2\cap\bd (U))$ otherwise. Observe that
the map $\phi_U$ is defined not only for periodic but also for
non-periodic Fatou gaps (recall that all Fatou gaps are
(pre)periodic).

%The next lemma studies how leaves of $\lam_2$ can intersect Fatou
%gaps of $\lam_1$.

\begin{lem}\label{l:fgapproj1}
If $U$ is a Fatou gap of $\lam_1$, then, for any leaf $\ell_2\in
\lam_2$ such that $\ell_2\cap\partial U\ne\0$, the set
$\phi_U(\ell_2)$ is degenerate.
\end{lem}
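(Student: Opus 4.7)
The plan is to argue by contradiction: assume $\phi_U(\ell_2)$ is non-degenerate. Unpacking the definition, this means either (i) there exist two edges $\ell', \ell''$ of $U$ with $\phi_U(\ell')\ne \phi_U(\ell'')$, each intersecting $\ell_2$ (by crossing or by sharing a single endpoint), or (ii) the two points of $\ell_2\cap\bd(U)$ both lie on $U\cap \uc$ and project to distinct points of the quotient. In either case we get two distinct points $x\ne y$ on the quotient circle arising from the interaction of $\ell_2$ with $\bd(U)$.

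First I would reduce to the case where $U$ is periodic. By Lemma~\ref{l:fatgaps}, every infinite gap of a proper geolamination is (pre)periodic, so after replacing $U,\ell_2,\ell',\ell''$ by their $\si_3^n$-iterates for sufficiently large $n$ (keeping track of intersections via the weak order preservation of Lemma~\ref{l:linleabeh}) one may assume $U$ is periodic of period $k$ and degree $r$. The options $r\ge 3$ are excluded: $D\in\np$ is already one critical leaf of $\lam_1$, and by Lemma~\ref{l:crit-2} together with the standing assumption that $\lam_1$ is not of Siegel capture type, the remaining critical structure of $\lam_1$ consists either of a finite set or of a periodic Fatou gap of degree $2$; thus $r\in\{1,2\}$. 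Lemma~\ref{l:fatgaps} then supplies the semiconjugacy from the remap $\si_3^k|_{\bd(U)}$ to either $\si_2|_{\uc}$ (if $r=2$) or to an irrational rotation (if $r=1$).

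In case $r=2$, the image $\phi_U(\ell_2)=\ol{xy}$ corresponds, under the semiconjugacy, to a non-degenerate chord whose $\si_2$-iterates $\ol{\si_2^j(x)\si_2^j(y)}$ eventually cross the unique fixed point of $\si_2$ on the quotient circle. Translating back, some iterate $\si_3^{jk}(\ell_2)$ must cross the unique refixed edge $M$ of $U$. By Proposition~\ref{p:no_per_collapse} this iterate is then periodic of vertex period $k$. However, Lemma~\ref{l:vperiod} asserts $M$ is the unique refixed edge of $U$ and that the forward $\si_3$-orbit of $M$ avoids $\cs(M)$; combined with the fact that the iterate in question is a periodic leaf of $\lam_2$ crossing $M$, this contradicts either the privilege of $\qcp_1$ for $\lam_1$ or Proposition~\ref{p:limunlink} (via the extra crossings the periodic orbit forces). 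In case $r=1$, the irrational rotation on the quotient pushes the endpoints of the iterated chords to a dense subset of the quotient, producing infinitely many pairwise distinct leaves $\si_3^{jk}(\ell_2)\in\lam_2$ each crossing $\bd(U)$; extracting a Hausdorff limit in the compact space $\Lam_3$ yields a leaf of $\lam_2$ linked with infinitely many leaves of $\lam_1$, contradicting Proposition~\ref{p:limunlink}.

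The main obstacle lies in passing between the clean quotient dynamics and the actual geometric intersection pattern of $\ell_2$ with $\bd(U)$: the set $\si_3^{jk}(\ell_2)\cap\bd(U)$ is \emph{not} in general equal to $\si_3^{jk}(\ell_2\cap\bd(U))$, so one must show that enough intersection survives under iteration to drive the argument. The ingredients to bridge this gap are the weak order preservation of Lemma~\ref{l:linleabeh}, together with the structural control on linked sibling collections provided by Lemma~\ref{l:sibling_loc} and Proposition~\ref{p:clps_sib_loc}, which together guarantee that the qualitative ``enters and exits through distinct $\phi_U$-classes'' property is preserved under forward iteration.
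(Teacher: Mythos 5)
Your skeleton matches the paper's: assume $\phi_U(\ell_2)$ is non-degenerate, reduce to $U$ periodic via Lemma~\ref{l:linleabeh} and the absence of critical preperiodic Fatou gaps, and split into degrees $1$ and $2$. But the heart of the degree-$2$ case is where your argument breaks down. You claim that the $\si_2$-iterates of the non-degenerate chord $\phi_U(\ell_2)$ ``eventually cross the unique fixed point of $\si_2$'' and hence that some $\si_3^{jk}(\ell_2)$ crosses the refixed edge $M$ of $U$. This is neither a meaningful nor an established statement, and it targets the wrong object: the paper instead observes that $\phi_U(Q_1)=\ol{0\frac12}$ and that, since the $\si_2$-images of $\phi_U(\ell_2)$ are pairwise unlinked, some image must cross or coincide with the diameter $\ol{0\frac12}$. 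Each alternative then yields a concrete contradiction: crossing $\ol{0\frac12}$ forces $\si_3^k(\ell_2)$ to cross two improper edges of $Q_1$, contradicting the fact that $Q_2$ is strongly linked with $Q_1$ or shares a spike with it; equality forces $\si_3^k(\ell_2)$ to meet one refixed and one non-periodic edge of $U$, contradicting Proposition~\ref{p:no_per_collapse}. Your corresponding step --- ``this contradicts either the privilege of $\qcp_1$ for $\lam_1$ or Proposition~\ref{p:limunlink} (via the extra crossings the periodic orbit forces)'' --- is not a derivation of a contradiction; and your appeal to Lemma~\ref{l:vperiod} ignores its hypothesis that the sibling of the refixed edge lies in a \emph{finite} critical gap of $\sim_{\lam_1}$, which you have not verified here.

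The degree-$1$ case also has a soft spot: a Hausdorff limit of the leaves $\si_3^{jk}(\ell_2)$ need not be linked with infinitely many leaves of $\lam_1$ (it could, for instance, be an edge of $U$ or merely touch $\bd(U)$), so Proposition~\ref{p:limunlink} does not apply as stated. The paper's contradiction is more direct: under the irrational rotation to which the remap of $\bd(U)$ is semiconjugate, the non-degenerate chord $\phi_U(\ell_2)$ eventually crosses one of its own images, whereas the $\si_3$-images of $\ell_2$ are pairwise unlinked. Your closing concern about $\si_3^{jk}(\ell_2)\cap\bd(U)$ versus $\si_3^{jk}(\ell_2\cap\bd(U))$ is legitimate and is exactly what the paper resolves with Lemma~\ref{l:linleabeh}, as you suspected.
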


\begin{proof}
Assume that $\phi_U(\ell_2)$ is non-degenerate. Let us show that we
may assume that $U$ is periodic. Indeed, any Fatou gap eventually
maps to a periodic Fatou gap. Moreover, since $\lam_1$ is not of
capture Siegel type then there are no critical preperiodic Fatou
gaps (in our case, i.e. with $D$ being a part of $\lam_1$, a
critical preperiodic Fatou gap which maps to a periodic Fatou gap of
degree greater than one is impossible). Thus, if $W$ is a
non-periodic Fatou gap of $\lam_1$ and $\ol{h}$ and $\ol{k}$ are its
edges which do not belong to the same concatenation of edges of $W$
then their images are distinct. Hence, if $\phi_U(\ell_2)$ is not
degenerate, we can keep mapping $U$ forward until $U$ maps to a
periodic gap $W$ and consider intersections of images $\ell_2$ with
appropriate images of $U$. By Lemma~\ref{l:linleabeh} the
intersections of $\ell_2$ with edges of $U$ are preserved under
iteration of $\si_3$. Since along the way to $W$ all preperiodic
images of $U$ are non-critical, we conclude that the image of
$\ell_2$ which intersects $W$ gives rise to a non-degenerate
$\phi_W$-image. Thus, we can assume without loss of generality that
$U$ is periodic from the very beginning.

Now, for geolaminations of $\prnp_3$, all periodic Fatou gaps are of
degree 1 or 2. Assume that $U$ is of degree 2.  Then $Q_1$ has an
edge $M$ which is a refixed edge of $U$.  Denote the  sibling edge
of $M$ in  $U$ by $M'$. Thus, $\phi_U(Q_1)=\ol{0\frac12}$. Since
$\si_3$-images of $\ell_2$ do not cross one another, $\si_2$-images
of $\phi_U(\ell_2)$ do not cross one another either. Then there
exists $k$ such that either (1) $\si_2^k(\phi_U(\ell_2))$ crosses
$\ol{0\frac12}$, or (2) $\si_2^k(\phi_U(\ell_2))=\ol{0\frac12}$. If
(1) holds, then $\si_3^k(\ell_2)$ crosses two improper leaves of
$Q_1$, contradicting the fact that $Q_2$ is strongly linked with
$Q_1$ or shares a spike with $Q_1$. If (2) holds, $\si_3^k(\ell_2)$
meets one refixed edge of $U$ and one edge of $U$ which is not
periodic contradicting Proposition~\ref{p:no_per_collapse} (indeed,
if $\si_3^k(\ell_2)$ is not periodic it cannot meet any refixed edge
of $U$ while if $\si_3^k(\ell_2)$ is periodic it cannot meet
non-periodic edge of $U$). Finally, if $U$ is of degree 1 then
$\phi_U(\ell_2)$ will eventually cross itself under irrational
rotation induced on $\phi_U(\bd(U))=\uc$, a contradiction.
\end{proof}

%there exists a qc-portrait privileged for both $\lam_1$ and $\lam_2$
%(by Lemma~\ref{l:good_link1} if $v$ is periodic), or otherwise

%With this in mind, we turn to lemmas which lead directly to the
%proof of Theorem B.

%Lemma~\ref{l:fgapproj1} implies that Fatou gaps of $\hlam_1$ and
%$\hlam_2$ are the same.

Recall that $\hlam_1$ and $\hlam_2$ are canonical geolaminations
constructed for laminations $\sim_1, \sim_2$ in turn generated by
the given geolaminations $\lam_1, \lam_2$.

\medskip

\noindent\emph{Proof of Theorem B.} First we prove that Fatou gaps
of $\hlam_1$ and $\hlam_2$ are the same. A Fatou gap of either
geolamination $\hlam_i$ cannot have a non-trivial (consisting of
more than one) concatenation of its edges as it will have to be
completed with an edge separating it from the rest of the Fatou gap
in question.
%Now, consider first periodic Fatou gaps.
Let $\widehat U_1$ be a periodic Fatou gap of $\hlam_1$. The
corresponding Fatou gap $U_1$ of $\lam_1$ is of the same degree as
$\widehat U_1$. If for a leaf $\hell_2\in \hlam_2$ the chord
$\phi_{\widehat U_1}(\hell_2)$ is non-degenerate, then there must
exist a leaf $\ell_2\in \lam_2$ with non-degenerate image
$\phi_{U_1}(\ell_2)$, contradicting Lemma~\ref{l:fgapproj1}. Hence
there exists an infinite gap $\widehat U_2$ of $\hlam_2$ containing
$\widehat U_1$. Similarly, there exists an infinite gap of $\hlam_1$
containing $\widehat U_2$. Thus, $\widehat U_1=\widehat U_2$.

We claim now that leaves of $\hlam_1$ and $\hlam_2$ coincide. Call a
leaf of $\hlam_i$ a \emph{limit} leaf if it is the limit of leaves
of $\hlam_i$. We claim the limit leaves of $\hlam_1$ and the limit
leaves of $\hlam_2$ form the same family of leaves. Indeed, let
$\hell_1\in \hlam_1$ be a limit leaf, and prove that then $\hell_1$
is a leaf of $\hlam_2$. Observe that $\hell_1$ must be a leaf of
$\lam_1$ too. By Proposition~\ref{p:limunlink} $\hell_1$ is not
linked with any leaf of $\lam_2$ as any such leaf of $\lam_2$ will
be crossed by infinitely many leaves of $\lam_1$. This easily
implies that $\hell_1$ in fact is not linked with leaves of
$\hlam_2$ either. Moreover, the same arguments show that no leaf of
$\hlam_2$ can share an endpoint with $\hell_1$ and be otherwise
located on the side of $\hell_1$ from which $\hell_1$ is approached
by infinitely many leaves of $\hlam_1$. Suppose that $\hell_1$ is
not a leaf of $\hlam_2$. Then by the above $\hell_1$ is contained
(except the endpoints) in a Fatou gap $V$ of $\hlam_2$, a
contradiction with the above.

On the other hand, suppose that $\hell_1\in \hlam_1$ is not a limit
leaf of $\hlam_1$. Then on at least one side a Fatou gap is attached
to $\hell_1$ which implies that $\hell_1$ is a leaf of $\hlam_2$ too.
Thus, $\hlam_1=\hlam_2$ as desired. \hfill\qed

\section{Unlinkage of QC-Portraits of  {$\qnp_3$}}
\label{s:unlink}

In what follows we always assume that $\qcp_1=(Q_1,D)$,
$\qcp_2=(Q_2,D)\in\qnp_3$ are \emph{linked} and \emph{distinct}
privileged portraits of proper geolaminations
$\lam_1,\lam_2\in\Ss_D$. Recall that $\De=\ch(a,b,v)$ is the
all-critical triangle with  edge $D=\ol{ab}$ where $(a, b)$ is a
circle arc of length $\frac13$. Observe that $(\De, D)$ is an
admissible qc-portrait.

\begin{lem}\label{l:De}
If $Q_1=\De$, then $Q_2$ is an edge of $\De$ distinct from $D$, or
$Q_2$ is a collapsing \ql{} whose one spike is an edge of $\De$ not
equal to $D$ and whose edges do not cross $D$.
\end{lem}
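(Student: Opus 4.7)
The plan is to combine the definition of linked qc-portraits with admissibility of $(Q_2,D)$ to pin down the possible shapes of $Q_2$. By Definition~\ref{d:qclink1}, since $\qcp_1=(\De,D)$ and $\qcp_2=(Q_2,D)$ share the second component $D$, the first components $\De$ and $Q_2$ must be strongly linked or share a spike. First I would use admissibility of $(Q_2,D)$ (Definition~\ref{d:admis}) to establish two preliminary facts: (i) every vertex of $Q_2$ lies in the closed arc $[b,a]$ of length $\tfrac{2}{3}$, and (ii) $D$ is neither an edge nor a spike of $Q_2$. For (i), if $Q_2$ had vertices strictly on both sides of $D$, its boundary would cross $D$ inside $\disk$, so $Q_2\cap D$ would be a proper subchord of $D$ that is neither an edge nor a vertex of $Q_2$, violating admissibility. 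For (ii), if $D$ were an edge of $Q_2$ then the two remaining vertices, being endpoints of critical spikes emanating from $a$ or $b$, would necessarily lie in $\{a,b,v\}$, forcing $Q_2\in\{D,\De\}$ (both excluded); if $D$ were a spike of $Q_2$ then $D\subset Q_2$ as a diagonal and $Q_2\cap D=D$ would again fail the common-edge-or-vertex criterion.

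Next I would show that $a$ or $b$ is a vertex of $Q_2$. A shared spike immediately furnishes two vertices of $Q_2$ drawn from the endpoints $\{a,b,v\}$ of edges of $\De$. For strong linkage (Definition~\ref{d:strolin}), the cyclic arrangement of $\{a,b,v\}$ into three arcs of length $\tfrac{1}{3}$, together with the interleaving requirement, forces at least one vertex of $Q_2$ to lie in the closed arc $[a,b]$; by (i) this vertex must be $a$ or $b$. If $v$ were the only shared vertex, then the spike of $Q_2$ passing through $v$ would have its other endpoint in $\si_3^{-1}(\si_3(v))=\{a,b,v\}$, and that endpoint would itself be a vertex of $Q_2$, reducing to the previous case. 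Say $a$ is a vertex of $Q_2$. The spike of $Q_2$ incident to $a$ has its other endpoint in $\{b,v\}$; by (ii) it cannot be $b$, so it is $v$. Hence $\overline{av}$ (or symmetrically $\overline{bv}$) is a spike of $Q_2$, i.e.\ an edge of $\De$ distinct from $D$.

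Finally I would case-split on the type of $Q_2$ (Definition~\ref{d:qll}). An all-critical quadrilateral is impossible in the cubic case, and an all-critical triangle would have all three vertices in $\si_3^{-1}(d)=\{a,b,v\}$, giving $Q_2=\De=Q_1$ and contradicting distinctness of the portraits. If $Q_2$ is a critical leaf, it coincides with its spike, so $Q_2=\overline{av}$ or $\overline{bv}$, an edge of $\De$ distinct from $D$, which is the first alternative of the lemma. If $Q_2$ is a collapsing \ql{}, then one of its spikes is $\overline{av}$ or $\overline{bv}$, and since by (i) all vertices of $Q_2$ lie in $[b,a]$, the convex polygon $Q_2$ is contained in the closed half-disk cut off by $D$; consequently no edge of $Q_2$ crosses $D$ inside $\disk$. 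This is precisely the second alternative of the lemma.

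The main obstacle is step (ii): the careful bookkeeping needed to rule out the degenerate possibility that $D$ is a spike of $Q_2$, since critical diagonals interact with admissibility in a way that depends on how one treats the degenerate ``edges'' of $D$. Once (ii) is clean, the remainder is a tight but elementary combinatorial analysis that leverages the cubic-specific fact $\si_3^{-1}(d)=\{a,b,v\}$.
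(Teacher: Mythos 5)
Your proof is correct and follows essentially the same route as the paper's: the linkage condition (interleaving or a shared spike) plus criticality of the spikes pins a spike of $Q_2$ to an edge of $\De$, distinctness of the portraits excludes $Q_2=\De$ and $Q_2=D$, and admissibility of $(Q_2,D)$ rules out crossings with $D$. The only micro-gap is in your fact (i): you exclude vertices of $Q_2$ strictly on both sides of $D$ but not the case where all vertices lie in the short arc $[a,b]$; in that case both spikes, being critical chords with endpoints in an arc of length $\tfrac13$, would have to equal $\ol{ab}=D$, forcing $Q_2=D$, which is excluded.
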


\begin{proof}%By Lemma~\ref{l:c1c2}, we may assume that $\lam_1$ and $\lam_2$
%are linked.
Suppose that $\De$ is represented as a generalized \ql{}
by assigning to it the following vertices in the positive direction:
$(a, b, b, v)$. Denoting the vertices of $Q_2$ by $x_1\le x_2\le x
_3\le x_4$ we may assume that

$$a\le x_1\le b\le x_2\le b\le x_3\le v\le x_4\le a$$

\noindent which implies $x_2=b$. Since $\ol{x_2x_4}$ is a critical
chord then either $a=x_4$ or $v=x_4$. Clearly, the argument can be
repeated in other cases too; it shows that $Q_2$ has a spike which
is an edge of $\De$. In the degenerate case $Q_2$ coincides with an
edge of $\De$; recall here, that by definition $Q_2\ne D$. If $Q_2$
is a triangle, then $Q_2=\De$, contradiction with the assumption
that $Q_1\ne Q_2$. Now, suppose that $Q_2$ is a true \ql. Then it
cannot have $D$ as a spike. Hence its spike which is an edge of
$\De$ must be either $\ol{av}$ or $\ol{bv}$. Clearly, the edges of
$Q_2$ cannot cross $D$.
\end{proof}

It will be  convenient to separate the following easy fact into a lemma.

\begin{lem}\label{l:sharever}
The sets $Q_1$ and $Q_2$ share at most one periodic vertex.
\end{lem}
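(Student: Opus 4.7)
The plan is to argue by contradiction: suppose $Q_1$ and $Q_2$ share two distinct periodic vertices $u$ and $t$. I will force $Q_1 = Q_2$, which contradicts the distinctness of the portraits $\qcp_1 = (Q_1, D)$ and $\qcp_2 = (Q_2, D)$.

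First I would invoke the cubic arithmetic fact (already used in the proof of Lemma~\ref{l:perfanorder}) that a non-degenerate critical chord has at most one periodic endpoint: if $x = i/(3^n - 1)$ is $\si_3$-periodic, then $x + \tfrac{1}{3}$ and $x + \tfrac{2}{3}$ have denominators divisible by $3$, whereas $3^m - 1$ is never divisible by $3$, so neither sibling can be periodic. Consequently each spike $\overline{a_0 a_2},\overline{a_1 a_3}$ of the quadrilateral $Q_i$ carries at most one periodic vertex, and two periodic vertices of $Q_i$ must lie on distinct spikes, i.e. be cyclically adjacent. Hence $\overline{ut}$ is an edge of both $Q_1$ and $Q_2$.

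Next I would locate $u$ and $t$. Since $a, b$ are non-periodic by assumption and $\si_3(v) = \si_3(a)$ is non-periodic (so $v$ is non-periodic too), $u, t \notin \{a, b, v\}$. As $\qcp_i$ is privileged, $Q_i$ is contained in a critical set $C_i \ne D$ of $\lam_i$; since $C_i$ is a gap, its interior is disjoint from the leaf $D$, so every vertex of $Q_i$ lies in the closed long arc $\overline{(b,a)}$. Combining these observations, $u, t \in (b, v) \cup (v, a)$.

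The main geometric step is to pin down the spike partners of $u$ and $t$. A direct case check shows that for any $x \in (b, v) \cup (v, a)$ exactly one of the two $\si_3$-siblings $x + \tfrac{1}{3}, x + \tfrac{2}{3}$ lies in $(b, a)$; call it $\mathrm{sib}(x)$. The spike of $Q_i$ emanating from $u$ must end at a sibling of $u$, and if that sibling were in $(a, b)$ the chord would cross $D$, impossible since $Q_i \subset C_i$ and $C_i$ has interior disjoint from $D$. Hence the spike at $u$ is $\overline{u\,\mathrm{sib}(u)}$ and similarly for $t$. Therefore the vertex set of each $Q_i$ is $\{u, t, \mathrm{sib}(u), \mathrm{sib}(t)\}$, which depends only on $u$ and $t$; this forces $Q_1 = Q_2$. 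The main obstacle I anticipate is the arithmetic bookkeeping in the preliminary fact and the case analysis defining $\mathrm{sib}(x)$; once these are in place, the uniqueness conclusion follows immediately from the privileged structure of $Q_i$ and the gap property of $C_i$.
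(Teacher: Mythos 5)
Your strategy is the same as the paper's (one-sentence) proof: a critical chord has at most one periodic endpoint, so two shared periodic vertices must lie on distinct spikes, and then uniqueness of siblings in the long arc pins down the quadrilateral, contradicting $Q_1\ne Q_2$. Your arithmetic justification of the critical-chord fact and the sibling bookkeeping for points of $(b,v)\cup(v,a)$ are correct.

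There is, however, a genuine gap in the step that excludes $v$ from being a shared vertex. You assert that $\si_3(v)=\si_3(a)$ is non-periodic ``so $v$ is non-periodic too.'' This does not follow: $a$ is non-periodic by hypothesis, but it may be strictly preperiodic, in which case $d=\si_3(a)=\si_3(v)$ can be periodic and then $v$, being the preimage of $d$ lying on the cycle of $d$, is periodic. The paper is explicitly aware of this possibility --- Lemma~\ref{l:v-perio} is devoted precisely to the case ``$v$ is periodic.'' The exclusion of $v$ is not cosmetic for your argument: $v$ is exactly the point of $[b,a]$ where the ``unique sibling in $(b,a)$'' claim breaks down, since the two siblings of $v$ are $a$ and $b$. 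If $v$ is periodic and is one of the two shared vertices, say the shared pair is $\{v,t\}$, then the spike of $Q_i$ at $v$ may a priori be either $\ol{va}$ or $\ol{vb}$, so both vertex sets $\{v,t,a,\mathrm{sib}(t)\}$ and $\{v,t,b,\mathrm{sib}(t)\}$ are candidates and $Q_1=Q_2$ no longer follows from the shared vertices alone. To close the gap you need a separate argument for this case: for instance, note that a quadrilateral with the periodic vertex $v$ cannot sit inside a finite critical set of a proper geolamination (a finite periodic class consists entirely of periodic points, so it cannot also contain the non-periodic sibling vertices), hence $Q_i$ must be the convex hull of the refixed edge $\ol{vt}$ of a degree-two Fatou gap and its sibling edge, after which further dynamical input in the spirit of Lemmas~\ref{l:vperiod} and~\ref{l:v-perio} is needed to eliminate one of the two competing choices of the fourth vertex. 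As written, the proof is incomplete.
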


\begin{proof}
Indeed, otherwise the two shared vertices are not endpoints of $D$
and are not the endpoints of a critical chord. Hence if they are
shared by $Q_1$ and $Q_2$ then $Q_1=Q_2$ contradicting the
assumptions.
\end{proof}

\subsection{Slices of {$\prnp_3$}}\label{ss:slice}

For a family of chords $\F$, the equivalence relation $\sim_{\F}$ is
defined as an equivalence relation on $\uc$ under which two points
of $\uc$ are declared to be \emph{equivalent} if and only if a
finite chain of leaves of $\F$ connects them; we call the
equivalence relation $\sim_{\F}$ \emph{generated} by $\F$. We claim
that each admissible qc-portrait $\qcp\in \Ss_D$ is associated with
a unique lamination denoted by $\sim_\qcp$. Indeed, by Theorem A
there is a proper geolamination $\lam_\qcp$ for which $\qcp$ is
privileged; by definition, $\lam_\qcp$ gives rise to an equivalence
relation $\sim_{\lam_\qcp}=\sim_\qcp$. By Theorem B, $\sim_\qcp$ is
unique for a given admissible qc-portrait $\qcp$ from $\Ss_D$.
Indeed, if there are two distinct geolaminations $\lam_\qcp$ and
$\qcp$ is privileged for both of them, these two equal qc-portraits
can be viewed as linked which implies that the laminations generated
by the two proper geolaminations in question are the same (so that
our notation $\sim_\qcp$ is appropriate). Set
$\hlam_\qcp=\lam_{\sim_\qcp}$.
%Clearly, $\qcp$ stays privileged for $\hlam_\qcp$ too.
Denote by $\prnp_{3, D}$ the family of all proper geolaminations
with a critical leaf $D$ except for geolaminations of Siegel capture
type.

Since $\qcp\in \Ss_D$, the endpoints $a$ and $b$ of $D$ are
equivalent under $\sim_\qcp$. Now, if $\sim$ is a cubic lamination
not of Siegel capture type, and $a$ and $b$ are $\sim$-equivalent,
we can find a qc-portrait $\qcp=(Q, D)$ privileged for $\lam_\sim$.
Thus, the family $\lami_D$ of laminations $\sim_\qcp$ obtained as
described above for admissible qc-portraits $\qcp\in \Ss_D$ is in
fact the family of all cubic laminations $\sim$ not of Siegel
capture type with $a \sim b$. Observe that if $\sim$ has a periodic
Fatou gap $U$ of degree two then the non-degenerate refixed edge of
$U$ is unique (if it exists) because the remap is of degree two and
canonical geolaminations associated with laminations cannot have
concatenations of edges on their boundaries. Also, if $\sim$ has a
unique critical set then it has to coincide with the $\sim$-class of
$a, b$ and is therefore finite.

Recall that every $\si_2$-invariant lamination $\sim$ has a unique
\emph{minor} set $m_\sim$ which is the convex hull of the image of a
$\sim$-class of maximal diameter. By Thurston \cite{thu85}, the minor
sets of quadratic invariant laminations are pairwise disjoint and form
a lamination of the unit circle. That is, minor sets of quadratic
laminations form classes of $\qml$. Domains of $\cdisk/\qml$ (i.e.,
components of $\cdisk/\qml\sm \uc/\qml$) %are the so-called \emph{Main
%Cardioid} (of the Mandelbrot set) and its renormalizations. They
come from critical sets of quadratic laminations $\sim$ which are
periodic Fatou gaps $U$ of degree two; each such domain will be called
the \emph{Main Cardioid of $U$}. In particular, $U$ can coincide with
the entire closed unit disk in which case we have the Main Cardioid of
$\cdisk$ (or the Main Cardioid of the Mandelbrot set as it is usually
called). In general $U$ can be of period $n$ in which case its Main
Cardioid can be described as follows. Take all \emph{rotational} sets
$G$ of period $n$ inside $U$. Each such set has an unique longest edge
denoted by $M_G$. This includes Siegel disks $S$ of period $n$ inside
$U$ in which cases the critical edge of $S$ is its longest edge. Then
the boundary of the Main Cardioid of $U$ is formed by sets $\si_2(M)$
taken over all rotational sets $G\subset U$ of period $n$.

Let us go back to the cubic case. Recall that $\lami_D$ is the set
of all $\si_3$-invariant laminations $\sim$ such that $a\sim b$
except for laminations of \emph{Siegel capture type}, and that
$\Ss_D$ is the family of all admissible qc-portraits $(Q, D)$ with
$D$ as the second element. Each $(Q,D)\in\Ss_D$ is tagged by the
chord or point (the \emph{minor}) $\si_3(Q)$, and $\Pp_D$ is the
family of all such chords $\si_3(Q)$.

By Theorem A, every admissible qc-portrait is a privileged qc-portrait
of a \emph{proper} geolamination. Typically, but not always, an
admissible qc-portrait is privileged for a geolamination canonically
associated with a cubic lamination from $\lami_D$ (to each class of a
lamination one associates its convex hull and then considers a
geolamination formed by the edges of these convex hulls). Indeed,
suppose that $\sim$ is a lamination from $\lami_D$ which has a periodic
critical Fatou gap $U$ of degree two and period $k$ with a refixed edge
$M$ which is also an edge of an \emph{identity return} triangle $T$,
i.e. of a periodic triangle $T$ of period $k$. %such that $\si_3^k$
%fixes all vertices of $T$.
Denote the other two edges of $T$ by $\m$ and $\n$. If we remove the
grand orbit of $M$ from $\hlam_\sim$, we will get a new proper
geolamination $\lam$ for which the gap $U$ becomes a new gap $V$
with refixed edges $\m$ and $\n$. The convex hulls of $\m$ ($\n$)
with their siblings are two admissible qc-portraits privileged for
$\lam$.

This shows that all edges of the triangle $\si_3(T)$ belong to $\Pp_D$.
However $\si_3(\m)$ or $\si_3(\n)$ are \emph{not} minors of any
geolaminations canonically associated to a lamination from $\lami_D$.
Call the triangle $T$ the \emph{major set} of $\sim$; if such $T$ does
not exist we call $M$ the \emph{major set} of $U$. In other words, the
major set of $\sim$ is the $\sim$-class of the refixed edge of $U$.

\begin{dfn}[Minor sets in the cubic case] \label{d:minset}
For $\sim\in \lami_D$ let $\g_D$ be the $\sim$-class of $a$;
moreover, if $\sim$ has a periodic Fatou gap $U$ of degree two, let
$M_\sim$ be the major set of $U$. Let $C_\sim$ be either the first
critical set of $\lam_\sim$ (if it is different from $\g_D$ and
finite), the major set of $U$ (if the first critical set of $\sim$
is a periodic Fatou gap $U$ of degree two), or $\ch(\g_D)$ (if
$\sim$ has a unique critical class $\g_D$). Set
$\si_3(C_\sim)=m_\sim$ and call $m_\sim$ the \emph{minor set} of
$\sim$.
\end{dfn}

We are ready to prove  Theorems C and D.

\begin{thm}\label{t:extendc}
The family of chords $\Pp_D$ is proper and gives rise to the
lamination $\sim_D$. Convex hulls of $\sim_D$-classes are minor sets
$m_\sim$ where $\sim$ belongs to $\lami_D$.
\end{thm}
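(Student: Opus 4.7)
The plan is to identify the equivalence classes of $\sim_D$ with the minor sets $m_\sim$ for $\sim \in \lami_D$, and then read off the three lamination axioms from this identification together with Theorems~A and B.

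First, for each $\sim \in \lami_D$, I would check that every edge of $m_\sim$ belongs to $\Pp_D$. When the critical set $C_\sim$ of Definition~\ref{d:minset} is the finite critical set of $\sim$ distinct from $\ch(\g_D)$, its edges pair up into sibling pairs (edges with the same $\si_3$-image), and each such pair is the convex hull of a collapsing generalized \ql{} $Q$ giving an admissible qc-portrait $(Q, D) \in \Ss_D$ whose minor $\si_3(Q)$ is the corresponding edge of $\si_3(C_\sim) = m_\sim$. When $C_\sim = M_\sim$ is the major set of a period-$k$ critical Fatou gap $U$ of degree two, the construction in the paragraph preceding Definition~\ref{d:minset} (removing the grand orbit of one edge of $M_\sim$ from the canonical geolamination) produces privileged qc-portraits in $\Ss_D$ whose minors are exactly the edges of $\si_3(M_\sim) = m_\sim$. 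The degenerate case $C_\sim = \De$ is handled directly, with $m_\sim = \{d\}$ and $(\De, D) \in \Ss_D$ an admissible qc-portrait with degenerate minor.

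Second, I would prove disjointness: for distinct $\sim_1, \sim_2 \in \lami_D$ the minor sets $m_{\sim_1}, m_{\sim_2}$ are disjoint. Any common point would produce privileged qc-portraits $(Q_1, D), (Q_2, D)$ with $\si_3(Q_1) \cap \si_3(Q_2) \ne \0$; Theorem~B then forces $\sim_1 = \sim_2$, a contradiction. Combined with the first step this shows that every chord $\mu \in \Pp_D$ lies as an edge in exactly one minor set: by Theorem~A, $\mu = \si_3(Q)$ for some privileged qc-portrait $(Q, D) \in \Ss_D$ of some $\lam \in \prnp_3$, and $\mu$ is then an edge of $m_{\sim_\lam}$.

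Third, I would identify $\sim_D$-classes with the vertex sets of the $m_\sim$. Vertices of a single $m_\sim$ are $\sim_D$-equivalent, since any two of them are joined by a chain of edges of the polygon $m_\sim$, each of which lies in $\Pp_D$. Conversely, if $x \sim_D y$ via a chain $\mu_1, \dots, \mu_n$ of chords in $\Pp_D$, then for each $i$ the consecutive chords $\mu_i, \mu_{i+1}$ share a point, so Theorem~B applied to the associated qc-portraits forces the corresponding laminations in $\lami_D$ to coincide; by induction every $\mu_i$ is an edge of the same $m_\sim$, and $x, y$ are vertices of this $m_\sim$.

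From this identification, axiom~(E2) is immediate from the second step, and (E3) follows because each $C_\sim$ is either a finite critical set or the finite major set $M_\sim$, and hence $m_\sim$ has finitely many vertices. The main obstacle is (E1), closedness of the graph of $\sim_D$. Here I would combine compactness of $\Ss_D$ from Theorem~A with the third step: given $x_n \to x$, $y_n \to y$ with $x_n \sim_D y_n$, one selects privileged qc-portraits $(Q^x_n, D), (Q^y_n, D) \in \Ss_D$ whose minors are edges of $m_{\sim_n}$ incident to $x_n$ and $y_n$ respectively, and passes to subsequences with Hausdorff limits $(Q^x, D), (Q^y, D) \in \Ss_D$. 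The delicate point is that the chain of edges of $m_{\sim_n}$ connecting $x_n$ to $y_n$ may have unbounded length, so direct chord-by-chord convergence is insufficient; I would instead pass to a Hausdorff limit of the minor sets $m_{\sim_n}$ themselves (bounded subsets of $\cdisk$) and argue that the limit set is contained in a single $m_\sim$ by the second step applied to its edges, yielding $x, y \in m_\sim$ and hence $x \sim_D y$ by the third step.
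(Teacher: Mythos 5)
Your proposal is correct and follows essentially the same route as the paper: identify $\sim_D$-classes with the minor sets $m_\sim$ via the case analysis of critical sets (inserting collapsing quadrilaterals into finite critical sets, using the major set for degree-two Fatou gaps), use Theorem A for existence and compactness of $\Ss_D$, and use Theorem B to force chords of $\Pp_D$ meeting each other into the same minor set. The only differences are presentational: you supply a more explicit Hausdorff-limit argument for closedness of $\sim_D$, which the paper dispatches with "clearly, $\sim_D$ is closed because $\Pp_D$ is closed," and you should note that the "unique critical class" case covers not just $\De$ but any finite $3$-to-$1$ gap $\ch(\g_D)$ with edge $D$ (handled by the same sibling-pair insertion), and that minors may be diagonals as well as edges of $m_\sim$.
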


\begin{proof} Since by Theorem A the set $\Ss_D$ is compact,
$\Pp_D$ is a closed family of chords (minors). Consider the
equivalence relation $\sim_D=\sim_{\Pp_D}$. Clearly, $\sim_D$ is
closed (because $\Pp_D$ is closed). Assume that a lamination
$\approx$ from $\lami_D$ is a lamination such that its first
critical set $C_\approx$ is a finite gap disjoint from $D$. Then we
can insert collapsing \ql s in $C_\approx$ in several ways and
associate to it the corresponding proper geolaminations. Images of
these collapsing \ql s are minors of the corresponding proper
geolaminations. Since these images are edges (and possibly some
diagonals) of $m_\approx$, in this case $m_\approx$ is a class of
equivalence of $\sim_D$.

Similar arguments show that the same holds if $\approx$ has a unique
critical set coinciding with the convex hull $\ch(\g_\approx)$ of
the $\approx$-class $\g_\approx$ of $a$. On the other hand, the
analysis before Definition~\ref{d:minset} shows that if the first
critical set $C_\approx$ is a periodic Fatou gap of degree two then
the corresponding class of equivalence of $\sim_D$ is the image
$m_\approx$ of the major set $M_\approx$ of $\approx$. This covers
all possibilities for the lamination $\approx$. Since each minor
from $\Pp_D$ can be associated with a proper geolamination and then
with a lamination from $\lami_D$, these cases cover the entire
$\Pp_D$ and complete the proof.
\end{proof}

\end{document}